\pgfplotsset{compat=1.15}
\definecolor{ffqqqq}{rgb}{1,0,0}
\newtheorem{thm}{Theorem}[section]
\newtheorem{cor}[thm]{Corollary}
\newtheorem{lem}[thm]{Lemma}
\newtheorem{prop}[thm]{Proposition}
\theoremstyle{definition}
\newtheorem{defn}[thm]{Definition}
\newtheorem{rem}[thm]{Remark}
\newtheorem{exmp}[thm]{Example}
\newtheoremstyle{named}{}{}{\itshape}{}{\bfseries}{.}{.5em}{\thmnote{#3}#1}
\theoremstyle{named}
\newcommand{\ClassW}{\text{Class}(W)}
\newcommand{\N}{\mathbb{N}}
\newcommand{\Z}{\mathbb{Z}}
\newcommand{\Q}{\mathbb{Q}}
\newcommand{\R}{\mathbb{R}}
\newcommand{\eps}{\varepsilon}
\newcommand{\ra}{\rightarrow}
\DeclareMathOperator{\diam}{diam}
\DeclareMathOperator{\dist}{dist}
\DeclareMathOperator{\Br}{Br}
\DeclareMathOperator{\End}{End}
\DeclareMathOperator{\Inter}{int}
\DeclareMathOperator{\Fix}{Fix}
\numberwithin{equation}{section}
\begin{document}

	\title{Tranched graphs: consequences for topology and dynamics}

	\author[M.\ Kowalewski]{Micha\l{}~Kowalewski}
	\author[P.\ Oprocha]{Piotr Oprocha}

\address[M. Kowalewski]{AGH University of Krakow, Faculty of Applied Mathematics,
al.\ Mickiewicza 30, 30-059 Krak\'ow, Poland.}
\email{kowalewski@agh.edu.pl}

 \address[P.\ Oprocha]{Centre of Excellence IT4Innovations - Institute for Research and Applications of Fuzzy Modeling, University of Ostrava, 30. dubna 22, 701 03 Ostrava 1, Czech Republic  -- $\&$ --
 AGH University of Krakow, Faculty of Applied Mathematics,
al.\ Mickiewicza 30, 30-059 Krak\'ow, Poland. }
\email{piotr.oprocha@osu.cz}

        \begin{abstract}
		We compare quasi-graphs and generalized $\sin(1/x)$-type continua, which are two classes of continua that generalize topological graphs and contain the Warsaw circle as a nontrivial common element.   We show that neither class is a subset of the other, provide some characterizations, and present illustrative examples. We unify both approaches by considering the class of \textit{tranched graphs},  compare it to concepts known from the literature, and describe how the topological structure of its elements restricts possible dynamics.
	\end{abstract}
 
	 \keywords{continuum theory, Warsaw circle, quasi-graphs, tranches, topological dynamics}
    \subjclass[2020]{54E45, 54F15, 37B45 (Primary) 54G15, 37B02 (Secondary)}
	\maketitle

\section{Introduction}
In the paper, we study the relationship between \textit{quasi-graphs} and \textit{generalized sin(1/x)-type continua}. The two classes were defined independently, non-trivially extending the class of topological graphs. Let us present a brief, informal description of these classes, while for a formal definition, the reader is referred to Section~\ref{sec:2}.
If we view topological graphs as arcwise connected unions of arcs, 
then roughly speaking, by an analogy, we can view quasi-graphs as arcwise connected unions of arcs and quasi-arcs. The definition of generalized sin(1/x)-type continua is based on an analogy to objects that generalize the unit interval, the so-called type-$\lambda$ continua, defined and studied by Kuratowski (see \cite[\S 48, Ch. III, footnote on p.197]{MR259835}) and subsequent mathematicians (e.g. see \cite{MR1073777,MR701520})). In this approach, a topological space generalizes a topological graph if we can define a monotone map onto a topological graph that is 1-1 on a sufficiently large set of points (with additional assumptions). 
Simple examples suggest that the two definitions may be equivalent under some mild assumptions. For instance, the Warsaw circle (see: Figure~\ref{fig:warsaw cricle}) is both a quasi-graph and a generalized sin(1/x)-type continuum. Motivated by this, we set as the main goal of this paper a study on the relationship between classes of quasi-graphs and generalized sin(1/x)-type continua.

    \begin{figure}[ht]

            \centering
            \includegraphics[scale=0.28]{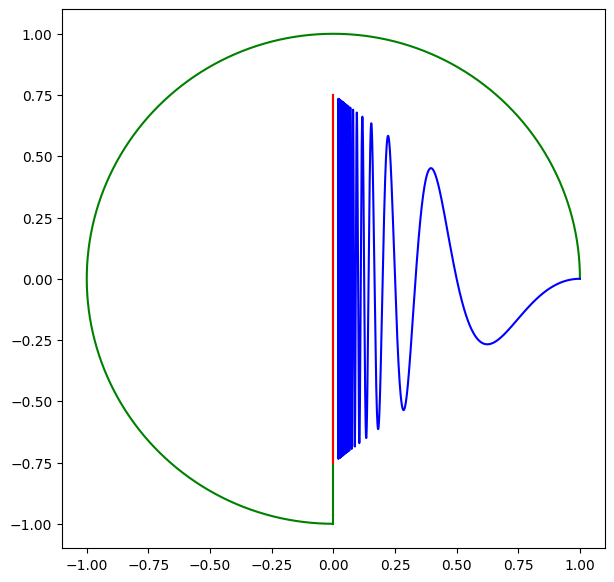}
            \includegraphics[scale=0.28]{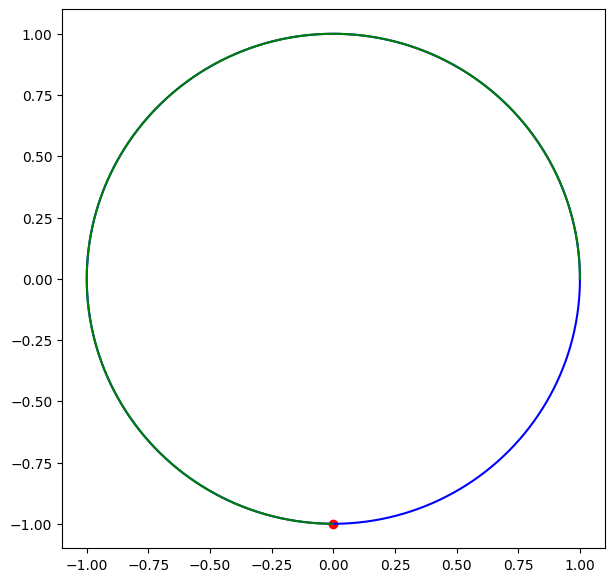}
            \caption{The Warsaw circle $W$ and its image $\phi(W)$ under mapping $\phi$ from Definition   \ref{def_sin1/x}. The points in topological graph $\phi(W)$ are colored in accordance to their preimage. Oscillatory quasi-arc required by Definition~\ref{def_quasigraph} is marked in blue.}
            \label{fig:warsaw cricle}
        \end{figure}

One can easily find an example  of a generalized sin(1/x)-type continuum that is not a quasi-graph, since the definition of generalized sin(1/x)-type continuum does not demand arcwise connectedness of the space; however, adding arcwise connectedness to the definition still does not lead to equivalence of definitions. The results of the paper include illustrative examples that present the differences between quasi-graphs and generalized sin(1/x)-type continua (see Section~\ref{sec:3} and Section~\ref{sec:4} for more details). We introduce the class of \textit{tranched graphs}, that contains all quasi-graphs and all generalized sin(1/x)-type continua, but is still small enough to provide some concrete characterizations. We also characterize both classes relative to each other, in particular, we try to understand the intersection of these classes. For the latter, we find very useful the class of continua known in literature as $\ClassW{}$, defined by Lelek in 1972 (see \cite{GT} and the summary of classical results in the book of Illanes and Nadler \cite[Ch. VIII \& IX]{Hyperspaces}).

The main results of the paper can be summarized as follows.
In Theorems~\ref{when_quasi_is_sin} and~\ref{thm:commonbase} we show that for a quasi-graph $X$, a sufficient condition to be a generalized sin(1/x)-type continuum is that for any connected component $\Lambda$ of the union of limit sets of oscillatory quasi-arcs in $X$: 
\begin{enumerate}
    \item there is a quasi-arc $L \subset X$ such that $\omega(L)=\Lambda$, and
    \item $\Lambda \in \ClassW{}$ 
\end{enumerate}
Furthermore,  $(1)$ is a necessary condition. 

In Theorem~\ref{thm:what_tranched_are_quasi} we show that a tranched graph is a quasi-graph if and only if it is arcwise connected and has a finite depth (see Definitions~\ref{def:3.6}, \ref{def:3.7} and \ref{def:4.21}). The result is strengthened by Example~\ref{exmp:infinite_depth}, which shows that the set of assumptions is optimal, by the construction of an arcwise connected generalized sin(1/x)-type continuum with infinite depth.  As such, we get a characterization of generalized sin(1/x)-type continua that are also quasi-graphs, giving us a result opposite to Theorems~\ref{when_quasi_is_sin} and~\ref{thm:commonbase}. 

The paper is organized as follows. We recall the definitions and basic results in Section~\ref{sec:2}. The conditions for a quasi-graph to be a generalized sin(1/x)-type continuum are presented in Section~\ref{sec:3}, while Section~\ref{sec:4} contains conditions for the opposite inclusion. The entirety of Section~\ref{sec:example} is devoted to rigorous construction of an arcwise connected generalized sin(1/x)-type continuum which is a tranched graph of infinite depth. In Section~\ref{sec:dynamics} we present how the structure of the spaces considered in the paper impacts the dynamics on them.

\section{Preliminaries} \label{sec:2} 
    We denote by $\N=\{1,2,,\ldots\}$, $\R=(-\infty,\infty)$, and $\R_+=[0,\infty)$ the sets of natural numbers, real numbers, and non-negative real numbers, respectively.
    
 Let $X,Y$ be compact metric spaces.  We say that a continuous map $f \colon X \ra Y$ is \textit{monotone} if $f^{-1}(y)$ is a connected subset of $X$ for every point $y \in Y$. To distinguish between degenerate and nondegenerate sets $f^{-1}(y)$ induced by a continuous monotone map $f$ in $X$, we will use the term \textit{fiber} of $f$ for any set $f^{-1}(y)$, $y\in Y$ and reserve the word \textit{tranche} for fibers
that are not singletons.
     
    We write $\overline{U}$ for the closure of $U$, $\Inter U$ for the interior of $U$ and $\partial U=\overline{U} \backslash \Inter U$ for the boundary of $U$. 
    We say that a set is \textit{meager} if it can be written as a countable union of nowhere dense sets and we call it \textit{residual} if its complement is meager. 
    
     For a given metric space $(X,d)$ by $d_H$ we denote the \textit{Hausdorff metric} induced by $d$ 
     on the space $2^X$ of compact non-empty subsets of $X$ (see \cite{Hyperspaces} for more details). 
     
By the \textit{Hilbert cube} we mean the space $\mathcal{H}=[0,1]^\N$ equipped with the product metric $d(x,y)=\sum_{i=1}^\infty 2^{-i}|x_i-y_i|$.  By a \textit{continuum} we mean any compact, connected metrizable space. We assume that the reader is familiar with continuum theory (e.g., see \cite{Nadler} as a standard reference), and we only very briefly present some of the concepts to make the paper more self-contained.
    The hyperspace of all subcontinua of a continuum $X$
    is denoted $\mathcal{C}(X)\subset 2^X$. It is well known that it is a closed subset of $2^X$ and therefore is also compact \cite{Hyperspaces}.

    An \textit{arc} is any continuum homeomorphic to the interval $[0,1]$ with natural topology and  a \textit{(topological) graph}  is the union of a finite collection of arcs (called \textit{edges}), intersecting only at their endpoints (called \textit{vertices}). Note that, by the definition, the vertices of any edge are distinct points.    A topological graph that does not contain any \textit{circle} (a subset homeomorphic to the unit circle with the natural topology induced from the plane) is called a \textit{tree}. Let $X$ be an arcwise connected space and let $x,y\in X$ be two distinct points. If there is a unique arc $J\subset X$ with endpoints $x$ and $y$ then we denote $[x,y]=J$. It is not difficult to verify that if $X$ is a tree, then $[x,y]$ is defined for two distinct $x,y\in X$.

     A \textit{Peano continuum} is any locally connected continuum (that is, every point has an arbitrarily small open and connected neighborhood). It is well known that being a locally connected continuum is equivalent to being an image of the unit interval $[0,1]$ under some continuous mapping.  For a more detailed exposition on Peano continua, the reader is referred to \cite{Nadler}.

    Denote by $S_n$ a topological graph obtained as the union of $n$ edges that share a common endpoint $s_n\in S_n$, $n\geq 2$.
   For consistency, denote $S_1=[0,1]$ and $s_1=0$. We say that $X$ is an $n$-star centered at $x$ if there is a homeomorphism $h \colon X \ra S_n$ with $h(x)=s_n$.
        \begin{defn}
            Let $X$ be a nondegenerate, arcwise connected 
             continuum and let $x\in X$, then:
(i) the \textit{valence} of $x$ is the number $$\text{val}(x)=\sup_{k \in \N} \{ k: \text{ there is a $k$-star contained in $X$ centered at $x$}  \};$$ 
(ii) an \textit{endpoint} is any point with valence equal to 1;
(iii) a \textit{branching point} is any point with valence greater than 2.

        We write $\End(X), \Br(X)$  to denote the set of endpoints and the set of branching points of $X$, respectively.
        \end{defn}
  
    The original definition of quasi-graphs in \cite{MR3557770} states that it is an arcwise connected continuum and there is a natural number $N$ such that for each arcwise connected subset $Y$, the set $\overline{Y} \backslash Y$ has at most $N$ arcwise connected components. For the purpose of the present paper, we will use the equivalent characterization provided by \cite[Theorem 2.24]{MR3557770}.  Let us first introduce the necessary terminology.

	\begin{defn}
		Let $X$ be a compact metric space. We say that a subset $L$ of $X$ is a \textit{quasi-arc} with a parametrization $\varphi$ if the map $\varphi \colon [0,\infty) \rightarrow L$ is a continuous bijection. We call the point $\varphi (0)$ 
   the \textit{endpoint} of $L$. We denote $\omega(L) = \bigcap_{m\geq 0}\overline{\varphi[m,\infty)}$, and say that a quasi-arc is \textit{oscillatory} if $\omega(L)$ has more than one element. It is easy to see that the endpoint and the limit set of a quasi-arc are independent of the parametrization. For the sake of consistency, unless stated otherwise, in all figures presented in the paper, we will always depict oscillatory quasi-arcs in blue color and their limit sets in red color.
	\end{defn}
    
        In the literature there is a well established notion of a  \textit{ray}, which is a space $L$ homeomorphic to $\R_+$. A continuum $X$ is called a compactification of a ray if it can be represented as union of a ray $R$ and continuum $P$ such that $R \cap P = \emptyset$ and $P=\overline{R}\setminus R$. The continuum  $P$ is called  \textit{remainder} of the compactification. One can easily verify that every ray is a quasi-arc, and that the class of quasi-arcs is strictly larger as it allows for self-accumulation (we allow $\omega(L) \cap L \neq \emptyset$). For example, both circle and Warsaw circle are quasi-arcs, but not rays. Notice that if we view a ray $L$ as a quasi-arc then, the limit set $\omega(L)$ is the remainder of $L$. For some results about ray compactifications see for example (\cite{MR3227203},\cite{MR1307490}) .
    
       Non-oscillatory quasi-arcs in $X$ are referred to as $0$-order quasi-arcs. We say that a quasi-arc $L$ is of \textit{ $k$-order} if it contains within its limit set $\omega(L)$ a $(k-1)$-order quasi-arc and the set $\omega(K)$ does not contain any $(k-1)$-order quasi-arcs for any quasi-arc $K \subset \omega(L)$. 
        \par
        If for any natural number $n \in \N$ there exists a sequence of oscillatory quasi-arcs:
        $$\{L_0,L_1, \ldots, L_n\}$$
        with $L_0=L$ and $L_{i+1} \subset \omega(L_i)$, then we say that $L$ is \textit{ $\infty$-order} oscillatory quasi-arc. Let $\varphi \colon [0,\infty) \to X$ be a parametrization of quasi-arc $L$. If for every $t \in \N$, the quasi-arc $\varphi([t,\infty))$  is not a subset of $\omega(K)$ for any oscillatory quasi-arc $K$, we will say that $L$ is \textit{without ancestors}.

        We are now prepared to give a formal definition of quasi-graphs (see Theorem 2.24 in \cite{MR3557770}).
	\begin{defn}
		\label{def_quasigraph}
		A \textit{quasi-graph} is a continuum $X$  that can be decomposed into a topological graph $G$ and pairwise disjoint oscillatory quasi-arcs $ L_1,..., L_n$ such that:
		\begin{enumerate}[(i)]
			\item\label{def_quasigraph:1} $X=G \cup \bigcup_{j=1}^{n} L_j$ and $\End(X) \cup \Br (X) \subset G$,
			\item\label{def_quasigraph:2} for each $ 0 \leq i \leq n$ $ L_i \cap G = \{a_i\}$, where $a_i$ is the endpoint of $L_i$,
			\item\label{def_quasigraph:3} $\omega (L_i) \subset G \cup  \bigcup_{j=1}^{i-1} L_j$ for each $ 0 \leq i \leq n$,
			\item\label{def_quasigraph:4} if $\omega (L_i) \cap L_j \neq \emptyset$ for some $ 0 \leq i,j \leq n$, then $\omega(L_i) \supset L_j$
		\end{enumerate}	
  We will denote $\omega(X)=\bigcup_{i=1}^n \omega(L_i)$. We can see that $\omega(X)$ is the union of nondegenerate nowhere dense subcontinua of $X$, hence it doesn't depend on the choice of quasi-arcs in the decomposition.
  	\end{defn}
         In other words, we can construct every quasi-graph in a finite number of steps as follows. We start with a topological graph and then in each step we add one by one a finite number of oscillatory quasi-arcs (without adding branching points at any step of the construction) such that their limit set is contained in the continuum generated in the previous step.
        
        Now let us define the second class of our interest: generalized sin(1/x)-type continua. The following definition can be found in Section 5 in \cite{MR3272777} and is a natural generalization of the notion of $\lambda$-continuum  studied already by Kuratowski (see \cite[\S 48, Ch. III, footnote on p.197]{MR259835}). In the context of $\lambda$-continua, the monotone map $\phi$ in the following definition is sometimes called a \textit{Kuratowski map} (see \cite{MR965302,MR1073777})
	\begin{defn}
		\label{def_sin1/x}
		A continuum $X$ is a \textit{generalized sin(1/x)-type continuum}  if there exists a topological graph $Y$ and a continuous monotone map $\phi \colon X \rightarrow Y$ with the following properties:
		\begin{enumerate}[(i)]
			\item\label{sin-i} $\phi^{-1} (y)$ is nowhere dense in $X$ for any $y\in Y$,
			\item\label{sin-ii} $\phi^{-1} (D)$ is dense in $X$, where $D = \{y \in Y$ such that $\phi^{-1}(y)$ is degenerate$\}$,
			\item\label{sin-iii} if $Y_0$ is a subcontinuum of  $\phi^{-1} (y)$ and $\epsilon >0$ then there exists an arc $[a,b] \subset Y$ such that $d_H(Y_0,\phi^{-1}([a,b]))<\epsilon$. 
			
		\end{enumerate}
		The sets $\phi^{-1} (y)$, $y\in Y$ are called \textit{fibers} (of $f$) in $X$, and fibers that are nondegenerate are called \textit{tranches} of $X$. We will often refer to 
  \eqref{sin-iii}
  as the
  \textit{approximation property}. As we show later in the paper, the decomposition into fibers doesn't depend on the choice of the graph $Y$ and map $\phi$ satisfying the conditions. 
	\end{defn}

The following is a simple, yet important, observation.

    \begin{lem}
    \label{lem:if_dense_then_fibers_ndense}
         Let $X,Y$ be nondegenerate continua and let $\phi \colon X \rightarrow Y$ be a continuous monotone map. If the set $\phi^{-1} (D)$ is dense in $X$, where $D = \{y \in Y$ such that $\phi^{-1}(y)$ is degenerate in $X\}$, then $\phi^{-1}(y)$ is nowhere dense in $X$ for every $y\in Y$.
    \end{lem}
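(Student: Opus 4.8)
The plan is to argue by contradiction. Fix $y_0 \in Y$ and write $F = \phi^{-1}(y_0)$. Since $\phi$ is continuous and $\{y_0\}$ is closed, $F$ is a closed subset of $X$, so $\overline{F}=F$. Consequently, $F$ fails to be nowhere dense precisely when $\Inter F \neq \emptyset$. So I would assume, toward a contradiction, that $U := \Inter F$ is a nonempty open set.

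The next step is to exploit the density hypothesis. Because $\phi^{-1}(D)$ is dense in $X$ and $U$ is a nonempty open set, there exists a point $x \in U \cap \phi^{-1}(D)$. By definition of $D$, the fiber $\phi^{-1}(\phi(x))$ is degenerate, i.e.\ a single point. On the other hand $x \in U \subseteq F = \phi^{-1}(y_0)$ forces $\phi(x)=y_0$, so that degenerate fiber is exactly $F$. Thus $F=\{x\}$ is a singleton, yet it contains the nonempty open set $U$; hence $U=\{x\}$ and $x$ is an isolated point of $X$.

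To close the argument I would invoke the elementary fact that a nondegenerate continuum has no isolated points: if $\{x\}$ were open it would also be closed, and by connectedness of $X$ we would get $X=\{x\}$, contradicting that $X$ is nondegenerate. This contradiction shows $\Inter F = \emptyset$, i.e.\ $F$ is nowhere dense, and since $y_0$ was arbitrary the lemma follows. I do not expect any genuine obstacle here; the only substantive ingredient is the perfectness of a nondegenerate continuum, and it is worth remarking that monotonicity of $\phi$ is never used, so the statement holds for arbitrary continuous maps satisfying the density condition.
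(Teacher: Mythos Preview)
Your argument is correct and follows essentially the same route as the paper's proof: both assume a fiber has nonempty interior, use density of $\phi^{-1}(D)$ to locate a point of a degenerate fiber inside that interior, and derive a contradiction with the fact that a nondegenerate continuum has no isolated points (the paper phrases this last step as ``$\phi^{-1}(y_0)$ is a tranche''). Your closing observation that monotonicity is never used is accurate and worth keeping.
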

    \begin{proof}
        The map $\phi$ is continuous, and hence all fibers $\phi^{-1}(y)$ are closed. Therefore, it is sufficient to show that the fibers have empty interiors. Assume on the contrary that there is a point $y_0 \in Y$ such that $\phi^{-1}(y_0)$ contains an open subset $U \subset \phi^{-1}(y_0)$. Since $X$ is a nondegenerate continuum, it follows that $\phi^{-1}(y_0)$ is a tranche of $X$. As the set of 
        $\phi^{-1}(D)$
        is dense in $X$, we can find a point $y_1$ with a degenerate preimage under $\phi$ such that $\phi^{-1}(y_1) \subset U$. This implies that $\phi^{-1}(y_1) \subset U \subset \phi^{-1}(y_0)$, leading to a contradiction. 
    \end{proof}
   \begin{rem}
       By Lemma~\ref{lem:if_dense_then_fibers_ndense} we get that $(i)$ in Definition~\ref{def_sin1/x} is redundant, so we will omit it in the rest of the paper.
   \end{rem}
    \begin{lem}\label{lem:nowheredense}
       Suppose a continuum $X$, topological graph $Y$, and a map $\phi \colon X \to Y$ satisfy $(ii)$ from Definition~\ref{def_sin1/x}. Then any fiber $\phi^{-1}(y)$ of the map $\phi$ is not a strict subset of any nowhere dense subcontinuum of $X$. 
   \end{lem}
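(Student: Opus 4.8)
The plan is to argue by contradiction. Suppose some fiber $F=\phi^{-1}(y_0)$ is a strict subset of a nowhere dense subcontinuum $K\subseteq X$. Because $F\subsetneq K$, there is a point $p\in K$ with $\phi(p)=y_1\neq y_0$, so the image $C:=\phi(K)$ is a subcontinuum of $Y$ containing the two distinct points $y_0$ and $y_1$; in particular $C$ is nondegenerate. The goal is to show that $K$ must then contain a nonempty open subset of $X$, contradicting that $K$ is nowhere dense (recall that $K$, being a subcontinuum, is closed, so being nowhere dense is the same as having empty interior).

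First I would record the purely topological fact that a nondegenerate subcontinuum $C$ of a topological graph $Y$ has nonempty interior in $Y$. Indeed, being a nondegenerate subcontinuum of a graph, $C$ contains an arc, and since $Y$ has only finitely many vertices this arc contains a point $x$ lying in its relative interior and in the interior of an edge of $Y$; near such an $x$ the arc coincides with the edge, which is locally homeomorphic to $\R$, so an entire open-in-$Y$ neighborhood of $x$ is contained in $C$. Write $V:=\Inter C$ for this nonempty open subset of $Y$. Since $V\subseteq C=\phi(K)$, the preimage $\phi^{-1}(V)$ is a nonempty open subset of $X$.

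The engine of the proof is the observation that $K$ swallows every degenerate fiber it meets. Precisely, if $y\in C\cap D$ then $\phi^{-1}(y)$ is a single point; as $y\in C=\phi(K)$, that point already lies in $K$, whence $\phi^{-1}(y)\subseteq K$. Taking the union over $y\in V\cap D$ gives $\phi^{-1}(V)\cap\phi^{-1}(D)\subseteq K$. Now by hypothesis $(ii)$ the set $\phi^{-1}(D)$ is dense in $X$, so $\phi^{-1}(V)\cap\phi^{-1}(D)$ is dense in the open set $\phi^{-1}(V)$; since $K$ is closed and contains this set, it contains its closure, and therefore $K\supseteq\phi^{-1}(V)$. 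Thus $K$ contains the nonempty open set $\phi^{-1}(V)$, contradicting nowhere density and finishing the argument.

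I expect the only genuinely nontrivial point to be the geometric claim that a nondegenerate subcontinuum of a graph has nonempty interior in the graph; this is exactly where the finiteness of $Y$ is used, while the rest is the standard interplay between a dense set and an open set. I would also remark that monotonicity of $\phi$ is never invoked: only continuity and property $(ii)$ enter the proof.
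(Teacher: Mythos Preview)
Your proof is correct. It and the paper's proof share the same starting observation---that $\phi(K)$ is a nondegenerate subcontinuum of the graph $Y$ and therefore has nonempty interior---but they diverge in how the contradiction is extracted. The paper first trims $U$ so that $\phi(U)$ is a star centered at $y$, then picks a single point $z$ in the interior of that star with degenerate preimage $\phi^{-1}(z)\in U$, and produces a sequence of singleton fibers in $X\setminus U$ converging to $\phi^{-1}(z)$; pushing forward by $\phi$ gives a sequence outside the star converging to the interior point $z$, which is impossible. You instead show directly that every degenerate fiber over the open set $V=\Inter\phi(K)$ already lies in $K$, and then use density of $\phi^{-1}(D)$ to conclude $\phi^{-1}(V)\subseteq K$.

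Your route is a bit cleaner: it avoids the somewhat informal ``we can take $U$ such that $V$ is a star'' reduction, and your final remark that monotonicity of $\phi$ is never used is both correct and worth recording, since the paper's proof states monotonicity up front but, on inspection, the star-reduction step is the only place one might be tempted to invoke it.
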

   \begin{proof}
       Let $Y$ be a topological graph, and $\phi \colon X \ra Y$ be a continuous monotone map satisfying $(ii)$ from Definition~\ref{def_sin1/x}. Suppose that there is a fiber $\phi^{-1}(y)$ for which there is a nowhere dense subcontinuum $U \supset \phi^{-1}(y)$. Denote $V=\phi(U)$. Notice that we can take $U$ such that $V$ is a star with center in $y$. Choose now $z \in V \backslash End(V)$ that has a degenerate preimage. As $U$ is nowhere dense, there is a sequence $\{z_n\}_{n=1}^\infty \subset X \backslash U$ of elements of singleton fibers converging to $\phi^{-1}(z)$. Clearly each $\phi(z_n)\in X \backslash V$, and by continuity  the sequence $\{\phi(z_n)\}_{n=1}^\infty$ converges to $z$. By definition, $z$ is not an endpoint of the star $V$, which is a contradiction.
       \end{proof}
    From Lemma~\ref{lem:nowheredense} we immediately get the following:
    \begin{rem}\label{rem:collapse}
        Suppose $\phi_1 \colon X \to Y_1$ and $\phi_2 \colon X \to Y_2$  satisfy $(ii)$ from Definition~\ref{def_sin1/x}. Then for any $\Lambda $, a maximal (in the sense of inclusion) nowhere dense subcontinuum of $X$, there are $y_1 \in Y_1, y_2 \in Y_2$ such that $\phi_i^{-1}(y_i)=\Lambda$ for $i=1,2$
    \end{rem}

\section{Characterization of quasi-graphs that are generalized sin(1/x)-type continua}\label{sec:3}

In this section we will try to describe which quasi-graphs are generalized sin(1/x)-type continua. The simplest case of the Warsaw circle suggests
that the limit sets of quasi-arcs are good candidates for tranches, and this intuition may lead to a claim that it is a general property
(for example, see comments after Question 1.1 in \cite{MR4385436}). Unfortunately, this observation does not generalize onto all quasi-graphs as we will show later in this paper.  Another property that this simple example may suggest is that the limit sets of quasi-arcs satisfy the approximation property (i.e. \eqref{sin-iii} from Definition~\ref{def_sin1/x} holds). This turns out to be false in the general case as well.   To provide a simple example, we will refer to the so-called class $\ClassW$.

\begin{rem}
\label{rem:arclike_are_classw}
Over the years, many continua belonging to \ClassW{} have been discovered, including arc-like continua, non-planar circle-like continua, hereditarily indecomposable continua, and many others (see \cite[Ch. IX, Sec. 67]{Hyperspaces}).
\end{rem}

\begin{defn}\label{def:classW}
    A continuum is said to be in $\ClassW{}$, written  $X\in \ClassW{}$, if for every continuum $S$ and any surjective continuous map $f \colon S \to X$, any subcontinuum of $X$ is the image of a subcontinuum of $S$.
\end{defn}

The following is one of the  conditions equivalent to  Definition~\ref{def:classW} (see \cite{GT,Proctor}, cf. \cite[Ch. VIII, Sec. 35]{Hyperspaces}). 

\begin{lem}
    A continuum $X$ is an element of  $\ClassW{}$ if and only if every compactification $Y$ of $[0,1)$ with the remainder $X$ has the property that $\mathcal{C}(Y)$ is a compactification of $\mathcal{C}([0, 1))$.
\end{lem}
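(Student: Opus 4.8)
The plan is to reduce the stated equivalence to a single approximation property and then connect that property to weak confluence of maps onto $X$. Fix a compactification $Y$ of $[0,1)$ with remainder $X$, write $e\colon [0,1)\to Y$ for the associated dense embedding, and set $\mathcal{A}=\{K\in\mathcal{C}(Y):K\subseteq e([0,1))\}$. Since $Y$ is a continuum, $\mathcal{C}(Y)$ is a compact continuum, and the natural map $\mathcal{C}([0,1))\to\mathcal{C}(Y)$ induced by $e$ is, for the Hausdorff metrics (taking on $[0,1)$ the metric inherited from $Y$), an isometry onto $\mathcal{A}$. Hence $\mathcal{C}(Y)$ is a compactification of $\mathcal{C}([0,1))$ if and only if $\mathcal{A}$ is dense in $\mathcal{C}(Y)$, so the whole content is this density.

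First I would record the easy reductions of density. The tails $T_t=\overline{e([t,1))}$ satisfy $T_t=\lim_{s\to 1^-}e([t,s])$, so each $T_t\in\overline{\mathcal{A}}$, and letting $t\to 1$ gives $X=\lim_{t\to 1}T_t\in\overline{\mathcal{A}}$: the full remainder is always approximable. A direct, if slightly technical, splicing argument (combining a tail segment with short arcs near the end, after checking how $K\cap X$ decomposes) then shows that every subcontinuum of $Y$ meeting $X$ lies in $\overline{\mathcal{A}}$ as soon as every subcontinuum of $X$ does. Consequently $\mathcal{A}$ is dense in $\mathcal{C}(Y)$ if and only if $\Lambda=\mathcal{C}(X)$, where $\Lambda\subseteq\mathcal{C}(X)$ is the (closed) set of those $B\subseteq X$ arising as Hausdorff limits $B=\lim_n e([a_n,b_n])$ of ray subarcs; note that for a proper subcontinuum $B\subsetneq X$ a tail overshoots, so necessarily $a_n,b_n\to 1$ there. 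Call this condition $(\ast)$.

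The core is to identify $\Lambda$ with a weakly confluent image. Recall that $X\in\ClassW{}$ means precisely that every surjection $f\colon S\to X$ from a continuum is weakly confluent, that is $\mathcal{C}(f)(\mathcal{C}(S))=\mathcal{C}(X)$, where $\mathcal{C}(f)(A)=f(A)$. The bridge I would establish is a two-way correspondence between compactifications of $[0,1)$ with remainder $X$ and surjections onto $X$: from a given compactification one extracts a surjection $f\colon S\to X$ whose subcontinuum–images are all ray–subarc limits, i.e. $\mathcal{C}(f)(\mathcal{C}(S))\subseteq\Lambda$, while from a surjection $f$ one builds a compactification $Y_f$ with $\Lambda\subseteq\mathcal{C}(f)(\mathcal{C}(S))$. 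Granting this, both directions follow. If $X\in\ClassW{}$, then for any given $Y$ the extracted $f$ is weakly confluent, so $\mathcal{C}(X)=\mathcal{C}(f)(\mathcal{C}(S))\subseteq\Lambda$, whence $\Lambda=\mathcal{C}(X)$ and $(\ast)$ holds. Conversely, if $X\notin\ClassW{}$, pick a surjection $f\colon S\to X$ that is not weakly confluent, so some $B\in\mathcal{C}(X)\setminus\mathcal{C}(f)(\mathcal{C}(S))$; the associated $Y_f$ then has $\Lambda\subseteq\mathcal{C}(f)(\mathcal{C}(S))\not\ni B$, and since $\Lambda$ is closed, $B$ is not a ray–subarc limit, so $(\ast)$ fails and $\mathcal{C}(Y_f)$ is not a compactification of $\mathcal{C}([0,1))$.

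The main obstacle is precisely realizing this correspondence, and in particular the direction that builds a ray from a map. Given $f\colon S\to X$, one wants a ray accumulating on $X$ whose short–arc limits capture exactly the images $f(A)$, $A\in\mathcal{C}(S)$; the difficulty is that $S$ may contain no arcs at all (for instance if $S$ is hereditarily indecomposable), so the ray cannot be drawn inside $S$. I would resolve $S$ as an inverse limit of polyhedra $P_n$ with surjective bonding maps, choose dense rays in the $P_n$, transport them by maps $h_n\colon P_n\to X$ approximating $f$, and concatenate slightly displaced copies that converge to $X$, so that the remainder is $X$; one must then verify, using that $\mathcal{C}(f)(\mathcal{C}(S))$ is closed and that subcontinua of the $P_n$ correspond up to small error to $f$–images of subcontinua of $S$, that the resulting short–arc limits fill out $\mathcal{C}(f)(\mathcal{C}(S))$ while staying within $\Lambda$. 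The dual extraction of a surjection from a given compactification, together with the splicing reduction of the second paragraph, are comparatively routine. This construction and its verification are carried out in \cite{GT,Proctor} (cf. \cite[Ch.~VIII, Sec.~35]{Hyperspaces}), whose argument I would follow.
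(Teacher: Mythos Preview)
The paper does not prove this lemma at all: it is stated with a citation to \cite{GT,Proctor} and \cite[Ch.~VIII, Sec.~35]{Hyperspaces}, and no argument is given. Your proposal is therefore strictly more detailed than what the paper offers---you outline the reduction to the approximation property $(\ast)$, the bridge to weak confluence, and the inverse-limit construction of a ray from a map---before ultimately deferring to the same references for the technical verification. Since the paper treats the lemma as a known black box, there is no ``paper's own proof'' to compare against; your sketch is consistent with the cited literature and goes well beyond what the paper itself supplies.
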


 It immediately follows from the above that if a continuum is not in the \ClassW{} then there is an arc $[0,1)$ and its compactification without the approximation property (hence it is not a generalized sin(1/x)-type continuum). It is well known that $n$-stars with $n\geq 3$ and circles do not belong to \ClassW{};  to see this, consider as an example continua from Figures~\ref{fig:branching_point_no_sin} and \ref{fig:limiting_circle}. 
  Any monotone map from the definition of sin(1/x)-type continuum has to collapse red subcontinua to a point by Remark~\ref{rem:collapse}.
 As a consequence we see that both examples are not generalized sin (1/x)-type continua, while the continuum on Figure~\ref{fig:limiting_circle} is a quasi-graph.

\begin{figure}
\centering
\begin{minipage}{.5\textwidth}
             \centering
\includegraphics[scale=0.35]{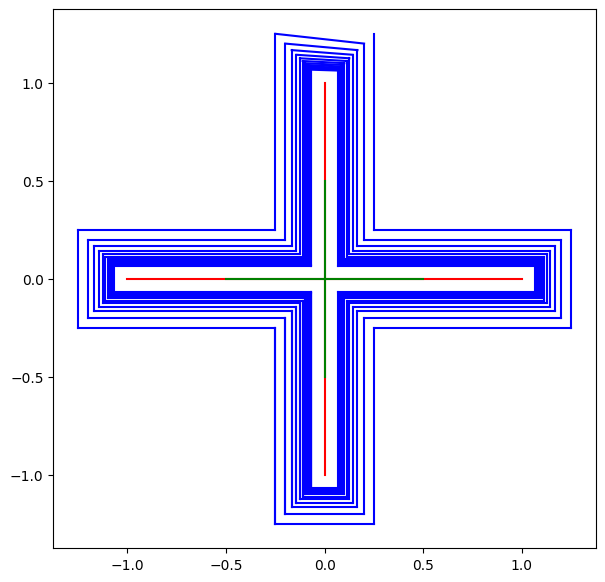}
            \caption{A quasi-arc with $4$-star as the limit set. If a map $\phi$ collapses the $4$-star to a point, then approximation property is violated, e.g. by subcontinuum marked in green.}

            \label{fig:branching_point_no_sin}
\end{minipage}%
\begin{minipage}{.5\textwidth}
   \centering
            \includegraphics[scale=0.25]{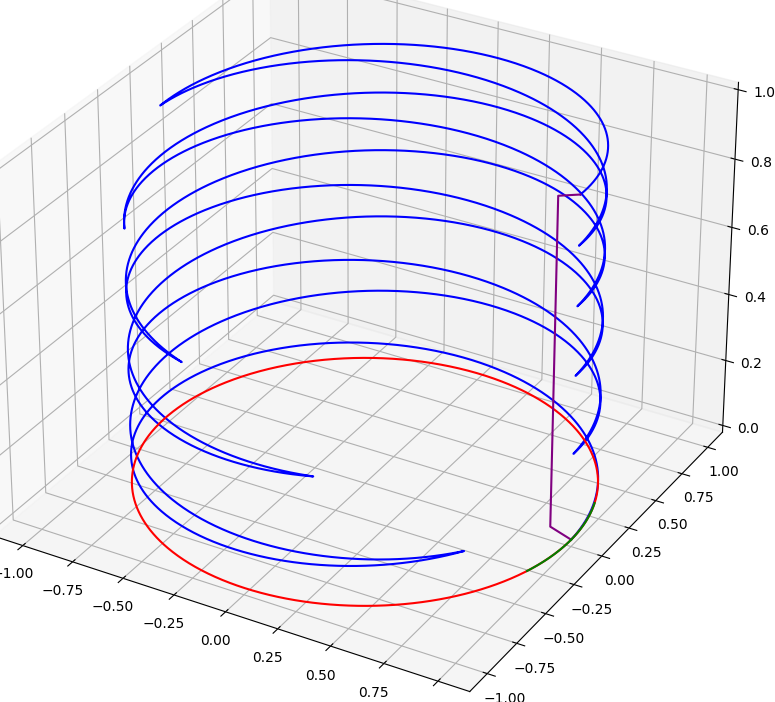}
            \caption{A quasi-graph whose limit set is circle, but is not a generalized sin(1/x)-type continuum.}            \label{fig:limiting_circle}
\end{minipage}
\end{figure}

\begin{lem}
        \label{limit_collapse}
		Suppose that $X$ is a quasi-graph, $Y$ is a topological graph, and $\phi \colon X \rightarrow Y$ is continuous and monotone. 
  Suppose that $\Lambda$ is a connected component of the set $\omega(X)$.
  Then $\phi(\Lambda) = \{y\}$ for some $y \in Y$. 
	\end{lem}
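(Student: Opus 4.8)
The plan is to reduce to a single oscillatory quasi-arc and then exploit monotonicity through the oscillation. First, observe that $\Lambda$ is a nowhere dense subcontinuum of $X$: it is a connected component, hence closed (so compact), of $\omega(X)$, and $\omega(X)=\bigcup_{i=1}^n\omega(L_i)$ is a finite union of nowhere dense subcontinua, hence nowhere dense. Because a connected component of a finite union of continua is the union of those members that are chained together by nonempty intersections, we may write $\Lambda=\bigcup_{i\in I}\omega(L_i)$ for some $I\subseteq\{1,\dots,n\}$ whose members form a single intersection-connected block. Consequently, it suffices to prove that $\phi(\omega(L))$ is a single point for every oscillatory quasi-arc $L$: if each $\phi(\omega(L_i))=\{y_i\}$, then whenever $\omega(L_i)\cap\omega(L_j)\neq\emptyset$ a common point forces $y_i=y_j$, so along the chaining block all the $y_i$ coincide and $\phi(\Lambda)$ is a single point.

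Fix an oscillatory quasi-arc $L=L_i$ with parametrization $\varphi$ and set $\gamma=\phi\circ\varphi\colon[0,\infty)\to Y$. By continuity and compactness, $\phi\big(\bigcap_m\overline{\varphi([m,\infty))}\big)=\bigcap_m\overline{\gamma([m,\infty))}$, i.e.\ $\phi(\omega(L))$ is exactly the limit set of the path $\gamma$ in the graph $Y$. Suppose, for contradiction, that this set is nondegenerate. As a subcontinuum of a graph it contains an arc, so I can fix a point $y^*$ lying both in the interior of an edge $e$ of $Y$ and in the interior of an arc of $\phi(\omega(L))$; let $U\subseteq\Inter e$ be a small arc-neighborhood of $y^*$ with $U\setminus\{y^*\}=U_1\sqcup U_2$. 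Since $\phi$ is monotone and $X,Y$ are compact, preimages of subcontinua are subcontinua; in particular $\phi^{-1}(\overline U)$ is a subcontinuum in which the connected fiber $F=\phi^{-1}(y^*)$ separates $\phi^{-1}(U_1)$ from $\phi^{-1}(U_2)$, because any connected subset of $\phi^{-1}(\overline U)\setminus F$ maps into the disconnected set $\overline U\setminus\{y^*\}$.

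Because $\phi(\omega(L))=\omega(\gamma)$ contains points on both sides of $y^*$, the path $\gamma$ re-enters $U_1$ and $U_2$ at arbitrarily large times. The core of the argument is to promote this into actual hits of an interior-edge point: moving from $U_1$ to $U_2$, the path either passes through $y^*$ or leaves $U$ through its boundary and re-enters on the far side, and such a bypass is possible only when $y^*$ sits on a cycle, in which case the bypass must fully traverse some other edge. Since $Y$ has only finitely many edges while $\gamma$ oscillates with amplitude at least $\diam(\omega(\gamma))>0$ infinitely often, a pigeonhole argument over the edges yields an interior-edge point $z^*$ with $\gamma(t_k)=z^*$ for a sequence $t_k\to\infty$. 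Then $\varphi(t_k)$ are distinct points of $L$ all lying in the single fiber $\phi^{-1}(z^*)$; as $t_k\to\infty$ they can accumulate only on $\omega(L)$, and each return occurs along an excursion of $L$ that enters a small preimage-neighborhood of $z^*$ and exits through its boundary. These excursions produce a sequence of pairwise disjoint arcs, each crossing the fiber, which detaches a piece of $\phi^{-1}(z^*)$ meeting $\omega(L)$ from the returns, contradicting the connectedness of $\phi^{-1}(z^*)$ furnished by monotonicity.

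The step I expect to be the main obstacle is exactly this last promotion. Guaranteeing that some interior-edge point is attained infinitely often requires excluding, by a careful finite-graph counting, the possibility that $\gamma$ forever bypasses every candidate point by winding around cycles. More delicate still is converting the infinitely many returns into a genuine disconnection of the fiber: since a quasi-arc need not be an embedding (it may self-accumulate), the returning arcs are not automatically isolated near $\omega(L)$. Here I would lean on the finiteness built into the quasi-graph decomposition, namely the finitely many quasi-arcs satisfying the nesting conditions \eqref{def_quasigraph:3}--\eqref{def_quasigraph:4} of Definition~\ref{def_quasigraph}, to keep the returns separated and force the fiber to split, completing the contradiction.
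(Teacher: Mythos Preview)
Your reduction to a single oscillatory quasi-arc via chaining the sets $\omega(L_i)$ is exactly what the paper does, so the overall architecture matches. The divergence is in how the single-quasi-arc case is handled, and that is where your argument is both more complicated than necessary and, as you yourself flag, incomplete.

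The paper does not try to locate a point of $Y$ that $\gamma=\phi\circ\varphi$ hits infinitely often, and so never needs your bypass/cycle analysis. Instead it fixes two points $a,b\in\omega(L_k)$ with $\phi(a)\neq\phi(b)$, picks sequences $a_n\to a$, $b_n\to b$ along $L_k$, and takes pairwise disjoint arcs $\varphi([s_n,t_n])\subset L_k$ with endpoints $a_n,b_n$. The images $J_n=\phi(\varphi([s_n,t_n]))$ then all have diameter at least $d(\phi(a),\phi(b))/2>0$ for large $n$, and a finite topological graph cannot contain infinitely many such subcontinua pairwise disjoint; hence some $x\in Y$ lies in $J_n\cap J_m$ for (arbitrarily large) $n\neq m$, and one may take $x$ avoiding the countable set $\{\phi(a_n),\phi(b_n)\}$. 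This single pigeonhole step replaces your entire recurrence discussion and works regardless of whether $y^*$ sits on a cycle.

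For the disconnection of the fiber, the paper's setup already hands you the separating points: say $b_n,b_m\notin\phi^{-1}(x)$ with $n<m$. The fiber meets both $\varphi([s_n,t_n])$ and $\varphi([s_m,t_m])$; if it is connected it must bridge them inside $X$ without passing through $b_n=\varphi(t_n)$. Because $L_k$ meets the rest of $X$ only at $\varphi(0)$ and through $\omega(L_k)$ (use conditions \eqref{def_quasigraph:2}--\eqref{def_quasigraph:4}, and pass to the largest index $k$ with $a,b\in\omega(L_k)$ so that $\mathring L_k$ is open in $X$), such a bridge forces $\phi^{-1}(x)$ to contain either an initial segment $\varphi([0,u])$ or a full tail $\varphi([v,\infty))$ of $L_k$. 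The first makes $\phi(a_j)=\phi(b_j)=x$ for some earlier $j$, contradicting $\phi(a_j)\to\phi(a)\neq\phi(b)\leftarrow\phi(b_j)$; the second forces $\phi(\omega(L_k))=\{x\}$, contradicting the standing assumption. Your worry about ``self-accumulation'' of $L$ is slightly misplaced: by \eqref{def_quasigraph:3} a quasi-arc in the decomposition can only self-accumulate at its endpoint; the genuine obstruction is other $L_j$ accumulating on $L_k$, which is exactly what choosing the outermost $k$ eliminates. With these observations the argument closes in two lines, and you do not need the infinitely-many-returns machinery you were reaching for.
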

    \begin{proof}   
		Suppose on the contrary that there is a quasi-graph $X$, a graph $Y$ and a  continuous monotone  map between them $\phi \colon X \rightarrow Y$, and assume that there is a connected component $\Lambda$ of 
   $\omega(X)=\bigcup_{j=1}^{n} \omega(L_i)$
  with $a,b \in \Lambda$, $\phi(a) \neq \phi(b)$.  
  First, assume that $a$ and $b$ are elements of the limit set of the same quasi-arc $L_k$ and fix its parametrization $\varphi \colon [0,+\infty) \rightarrow L_k$.  
    Let $ \{a_n \}_{n=1}^\infty, \{b_n\}_{n=1}^\infty \subset L_k$ be the sequences that converge to $a$ and $b$, respectively. There is a sequence $s_1<t_1<s_2<t_2< \ldots < s_n<t_n< \ldots$ such that $\{\varphi([s_n,t_n])\}_{n=1}^\infty$ are disjoint arcs, while $\varphi(s_n)=a_n$ and $\varphi(t_n)=b_n$. Then also $J_n = \phi(\varphi([s_n,t_n]))$ is an arc and since $\phi(a) \neq \phi(b)$, there is $\delta>0$ such that
    $$\diam(J_n) \geq d(\phi(a_n),\phi(b_n)) \geq d(\phi(a),\phi(b))/2 > \delta$$
    for all sufficiently large $n$.
     Arcs $J_n$ cannot be pairwise disjoint for different $n$, and so there is $x \in \Inter J_n \cap  \Inter J_m$ for some arbitrarily large $m\neq n$. This means that $\phi^{-1}(x) \cap \varphi([s_n,t_n]) \neq \emptyset$ and $\phi^{-1}(x) \cap \varphi([s_m,t_m]) \neq \emptyset$ while $a_n,a_m \not\in\phi^{-1}(x)$ or $b_n,b_m \not\in\phi^{-1}(x)$. This immediately implies that $\phi^{-1}(x)$ is not connected.  This is a contradiction.

 We obtained that there is no quasi-arc such that both $a$ and $b$ belong to its limit set. Let 
  $\Lambda = \omega(L_a) \cup \omega(L_b) \cup \omega(L_{a_1}) \cup ... \cup \omega(L_{a_m})$, where $\omega(L_a),\omega(L_b), \omega(L_{a_i})$
are limit sets of distinct quasi-arcs,
 with $a \in \omega(L_a)$ and $b \in \omega (L_b)$. We already proved that $\phi(\omega(L_a))=\{y\}$ for some $y\in Y$. Suppose now that $\omega(L_a) \cap \omega(L_\xi) \neq \emptyset$ for some $\xi \in \{ a_1, a_2, ...,a_m, b\}$. This means that for some $x_k \in \omega(L_\xi)$ we have $\phi(x_k)=y$, which means $\phi(\omega(L_\xi))= \{y\}$. Then either $\xi=b$, in which case we have a contradiction, or we repeat the above argument until we reach $\omega(L_b)$ (showing $\phi(\omega(L_a))= \{y\}=\phi(\omega(L_b))$), which must eventually occur as $\Lambda$ is connected.
    \end{proof}

The following is a standard but useful tool for the construction of factors (see
Theorem~6 from $\S 19$ in \cite{MR217751}). We will use it for projecting quasi-graphs onto topological graphs.

    \begin{thm}
        Let $D$ be a decomposition of a Hausdorff space $X$ and let $\rho$ be the corresponding equivalence relation. If  $D$ is  lower semi-continuous and $\rho$ is closed, then $X/ _{\rho}$ equipped with the quotient topology is a Hausdorff space.
    \end{thm}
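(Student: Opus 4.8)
The plan is to separate the roles of the two hypotheses: lower semi-continuity of $D$ will be used solely to guarantee that the canonical projection $\pi \colon X \to X/_{\rho}$ is an \emph{open} map, while the closedness of $\rho$ will be used solely to separate points of the quotient. Throughout, recall that $X/_{\rho}$ carries the quotient topology, so a set $W \subseteq X/_{\rho}$ is open precisely when $\pi^{-1}(W)$ is open in $X$, and that the members of $D$ are exactly the fibers of $\pi$.

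First I would translate lower semi-continuity into openness of $\pi$. Given an open set $U \subseteq X$, its saturation is $\pi^{-1}(\pi(U)) = \bigcup\{A \in D : A \cap U \neq \emptyset\}$. By the definition of lower semi-continuity, this saturation is open whenever $U$ is open; hence $\pi^{-1}(\pi(U))$ is open in $X$, and by the defining property of the quotient topology $\pi(U)$ is open in $X/_{\rho}$. Thus $\pi$ is an open map.

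Next I would use the closedness of $\rho$ to separate two distinct points $p \neq q$ of $X/_{\rho}$. Choose representatives $a \in \pi^{-1}(p)$ and $b \in \pi^{-1}(q)$; since $p \neq q$, the points $a$ and $b$ lie in distinct members of $D$, i.e.\ $(a,b) \notin \rho$. As $\rho$ is closed in $X \times X$, its complement is open, so by the definition of the product topology there exist open sets $U, V \subseteq X$ with $a \in U$, $b \in V$ and $(U \times V) \cap \rho = \emptyset$. This last condition means that no point of $U$ is $\rho$-equivalent to any point of $V$, that is, $\pi(x) \neq \pi(y)$ for all $x \in U$ and $y \in V$.

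Finally I would combine the two steps. By the first step, $\pi(U)$ and $\pi(V)$ are open subsets of $X/_{\rho}$ containing $p$ and $q$ respectively, and they are disjoint: a common point $r \in \pi(U) \cap \pi(V)$ would yield $x \in U$ and $y \in V$ with $\pi(x) = r = \pi(y)$, contradicting $(U \times V) \cap \rho = \emptyset$. Hence $p$ and $q$ admit disjoint open neighborhoods, and $X/_{\rho}$ is Hausdorff. I expect the only genuinely delicate point to be the first step, namely ensuring that the formulation of lower semi-continuity in use is exactly the assertion that saturations of open sets are open (equivalently, that $\pi$ is open), since some sources phrase lower semi-continuity in a pointwise manner; once this identification is granted, the separation argument is the routine ``open projection together with closed relation yields a Hausdorff quotient,'' and it is purely topological, using neither a metric nor compactness.
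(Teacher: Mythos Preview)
The paper does not supply its own proof of this statement; it quotes the theorem as a known tool and cites it to Kuratowski (Theorem~6 of \S 19 in \cite{MR217751}). Your argument is correct and is the standard proof: lower semi-continuity of the decomposition is exactly the assertion that the quotient map is open, and then the usual ``open surjection with closed graph-relation implies Hausdorff quotient'' argument applies. Your caveat about the possible pointwise phrasing of lower semi-continuity is well placed, but in Kuratowski's formulation the two versions coincide, so nothing is missing.
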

 With the above tool at hand, we easily obtain the following result. Details of its (standard) proof are left to the reader.
        \begin{lem}
        \label{lem:graph_for_quasi}
             Let $X$ be a quasi-graph. Define an equivalence relation $\sim$ in $X$ by $a \sim b$ if $a=b$ or there exists a connected component $\Lambda$ of $\omega(X)$
            such that $a,b \in \Lambda$. Then $X/_\sim$ is a topological graph.
     
        \end{lem}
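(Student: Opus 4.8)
The plan is to realize $Z := X/_\sim$ as the quotient of $X$ by the monotone decomposition $\mathcal{D}$ whose nondegenerate members are exactly the connected components of $\omega(X)$, and then to identify $Z$ with a finite union of arcs meeting in finitely many points. First I would record that $\omega(X) = \bigcup_{i=1}^{n} \omega(L_i)$ is a finite union of nondegenerate nowhere dense subcontinua, so it has only finitely many connected components $\Lambda_1, \dots, \Lambda_k$, each itself a subcontinuum, and these are pairwise disjoint compact sets. Hence the decomposition $\mathcal{D} = \{\Lambda_1, \dots, \Lambda_k\} \cup \{\{x\} : x \in X \setminus \omega(X)\}$ has only finitely many nondegenerate elements; this makes $\mathcal{D}$ upper semicontinuous and its associated relation closed, so the preceding decomposition theorem applies and $Z$ is a compact metrizable continuum with quotient map $q \colon X \to Z$ monotone, each fiber $\Lambda_i$ being connected.

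Next I would decompose $Z$ explicitly. Writing $p_i = q(\Lambda_i)$, every $L_j$ contained in $\omega(X)$ is collapsed into one of the points $p_i$, so $Z = q(G) \cup \bigcup_j q(\overline{L_j})$, the union taken over the finitely many quasi-arcs $L_j$ not contained in $\omega(X)$. For such an $L_j$, conditions \eqref{def_quasigraph:2}--\eqref{def_quasigraph:4} of Definition~\ref{def_quasigraph} force $L_j \cap \omega(X) \subseteq \{a_j\}$, so $q$ is injective on $L_j \setminus \{a_j\}$; extending $q \circ \varphi_j$ continuously to $[0,\infty]$ by sending $\infty$ to the point $p_i$ with $\omega(L_j) \subseteq \Lambda_i$ yields a continuous bijection of $[0,\infty]$ onto $q(\overline{L_j})$, injective save possibly for the gluing $\bar\varphi_j(0) = \bar\varphi_j(\infty)$. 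Thus each $q(\overline{L_j})$ is an arc or a circle. Consequently $Z$ is a finite graph as soon as $q(G)$ is one, and the latter holds precisely when each $\Lambda_i \cap G$ is a finite subgraph of $G$ (a finite union of points and subarcs): collapsing finitely many finite subgraphs of a finite graph to points again gives a finite graph, and attaching finitely many arcs and circles at finitely many points preserves this.

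The main obstacle is therefore the structural claim that each $\Lambda_i \cap G$ is a finite subgraph, equivalently that each collapse point $p_i$ has finite order in $Z$. The danger to exclude is that $\omega(L_i)$ meets some edge of $G$ in, say, a Cantor set, which after collapsing would produce a Hawaiian-earring-type point of infinite order, making $Z$ a Peano continuum that is not a finite graph. The reason this cannot happen is the finiteness built into a quasi-graph: by Definition~\ref{def_quasigraph}\eqref{def_quasigraph:1} all branch points of $X$ lie in the finite graph $G$, so $\Br(X)$ is finite. If $\Lambda_i \cap e$ were infinite for some edge $e$, then connectedness of the continuum $\omega(L_i)$ would force infinitely many of its pieces inside $e$ to be joined by subcontinua of $\Lambda_i$ lying off $e$; the resulting junction points would be branch points of $X$ accumulating along $e$, contradicting finiteness of $\Br(X)$. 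Hence $\Lambda_i \cap e$ is a finite union of points and subarcs for each of the finitely many edges $e$, which gives the claim. I expect making this junction-point argument precise, together with checking the few degenerate cases in the extension $\bar\varphi_j$ (self-accumulation at $a_j$ and the circle case), to be the part requiring the most care, consistent with the paper's remark that the proof is standard.
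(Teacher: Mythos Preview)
The paper leaves this proof to the reader, calling it ``standard'', so your detailed outline already goes well beyond what the authors provide. Your overall strategy---verify that the decomposition $\mathcal{D}$ is upper semicontinuous with closed associated relation, then exhibit $Z=X/_\sim$ as $q(G)$ together with finitely many arcs or circles $q(\overline{L_j})$---is exactly the natural route and matches the spirit of the tool the paper quotes just before the lemma.

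There is, however, a genuine gap in your argument for the key structural claim that each $\Lambda_i\cap G$ is a finite subgraph. You assert that if $\Lambda_i\cap e$ had infinitely many components then the subcontinua of $\Lambda_i$ joining them off $e$ would meet $e$ at branch points of $X$. This is not forced: $\Lambda_i$ contains entire quasi-arcs $L_k$ (those with $L_k\subset\Lambda_i$, by condition~\eqref{def_quasigraph:4}), and such an $L_k$ can oscillate and accumulate on many points of $e$ without any of those accumulation points being branch points of $X$---locally at such a point one sees only the edge $e$ and the oscillating arc $L_k$, giving valence~$2$, not~$\geq 3$. So ``infinitely many joining subcontinua'' does not by itself yield infinitely many branch points.

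A cleaner way to close the gap is by induction on the index $j$ in the decomposition $X=G\cup L_1\cup\dots\cup L_n$. For $j=1$ one has $\omega(L_1)\subset G$, so $\omega(L_1)$ is a sub\emph{continuum} of the graph $G$ and hence a subgraph; its intersection with each edge is then automatically a finite union of subarcs and points. For the inductive step, $X_{j-1}=G\cup L_1\cup\dots\cup L_{j-1}$ is a quasi-graph and by induction $X_{j-1}/_{\sim_{j-1}}$ is a graph; the image of the continuum $\omega(L_j)\subset X_{j-1}$ there is again a subgraph, and pulling this back controls $\omega(L_j)\cap G$. Alternatively, invoke the original definition of quasi-graph from \cite{MR3557770} to conclude that each $\omega(L_j)$ has at most $N$ arcwise-connected components, and analyse those directly. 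Either route replaces the branch-point argument and makes the ``main obstacle'' paragraph rigorous.
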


    The continuum depicted in Figure~\ref{fig:limiting_circle}  shows that the approximation property of generalized sin(1/x)-type continua in Definition~\ref{def_sin1/x}\eqref{sin-iii} sometimes fails for quasi-graphs. Close investigation of these examples shows, however, that the set of singleton fibers is always dense in the quasi-graph,  so the condition in Definition~\ref{def_sin1/x}\eqref{sin-ii} is satisfied. This motivates the following definition.
\begin{defn}\label{def:3.6}
		\label{def:tranched_graph}
        A continuum  $X$ is said to be a  \textit{tranched graph}  if there is a topological graph $Y$ and a continuous monotone map $\phi \colon X \rightarrow Y$  such that $\phi^{-1} (D)$ is dense in $X$, where $D = \{y \in Y$ such that $\phi^{-1}(y)$ is degenerate$\}$.
\end{defn}

Next we will show that in a tranched graph, set of singleton fibers is not only dense, but residual, meaning tranched graphs are similar to graphs on a topologically large set.
\begin{thm}
\label{thm:set_of_tranches_is_meager}
 Let $X$ be a tranched graph and let $\phi \colon X \ra Y$ be an associated mapping. 
The set
$\phi^{-1} (D)$ is residual in $X$, where $D = \{y \in Y$ such that $\phi^{-1}(y)$ is degenerate$\}$.
\end{thm}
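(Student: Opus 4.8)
The plan is to stratify the union of all tranches by diameter and show that each stratum is a closed, nowhere dense set, so that the whole union is meager. Recall that $X\setminus\phi^{-1}(D)=\phi^{-1}(Y\setminus D)$ is precisely the union of the tranches of $X$, since a point lies there exactly when its fiber is nondegenerate. For $n\in\N$ I set
$$A_n=\{x\in X:\diam\phi^{-1}(\phi(x))\geq 1/n\}.$$
A point $x$ has a nondegenerate fiber if and only if $\diam\phi^{-1}(\phi(x))>0$, i.e. if and only if $x\in A_n$ for some $n$, so $\bigcup_{n\in\N}A_n=X\setminus\phi^{-1}(D)$. It therefore suffices to prove that each $A_n$ is nowhere dense.

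First I would check that $A_n$ is closed, which is the only place compactness enters. Take $x_k\to x$ with $x_k\in A_n$, choose $x_k'\in\phi^{-1}(\phi(x_k))$ with $d(x_k,x_k')\geq 1/n$, and pass to a convergent subsequence $x_k'\to x'$. Continuity of $\phi$ gives $\phi(x')=\phi(x)$, while $d(x,x')\geq 1/n$, so $\diam\phi^{-1}(\phi(x))\geq 1/n$ and $x\in A_n$. Equivalently, the map $y\mapsto\diam\phi^{-1}(y)$ is upper semicontinuous because the fiber decomposition of a continuous map on a compact space is upper semicontinuous, so its superlevel sets pull back under $\phi$ to closed subsets of $X$.

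It remains to see that $A_n$ has empty interior, and here I use density of the singleton fibers directly. If some nonempty open $U\subseteq X$ were contained in $A_n$, then, as $\phi^{-1}(D)$ is dense in $X$, the set $U$ would contain a point $x$ with $\phi(x)\in D$, that is, with degenerate fiber; this contradicts $x\in A_n$, which forces $\diam\phi^{-1}(\phi(x))\geq 1/n>0$. Hence each $A_n$ is closed with empty interior, the union $\bigcup_{n\in\N}A_n$ is meager, and its complement $\phi^{-1}(D)$ is residual.

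I do not anticipate a serious obstacle: the entire content is the observation that the tranches of diameter bounded below from zero form a closed set, which is a soft consequence of compactness, after which the density of $\phi^{-1}(D)$ guaranteed by Definition~\ref{def:tranched_graph} does the rest via a routine Baire-category argument. The only point demanding care is the subsequence/closedness argument for $A_n$; everything else is bookkeeping.
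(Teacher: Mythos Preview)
Your proof is correct and follows essentially the same approach as the paper. You show that each $A_n=\{x:\diam\phi^{-1}(\phi(x))\geq 1/n\}$ is closed and nowhere dense, while the paper shows the complementary sets $\mathcal{D}_\eps=\{x:\diam\phi^{-1}(\phi(x))<\eps\}$ are open and dense; both arguments rest on the same upper semicontinuity of the fiber diameter (your subsequence argument) together with the density of $\phi^{-1}(D)$.
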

\begin{proof}
    Let $X$ be a tranched graph and let $\phi \colon X \ra Y$ be an associated mapping. 
    Let $x\in X$ be a point in a singleton fiber, i.e. $\phi^{-1}(\phi(x))=\{x\}$ and fix any $\eps>0$. Observe that there is $\delta>0$ such that if $z\in B(x,\delta)$ then $\diam \phi^{-1}(\phi(z))<\eps$. Otherwise, there are points $w_n,z_n$ such that $\phi(z_n)=\phi(w_n) \to \phi(x)$ with $d(z_n,w_n)\geq \eps/2$ and as a consequence there are $w\neq z$ with $\phi(x)=\phi(w)=\phi(z)$, which is a contradiction. This shows that the set $$ \mathcal{D}_\epsilon = \{x\in X : \diam \phi^{-1}(\phi(x))<\eps \}$$
    is open and dense.

If we denote by $\mathcal{D}_0$ the set of points belonging to degenerate fibers (i.e. $\mathcal{D}_0=\phi^{-1} (D)$), then obviously:
    $$\mathcal{D}_0= \bigcap_{\epsilon \in \Q_+} \mathcal{D}_\epsilon.$$

Indeed, the set $\mathcal{D}_0$ is residual, which completes the proof.
\end{proof}
Now we can prove that properties of a tranched graph don't depend on the choice of map onto a topological graph, as we can go from one to the other by a homeomorphism.
\begin{thm}
\label{thm:two_maps_are_homeo}
Assume that $X$ is a tranched graph, let $Y_1,Y_2$ be two topological graphs and let $\phi_i\colon X
\to Y_i$, $i=1,2$ be two (possibly different) continuous monotone maps from the definition of tranched graph. Then there is a homeomorphism $\psi\colon Y_1\to Y_2$ such that $\psi\circ \phi_1=\phi_2$.
\end{thm}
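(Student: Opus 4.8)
The plan is to show that $\phi_1$ and $\phi_2$ induce one and the same decomposition of $X$ into fibers, and then to let $\psi$ be the tautological identification of the two resulting quotients. The point is that the fibers admit an intrinsic description, independent of the chosen map.

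First I would characterize the fibers of each $\phi_i$ without reference to the particular map. Each fiber $\phi_i^{-1}(y)$ is a subcontinuum of $X$: it is connected because $\phi_i$ is monotone and closed because $\phi_i$ is continuous. Since $\phi_i$ comes from the definition of a tranched graph, the set of points in degenerate fibers is dense, so by Lemma~\ref{lem:if_dense_then_fibers_ndense} every fiber is nowhere dense, and by Lemma~\ref{lem:nowheredense} no fiber is a proper subset of a nowhere dense subcontinuum. Hence every fiber of $\phi_i$, degenerate or not, is a \emph{maximal} nowhere dense subcontinuum of $X$. Conversely, Remark~\ref{rem:collapse} gives that every maximal nowhere dense subcontinuum $\Lambda$ is a fiber of $\phi_i$; concretely, for $x\in\Lambda$ the union $\Lambda\cup\phi_i^{-1}(\phi_i(x))$ is again a nowhere dense subcontinuum, so maximality of $\Lambda$ together with Lemma~\ref{lem:nowheredense} forces $\phi_i^{-1}(\phi_i(x))=\Lambda$. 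Therefore the fibers of $\phi_1$ and the fibers of $\phi_2$ both coincide with the collection of maximal nowhere dense subcontinua of $X$; in particular the two maps determine the same partition of $X$.

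Next I would build $\psi$ from this common partition. Since $\phi_1,\phi_2$ are surjective and their fibers agree as sets, for each $y_1\in Y_1$ there is a unique $y_2\in Y_2$ with $\phi_1^{-1}(y_1)=\phi_2^{-1}(y_2)$, and I set $\psi(y_1)=y_2$. This $\psi$ is a well-defined bijection, with inverse obtained by exchanging the roles of $\phi_1$ and $\phi_2$, and by construction $\psi\circ\phi_1=\phi_2$.

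It remains to verify that $\psi$ is a homeomorphism, which I expect to be routine rather than the crux. Because $X$ is compact and $Y_1$ is Hausdorff, the continuous surjection $\phi_1$ is a closed map and hence a quotient map; thus $\psi$ is continuous precisely because $\psi\circ\phi_1=\phi_2$ is continuous. A continuous bijection from the compact space $Y_1$ onto the Hausdorff space $Y_2$ is automatically a homeomorphism, which finishes the argument. The genuine content lies entirely in the first step, namely the intrinsic identification of the fibers with the maximal nowhere dense subcontinua of $X$; once that is in place, the construction of $\psi$ and the verification of its continuity are purely formal.
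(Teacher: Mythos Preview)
Your proof is correct and takes a genuinely different route from the paper's. The paper first invokes Theorem~\ref{thm:set_of_tranches_is_meager} to obtain a residual set $D=D_1\cap D_2$ of common singleton fibers, then defines $\psi$ via the closure in $Y_1\times Y_2$ of the graph of $\phi_2\circ(\phi_1|_D)^{-1}$, and checks by a sequence argument that this relation is single-valued and continuous. Your approach bypasses the residuality result entirely: using Lemma~\ref{lem:nowheredense} you identify the fibers of either $\phi_i$ intrinsically as exactly the maximal nowhere dense subcontinua of $X$, whence the two decompositions coincide and $\psi$ is forced by the quotient property. Your argument is shorter and more conceptual, and it makes explicit what the paper's proof establishes implicitly (namely that $\phi_1^{-1}(a)=\phi_2^{-1}(b)$ whenever these sets meet). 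The paper's approach, on the other hand, highlights the role of the residual set of singleton fibers, which is of independent interest elsewhere in the paper.
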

\begin{proof}
By Theorem~\ref{thm:set_of_tranches_is_meager}, sets $D_1$ and $D_2$ of elements of degenerate fibers of  $\phi_1$ and $\phi_2$ respectively are both residual, hence their intersection is a residual set $D = D_1\cap D_2 \subset X$, in particular $\phi_1^{-1}(\phi_1(x))=\phi_2^{-1}(\phi_2(x))=\{x\}$ for every $x\in D$.

Let $R$ be the closure of the relation $\{(\phi_1(x),\phi_2(x)): x\in D\}$ in $Y_1\times Y_2$. It is clear that $Y_1,Y_2$ are projections of $R$ onto respective coordinates. We claim that $R$ is one-to-one. To see this, fix any $(a,b)\in R$ and let $A=\phi_1^{-1}(a)$, $B=\phi_2^{-1}(b)$. There is a sequence $\{x_n\}_{n=1}^\infty\subset D$  with $x=\lim_{n \to \infty}x_n $  such that $(\phi_1(x_n),\phi_2(x_n))\to (a,b)$, hence $x \in A\cap B\neq \emptyset$. Then $b\in \phi_2(A)$ and $\phi_2(A)$ must be degenerate as otherwise $D\cap A\neq \emptyset$ and $A$ are nondegenerate which is impossible. This shows that $A\subset B$ and by symmetric argument $B\subset A$. Indeed $R$ is one-to-one, and hence induces a bijection $\psi\colon Y_1\to Y_2$.

To see that $\psi$ is continuous, fix any sequence $\{ \hat{x}_n\}_{n=1}^\infty$ such that $\hat{x}_n\to \hat{x}$ in $Y_1$. By previous argument, $\phi_2(\phi_1^{-1}(\hat{x}_n))$ is a single point $\hat{y}_n=\psi(\hat{x}_n)$. Let $\hat{y}$ be any limit point of the sequence $\{\hat{y}_n\}_{n=1}^\infty$. For every $n$ there is a point $x_n\in D$ such that 
$$
\dist(x_n,\phi_1^{-1}(\hat{x}_n))=\dist(x_n,\phi_2^{-1}(\hat{y}_n))<1/n
$$
and $x_n\to x\in \phi_2(\hat{y})$. But by continuity and uniqueness of the limit we have that $\psi_1(x_n)\to \hat{x}$.
Since $(\phi_1(x_n),\phi_2(x_n))\in R$, we have $(\hat{x},\hat{y})\in R$ completing the proof.
\end{proof}
As such, we get the following:
\begin{rem}
    By Theorem~\ref{thm:two_maps_are_homeo} decomposition of a tranched graph into fibers, up to homeomorphism, does not depend on the choice of topological graph $Y$ and map $\phi \colon X \to Y$. Therefore we can speak about fibers and tranches of $X$ without mentioning $\phi$, since it does not lead to ambiguity.
\end{rem}

Recall that our first goal is to characterize what generalized sin(1/x)-type continua are quasi-graphs. The examples presented indicate that we should put some restrictions on the topological structure of the tranches. 
Definition~\ref{def:tranched_graph} for example, allows a square to be a tranche, which we have to eliminate as it cannot be a tranche for any quasi-graph. In Theorem~\ref{thm:sin1x_with_any_quasig_as_tranche} we will show that the definition of a generalized sin(1/x)-type continua does not eliminate this as well. To deal with this problem, we introduce the following hereditary property that imposes some restrictions on tranched graphs.

\begin{defn}\label{def:3.7}

We say that a continuum $X$ is a \textit{tranched graph with hereditary fibers} if $X$ is a tranched graph with an associated continuous monotone map $\phi \colon X \rightarrow Y$ onto a topological graph $Y$, then every fiber $\phi^{-1}(y)$, $y\in Y$ is a singleton or a tranched graph.
    A continuum $X$ is \textit{hereditary tranched graph} if any subcontinuum $A \subset X$ is either a singleton or a tranched graph with hereditary fibers.
\end{defn}

It is obvious that all generalized sin(1/x)-type continua are tranched graphs, and the next lemma shows that quasi-graphs also fall into this class.
The definition of tranched graph is general enough to cover all quasi-graphs and all generalized sin(1/x)-type continua.
Our further goal is to impose additional conditions on tranched graphs to characterize the intersection of the classes of quasi-graphs and generalized sin(1/x)-type continua.

\begin{lem}
\label{lem:quasi-graphs_are_tranched}
 Let $X$ be a quasi-graph.
Define the equivalence relation $\sim$ on $X$ by $a \sim b$ if $a=b$ or there exists a connected component $\Lambda$ of $\omega(X)$
such that $a,b \in \Lambda$.
Then $X/_\sim$ is a topological graph and $X$ is a hereditary tranched graph.  Furthermore, the quotient map $\phi \colon X \ra X /_\sim$ satisfies Definition \ref{def:tranched_graph} for $X$.
\end{lem}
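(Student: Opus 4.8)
The plan is to assemble this lemma from the machinery already established, since two of its three assertions are essentially restatements of earlier results. First I would observe that the statement "$X/_\sim$ is a topological graph" is exactly the content of Lemma~\ref{lem:graph_for_quasi}, so that part requires only a citation. The substantive work is to verify that the quotient map $\phi\colon X\to X/_\sim$ satisfies Definition~\ref{def:tranched_graph} (density of singleton fibers), and then to bootstrap from this that $X$ is a \emph{hereditary} tranched graph. I would carry these out in that order, because the hereditary statement reuses the tranched-graph verification applied to subcontinua.

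\textbf{Step 1: $\phi$ witnesses that $X$ is a tranched graph.} The map $\phi$ is the quotient map for the decomposition whose nondegenerate classes are precisely the connected components $\Lambda$ of $\omega(X)$. I must check continuity, monotonicity, and density of the degenerate fibers. Monotonicity is immediate: each fiber is either a singleton or one of the components $\Lambda$, and each component of $\omega(X)$ is connected by construction. Continuity of $\phi$ follows from the quotient construction together with the semicontinuity hypotheses of Theorem (the $\S19$ factorization theorem) already invoked for Lemma~\ref{lem:graph_for_quasi}. For the density of singleton fibers, the key point is that $\omega(X)=\bigcup_{i=1}^n\omega(L_i)$ is a finite union of \emph{nowhere dense} subcontinua (as recorded in Definition~\ref{def_quasigraph}), hence $\omega(X)$ itself is nowhere dense in $X$; its complement, which consists entirely of points lying in singleton fibers, is therefore open and dense. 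This gives condition of Definition~\ref{def:tranched_graph} directly.

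\textbf{Step 2: hereditary structure.} To show $X$ is a hereditary tranched graph I must verify two things from Definition~\ref{def:3.7}: that every fiber of $\phi$ is a singleton or a tranched graph (so that $\phi$ witnesses a tranched graph \emph{with hereditary fibers}), and that every subcontinuum $A\subset X$ is a singleton or such a hereditary-fiber tranched graph. The nondegenerate fibers are the components $\Lambda$ of $\omega(X)$, and each such $\Lambda$ is a finite union of limit sets $\omega(L_i)$; by the inductive $k$-order structure of oscillatory quasi-arcs, $\Lambda$ is itself a quasi-graph of strictly lower order, so one recovers the tranched-graph property for $\Lambda$ by the same argument applied inside $\Lambda$, the recursion terminating because orders decrease. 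For a general subcontinuum $A\subset X$ the plan is to argue that $A$ inherits a quasi-graph decomposition: intersecting $A$ with the topological graph $G$ and with each quasi-arc $L_j$ of the decomposition of $X$ expresses $A$ again as a graph together with finitely many oscillatory quasi-arcs satisfying the nesting conditions~\eqref{def_quasigraph:3}--\eqref{def_quasigraph:4}, so $A$ is again a quasi-graph and Steps~1 applies to it.

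\textbf{The main obstacle} I anticipate is Step~2's claim that an arbitrary subcontinuum $A$ of a quasi-graph is again a quasi-graph. The decomposition conditions of Definition~\ref{def_quasigraph} are delicate: when we cut a quasi-arc $L_j$ by $A$, the intersection $A\cap L_j$ need not be a single quasi-arc but may split into several arcs and quasi-arcs, and we must confirm that no new branching points are created and that the limit-set containments are preserved. Controlling this combinatorics — and in particular checking that only finitely many oscillatory pieces arise and that their limit sets still land inside the previously-constructed part — is where the real care is needed; the continuity, monotonicity, and density verifications of Step~1 are by comparison routine.
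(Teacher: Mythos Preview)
Your Step~1 is correct and aligns with the paper's argument. The gap is in Step~2, and it is more serious than the combinatorial obstacle you flag at the end.

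You assert that each connected component $\Lambda$ of $\omega(X)$ ``is itself a quasi-graph of strictly lower order,'' and more generally that an arbitrary subcontinuum $A\subset X$ is again a quasi-graph. Both claims fail for the same reason: quasi-graphs are \emph{arcwise connected} by definition, but neither $\Lambda$ nor a general subcontinuum $A$ need be. For a concrete example, take $X=G\cup L_1\cup L_2$ where $\omega(L_1)\subset G$ is an arc and $\omega(L_2)=\overline{L_1}=L_1\cup\omega(L_1)$. Then $\omega(X)=\omega(L_2)$ is homeomorphic to the topologist's sine curve, which is connected but not arcwise connected; it is the unique component $\Lambda$, yet it is not a quasi-graph. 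So the recursion you propose cannot even get started, and the plan of intersecting $A$ with the pieces of the decomposition will not repair this.

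The paper circumvents this in two ways. First, it invokes the \emph{original} definition of quasi-graph (the uniform bound on components of $\overline{Y}\setminus Y$), which is manifestly inherited by arcwise connected subcontinua without any combinatorial bookkeeping. Second, for a tranche $T$ that is not arcwise connected, it does not claim $T$ is a quasi-graph; instead it enlarges $T$ to the arcwise connected set $S=G\cup L_{\alpha_1}\cup\cdots\cup L_{\alpha_n}$ (attaching back the graph $G$ through the endpoints of the relevant quasi-arcs), observes that $S$ is a quasi-graph and hence a tranched graph by Step~1, and then notes that $T$ differs from $S$ only by finitely many graph pieces, so $T$ inherits the tranched-graph structure. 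For a general subcontinuum $X_0$, the paper again avoids showing $X_0$ is a quasi-graph: either $X_0$ is not contained in any tranche, in which case $\phi|_{X_0}$ witnesses the definition directly, or $X_0$ lies in some tranche $T_1$ and one descends through the finite hierarchy $T_1\supset T_2\supset\cdots$ of nested tranches until $X_0$ is no longer nowhere dense.
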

  \begin{proof}
 Denote $Y=X / _\sim$ and observe that by
Lemma~\ref{lem:graph_for_quasi} we obtain that $Y$ is a topological graph. Let $\phi \colon X \ra Y$ be the quotient map induced by $\sim$. Tranches in this case are nondegenerate equivalence classes of $\sim$ which are connected components of $\omega(X)$. By Definition~\ref{def_quasigraph} there is a finite number of them. This means that the set $\phi^{-1} (D)$, where $D = \{y \in Y$ such that $\phi^{-1}(y)  \text{ is degenerate} \}$  is dense in $X$. 
        \par
 Recall the original definition of quasi-graphs $X$ from \cite{MR3557770} which states that it is an arcwise connected continuum and there is a natural number $N$ such that for each arcwise connected subset $Y$, the set $\overline{Y} \backslash Y$ has at most $N$ arcwise connected components. It is clear that this property is hereditary by any arcwise connected subcontinua of $X$, hence any
 arcwise connected subcontinuum of a quasi-graph is a quasi-graph, which ensures the hereditary property for arcwise connected tranches.
 
 Fix a decomposition $X=G \cup \bigcup_{i=1}^N L_i$ and any tranche $T=\phi^{-1}(y)$.
As  $T\subset \omega(X)$ and is connected by the definition, it is the union of finitely many arcwise connected components, $T=G_0 \cup L_{\alpha_1} \cup \ldots \cup L_{\alpha_n}$, $\alpha_i \in \{1, \ldots, N\}$, $G_0 \subset G$.
Namely by Definition~\ref{def_quasigraph}\eqref{def_quasigraph:4} for every $i$, either $T\cap L_i=\emptyset$
or $L_i\subset T$.
Since endpoints of quasi-arcs are elements of $G$, we see that  $S=G \cup L_{\alpha_1} \cup \ldots \cup L_{\alpha_n}$ is arcwise connected, hence a tranched graph by previous argument.
Then it is easy to see that $T$ in that case is also a tranched graph as $S \setminus T$ is a finite union of topological graphs. We can repeat the above construction (with the analogue of the relation $\sim$) for every tranche $T$ of $X$. 
Suppose now that $X_0$ is a subcontinuum of $X$. If $X_0$ is not a subset of a tranche, then the map $\phi$ restricted to $X_0$ satisfies the conditions  from Definition \ref{def:tranched_graph}. Suppose now that there is a tranche $T_1$ in $X$ such that $X_0 \subset T_1$. We know that $T_1$ is a tranched graph, furthermore by adding finite number of arcs we get a quasi-graph with the same set of tranches. It follows that $T_1$ has hereditary fibers. If $X_0$ is not nowhere dense in $T_1$, we get the result as before. If not, it is a subset of a tranche $T_2$ of $T_1$, which by our argumentation is also a tranched graph with hereditary fibers. We continue this process until we find $T_m$ such that $X_0$ is not nowhere dense in $T_m$.

By the definition of quasi-graph such $T_m$ must exist.
    \end{proof}

We already proved that all quasi-graphs are tranched graphs. We only need to investigate the topological structure of limit sets of oscillatory quasi-arcs in the context of Definition~\ref{def_sin1/x}\eqref{sin-iii}. 
\begin{thm}
\label{when_quasi_is_sin}
		Let  $X=G \cup \bigcup_{j=1}^{n} L_j$ be a quasi-graph. Assume that for every connected component $\Lambda$ of $\omega(X)$
  the following assertions hold:
        \begin{enumerate}
            \item\label{when_quasi_is_sin:1} There is a quasi-arc $L$ in $X$ such that $\omega(L)=\Lambda$ and
            \item  Continuum $\Lambda$  belongs to \ClassW{}.
        \end{enumerate}
        Then $X$ is a generalized sin(1/x)-type continuum.
        
	\end{thm}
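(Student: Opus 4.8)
The plan is to take as witnessing data the quotient constructed in Lemmas~\ref{lem:graph_for_quasi} and~\ref{lem:quasi-graphs_are_tranched}: collapsing each connected component of $\omega(X)$ to a point yields a topological graph $Y=X/_\sim$ together with the monotone quotient map $\phi\colon X\ra Y$, which by those lemmas already satisfies Definition~\ref{def_sin1/x}\eqref{sin-ii} and whose tranches are exactly the components $\Lambda$ of $\omega(X)$. Since Definition~\ref{def_sin1/x}\eqref{sin-i} is redundant, the entire problem reduces to verifying the approximation property \eqref{sin-iii}. For a degenerate fiber the property is immediate from upper semicontinuity of $\phi^{-1}$ (shrinking a small arc around $\phi(x)$ forces its preimage into $B(x,\eps)$, exactly the diameter estimate used in the proof of Theorem~\ref{thm:set_of_tranches_is_meager}), so I would fix a tranche $\Lambda$, a subcontinuum $Y_0\subset\Lambda$, and $\eps>0$, and seek an arc $[a,b]\subset Y$ with $d_H(Y_0,\phi^{-1}([a,b]))<\eps$.

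The two hypotheses enter at different places. Assumption~(1) is used to realize $\Lambda$ as the remainder of a genuine ray lying inside $X$: starting from a quasi-arc $L$ with $\omega(L)=\Lambda$ and parametrization $\varphi$, I would argue that after passing to a tail (the quasi-graph structure of Definition~\ref{def_quasigraph}\eqref{def_quasigraph:3} lets us arrange $\omega(L)\cap L=\emptyset$) the map $\varphi$ is an embedding, so $R=\varphi([m,\infty))$ is a ray with $\overline{R}=R\cup\Lambda$ a compactification of $[0,1)$ having remainder $\Lambda$. Taking $m$ large forces $R$ into an arbitrarily small neighborhood of $\Lambda$, hence $R\cap\omega(X)=\emptyset$ (it misses $\Lambda$ itself and every other tranche). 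Assumption~(2) is then invoked through the hyperspace characterization of $\ClassW{}$ recalled above: since $\Lambda\in\ClassW{}$, the hyperspace $\mathcal{C}(\overline{R})$ is a compactification of $\mathcal{C}([0,1))$, so $Y_0\in\mathcal{C}(\Lambda)\subset\mathcal{C}(\overline{R})$ is a Hausdorff limit of compact subarcs $\varphi([a_k,b_k])$ of $R$ (each already closed, as $[a_k,b_k]$ has finite parameters). I would select one such subarc $A=\varphi([a_k,b_k])$ with $d_H(Y_0,A)<\eps$.

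It then remains to exhibit $A$ as a full fiber over an arc. Here I would use that $\phi$ is injective off $\omega(X)$, which is built into the relation $\sim$. Since $A\subset R$ and $R\cap\omega(X)=\emptyset$, the restriction $\phi|_A$ is injective, so $[a,b]:=\phi(A)$ is an arc in $Y$; moreover, because $A$ meets no tranche, $[a,b]$ avoids every tranche-image, and this forces $\phi^{-1}([a,b])=A$ (any additional preimage point would lie in a tranche mapping into $[a,b]$, which is excluded). Consequently $d_H(Y_0,\phi^{-1}([a,b]))=d_H(Y_0,A)<\eps$, establishing \eqref{sin-iii} and hence the theorem.

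I expect the main obstacle to be precisely this translation of the abstract $\ClassW{}$ condition into the concrete approximation property, namely the passage from ``$Y_0$ is a Hausdorff limit of subarcs of the ray'' to ``$Y_0$ is approximated by a fiber $\phi^{-1}([a,b])$''. The two delicate points are the identity $\phi^{-1}([a,b])=A$, which rests on injectivity of $\phi$ away from the tranches together with the fact that a deep-enough ray $R$ (and therefore every subarc of it) avoids all of $\omega(X)$ and all tranche-images, and the preliminary reduction ensuring that the quasi-arc furnished by assumption~(1) may be taken to be an honest ray with remainder exactly $\Lambda$ and disjoint from $\omega(X)$.
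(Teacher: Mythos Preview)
Your proposal is correct and follows essentially the same route as the paper: take the quotient map $\phi\colon X\to X/_\sim$ from Lemmas~\ref{lem:graph_for_quasi} and~\ref{lem:quasi-graphs_are_tranched}, and use the hyperspace characterization of $\ClassW{}$ applied to the compactification $\overline{R}=R\cup\Lambda$ to produce approximating subarcs of the ray. You are in fact more careful than the paper in two places it glosses over --- passing to a tail so that $R$ is an honest ray disjoint from $\omega(X)$, and verifying that $\phi^{-1}([a,b])=A$ via injectivity of $\phi$ off the tranches --- whereas the paper simply writes ``one can easily see that this implies the approximation property.''
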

	\begin{proof}
         Let $Y= X/_\sim$, where the relation $\sim$ is defined on $X$ as in Lemma~\ref{lem:graph_for_quasi}, that is, put $a \sim b$ if $a=b$ or there exists a connected component $\Lambda$ of 
             $\omega(X)$
            such that $a,b \in \Lambda$.
         Let $\phi \colon X \rightarrow Y$ be the associated quotient map. By Lemma~\ref{lem:quasi-graphs_are_tranched} we see that $X$ is a tranched graph, so it remains to show that the conditions from Definition~\ref{def_sin1/x}\eqref{sin-iii} hold.

         Fix any $y \in Y$. If $\phi^{-1}(y)=\{x\}$, there exist neighborhoods $U,V$ of $x,y$ respectively such that $\phi|_U \colon U \rightarrow V$ is a homeomorphism. Now suppose that $\phi^{-1}(y)$ is nondegenerate. By assumptions $\phi^{-1}(y)= \Lambda=\omega(L_c)$ for some quasi-arc $L_c$. 
         This means that $\Lambda\in \ClassW{}$ is a compactification of $L_c$. It follows that for any subcontinuum $Y_0 \subset \Lambda$ there is a sequence of arcs $\{[a_n,b_n]\}_{n=1}^\infty$ in $L_c$ such that $\{[a_n,b_n]\}_{n=1}^\infty$ converges to $Y_0$ in the Hausdorff metric. One can easily see that this is  implies the approximation property.
	\end{proof}

 Remark~\ref{rem:arclike_are_classw} recalls most known examples of elements of $\text{Class}(W)$. The following may provide another tool for identifying continua in this class.

\begin{rem}
\label{rem:classw_is_hereditary}
The result of Grispolakis and Tymchatyn \cite[Corollary~3.4]{GT} shows that if $Y=L\cup \omega(L)$ for some quasi-arc $L$ then $Y\in \text{Class}(W)$ iff $\omega(L)\in \text{Class}(W)$. It allows hierarchical constructions of generalized sin(1/x)-type continua.
\end{rem}

 Unfortunately, Theorem~\ref{when_quasi_is_sin} provides only a sufficient condition while a complete characterization seems almost an impossible task from the point of view of
Theorem~\ref{thm:sin1x_with_any_quasig_as_tranche} presented later in the paper. Roughly speaking, it says that any
continuum can be a tranche of a generalized sin(1/x)-type continuum. 
For example, we can construct a quasi-graph that has a $4$-star as a limit set of its unique oscillatory quasi-arc and satisfies the definition of generalized sin(1/x)-type continuum, e.g. see Figure~\ref{fig:limiting_star}. The only difference compared to Figure~\ref{fig:branching_point_no_sin}
is how the quasi-arc approaches its limit set.
     	\begin{figure}[ht]

            \centering
            \includegraphics[scale=0.38]{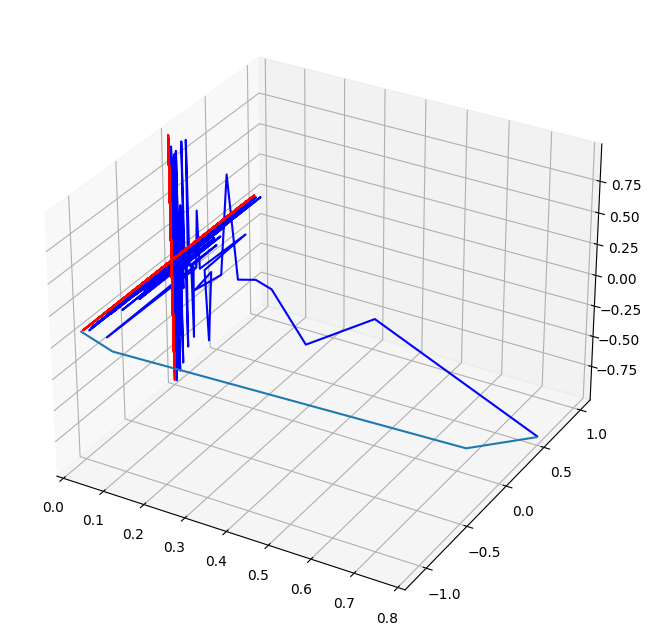}
               \caption{A quasi-graph which is generalized sin(1/x)-type continuum and contains 4-star as a tranche}         \label{fig:limiting_star}
        \end{figure}
 Then belonging to $\ClassW{}$ in Theorem~\ref{when_quasi_is_sin} is only
a sufficient condition ensuring that the resulting space is sin(1/x)-type continuum.On the other hand, the next result shows that condition \eqref{when_quasi_is_sin:1} in Theorem~\ref{when_quasi_is_sin} is necessary.

\begin{lem}
\label{lem:finite_tranches}
    Let $X$ be a tranched graph with finitely many tranches. Then every tranche is a union of limit sets of finitely many oscillatory quasi-arcs. Moreover, if $X$ is a generalized sin(1/x)-type continuum, then every tranche is the limit set of some oscillatory quasi-arc.
\end{lem}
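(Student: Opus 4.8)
The plan is to prove the two assertions separately, exploiting the finiteness of the tranche set. First I would set up the quotient map $\phi \colon X \to Y$ given by the tranched graph structure, so that the tranches are exactly the finitely many nondegenerate fibers $T_1,\dots,T_m = \phi^{-1}(y_1),\dots,\phi^{-1}(y_m)$. Since there are only finitely many of them and $\phi^{-1}(D)$ is dense (indeed residual, by Theorem~\ref{thm:set_of_tranches_is_meager}), each $T_k$ is a nowhere dense subcontinuum that is approached by points with degenerate fibers. The idea for the first claim is to produce, for a fixed tranche $T=T_k$, an oscillatory quasi-arc whose limit set lands inside $T$, and then to cover all of $T$ with finitely many such limit sets.

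For the first assertion, I would argue as follows. Fix a tranche $T$ and a point $x\in T$. Because $\phi^{-1}(D)$ is dense and $T$ is nowhere dense, there is a sequence of points $z_n\to x$ lying in singleton fibers, hence $z_n\notin T$. Using arcwise connectedness inside $X\setminus T$ between consecutive $z_n$'s (after passing to a subsequence so that the $\phi$-images move along $Y$ in a controlled way toward $y=\phi(T)$), I would concatenate short arcs to build a ray whose closure meets $T$. The key point is that $Y$ is a topological graph, so near $y$ it looks like a star, and the finitely many local branches let me route an injective path that accumulates on $T$ but stays off $T$ except in the limit; the resulting quasi-arc $L$ then satisfies $\omega(L)\subset T$. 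Since $T$ is a continuum and any single such $\omega(L)$ may be a proper subcontinuum, I would repeat this for enough base points to cover $T$, and use compactness together with the finiteness of tranches to extract a finite subfamily $L^{(1)},\dots,L^{(p)}$ with $T=\bigcup_i \omega(L^{(i)})$.

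For the second (stronger) assertion, I would invoke the approximation property \eqref{sin-iii} of Definition~\ref{def_sin1/x}, which now holds because $X$ is a generalized $\sin(1/x)$-type continuum. The aim is to show that a single oscillatory quasi-arc $L$ already has $\omega(L)=T$, rather than merely $\omega(L)\subset T$. From the construction above I have a quasi-arc $L$ with $\omega(L)\subset T$; I would apply \eqref{sin-iii} to the subcontinuum $Y_0=T$ itself (a subcontinuum of the fiber $\phi^{-1}(y)=T$) to obtain arcs $[a_j,b_j]\subset Y$ with $\phi^{-1}([a_j,b_j])\to T$ in the Hausdorff metric. Pulling these back and matching them against the oscillation of $L$ along $Y$ near $y$, I would show that the pieces of $L$ landing over $[a_j,b_j]$ accumulate on all of $T$, forcing $\omega(L)=T$; the finiteness of the tranche set ensures the oscillation of $L$ is genuinely confined to approaching the single point $y$.

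\textbf{Main obstacle.} The hard part will be the explicit construction of the quasi-arc in the first assertion: ensuring the concatenated path is a continuous \emph{bijection} on $[0,\infty)$ (no self-intersections) while it oscillates toward $T$, and simultaneously controlling the limit set to land inside $T$ rather than spilling onto nearby graph edges. The finiteness of the tranche set and the local star structure of $Y$ at $y$ are what make this tractable, but verifying injectivity and the exact identification $\omega(L)=T$ in the $\sin(1/x)$ case — where the approximation property must be leveraged to rule out $\omega(L)$ being a proper subcontinuum — is the delicate technical core.
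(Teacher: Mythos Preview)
Your first-assertion strategy has a genuine gap: you invoke ``arcwise connectedness inside $X\setminus T$'' to concatenate arcs between the points $z_n$, but a tranched graph is \emph{not} assumed arcwise connected (Definition~\ref{def:tranched_graph} only asks for a monotone map onto a graph with dense singleton fibers). So the concatenation step simply need not be available, and even when $X$ happens to be arcwise connected, guaranteeing an injective ray this way is delicate. The paper's argument avoids all of this: once you fix a small star $V=E_1\cup\cdots\cup E_n\subset Y$ centered at $y=\phi(T)$ whose preimage meets no tranche other than $T$, the map $\phi$ is a bijection from $\phi^{-1}(E_i\setminus\{y\})$ onto the half-open arc $E_i\setminus\{y\}$, so each $\phi^{-1}(E_i\setminus\{y\})$ is itself a quasi-arc $T_i$. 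Nowhere-density of $T$ then forces $T=\bigcup_i\omega(T_i)$. No path-building in $X$ is needed; the quasi-arcs come for free as preimages of the star edges.

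For the second assertion your plan is too vague to succeed. Applying the approximation property to $Y_0=T$ yields arcs $\phi^{-1}([a_j,b_j])$ Hausdorff-close to $T$, but such arcs may well contain $y$ (hence $T$ itself) or straddle several of the quasi-arcs $T_i$; nothing forces them to lie in a single $T_i$, so you cannot conclude that one $\omega(T_i)$ equals $T$. The paper proceeds by contradiction: assuming no single quasi-arc has limit set equal to $\Lambda=T$, it sets $\delta=\min_k\diam(\Lambda\setminus\omega(L_k))>0$, then picks a \emph{proper} subcontinuum $Y_0\subsetneq\Lambda$ with $\diam(\Lambda\setminus Y_0)<\delta$. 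For small $\epsilon$ one shows any approximating $\phi^{-1}([a,b])$ must miss $\Lambda$ and hence lie inside a single quasi-arc $L$, forcing $Y_0\subset\omega(L)$ and thus $\diam(\Lambda\setminus Y_0)\ge\diam(\Lambda\setminus\omega(L))\ge\delta$, a contradiction. The choice of a proper $Y_0$ just below $\Lambda$ (rather than $Y_0=\Lambda$) is exactly the missing idea.
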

\begin{proof}
     Let $X$  be a tranched graph with finitely many tranches and let $\phi \colon X \rightarrow Y$  be the associated map from Definition~\ref{def:tranched_graph}. 
     Fix any tranche $T\subset X$. If we denote $y=\phi(T)$, then there 
    is an open set $U\ni y$ such that $\overline U$ contains at most one branching point. 
    As there are finitely many tranches, we can pick $\epsilon>0$ such that $B=B(T,\epsilon)$ does not intersect any tranches other than $T$ and
    $\phi(B)\subset U$.       
    
      Now let $n=val(y)$ and choose a closed set $V\subset Y$, so that $U\supset V=E_1 \cup E_2 \cup \ldots \cup E_n$ is an $n$-star with the center $y$ and edges $E_i$. By continuity of $\phi$ and the fact that it is one-to-one on $B\setminus T$,  the sets $T_i=\phi^{-1}(E_i\setminus \{y\})$ are quasi-arcs for $i=1, \ldots n$ and $\bigcup_{i=1}^n \omega(T_i) \subset T$. Assume now there is $x \in T$ that is not an element of the limit set of a quasi-arc. Then, by the definition of topological limit, there is a ball centered at $x$ that does not intersect any quasi-arc. Choosing the radius of this ball to be smaller than $\epsilon$ if necessary, we get an open ball contained in $T$, contradicting the assumption that fibers are nowhere dense in $X$.
      Summing up, we get that  $T=\bigcup_{i=1}^n \omega(T_i)$, which proves the first part of the statement of theorem.
      \par 

      Suppose now that $X$ is a generalized sin(1/x)-type continuum  but there is a tranche $\Lambda \subset X$ that is not the limit set of any quasi-arc, i.e $\Lambda \neq \omega(L_k)$ for any oscillatory quasi-arc $L_k \subset X$. By previous argument we know that $\Lambda=\bigcup_{i \in K} \omega(L_i)$, where $K$ is the set of indices of quasi-arcs whose limit sets are subsets of $\Lambda$. 
      Each $\omega(L_k)$ is connected, thus $K$ is well defined.
      Denote $\delta = \min\limits_{k\in K} \diam(\Lambda \setminus \omega(L_k))>0$. 
            
    First we claim that if $Y_0  \subsetneq\Lambda$ is a  proper subcontinuum of $\Lambda$ then for $\epsilon>0$ small enough, any arc $[a,b]$ such that $d_H(Y_0,\phi^{-1}([a,b]))<\epsilon$ may intersect only one quasi-arc. To see this, suppose  that $[a,b] \subset Y$ is an arc with desired property such that $\phi^{-1}(a)$ and $\phi^{-1}(b)$ are singletons and elements of different quasi-arcs. Therefore, since $\epsilon$ is small, we must have that  $\Lambda \cap \phi^{-1}([a,b])\neq \emptyset$ and consequently $\Lambda \subset \phi^{-1}([a,b])$. This implies that:
     $$\epsilon>d_H(\phi^{-1}([a,b]),Y_0) \geq d_H(\Lambda,Y_0)>0$$
    But $\epsilon$ can be arbitrarily small, hence we may assume that $\epsilon<d_H(Y_0,\Lambda)$ which is a contradiction. Indeed, the claim holds.

     Now choose a subcontinuum $Y_0  \subset \Lambda $ such that $0<\diam(\Lambda \setminus Y_0 )< \delta$ and fix small $\epsilon>0$  provided by the claim above and assume that $\epsilon<\delta$. Since $X$ is a generalized sin(1/x)-type continuum, there is an arc $[a,b] \subset Y$ such that $d_H(\phi^{-1}([a,b]),Y_0)<\epsilon$.
      By the choice of $\epsilon$ we cannot have $Y_0\subset \phi^{-1}([a,b])$, thus 
     we can assume that $[a,b] \subset L$ for some quasi-arc $L$ with $\omega(L) \subset \Lambda$.  
     Passing with $\eps$ to $0$ and using the pigeon-hole principle, we obtain that $Y_0 \subset \omega(L)$ for some quasi-arc $L$    and therefore 
     $\diam(\Lambda \setminus Y_0) \geq \diam ( \Lambda \setminus \omega(L))$.  It follows that:
     $$\delta  > \diam(\Lambda \setminus Y_0) \geq \diam ( \Lambda \setminus \omega(L)) \geq \delta$$  which is a contradiction. The proof is finished.    
\end{proof}

        \begin{thm}\label{thm:commonbase}
            Let $X$ be a quasi-graph that is a generalized sin(1/x)-type continuum. Then for every connected component $\Lambda$ of  $\omega (X)$
            there is a quasi-arc $L \subset X$ such that $\omega(L)=\Lambda$.
        \end{thm}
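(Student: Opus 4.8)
The plan is to reduce the statement to the \emph{moreover} part of Lemma~\ref{lem:finite_tranches}, which already asserts exactly this conclusion for every tranche of a generalized sin(1/x)-type continuum that is also a tranched graph with finitely many tranches. Thus the whole argument amounts to checking two things: that $X$ has only finitely many tranches, and that those tranches are precisely the connected components of $\omega(X)$.

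First I would record the tranched-graph structure coming from the quasi-graph hypothesis. Writing $X=G\cup\bigcup_{j=1}^{n}L_j$ for a quasi-graph decomposition, Lemma~\ref{lem:quasi-graphs_are_tranched} shows that $X$ is a hereditary tranched graph and that the quotient map $\phi\colon X\to X/_\sim$ realizing this structure has as its nondegenerate fibers exactly the connected components of $\omega(X)=\bigcup_{i=1}^{n}\omega(L_i)$. Since each $\omega(L_i)$ is a nondegenerate subcontinuum, $\omega(X)$ is a finite union of continua and therefore has only finitely many connected components; hence this tranched-graph structure has finitely many tranches, and they are exactly the components $\Lambda$ that appear in the statement.

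Next I would reconcile this decomposition with the generalized sin(1/x) structure. By hypothesis there is a continuous monotone map $\psi\colon X\to Z$ onto a topological graph satisfying Definition~\ref{def_sin1/x}; in particular $\psi$ satisfies condition~\eqref{sin-ii}, so $\psi$ is itself a tranched-graph map for $X$ in the sense of Definition~\ref{def:tranched_graph}. Theorem~\ref{thm:two_maps_are_homeo} then supplies a homeomorphism $h\colon X/_\sim\to Z$ with $h\circ\phi=\psi$, so the fibers of $\psi$ coincide, as subsets of $X$, with the fibers of $\phi$. Consequently the tranches of the generalized sin(1/x) structure are exactly the connected components of $\omega(X)$, and there are finitely many of them.

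Finally, I would apply Lemma~\ref{lem:finite_tranches}: $X$ is a generalized sin(1/x)-type continuum which is a tranched graph with finitely many tranches, so each tranche is the limit set $\omega(L)$ of some oscillatory quasi-arc $L\subset X$. Since the tranches are the connected components $\Lambda$ of $\omega(X)$, this is precisely the desired claim. The only genuinely delicate point is the identification of the two fiber decompositions; once Theorem~\ref{thm:two_maps_are_homeo} (or, alternatively, Remark~\ref{rem:collapse} together with Lemma~\ref{lem:nowheredense}, which makes every tranche a maximal nowhere dense subcontinuum) is invoked, this is immediate, and everything else is bookkeeping.
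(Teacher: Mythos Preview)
Your proof is correct and follows the same route as the paper: invoke Lemma~\ref{lem:quasi-graphs_are_tranched} to identify the tranches of $X$ with the finitely many connected components of $\omega(X)$, then apply the ``moreover'' clause of Lemma~\ref{lem:finite_tranches}. The only difference is that you make explicit, via Theorem~\ref{thm:two_maps_are_homeo}, the identification of the quasi-graph tranches with the sin(1/x) tranches, whereas the paper's two-line proof leaves this implicit (it is already recorded in the remark following Theorem~\ref{thm:two_maps_are_homeo} that the fiber decomposition is canonical).
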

        \begin{proof}
            By Lemma~\ref{lem:quasi-graphs_are_tranched} every quasi-graph is a hereditary tranched graph with tranches being connected components of  $\omega(X)$, in particular with finitely many tranches.
Lemma~\ref{lem:finite_tranches} 
completes the proof.
        \end{proof}

In the introduction, we were trying to convince the reader that the main goal behind the definition of a generalized sin(1/x)-type continuum was to obtain a nice generalization of topological graphs. In a sense $\phi \colon X\to Y$ should maintain structure of $X$ ,,similar'' to $Y$. Surprisingly, there is no direct restriction on the structure of a single tranche in a generalized sin(1/x)-type continuum. It can be any continuum, extending the class of generalized sin(1/x)-type continua much beyond the initial intuition.
The following result is folklore and is not difficult to prove. Details are left to the reader (cf. the arguments in the proofs in \cite{Proctor}).

   \begin{thm}
   \label{thm:sin1x_with_any_quasig_as_tranche}
        Suppose $X$ is a continuum. Then there exists an oscillatory quasi-arc $L$ with $\omega(L)$  homeomorphic to $X$ and such that  $Z=     \omega(L)\cup L$ is a generalized sin(1/x)-type continuum.
    \end{thm}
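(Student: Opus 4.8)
The plan is to build the quasi-arc $L$ explicitly by attaching a "spiraling ray" onto the target continuum $X$, sitting inside a suitable product space, in such a way that the ray accumulates onto an isometric copy of $X$ but touches it only in the limit. Concretely, embed $X$ isometrically in the Hilbert cube $\mathcal{H}$ (possible since $X$ is a continuum, hence metrizable and of weight $\leq\aleph_0$), and work inside $\mathcal{H}\times[0,1]$. Place the copy of $X$ at height $0$, i.e. on the slice $X\times\{0\}$, and construct a ray $R\subset \mathcal{H}\times(0,1]$ that "sweeps across" $X$ at successively lower heights. The intuition from the Warsaw circle is that a single continuous injective path can be made to pass closer and closer to every point of $X$ as its parameter tends to infinity, while its height coordinate decreases monotonically to $0$; then the topological limit of the path is forced to be exactly the whole copy of $X$ at height $0$.

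The key steps, in order, are as follows. First I would fix a sequence of finite $\varepsilon_k$-nets $F_k\subset X$ with $\varepsilon_k\to 0$ and a sequence of continuous "tours" $\gamma_k$ of $X$ — for each $k$, a path that visits every point of $F_k$ to within $\varepsilon_k$; such tours exist because $X$, being a Peano continuum only after the fact is not assumed, so instead I would use that $X$ is a continuum and pass to approximating paths in a locally connected superspace, or more cleanly, use that any continuum is the image of the Hilbert cube and route the tour through a continuous surjection $[0,1]\to X$. Second, I would concatenate rescaled and vertically-lowered copies of these tours: on the interval $[k,k+1]$, define $\varphi$ to follow the $k$-th tour in the $\mathcal{H}$-coordinate while the height coordinate interpolates strictly monotonically from $2^{-k}$ down to $2^{-k-1}$. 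This yields a continuous map $\varphi\colon[0,\infty)\to\mathcal{H}\times(0,1]$. Third, I would verify injectivity: since the height coordinate is strictly decreasing, $\varphi$ is automatically one-to-one, so $L=\varphi([0,\infty))$ is a quasi-arc with parametrization $\varphi$. Fourth, I would compute $\omega(L)=\bigcap_{m\geq 0}\overline{\varphi[m,\infty)}$ and check it equals $X\times\{0\}$: the height coordinate forces every accumulation point to lie at height $0$, and the density of the tours (the nets $F_k$ becoming arbitrarily fine) forces every point of $X\times\{0\}$ to be a limit point, giving $\omega(L)$ homeomorphic to $X$.

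Finally, to show $Z=L\cup\omega(L)$ is a generalized sin(1/x)-type continuum, I would take $Y$ to be the target graph obtained by collapsing $\omega(L)$ to a single point $y_0$ and verify the conditions of Definition~\ref{def_sin1/x} for the quotient map $\phi\colon Z\to Z/\omega(L)$. Here $Y$ is an arc (the ray $L$ with its limit point identified to one endpoint), hence a topological graph. Density of the singleton fibers (condition~\eqref{sin-ii}) is immediate since every point of $L$ has a degenerate fiber and $L$ is dense in $Z$. The approximation property~\eqref{sin-iii} is the substantive point: for a subcontinuum $Y_0\subset\omega(L)$ I must produce arcs $[a,b]\subset Y$ with $\phi^{-1}([a,b])$ Hausdorff-close to $Y_0$, and this is exactly where the construction of the tours must be engineered so that initial segments of the ray, read near the limit, trace out arbitrarily good approximations of subcontinua of $X$. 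This requires arranging the tours so that the partial paths at large parameter values have Hausdorff-image converging to prescribed subcontinua — achievable by refining the tour $\gamma_k$ to pass through a dense family of subcontinua of $X$, exploiting that $\mathcal{C}(X)$ is itself a compact metric space.

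The main obstacle I anticipate is verifying the approximation property~\eqref{sin-iii} in full generality for an \emph{arbitrary} continuum $X$, rather than just recovering $\omega(L)\cong X$. Getting the limit set right only needs the tours to become fine; but to approximate every subcontinuum $Y_0$ of $X$ by a single arc $\phi^{-1}([a,b])$ of the ray near the limit, I must control \emph{which connected pieces} of $X$ the ray traces at each stage, and ensure no "accidental" large jumps make $\phi^{-1}([a,b])$ spill outside a neighborhood of $Y_0$. The clean way around this is to use compactness of $\mathcal{C}(X)$: fix a countable dense sequence $\{Y_0^{(j)}\}$ in $\mathcal{C}(X)$, route the $k$-th tour so that among its initial sub-paths it realizes $\varepsilon_k$-approximations (in $d_H$) of $Y_0^{(1)},\dots,Y_0^{(k)}$, and then use density of this sequence together with continuity of $\phi$ to upgrade approximation of these special subcontinua to all subcontinua. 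This is the step I would expect to consume the bulk of the (here omitted) routine verification.
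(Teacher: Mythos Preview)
Your approach is essentially the standard one, and it matches the spirit of the paper's treatment: the paper does not actually give a proof of this result, labeling it folklore and referring to \cite{Proctor}; the Remark immediately following the theorem sketches exactly your idea (embed $X$ in $\mathcal{H}$, use two auxiliary coordinates to carry a spiral/decreasing-height ray whose segments are arcs in $\mathcal{H}$ approximating $X$ and its subcontinua). So there is no discrepancy to report.

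One sentence in your outline should be cleaned up. You write ``use that any continuum is the image of the Hilbert cube and route the tour through a continuous surjection $[0,1]\to X$''; but a continuous surjection $[0,1]\to X$ exists only when $X$ is a Peano continuum, which is precisely what you are \emph{not} assuming. The fix you mention parenthetically is the correct one and is what makes the construction work: route the tours not through $X$ itself but through the ambient Hilbert cube $\mathcal{H}$, which \emph{is} a Peano continuum, keeping the $k$-th tour inside the $\varepsilon_k$-neighborhood of $X$ (open connected subsets of $\mathcal{H}$ are path-connected, so such tours exist). This simultaneously guarantees (a) that $\omega(L)$ is exactly $X\times\{0\}$ and not larger, and (b) that the sub-paths approximating the dense sequence $\{Y_0^{(j)}\}\subset\mathcal{C}(X)$ can be realized. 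Your plan for the approximation property via a countable dense family in $\mathcal{C}(X)$ is the right way to secure condition~\eqref{sin-iii}, and is the only place where genuine care is needed.
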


    \begin{rem}
        Suppose $X=G \cup \bigcup_{j=1}^{n} L_j$ is a quasi-graph. 
        Then there is an oscillatory quasi-arc $L_{n+1}$ such that  $\omega(L_{n+1})$ is homeomorphic to $G \cup \bigcup_{j=1}^{n} L_j$ and $Z=L_{n+1}\cup \omega(L_{n+1})$ is a quasi-graph and a generalized sin(1/x)-type continuum.
        Furthermore, if $X\subset \mathcal{H}\times \{0\}\times \{0\}$ then $L_{n+1}$ can be constructed in such a way that $Z=G \cup \bigcup_{j=1}^{n+1} L_j$  (i.e. we have exact representation without need of passing through a homeomorphism. Namely, we can use two first coordinates to define a spiral compactified by $\{0\}\times \{0\}$ and use its consecutive segments as parametrizations of arcs approximating $X$ in $\mathcal{H}$. This way we obtain an oscillatory quasi-arc with reminder $X$.
    \end{rem}
    
 So far, we provided necessary conditions ensuring that a given quasi-graph is a generalized sin(1/x)-type continuum (see Theorem~\ref{when_quasi_is_sin}) and that one of the conditions is necessary (see Theorem~\ref{thm:commonbase}). Lastly, we proved that any continuum can be a tranche  (see Theorem~\ref{thm:sin1x_with_any_quasig_as_tranche}).  Taking these results all together, it seems that the  characterization in full generality when a quasi-graph is also a generalized sin(1/x)-type continuum is out of reach.
    
        \section{Characterization of generalized sin(1/x)-type continua that are quasi-graphs }\label{sec:4}

Recall that quasi-graphs which are generalized sin(1/x)-type continua must have a finite number of tranches, because their tranches are connected components of limit sets of oscillatory quasi-arcs, and quasi-graphs have a finite number of those. It may happen, however, that a generalized sin(1/x)-type continuum has even uncountably many tranches (see \cite[Example 26]{MR3272777}).  As a complement to these situations, we will construct a generalized sin(1/x)-type continuum with an infinite ,,depth'' later in this section (see Lemma~\ref{lem:infite_depth}).

\begin{lem}
\label{lem:count_quasi-arcs}
    Let $X$ be a generalized sin(1/x)-type continuum. Then $X$ contains at most countably many oscillatory quasi-arcs  without ancestors.
\end{lem}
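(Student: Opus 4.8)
The plan is to attach to each oscillatory quasi-arc without ancestors a nonempty open subset of $X$ in such a way that the resulting family is pairwise disjoint; since $X$ is compact metric, hence second countable, this forces the indexing family to be at most countable. So the whole proof reduces to a careful separation argument, and everything before it is soft.

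First I would record the one fact that collapses the picture to dimension one: for every oscillatory quasi-arc $L\subset X$ with parametrization $\varphi$, the image $\phi(\omega(L))$ is a single point $y_L\in Y$. This is precisely the content of the first part of the proof of Lemma~\ref{limit_collapse}; that argument uses only that $L$ is an oscillatory quasi-arc and that $\phi$ is monotone onto a topological graph (disjoint arc-pieces $\varphi([s_n,t_n])$ cannot have images of diameter bounded away from $0$ that are pairwise disjoint inside a graph, and an overlap contradicts connectedness of a fiber), so it applies verbatim here. Since the set of subsequential limits of $\varphi(t)$ as $t\to\infty$ is exactly $\omega(L)$ and $\phi$ is continuous, it follows that $\phi(\varphi(t))\to y_L$. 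Hence $\omega(L)\subset\phi^{-1}(y_L)=:T_L$, a fiber which is a tranche; and by Lemma~\ref{lem:nowheredense} together with Lemma~\ref{lem:if_dense_then_fibers_ndense} every tranche is a \emph{maximal} nowhere dense subcontinuum of $X$, so $T_L$ is canonically determined by $L$.

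Next I would exploit the convergence $\phi(\varphi(t))\to y_L$ and the residuality of the singleton-fiber set (Theorem~\ref{thm:set_of_tranches_is_meager}) to select, for each $L$, a distinguished point $x_L\in L$ lying in a singleton fiber on which $\phi$ restricts to a local homeomorphism, with $\phi(x_L)$ close to $y_L$. Near such $x_L$ the space $X$ is homeomorphic, via $\phi$, to $Y$ near $\phi(x_L)$, so there is a small open $U_L\ni x_L$ carried homeomorphically onto an open arc of $Y$ abutting $y_L$. The hypothesis \emph{without ancestors} enters exactly here: because no tail of $L$ is contained in $\omega(K)$ for any oscillatory quasi-arc $K$, the selected portion of $L$ is ``exposed'' rather than buried inside the limit set of another quasi-arc, which is what should allow the sets $U_L$ to be kept apart (a quasi-arc \emph{with} an ancestor lies eventually inside some $\omega(K)$, where it may be shadowed by the other strands that accumulate on the same tranche).

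The hard part, and the technical heart of the argument, is verifying that these open sets can genuinely be taken pairwise disjoint, equivalently that the assignment $L\mapsto U_L$ is countable-to-one into a countable family. The obstacle is real: a generalized sin(1/x)-type continuum may carry uncountably many tranches, many distinct top-level quasi-arcs may accumulate on the \emph{same} $T_L$, and distinct quasi-arcs need not be disjoint as subsets of $X$, so distinctness of the points $x_L$ is not by itself enough. To control this I would combine monotonicity of $\phi$ (which, as in Lemma~\ref{limit_collapse}, forbids two disjoint arc-pieces of different quasi-arcs from sharing a fiber) with the approximation property of Definition~\ref{def_sin1/x}\eqref{sin-iii}, which restricts how a strand may sit near a tranche, and with the without-ancestors hypothesis, which rules out the nesting $L\subset\omega(L')$. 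The cleanest route is probably to prove the two reductions separately: for a fixed tranche $T$ only countably many without-ancestors quasi-arcs can have limit set contained in $T$, and the set of tranches arising as some $T_L$ is itself countable; both then collapse onto the same separability argument once the neighborhoods $U_L$ have been separated. I expect essentially all of the difficulty to be concentrated in this last disjointness verification.
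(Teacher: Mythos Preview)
Your high-level strategy (assign to each quasi-arc a nonempty open set, then invoke second countability) is the right instinct, and it is also what the paper does --- but the paper executes it in $Y$, not in $X$, and that makes all the difference.

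There are two concrete problems with your route.

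First, the local-homeomorphism claim is false. Even at a singleton fiber $x_L$, the map $\phi$ need not be a local homeomorphism: a generalized $\sin(1/x)$-type continuum can have a \emph{dense} set of tranches (this is exactly the content of Example~\ref{exmp:dense_no_arcs} and Theorem~\ref{thm:hat_sin}), so no neighborhood of any point is carried homeomorphically onto an open arc of $Y$. Residuality of the singleton-fiber set (Theorem~\ref{thm:set_of_tranches_is_meager}) does not help here; residual is far from open.

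Second, you correctly identify the disjointness verification as the hard step, but you do not carry it out, and the two ``reductions'' you propose at the end do not reduce anything. Showing that only countably many tranches arise as some $T_L$ is essentially the original problem restated; and there is no obvious mechanism to bound the number of without-ancestor quasi-arcs accumulating on a fixed tranche while working purely inside $X$.

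The paper sidesteps all of this by working downstairs. Rather than producing open sets in $X$, it looks at the images $\phi(\mathring L_\alpha)\subset Y$. The point of the ``without ancestors'' hypothesis is exactly that $\phi$ is injective on $\mathring L_\alpha$ (no tail sits inside a tranche), so pairwise disjoint quasi-arcs give subsets of $Y$ that can only overlap at branching points of $Y$. Throwing away the finitely many arcs whose image hits a branching point leaves an uncountable family of pairwise disjoint open arcs in the topological graph $Y$ --- impossible. This is where you should have pushed: once you know $\phi(\omega(L))$ is a point (which you proved correctly via the Lemma~\ref{limit_collapse} argument), stay in $Y$ and exploit that graphs are one-dimensional, rather than trying to separate things inside the much wilder space $X$.
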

\begin{proof}
    Suppose $X$ is a generalized sin(1/x)-type continuum with an uncountable family of pairwise disjoint oscillatory quasi-arcs  without ancestors $\{L_\alpha\}_{ \alpha \in A}$.  Fix a parameterization
    $\phi_\alpha \colon [0,+\infty) \rightarrow L_\alpha\subset X$
    of $L_\alpha$. Denote $\mathring{L}_\alpha=L_\alpha \setminus \phi_\alpha(0) $.  Let $\phi \colon X\to Y$ be the map provided by the definition of a generalized sin(1/x)-type continuum.
    As points with nondegenerate preimage have to be nowhere dense in $Y$, $\phi$ is a bijection on each $\mathring{L}_\alpha$, hence
    disjoint quasi-arcs do not map by $\phi$ onto the same arcs in $Y$. Sets $\mathring{L}_\alpha$ may intersect, however, it is not hard to see that the intersections are allowed only at the branching points of $Y$, so there are  only finitely many $L_\alpha$ such that $\phi(L_\alpha)$ contains a branching point. By removing the indexes of these arcs from $A$ (should there be any) we get a family of disjoint open arcs in $G$ with cardinality no smaller than that of $A$ (in particular uncountable).  But $Y$ is a topological graph, and hence it does not allow an uncountable family of open connected disjoint subsets. It is a contradiction, completing the proof.
\end{proof}

\begin{rem}\label{rem:48osci}
Generalized sin(1/x)-type continuum can contain only countably many oscillatory quasi-arcs but may contain uncountably many tranches.
It means that the situation where a tranche is a limit set of oscillatory quasi-arc (e.g. like in the Warsaw circle) is to some extent special.
\end{rem}

In \cite[Example 26]{MR3272777} the authors provided an example of a generalized sin(1/x)-type continuum $X$ with map $\phi\colon X\to [0,1]$
such that there is a Cantor set $Q \subset [0,1]$ with nondegenerate $\phi^{-1}(y)$ for every $y\in Q$. It is a particular example of the situation described in Remark~\ref{rem:48osci}.

Later, in Example~\ref{exmp:dense_no_arcs} we show that even stronger extreme is possible. We will construct a generalized sin(1/x)-type continuum with a dense set of tranches and without an oscillatory quasi-arc (in fact, $X$ does not contain any arcs).

\begin{lem}\label{lem:4.3}
Suppose that $X$ is a tranched graph and $L \subset X$ is an oscillatory quasi-arc. Then $\omega(L)$ is a subset of a tranche.
\end{lem}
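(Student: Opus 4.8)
The plan is to fix an associated monotone map $\phi\colon X\to Y$ onto a topological graph $Y$ witnessing that $X$ is a tranched graph, and to prove that $\phi$ is constant on $\omega(L)$; writing $\phi(\omega(L))=\{y_0\}$, the conclusion follows at once, since $L$ oscillatory makes $\omega(L)$ nondegenerate, so $\phi^{-1}(y_0)$ is a tranche and $\omega(L)\subset\phi^{-1}(y_0)$. Throughout I would use that the set $D=\{y\in Y:\phi^{-1}(y)\text{ is degenerate}\}$ is dense in $Y$: indeed $\phi^{-1}(D)$ is dense in $X$ by the definition of a tranched graph (in fact residual by Theorem~\ref{thm:set_of_tranches_is_meager}), and since $\phi$ is a continuous surjection, $Y=\phi(X)=\phi(\overline{\phi^{-1}(D)})\subset\overline{D}$. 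This is the tranched-graph analogue of Lemma~\ref{limit_collapse}, and I would argue in the same spirit.

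Suppose toward a contradiction that there are $a,b\in\omega(L)$ with $\phi(a)\neq\phi(b)$, and fix a parametrization $\varphi\colon[0,\infty)\to L$. Because $a,b\in\omega(L)=\bigcap_{m\ge 0}\overline{\varphi[m,\infty)}$, I can choose parameters $s_1<t_1<s_2<t_2<\cdots\to\infty$ with $\varphi(s_n)\to a$ and $\varphi(t_n)\to b$. As $\varphi$ is injective and the intervals $[s_n,t_n]$ are pairwise disjoint, the arcs $A_n=\varphi([s_n,t_n])$ are pairwise disjoint, and their images $J_n=\phi(A_n)$ are subcontinua of $Y$ with $\diam J_n\ge d(\phi(\varphi(s_n)),\phi(\varphi(t_n)))\to d(\phi(a),\phi(b))>0$. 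Hence there is $\delta>0$ with $\diam J_n>\delta$ for all large $n$.

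The heart of the argument, and the step I expect to be the main obstacle, is to show that two of the images $J_n$ must overlap in more than a single point, so that a degenerate fiber can be inserted into the overlap. Here I would equip $Y$ with a geodesic metric $\rho$ of finite total length $\ell$ (each edge of length one); it induces the same topology as $d$, so the identity is uniformly continuous between the two metrics, and the bound $\diam_d J_n>\delta$ upgrades to a uniform lower bound $\lambda(J_n)\ge\delta'>0$ on the $1$-dimensional length $\lambda$ of $J_n$, where $\lambda(Y)=\ell$. If the overlaps $J_n\cap J_m$, $n\neq m$, all had zero length, then inclusion–exclusion would force $N\delta'\le\lambda\bigl(\bigcup_{n\le N}J_n\bigr)\le\ell$ for every $N$, which is absurd; hence some pair with $n\neq m$ satisfies $\lambda(J_n\cap J_m)>0$. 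Since subcontinua of a topological graph are again finite topological graphs, $J_n\cap J_m$ is a finite graph of positive length and therefore contains a nondegenerate arc; shrinking it, I obtain a non-branching arc $W$ that is open in $Y$ and contained in $J_n\cap J_m$.

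To finish, I would invoke density of $D$ to choose $x\in W$ with $\phi^{-1}(x)=\{w\}$ a singleton. Since $x\in J_n=\phi(A_n)$, there is $p\in A_n$ with $\phi(p)=x$, whence $p=w$ and $w\in A_n$; the same reasoning applied to $J_m$ gives $w\in A_m$. This contradicts $A_n\cap A_m=\emptyset$. Therefore no such pair $a,b$ exists, $\phi$ is constant on $\omega(L)$, and $\omega(L)$ lies in a single tranche, as claimed.
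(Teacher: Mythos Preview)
Your proof is correct, but it takes a markedly different route from the paper's. The paper dispatches the lemma in one line: since $\omega(L)$ is a nowhere dense subcontinuum of $X$, Remark~\ref{rem:collapse} (equivalently Lemma~\ref{lem:nowheredense}) forces $\phi(\omega(L))$ to be a single point. In other words, the paper has already isolated the relevant structural fact---fibers of $\phi$ are never proper subsets of nowhere dense subcontinua---and here simply observes that $\omega(L)$ is such a subcontinuum.

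Your argument instead transplants the proof of Lemma~\ref{limit_collapse} to the tranched-graph setting, replacing the combinatorial pigeonhole on a graph (``arcs $J_n$ cannot be pairwise disjoint'') with a length/measure argument to guarantee that two images $J_n$, $J_m$ overlap on a set of positive length, and then exploiting density of $D$ in $Y$ rather than monotonicity alone. This is more hands-on but entirely self-contained: you never need to check that $\omega(L)$ is nowhere dense, a fact the paper asserts without justification. The paper's approach is shorter and more conceptual, emphasizing that any nowhere dense subcontinuum (not just $\omega(L)$) collapses under $\phi$; yours makes the mechanism explicit and avoids an unproven auxiliary claim.
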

\begin{proof}
 As $\omega(L)$ is a nowhere dense subcontinuum of $X$, the result follows from Remark~\ref{rem:collapse}
\end{proof}
    \begin{figure}

            \centering
            \includegraphics[scale=0.32]{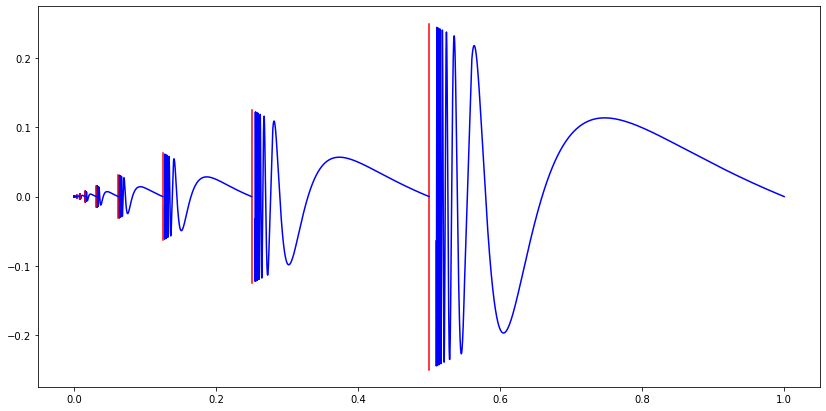}
            \caption{An exemplary generalized sin(1/x)-type continuum whose set of tranches is not closed}            \label{fig:tranche_not_closed}
        \end{figure}   

For the construction in Example~\ref{exmp:infinite_order_quasi_arc} we will use a continuum homeomorphic to the classical sin(1/x)-continuum. However, note that we slightly modified the classical geometry of this object.
Although we will still require
that its limit set is  
$\{0\} \times [0,1]$, we also demand that no point of the associated oscillatory quasi-arc 
intersects the set $[0,1]\times \{0\}$ and intersects $[0,1]\times \{1\}$
only at its endpoint (see Figure~\ref{fig:map_f}). 
Clearly, we can view this quasi-arc as a graph of a continuous map from the set $(0,1]$ onto itself, allowing the following inductive procedure.

\begin{exmp}
\label{exmp:infinite_order_quasi_arc}
    Let    $f\colon (0,1] \rightarrow (0,1]$  be a continuous surjective mapping such that $f(x)=1$ if and only if $x=1$, $f(x)$  approaches $0$ like classical sin(1/x) continuum and $f(x) \neq 0$ for any $x \in (0,1]$, 
    see Figure~\ref{fig:map_f}.  Strictly speaking, we put $f(t)= 0.5((1-t)\sin(1/t)+1)$ for $t\leq 0.7$ and for $t\geq 0.7$ the map $f$ is affine with $f(1)=1$.
    
\end{exmp}

\begin{figure}[ht]
            \centering
            \includegraphics[scale=0.35]{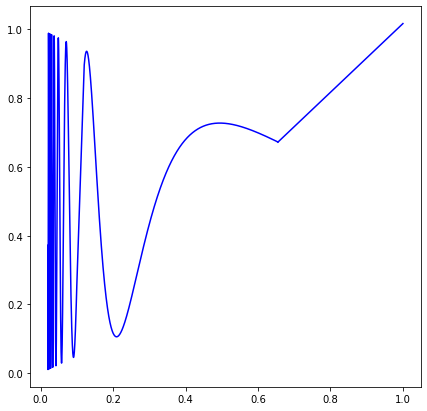}
            \caption{The map $f \colon (0,1] \rightarrow (0,1]$ from Example~\ref{exmp:infinite_order_quasi_arc}}
            \label{fig:map_f}
        \end{figure}
        
 Let us now construct following continua:
\begin{eqnarray}
A_0&=&\{(x,0,0,\ldots): x\in (0,1]\} \cup \{0\}^\infty\nonumber \\
A_1&=&\{(x,f(x),0,\ldots): x\in (0,1]\}\cup \{(0,x,0,\ldots): x\in (0,1]\} \cup \{0\}^\infty\nonumber\\
&\vdots&\nonumber\\
A_n&=&\{(x,f(x),\ldots,f^n(x),0,\ldots): x\in (0,1]\}\cup \theta(A_{n-1})\label{def:An}\\
&\vdots&\nonumber
\end{eqnarray}
where $ \theta \colon [0,1]^\N \rightarrow [0,1]^\N$ is the right shift defined by $\theta((x_0,x_1, \ldots))=(0, x_0, x_1,\ldots)$.

It is easy to see that the sequence $\{A_n\}_{n=1}^\infty$ converges, as the difference between the $n-th$ and $(n+1)th$ elements only appears in the $(n+1)th$ coordinate. Now let $A=\lim_{n \ra \infty} A_n$, which
equivalently means that:
\begin{equation}
    \label{eq:A}
A= \bigcup_{n=0}^\infty\theta^n(\{x,f(x),\ldots,f^n(x),\ldots): x\in (0,1]
\})\cup \{0\}^\infty. 
\end{equation}
In what follows, we will take a closer look at the properties of the set $A$. It will provide us with an intuition before proceeding with more complicated
examples of similar flavor.

\begin{lem}\label{lem:An-sin1x}
    Each of the continua $A_n$ defined by \eqref{def:An} is a generalized sin(1/x)-type continuum and arc-like.
\end{lem}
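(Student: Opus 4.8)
The plan is to induct on $n$. The base case $A_0$ is an arc, which is arc-like and satisfies Definition~\ref{def_sin1/x} trivially via the identity map onto $[0,1]$ (all fibers singletons). For the inductive step I would first record the structural observation that drives everything. Writing $p(x)=(x,f(x),\dots,f^n(x),0,\dots)$ and $R_n=\{p(x):x\in(0,1]\}$, the map $p$ is a continuous injection whose inverse is the first-coordinate projection, so $R_n$ is a ray (indeed a quasi-arc). One checks that $\overline{R_n}\setminus R_n=\theta(A_{n-1})$: the inclusion $\subseteq$ holds because any limit of $p(x_k)$ with $x_k\to 0$ has first coordinate $0$, and its remaining coordinates, obtained as sublimits of the iterates $f^j(x_k)$, reassemble into a point of $\theta(A_{n-1})$ using $f^{j+1}=f\circ f^{j}$ and continuity of $f$ on $(0,1]$; the inclusion $\supseteq$ uses the $\sin(1/x)$-type oscillation of $f$ near $0$ to realize every prescribed configuration of sublimits. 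Thus $A_n=\overline{R_n}=R_n\cup\theta(A_{n-1})$ is a continuum, namely a compactification of a ray whose remainder $\theta(A_{n-1})$ is a homeomorphic copy of $A_{n-1}$ (the shift $\theta$ is an embedding).

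For the generalized $\sin(1/x)$-type property I would take $\phi=\pi_1\colon A_n\to[0,1]$, the first-coordinate projection onto the arc $[0,1]$ (a topological graph). Its only nondegenerate fiber is $\pi_1^{-1}(0)=\theta(A_{n-1})$, which is connected, while $\pi_1^{-1}(t)=\{p(t)\}$ for $t\in(0,1]$; hence $\phi$ is monotone and the set of degenerate fibers is dense (in fact residual, by Theorem~\ref{thm:set_of_tranches_is_meager}). It remains to verify the approximation property \eqref{sin-iii}. By the inductive hypothesis $A_{n-1}$, and hence $\theta(A_{n-1})$, is arc-like, so $\theta(A_{n-1})\in\ClassW{}$ by Remark~\ref{rem:arclike_are_classw}. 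Since $A_n$ is a compactification of the ray $R_n\cong[0,1)$ with remainder $\theta(A_{n-1})\in\ClassW{}$, the characterization of $\ClassW{}$ recalled after Definition~\ref{def:classW} gives that $\mathcal{C}(A_n)$ is a compactification of $\mathcal{C}(R_n)$; that is, every subcontinuum of $A_n$ is a Hausdorff limit of subcontinua of the ray. In particular every subcontinuum $Y_0$ of the tranche $\pi_1^{-1}(0)$ is a limit of sub-arcs of $R_n$, and each such sub-arc has the form $\pi_1^{-1}([a,b])$ with $0<a\le b\le 1$; this is exactly \eqref{sin-iii} (the degenerate fibers being handled trivially by a local homeomorphism).

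For arc-likeness I would again invoke that $A_{n-1}$, hence $\theta(A_{n-1})$, is chainable, and show that a ray compactification with chainable remainder is chainable. Fixing $\epsilon>0$, I would take a fine $\epsilon$-chain $\mathcal U=(U_1,\dots,U_m)$ of open subsets of $A_n$ covering the remainder, so that its union is an open neighbourhood $N$ of $\theta(A_{n-1})$ containing the ray tail $\{p(x):x<x_0\}$ for some $x_0>0$; the compact sub-arc $K=p([x_0,1])$ is bounded away from the remainder in the first coordinate and can be covered by a simple chain $(W_1,\dots,W_p)$, after which one concatenates the two chains. The delicate point, and the main obstacle, is that the concatenation must remain a genuine linear chain: the ray re-enters $N$ and weaves back and forth through the links $U_i$, so the chain on $K$ must be attached at an \emph{end} link of $\mathcal U$. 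I would secure this by choosing the cut $x_0$ so that $p(x_0)$ lies near an ``endpoint'' of the chain of $\theta(A_{n-1})$, which is possible precisely because $R_n$ accumulates on all of $\theta(A_{n-1})$, as established above. With this choice the concatenated sequence is an $\epsilon$-chain of $A_n$, and letting $\epsilon\to 0$ yields chainability of $A_n$, closing the induction and proving the lemma.
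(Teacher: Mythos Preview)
Your argument follows the same route as the paper's: induct on $n$, recognize $A_n$ as the closure of a ray with remainder $\theta(A_{n-1})$, and use that arc-like continua lie in $\ClassW{}$ (Remark~\ref{rem:arclike_are_classw}) together with the $\ClassW{}$ characterization to obtain the approximation property. The paper is terser, simply citing Remarks~\ref{rem:arclike_are_classw} and~\ref{rem:classw_is_hereditary} and declaring the inductive arc-likeness step ``elementary to check'', whereas you sketch a direct chain argument; that sketch is correct in spirit, though turning the concatenated cover into a genuine chain (guaranteeing the $W_i$ miss all non-adjacent $U_j$ even though the ray tail weaves through the thickened links) requires a bit more care than your final paragraph indicates.
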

\begin{proof}

 It is enough to use   Remark~\ref{rem:arclike_are_classw} and Remark~\ref{rem:classw_is_hereditary}.
The continuum $A_1$ is an interval, hence arc-like and an element of $\ClassW{}$. Observe that $A_{n+1}$ is a closure of an  oscillatory quasi-arc $L_{n+1}$ with $\omega(L_{n+1})=\theta(A_n)$. Since $\theta$ is a homeomorphism between $A_n$ and $\theta(A_n)$, we see that $\theta(A_n)$ is arc-like, and so $A_{n+1}=L_{n+1} \cup \omega(L_{n+1})$ is a generalized sin(1/x)-type continuum, as $\omega(L_{n+1}) \in \ClassW$ . It is elementary to check that since $\theta(A_{n+1})$ is arc-like, so is $A_{n+1}$.
\end{proof}
\begin{figure}[ht]
            \centering
            \includegraphics[scale=0.27]{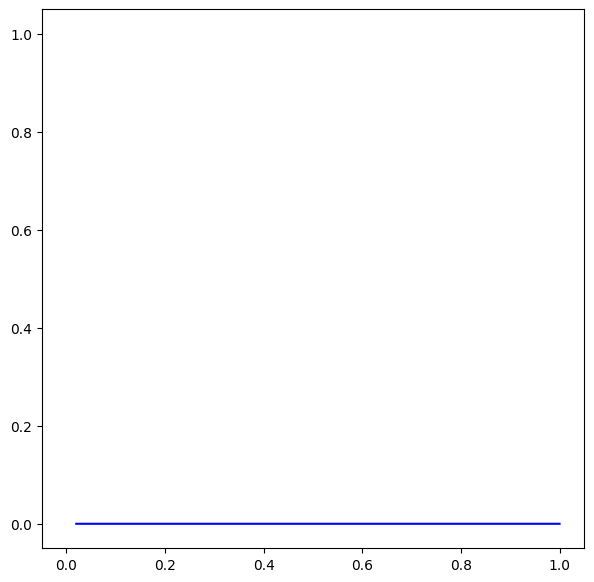}
            \includegraphics[scale=0.27]{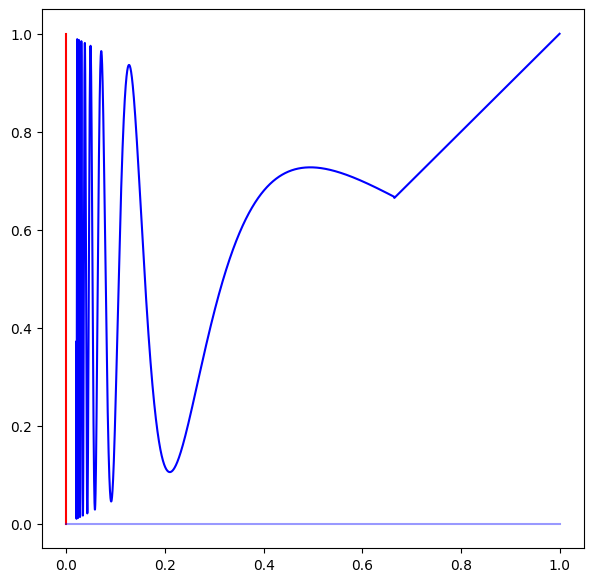}
            \includegraphics[scale=0.28]{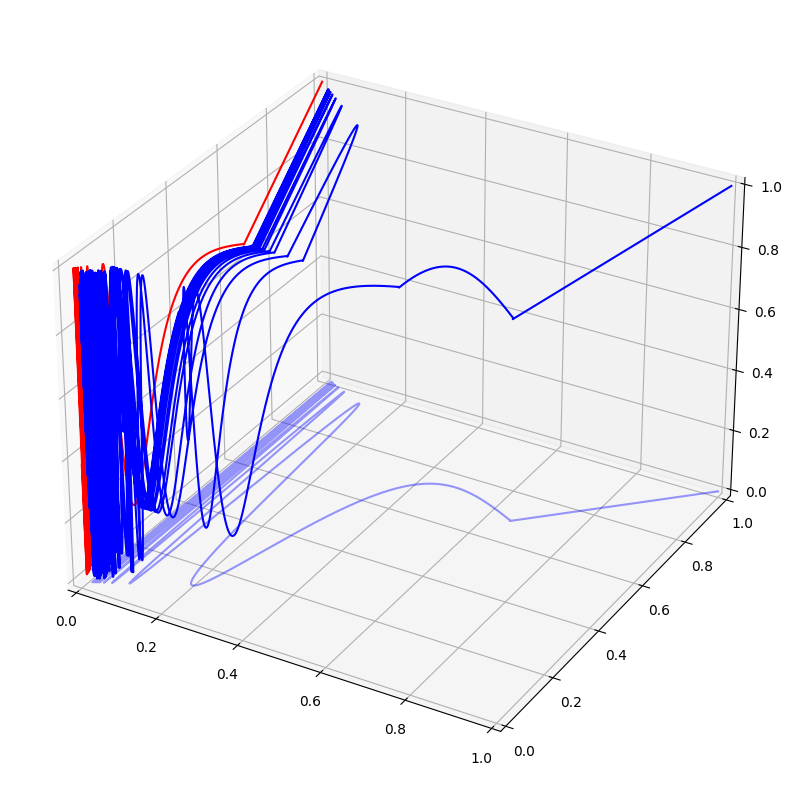}
            \caption{Projections of continua  $A_0$, $A_1$, $A_2$ defined by \eqref{def:An}.}
            \label{fig:contA3}
        \end{figure}

\begin{lem}
\label{lem:infite_depth}
    The set $A$ defined by \eqref{eq:A} is a generalized sin(1/x)-type continuum.
\end{lem}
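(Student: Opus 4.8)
The plan is to realise $A$ as the closure of a single oscillatory quasi-arc and to take the projection onto the first coordinate as the monotone map witnessing that $A$ is a generalized sin(1/x)-type continuum. Write $L=\{(x,f(x),f^2(x),\ldots)\colon x\in(0,1]\}$ for the ``infinite diagonal'' quasi-arc; unravelling \eqref{eq:A} gives $A=\overline{L}$ with $\omega(L)=\theta(A)$, and since $A\setminus\{0\}^\infty=\bigsqcup_{n\ge 0}\theta^n(L)$ is partitioned according to the position of the first nonzero coordinate, the projection $\pi_1\colon A\to[0,1]$ onto the first coordinate has a single nondegenerate fiber, namely $\pi_1^{-1}(0)=\theta(A)$, while $\pi_1^{-1}(c)=\{(c,f(c),f^2(c),\ldots)\}$ for $c\in(0,1]$. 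First I would record that $\pi_1$ is continuous and monotone (its fibers are either singletons or the connected set $\theta(A)\cong A$) and that $\pi_1^{-1}((0,1])=L$ is dense in $A=\overline{L}$, so that Definition~\ref{def_sin1/x}\eqref{sin-ii} holds and $A$ is a tranched graph whose only tranche is $\theta(A)$.

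It then remains to verify the approximation property Definition~\ref{def_sin1/x}\eqref{sin-iii} for this single tranche, and I would reduce it to showing $A\in\ClassW$. Indeed, $A=L\cup\omega(L)$ is a compactification of the ray $L$ with remainder $\theta(A)\cong A$; assuming $A\in\ClassW$, the equivalent condition for $\ClassW$ stated after Definition~\ref{def:classW} (via compactifications of $[0,1)$) guarantees that every subcontinuum $Y_0$ of the remainder $\theta(A)$ is a Hausdorff limit of subcontinua of the ray $L$. Since $\pi_1$ restricts to a homeomorphism $L\to(0,1]$, those approximating subcontinua are exactly the arcs $\pi_1^{-1}([a_k,b_k])$ with $0<a_k\le b_k\le 1$, and for such intervals $\pi_1^{-1}([a_k,b_k])=\phi^{-1}([a_k,b_k])$; choosing $k$ with $d_H(Y_0,\pi_1^{-1}([a_k,b_k]))<\epsilon$ yields precisely \eqref{sin-iii}.

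The heart of the argument, and the step I expect to be the main obstacle, is therefore proving $A\in\ClassW$, which I would obtain by showing $A$ is arc-like and invoking Remark~\ref{rem:arclike_are_classw}. Here one must be careful: although each $A_n$ is arc-like (Lemma~\ref{lem:An-sin1x}) and $A=\lim_n A_n$, arc-likeness is \emph{not} preserved under arbitrary Hausdorff limits (a sequence of arcs can converge to a circle), so $A_n\to A$ cannot be used as a black box and the specific coordinate structure must be exploited. For the projection $q_N\colon A\to[0,1]^N$ onto the first $N$ coordinates, the level decomposition above shows $q_N(A)=\overline{q_N\!\big(\bigsqcup_n\theta^n(L)\big)}=A_{N-1}$ (identifying $A_{N-1}$ with its image in $[0,1]^N$), and $A_{N-1}$ is arc-like by Lemma~\ref{lem:An-sin1x}.

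To finish I would use the standard characterisation that a continuum is arc-like iff for every $\epsilon>0$ it admits an $\epsilon$-map onto $[0,1]$, i.e.\ a surjection all of whose fibers have diameter less than $\epsilon$. Given $\epsilon>0$, pick $N$ so large that every fiber of $q_N$ has diameter less than $\epsilon/2$ (possible since the coordinates beyond the $N$-th contribute arbitrarily little to the product metric) and let $h\colon A_{N-1}\to[0,1]$ be an $(\epsilon/2)$-map. Then for $x,y\in A$ with $h(q_N(x))=h(q_N(y))$ the first $N$ coordinates contribute less than $\epsilon/2$ to $d(x,y)$ while the remaining coordinates contribute less than $\epsilon/2$, so $h\circ q_N$ is an $\epsilon$-map of $A$ onto $[0,1]$. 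Hence $A$ is arc-like, $A\in\ClassW$, and the reduction of the previous paragraph completes the proof.
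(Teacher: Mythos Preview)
Your proof is correct and takes a genuinely different route from the paper's. The paper verifies the approximation property directly: given a subcontinuum $Z\subset\theta(A)$ and $\varepsilon>0$, it chooses $n$ large enough that points agreeing on the first $n{+}1$ coordinates are $\varepsilon/4$-close, projects $Z$ to $Z_n\subset A_n$, invokes Lemma~\ref{lem:An-sin1x} to find an arc $\varphi_n([a,b])$ in $A_n$ that is $\varepsilon/4$-close to $Z_n$, and then observes that $\varphi_n([a,b])$ and $\varphi([a,b])$ differ only beyond coordinate $n$ and hence are $\varepsilon/4$-close. A triangle inequality finishes it, with no appeal to $\ClassW$ or arc-likeness.

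Your argument instead upgrades the arc-likeness of each $A_n$ to arc-likeness of $A$ itself via the $\varepsilon$-map characterisation, then routes through Remark~\ref{rem:arclike_are_classw} and the ray-compactification criterion for $\ClassW$. This is a legitimate and clean reduction, and it buys you an extra structural conclusion the paper does not state explicitly, namely that $A$ is arc-like. The paper's approach, by contrast, is more self-contained: it never leaves the circle of ideas already set up for $A_n$ and avoids importing the $\varepsilon$-map machinery and the $\ClassW$ equivalence. Both proofs ultimately rest on the same finite-dimensional input (Lemma~\ref{lem:An-sin1x}) and the same coordinate-truncation trick; they just cash it in differently.
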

\begin{proof}
It is clear that $A$ is a compact set. Now suppose that it is not connected. Then there are closed and disjoint sets $U,V$ such that $A=U \cup V$ and as a consequence there is a coordinate $n$ such that projections $U_n$ and $V_n$ are also disjoint.
The projection of $A$ onto the $n$-th coordinate is the set
$$\bigcup_{i=0}^n \{f^i(x) : x\in (0,1]\}\cup \{0\}=[0,1]
= U_n\cup V_n
$$ 
which is a contradiction.

    Let $Y=[0,1]$ and let $\phi\colon A \rightarrow Y$ be the projection onto  the first coordinate. 
 Observe that $\phi^{-1}(\{0\})=\{x\in A: x_0=0\}=\theta(A)$, hence it is  a compact connected set. For $x\in (0,1]$ each fiber $\phi^{-1}(x)$ is a singleton. This shows that $\phi$ is a monotone map.      
    
    Let $Z$ be a subcontinuum of $\phi^{-1}(0)$. 
    Note that the projection of $A$ onto the first $n+1$-coordinates is the same as the projection of $A_n$  onto these coordinates.
    Take $n$ such that if $y,z$ satisfy $y_i=z_i$ for $i=0,\ldots,n$ then $d(y,z)<\varepsilon/4$.
    Let $Z_n$ be the projection of $Z$ onto first $n+1$ coordinates with $0$ on coordinates with index $i>n$. In other words, $Z_n$ is the projection of $Z$ into $A_n$.
    Let $\varphi_n(x)=(x,f(x),\ldots,f^n(x),0,0,\ldots)\in A_n$
    and $\varphi(x)=(x,f(x),\ldots,f^n(x),\ldots)\in A$, defined for $x\in [0,1)$,
    be oscillatory quasi-arcs.
    By Lemma~\ref{lem:An-sin1x} there are $a,b$ such that $d_H(\varphi_n([a,b]),Z_n)<\varepsilon/4$.
    But $d_H(\varphi_n([a,b]),\varphi([a,b]))<\varepsilon/4$,
    hence $d_H(\varphi([a,b]),Z)<3\varepsilon/4<\varepsilon$ completing the proof.
\end{proof}
 Let us observe that $A$ has a kind of fractal self-similar structure. Strictly speaking, we can construct a sequence $A=X_0 \supset X_1 \supset \ldots \supset X_n$ of any finite length such that $X_{k+1}$ is a tranche of $X_k$ for $k=0, \ldots , n-1$ and all of the spaces $X_k$ are homeomorphic to $A$.  The construction presented in Example~\ref{exmp:infinite_order_quasi_arc} is also a natural way of constructing $\infty$-order oscillatory quasi-arc. This gives us another difference between quasi-graphs and generalized sin(1/x)-type continua.

\begin{rem}
    In general, generalized sin(1/x)-type continua may contain an $\infty$-order oscillatory quasi-arcs as subsets.
\end{rem}

To construct the continuum $A$ defined by \eqref{eq:A} we set a particular geometric representation of the sin(1/x)-continuum, whose only intersection with line $[0,1] \times \{0\}$ was allowed for the limit set of the sin(1/x) curve. This allowed us to lift this continuum consecutively to higher dimensions, keeping only one tranche each time, hence maintaining a general structure of oscillatory quasi-arc.

In Example~\ref{exmp:dense_no_arcs} we are going to give another geometric representation of a sin (1/x)-type continuum, but this time, for symmetry  in the construction, we use a two-sided version of the sin(1/x)-continuum (see Figure~\ref{fig:double}). If we discard the limit sets of the sin(1/x) curve, the remaining set can be parameterized as a graph of a function. The main difficulty in repeating this construction (and in providing an accessible description of the resulting space) is caused by the placement of the curve in the space. Namely, several points that map to $0$ or $1$, and so we cannot iterate the map on them any further (in contrast to Example~\ref{exmp:infinite_order_quasi_arc} where there was only one problematic point). This significantly increases the complexity of the construction (and the resulting space). In contrast to Example~\ref{exmp:infinite_order_quasi_arc}, instead of maintaining one tranche, in each step of the construction, we produce infinitely many new tranches.

\begin{exmp}
\label{exmp:dense_no_arcs}
First define auxiliary sequence $a_{-k}=\frac{1}{(k+2)^2}$ for $k \in\N $ and $a_k=1-\frac1{(k+2)^2}$ for $k>0$. Notice that intervals $I_n=[a_n,a_{n+1}]$ give us a decomposition of $(0,1)$, $|I_n|\leq\frac12$ for all $n \in \Z$ and $|I_n|=\frac12$ if and only if $n=0$. On each interval $I_n$ we plot a tent map with slope $\lambda_n=2\frac1{|I_n|}$, notice that $\lambda_n \geq 4$. Denote by $X$ the closure of the graphs of tent maps (see Figure~\ref{fig:double}).

\begin{figure}[ht]
    \centering
    \includegraphics[scale=0.25]{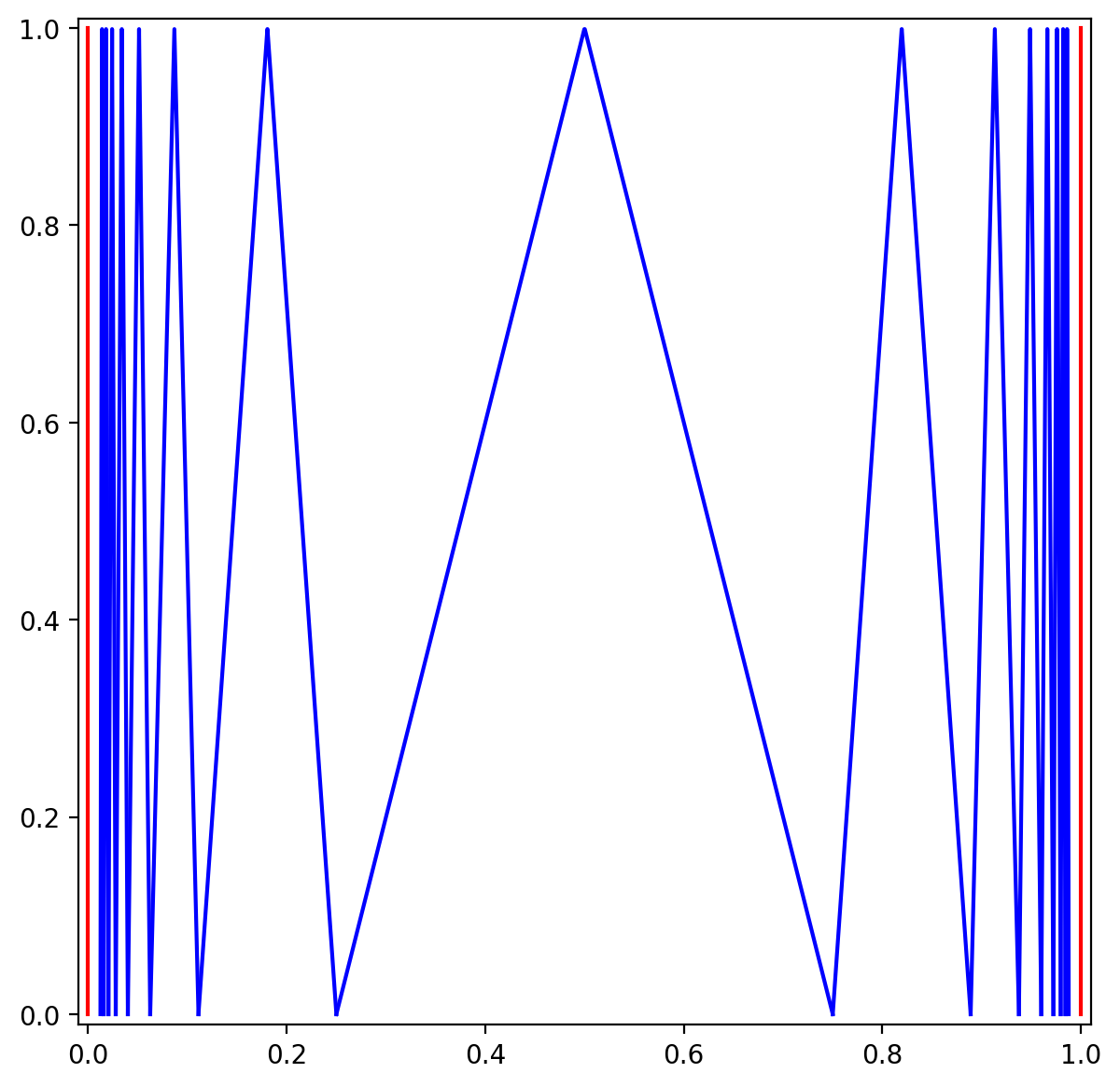}
    \includegraphics[scale=0.4]{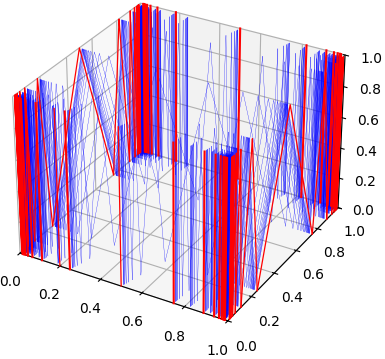}
    \caption{The continuum $X$ and projection of continuum $X_2$ from Example \ref{exmp:dense_no_arcs}. Points where $X_2$ is not locally arcwise connected are marked in red. Roughly speaking they appear at the faces of the boundary cube; on two faces these sets are copy of $X$; at another two adjacent faces they appear as vertical lines exactly at extrema of blue curve defining $X$. }
    \label{fig:double}
\end{figure}
It is easy to check that $X$ is a generalized sin(1/x)-type continuum.
We construct the following sequence of continua embedded in the Hilbert cube $\mathcal{H}=[0,1]^\N$:
\begin{eqnarray*}
	X_0&=&\{(x_0,0,0,\ldots): x_0\in [0,1]\}\\
	X_1&=&\{(x_0,x_1,0,\ldots): x_0\in [0,1],(x_0,x_1) \in X \}\\
	&\vdots&\\
	X_n&=&\{(x_0,x_1,\ldots,x_n,0,\ldots): x_0\in [0,1], \forall i=1,...,n \quad (x_{i-1},x_i) \in X \}\\
	&\vdots&\\
\end{eqnarray*}
\end{exmp}

The ultimate goal of our construction is to prove that the sequence $\{X_n\}_{n=1}^\infty$ converges in the Hausdorff metric to a generalized sin(1/x)-type continuum. Before we proceed further, we describe the topological structure of continua $X_n$. 
Let $\psi \colon X \ra [0,1]$ and $\psi_n \colon X_n \ra [0,1]$ be projections onto the first coordinate. 
The following lemma describes the subcontinua of the continuum $X_n$:
\begin{lem}
\label{lem:no_arcs_finite_tranche_subcontinuum}
    Let $Z \subset X_{n}$ be a subcontinuum with $\psi_n(Z)=y$ for some $y \in [0,1]$. Then $Z= \{y\} \times Z_0$ and $Z_0$ is a subcontinuum of $X_{n-1}$.
\end{lem}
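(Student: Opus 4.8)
The plan is to exploit the one-sided shift structure that is hard-wired into the definition of the $X_n$. Introduce the left shift $s\colon [0,1]^\N\to[0,1]^\N$, $s(x_0,x_1,x_2,\ldots)=(x_1,x_2,\ldots)$, which is continuous and is a left inverse of the right shift $\theta$ used to build the $X_n$. Using the canonical homeomorphism $[0,1]^\N\cong[0,1]\times[0,1]^\N$, $x\mapsto(x_0,s(x))$ (note that on $X_n$ the first coordinate is exactly $\psi_n(x)=x_0$), the whole statement becomes a bookkeeping claim about what happens to the chain conditions defining $X_n$ when one coordinate is stripped off.

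First I would use the hypothesis $\psi_n(Z)=\{y\}$: it says precisely that every point of $Z$ has first coordinate equal to $y$. Setting $Z_0:=s(Z)$, each $z\in Z$ is then recovered from $s(z)$ by prepending $y$, so under the identification above $Z=\{y\}\times Z_0$; this handles the first assertion. Since $Z$ is a continuum and $s$ is continuous, $Z_0=s(Z)$ is nonempty, compact and connected, hence a continuum. It then remains to check $Z_0\subset X_{n-1}$. For this I would take a representative $z=(y,x_1,\ldots,x_n,0,0,\ldots)\in Z$, set $x_0:=y$, and note that membership $z\in X_n$ gives the chain conditions $(x_{i-1},x_i)\in X$ for all $i=1,\ldots,n$. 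Then $s(z)=(x_1,\ldots,x_n,0,0,\ldots)$, and writing $x_i':=x_{i+1}$ we get $(x_{i-1}',x_i')=(x_i,x_{i+1})\in X$ for $i=1,\ldots,n-1$, together with $x_1\in[0,1]$; these are exactly the conditions characterizing membership in $X_{n-1}$, so $s(z)\in X_{n-1}$ and therefore $Z_0\subset X_{n-1}$.

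I do not expect a genuine obstacle here: the only content of the argument is the observation that the defining chain conditions of $X_{n-1}$ are precisely those of $X_n$ with the single initial link $(y,x_1)\in X$ deleted, so discarding the first coordinate can only remove constraints and hence preserves membership (no closure issue arises because $X$ is already closed, so each $X_n$ is literally the solution set of finitely many membership conditions). The one point worth a line of care is the base instance $n=1$, where $X_{n-1}=X_0$ is the interval $\{(t,0,0,\ldots):t\in[0,1]\}$ and the claim degenerates to the statement that the shifted fiber consists of the values $x_1$ with $(y,x_1)\in X$, each of which trivially lies in $[0,1]$; this confirms that the reindexing is consistent at the bottom of the induction that this lemma is designed to feed.
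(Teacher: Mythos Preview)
Your proposal is correct and follows essentially the same route as the paper: both introduce the left shift $\sigma$ (your $s$), observe that $Z_0=\sigma(Z)$ is a continuum by continuity of $\sigma$, and verify $Z_0\subset X_{n-1}$ via the defining chain conditions (the paper phrases this last step as ``the relation $X$ is surjective, hence $\sigma(X_n)=X_{n-1}$'', which is just a more compressed version of your explicit reindexing check).
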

\begin{proof}
    As $\psi_n \colon X_n \ra [0,1] $ is a projection, obviously $Z= \{y\} \times Z_0$ for some set $Z_0$. Denote by $\sigma$ the standard left shift, meaning $\sigma ( (x_0,x_1,\ldots) )= (x_1,x_2,\ldots)$.
    Then $Z_0=\sigma(Z)$ and so is a continuum, since $\sigma$ is a continuous map.
    But the relation $X$ is surjective, and hence by definition $\sigma(X_n)=X_{n-1}$, completing the proof.
\end{proof}

Now we are ready to prove the following:
\begin{lem}
\label{lem:no_arcs_finite_sin}

	Each continuum $X_k$, $k=0,1,\ldots$ is a generalized sin(1/x)-type continuum with the associated map $\psi_k \colon X_k \ra [0,1]$ (that is, $\psi_k$ satisfies the definition requirements). 
\end{lem}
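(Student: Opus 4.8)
The plan is to induct on $k$, and to carry through the induction a package slightly stronger than the statement: for each $k$ I prove (a) that $X_k$ is a generalized sin(1/x)-type continuum with map $\psi_k$, (b) that for a subcontinuum of a single fiber $\psi_k^{-1}(y)$ the arcs witnessing the approximation property may be chosen to shrink to the point $y$, and (c) that \emph{every} subcontinuum of $X_k$ (not only those inside one fiber) is a Hausdorff limit of vertical slabs $\psi_k^{-1}(J)$, $J\subseteq[0,1]$ an interval. The base case $k=0$ is immediate, since $X_0$ is an arc and $\psi_0$ is a homeomorphism. Write $\tau\colon(0,1)\to[0,1]$ for the piecewise--tent function whose graph closure is $X$, so that $(x_{i-1},x_i)\in X$ means $x_i=\tau(x_{i-1})$ when $x_{i-1}\in(0,1)$ and leaves $x_i$ free when $x_{i-1}\in\{0,1\}$; recall that the slopes $\lambda_n$ are chosen so that every tent attains the value $1$, i.e.\ each monotone half-tent maps its interval \emph{onto} $[0,1]$.

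I would first dispatch the routine requirements. Compactness of $X_k$ is clear, as it is cut out of the Hilbert cube by the closed relations $(x_{i-1},x_i)\in X$. By Lemma~\ref{lem:no_arcs_finite_tranche_subcontinuum} and the inductive hypothesis, each fiber $\psi_k^{-1}(y)$ is $\{y\}\times X_{k-1}$ when $y\in\{0,1\}$ and, via the shift, a homeomorphic copy of the fiber $\psi_{k-1}^{-1}(\tau(y))$ when $y\notin\{0,1\}$; in either case it is connected, so $\psi_k$ is monotone, and since $\psi_k$ maps onto the continuum $[0,1]$ with connected fibers, $X_k$ is connected, hence a continuum. The fibers that are degenerate are exactly those over points $x_0$ whose forward $\tau$-orbit avoids $\{0,1\}$ through coordinate $k-1$, and the exceptional set of $x_0$ is countable; thus condition (ii) of Definition~\ref{def_sin1/x} reduces to density of the singleton-fiber points, which I obtain from the accumulation statement below.

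The core is the approximation property, and the engine is a lifting reduction. Given a subcontinuum $Y_0\subseteq\psi_k^{-1}(y)$, write $Y_0=\{y\}\times Z_0$ by Lemma~\ref{lem:no_arcs_finite_tranche_subcontinuum}. If $[a,b]$ is an interval on which $\tau$ is monotone with $\tau([a,b])=J$, then $\psi_k^{-1}([a,b])$ has first coordinate in $[a,b]$ and tail ranging over the whole slab $\psi_{k-1}^{-1}(J)$; consequently, once $[a,b]$ is close to $y$ and $\psi_{k-1}^{-1}(J)$ is Hausdorff-close to $Z_0$, the set $\psi_k^{-1}([a,b])$ is Hausdorff-close to $\{y\}\times Z_0=Y_0$. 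It remains to produce $J$ and to place $[a,b]$. When $y\in\{0,1\}$ the fiber is all of $\{y\}\times X_{k-1}$, so $Z_0$ is an arbitrary subcontinuum of $X_{k-1}$; I then use the inductive statement (c) to approximate $Z_0$ by slabs $\psi_{k-1}^{-1}(J_n)$, and I realize each target $J_n$ on a monotone half-tent of $\tau$ lying as close to $y$ as desired, using that the half-tents accumulating at $0$ and $1$ each map onto $[0,1]$. When $y\notin\{0,1\}$ the fiber is a copy of $\psi_{k-1}^{-1}(\tau(y))$, so $Z_0$ lies in one fiber of $\psi_{k-1}$; here I use the inductive statement (b) to approximate $Z_0$ by slabs $\psi_{k-1}^{-1}(J_n)$ with $J_n\to\tau(y)$, and lift through the monotone branch of $\tau$ through $y$, which carries a shrinking interval about $y$ onto $J_n$. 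In both cases the lifting interval shrinks to $y$, which propagates (b); and since a full slab approximates itself while a fiber-subcontinuum is handled exactly as above, statement (c) at level $k$ follows once I know that a subcontinuum of $X_k$ is either a full slab or contained in one fiber.

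This dichotomy and the density of singleton fibers both rest on a single structural fact, which I expect to be the main obstacle: at a branching value $x_0^\ast$ (one whose $\tau$-orbit meets $\{0,1\}$), the singleton-fiber points over nearby $x_0$ accumulate onto the \emph{entire} tranche $\psi_k^{-1}(x_0^\ast)$. I would prove this by an auxiliary induction that tracks the first coordinate $m$ at which the orbit of $x_0^\ast$ reaches $\{0,1\}$: the first $m$ coordinates vary continuously, at coordinate $m$ the value sits next to $0$ or $1$ so that $\tau$ there oscillates surjectively onto $[0,1]$, and the accumulation statement for $X_{k-m-1}$ lets the remaining coordinates be driven to any prescribed point of the free part of the tranche. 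Granting this, if $\psi_k(W)$ is nondegenerate then $W$ contains every (dense) singleton point over its image and, by the accumulation fact, every tranche it meets in full, so $W$ is the full slab over $\psi_k(W)$; and the same fact applied to an arbitrary point of $X_k$ gives density of the singleton-fiber points. The only geometric input used is that each tent attains height $1$, so I would take care to invoke only that feature.
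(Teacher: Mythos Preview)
Your proposal is correct and follows essentially the same inductive strategy as the paper: both reduce a fiber-subcontinuum to $\{y\}\times Z_0$ via Lemma~\ref{lem:no_arcs_finite_tranche_subcontinuum}, establish the full-slab dichotomy (a subcontinuum with nondegenerate $\psi_k$-image equals the slab over that image, using density of singleton fibers), and then lift approximating slabs from $X_{k-1}$ through monotone pieces of $\tau$ near $y$. Your explicit packaging of statements (b) and (c) and the case split $y\in\{0,1\}$ versus $y\notin\{0,1\}$ are minor organizational variants of the paper's inline claim and its split by whether $\psi_{k-1}(Z_0)$ is degenerate; note also that your (b) is in fact automatic, since $d_H(\psi_{k-1}^{-1}([a,b]),Z_0)<\epsilon$ already forces $[a,b]$ close to $\psi_{k-1}(Z_0)$.
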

\begin{proof}
	 We will proceed by induction.  By the definition, $X_0$ is an arc and $X_1$ is homeomorphic to $X$, so both are a generalized sin(1/x)-type continua.   As we noticed before, $X$ can be viewed as the closure of the graph of a function, say 
  $f \colon (0,1) \ra [0,1]$
  
  Now assume we already proved that $X_{k-1}$ is a generalized sin(1/x)-type continuum for some $k\geq 2$.
    Choose a point $x_0\in [0,1]$ such that $\psi^{-1}_k(x_0)$ is nondegenerate and fix any point $x=(x_0, \ldots,x_{k},0, \ldots)\in \psi^{-1}_k(x_0)\subset X_k$. We claim that for every $\epsilon>0$ there is an element $x_\epsilon \in X_k$ defining a singleton fiber and such that $d(x,x_\epsilon)<\epsilon$. 
    
     Assume that $x_0=0$ and $x_1,\ldots, x_k\not\in \{0,1\}$, then using the fact that $X$ is a generalized sin(1/x)-type continuum, for every $\epsilon >0$ there is a point $(z,x_1) \in X$ with $0<z<\epsilon$. By our construction $x_\epsilon = (z,x_1,\ldots,x_{k},0, \ldots) \in X_k$ and obviously $d(x, x_\epsilon)< \epsilon$. For $x_0=1$ the process is completely analogous.  If there were positions $x_i\in \{0,1\}$ we can perform the above construction of $x_\eps$ in steps, starting with largest $i\leq k$ such that $x_i\in \{0,1\}$ and perform the above modification starting with this position, next move to smaller $i$, eventually finishing at $x_0$.

    Suppose now $x_0 \not\in \{0,1\}$, choose $\delta<\epsilon/2$ such that $J=(x_0 - \delta, x_0+\delta) \subset (0,1)$ and $f(J) \subset [x_1 - \epsilon/2, x_1 + \epsilon/2]$. By induction hypothesis there is a point $\tilde{x}=(\tilde{x}_1, \ldots, \tilde{x}_{k} , 0,0,\ldots)$ such that $d((  x_1\ldots,x_{k},0, \ldots),\tilde{x})<\epsilon/2$  and $\tilde x_1\in f(J)$, which is possible since $f(J)\ni x_1$ is an open set for small $\delta$. We choose $\tilde{x}_0 \in J$ such that $f(\tilde{x}_0)=\tilde{x}_1$     and we set $x_\epsilon=(\tilde{x}_0, \ldots, \tilde{x}_{k})$. It follows easily that $d(x,x_\epsilon) <\epsilon$.
    We get that the set of degenerate fibers is dense in $X_k$, so the claim is proved.

    Next we are going to prove the approximation property.     First we claim that if $Z \subset X_k$  is a continuum such that $\psi_k(Z)=[a,b]$ for a nondegenerate interval $[a,b]\subset [0,1]$, then $Z=\psi^{-1}_k([a,b])$. Denote by $D_k \subset [a,b]$ the set of points with degenerate preimages under $\psi_k$. We proved already that points from singleton fibers are dense in $X_k$, and the argument from the proof easily leads to  $\overline{\psi^{-1}_k(D)}=\psi^{-1}_k([a,b])$.
    On the other hand, points in the set $\psi^{-1}_k(D)$ map injectively to $[a,b]$, meaning $\psi^{-1}_k(D)\subset Z$, and so $Z$ is dense in $\psi^{-1}_k([a,b])$. But $Z$ is closed, so $Z=\psi^{-1}_k([a,b])$ proving that the claim holds.

     Fix any $y\in [0,1]$ and  $\epsilon>0$. If $\psi^{-1}_{k}(y)$ is a singleton, the approximation property is trivially satisfied. Suppose that $\psi^{-1}_{k}(y)$ is nondegenerate and fix any subcontinuum $Y_0$ of $\psi^{-1}_{k}(y)$.  
     Lemma~\ref{lem:no_arcs_finite_tranche_subcontinuum} implies that $Y_0=\{y\} \times Z_0$, for some subcontinuum $Z_0$ of  $X_{k-1}$. Now either  $ \psi_{k-1}(Z_0)=[a,b]$ for $a\neq b$, or $Z_0$ is a subset of a tranche in $X_{k-1}$. 
    Assume that the first possibility holds.
    Choose $[\alpha,\beta] \subset [0,1]$ such that $\dist(y,[\alpha,\beta])<\epsilon$ and $R([\alpha,\beta])=[a,b]$, where  $R(J)=\{z \in [0,1]: (x,z) \in X, x\in J \}$. Then as we showed before, there is an unique continuum $A=\phi_{k}^{-1}([\alpha,\beta])$ that projects to $[\alpha,\beta]$, and so $\sigma(A)=Z_0$, $\sigma$ being the left shift, defined as in Lemma~\ref{lem:no_arcs_finite_tranche_subcontinuum}. That gives us
    $$d_H(Y_0,\psi^{-1}_{k}([\alpha,\beta]))=\dist(y,[\alpha,\beta])<\epsilon.$$
    Suppose now that $Z_0$ is a subset of a tranche in $X_{k-1}$. By induction hypothesis, there is $[a,b] \subset [0,1]$ with
    $$d_H(\psi^{-1}_{k-1}([a,b]),Z_0) <\epsilon/2$$
    Again, we choose $[\alpha,\beta] \subset [0,1]$ such that $\dist(y,[\alpha,\beta])<\epsilon/2$ and $R([\alpha,\beta])=[a,b]$, giving us 
    $$d_H(Y_0,\psi^{-1}_{k}([\alpha,\beta]))<\epsilon.$$
    \end{proof}

It is easy to see that the sequence $\{X_n\}_{n=1}^\infty$ defined in Example~\ref{exmp:dense_no_arcs} converges, as the changes we make occur on higher dimensions in each step. Denote $\widehat{X}=\lim\limits_{n \ra \infty} X_n$ and let $\hat{\psi} \colon \widehat{X} \ra [0,1]$ be the projection onto the first coordinate. Construction of $\widehat{X}$ is similar to that of an inverse limit, and is sometimes called the infinite Mahavier product of $X$, and has generated some attention recently (see \cite{MR3179790}). We are going to show that $\widehat{X}$ is a generalized sin(1/x)-type continuum. The first step is to show that all fibers are nowhere dense, which is $(i)$ in Definition~\ref{def_sin1/x} and will be used to prove point $(ii)$ in that definition. 

\begin{lem}
\label{lem:fibers:nd}
     The fiber $\hat{\phi}^{-1}(y)$ is nowhere dense in $\hat{X}$ for any $y \in [0,1]$.
\end{lem}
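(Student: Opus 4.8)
The plan is to show that each fiber $\hat\psi^{-1}(y)$ has empty interior; since $\hat\psi$ (the projection onto the first coordinate) is continuous, the fibers are closed, so this is equivalent to nowhere density, and equivalent to the openness statement that the set $\{x\in\widehat X : x_0\neq y\}$ is dense. As this set is open, density reduces to the following: for every $x=(x_0,x_1,\ldots)\in\widehat X$ with $x_0=y$ and every $N$ and $\delta>0$, there is a point $x'\in\widehat X$ with $x'_0\neq y$, $|x'_0-y|<\delta$, and $x'_j=x_j$ for all $j$ outside a fixed finite set, so that $x'$ lies in the prescribed neighborhood of $x$. Throughout I write $X=\overline{\mathrm{graph}(f)}$ with $f\colon(0,1)\to[0,1]$ the piecewise tent map of Example~\ref{exmp:dense_no_arcs}, so that a point of $\widehat X$ is precisely a sequence with $x_{i+1}=f(x_i)$ whenever $x_i\in(0,1)$ and $x_{i+1}$ arbitrary when $x_i\in\{0,1\}$.

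First I would record the structural dichotomy governing fibers. If $y\in(0,1)$ and the forward orbit $y,f(y),f^2(y),\ldots$ never meets $\{0,1\}$, then all coordinates are forced, $x_i=f^i(y)$, and $\hat\psi^{-1}(y)$ is a single point; since $\widehat X$ is a nondegenerate continuum, a one-point set is automatically nowhere dense, so this case needs nothing further (it is also covered by the construction below). Otherwise let $k\geq 0$ be the least index with $f^k(y)\in\{0,1\}$ (with $k=0$ when $y\in\{0,1\}$), and set $p_i=f^i(y)$. Minimality forces $p_i\in(0,1)$ for $i<k$, and none of $p_0,\ldots,p_{k-2}$ can be a peak or a junction $a_n$, since either event would place a value of $\{0,1\}$ in the orbit before time $k$; consequently $f$ is a local homeomorphism at each of $p_0,\ldots,p_{k-2}$, and hence $f^{k-1}$ is a local homeomorphism at $y$.

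The core construction perturbs $y$ and then reconnects to the original tail exactly. For $u'$ near $y$ the values $f^i(u')$, $i\leq k-1$, stay near $p_i\in(0,1)$ by continuity, while $f^k(u')=f(f^{k-1}(u'))$ is forced near $p_k\in\{0,1\}$; moreover, since $p_{k-1}$ is a junction (if $p_k=0$) or a peak (if $p_k=1$), the map $f^k$ carries a neighborhood of $y$ onto a one-sided neighborhood $[0,\rho)$ of $0$ (resp.\ $(1-\rho,1]$ of $1$). Now I invoke the oscillatory behaviour of $f$ near the endpoints: because the tent maps on the intervals $I_n$ accumulating at $0$ and at $1$ are each onto $[0,1]$, for the target value $x_{k+1}$ and any $\rho>0$ there is $t$, arbitrarily close to $p_k$, with $f(t)=x_{k+1}$. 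Choosing such a $t$ inside the image neighborhood and pulling it back gives $u'$ near $y$ with $f^k(u')=t$; since $t\neq p_k$ we automatically obtain $u'\neq y$. I then set $x'_i=f^i(u')$ for $i\leq k$ and $x'_j=x_j$ for $j\geq k+1$. Every adjacency constraint holds — in particular $(x'_k,x'_{k+1})=(t,x_{k+1})\in X$ because $f(t)=x_{k+1}$ — so $x'\in\widehat X$; the first $k+1$ coordinates are as close to those of $x$ as we wish and the remaining coordinates agree exactly, while $x'_0=u'\neq y$. This exhibits the required nearby point off the fiber. The cases $k=0$ and $k=1$ are the same argument with $f^{k-1}$ read as the identity.

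The main obstacle, and the reason the argument is not merely ``perturb the first coordinate,'' is the expansion of the tent maps (all slopes $\geq 4$): a small change in $x_0$ is amplified under forward iteration, so one cannot keep a freely evolving perturbed orbit uniformly close to the original. The resolution is precisely to route the perturbation through the first coordinate at which the orbit reaches $\{0,1\}$ and to use the $\sin(1/x)$-type oscillation of $f$ there to land the perturbed orbit \emph{exactly} back on the value $x_{k+1}$, after which the tail is copied verbatim and contributes nothing to the distance. Verifying this reconnection in all cases — namely that $f$ is locally onto $[0,1]$ arbitrarily near $0$ and near $1$, and that the relevant pullbacks of small $t$ stay near $y$ — is the only point requiring care.
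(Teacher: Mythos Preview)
Your proof is correct but follows a genuinely different route from the paper. The paper's argument is extremely short: it assumes for contradiction that some fiber contains an open set $U$, picks $n$ large enough that $U$ has a nondegenerate projection onto the $n$-th coordinate, and observes that the image of $U$ in $X_n$ then witnesses nonempty interior in the fiber $\psi_n^{-1}(y)$, contradicting the already-established Lemma~\ref{lem:no_arcs_finite_sin} (each finite approximation $X_n$ is a generalized $\sin(1/x)$-type continuum, hence all its fibers are nowhere dense). In other words, the paper reduces the limit space to the finite-level approximants via projection.

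Your approach is self-contained and constructive: you analyze the orbit structure of a point in the fiber, identify the first index $k$ where the orbit hits $\{0,1\}$, and exploit the oscillation of $f$ near the endpoints to reconnect a perturbed orbit exactly to the original tail at step $k+1$. This avoids any appeal to Lemma~\ref{lem:no_arcs_finite_sin} and gives a direct reason why fibers are thin: the $\sin(1/x)$-type behavior of $f$ lets one shadow any tail from arbitrarily close starting points. The cost is a longer argument with more case analysis; the benefit is that it is independent of the inductive machinery and makes the mechanism transparent. Both proofs are valid; the paper's is the natural one given what has been built, while yours would stand alone even without Lemma~\ref{lem:no_arcs_finite_sin}.
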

\begin{proof}

    If the fiber is a singleton, the result is obvious. Assume otherwise and suppose that there is an open set $U \subset \hat{\phi}^{-1}(y)$. 
    There is $n$ such that $U$ has a nondegenerate projection onto $n$-th coordinate. But then $\psi_n^{-1}(y)$ has nonempty interior in $X_n$, which is a contradiction by Lemma~\ref{lem:no_arcs_finite_sin}.
\end{proof}

\begin{lem}
\label{lem:tranche:count}
\label{lem:tranche_baire}

Let $N=[0,1] \backslash D$, where  $D \subset [0,1]$ is the set of points with a degenerate preimage. The set $\hat{\psi}^{-1}(N)$ is dense in $\widehat{X}$, countable, and meager.
As a consequence, the set
$\hat{\psi}^{-1}(D)$ is residual.
\end{lem}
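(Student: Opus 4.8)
The plan is to reduce the whole statement to the forward dynamics of the single function $f\colon (0,1)\to[0,1]$ whose graph-closure is $X$ (as used in the proof of Lemma~\ref{lem:no_arcs_finite_sin}). Since for $x\in(0,1)$ the only $z$ with $(x,z)\in X$ is $z=f(x)$, while for $x\in\{0,1\}$ every $z\in[0,1]$ satisfies $(x,z)\in X$ (the two vertical limit segments), a point $(x_0,x_1,\dots)\in\widehat X$ is forced coordinate-by-coordinate as long as it stays in $(0,1)$, and it can branch only after hitting $\{0,1\}$. Thus $\hat\psi^{-1}(y)$ is a singleton exactly when the orbit $y,f(y),f^2(y),\dots$ never meets $\{0,1\}$, giving the characterization
\[
N=\bigcup_{k\ge 0} f^{-k}(\{0,1\}),\qquad f^{0}=\id .
\]
I would record this first, since every subsequent step rests on it.

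The countability of $N$ comes next. Each monotone branch of a tent map sends its interval onto $[0,1]$, so any value has at most two $f$-preimages in a given $I_n$; as there are countably many intervals $I_n$, the set $f^{-1}(v)$ is countable for every $v$, hence so is each $f^{-k}(\{0,1\})$, and $N$ is a countable union of countable sets. Consequently $\hat\psi^{-1}(N)=\bigcup_{y\in N}\hat\psi^{-1}(y)$ is a countable union of fibers, i.e.\ there are only countably many tranches. Meagerness and the residual conclusion are then immediate: by Lemma~\ref{lem:fibers:nd} every fiber $\hat\psi^{-1}(y)$ is nowhere dense in $\widehat X$, so the countable union $\hat\psi^{-1}(N)$ is meager; because $\widehat X$ is compact metric (hence a Baire space) and $\hat\psi^{-1}(D)=\widehat X\setminus\hat\psi^{-1}(N)$, the set $\hat\psi^{-1}(D)$ is residual.

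The main obstacle is density of $\hat\psi^{-1}(N)$. I would first show $N$ is dense in $[0,1]$: as $f$ is piecewise linear with all slopes at least $4$, any interval $J\subset(0,1)$ expands under iteration until some $f^{j}(J)$ contains a turning point of $f$ (a peak, where $f=1$, or a junction $a_k$, where $f=0$); every turning point lies in $f^{-1}(\{0,1\})\subset N$, so $J\cap N\neq\emptyset$. Then, given $p=(x_0,x_1,\dots)\in\widehat X$ and $\eps>0$, I fix $m$ so large that agreement on the first $m+1$ coordinates forces distance below $\eps$. If $x_0\in N$ we are done; otherwise $x_i=f^i(x_0)\in(0,1)$ for all $i$ and none of $x_0,\dots,x_{m-1}$ is a turning point (turning points map into $\{0,1\}$), so $f^{m}$ restricts to a homeomorphism of a neighborhood $U\ni x_0$ onto a neighborhood $V\ni x_m$ inside $(0,1)$. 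Choosing $z\in N\cap V$ close to $x_m$ and setting $x_0'=(f^{m}|_U)^{-1}(z)$ gives $x_0'\in N$ (its orbit reaches $z\in N$, hence eventually $\{0,1\}$) with $f^i(x_0')$ close to $x_i$ for $i\le m$; extending $(x_0',f(x_0'),\dots)$ to an admissible sequence $q$ yields $q\in\hat\psi^{-1}(N)$ with $d(p,q)<\eps$.

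The delicate point is exactly this controlled perturbation of the initial coordinate into $N$ while preserving the forced initial segment of the sequence: density of $N$ in $[0,1]$ and local invertibility of $f^m$ away from turning points are what make it possible, and I expect the bookkeeping there (verifying that all intermediate coordinates stay in $(0,1)$ so that the pulled-back point genuinely lies in $N$) to be the only subtle part of the argument.
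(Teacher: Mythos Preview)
Your proof is correct and follows essentially the same outline as the paper's: identify $N$ via the forward orbit of $f$, show $N$ is countable, use Lemma~\ref{lem:fibers:nd} to get meagerness, show $N$ is dense in $[0,1]$ by expansion, and lift density to $\widehat X$. The one substantive difference is in the last step. The paper uses a soft compactness argument: given $y\in D$ and $y_n\in N$ with $y_n\to y$, pick any $x_n\in\hat\psi^{-1}(y_n)$, pass to a convergent subsequence $x_n\to x$, and observe that $\hat\psi(x)=y$ forces $x$ to be the unique point of $\hat\psi^{-1}(y)$; hence every singleton fiber lies in $\overline{\hat\psi^{-1}(N)}$. Your argument instead constructs the approximating point explicitly by pulling back along a monotone branch of $f^m$. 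Both are valid, but the compactness route avoids all the bookkeeping you flagged as delicate (staying inside $(0,1)$, avoiding turning points), since it never needs to control the tail of the approximating sequence. Your approach, in return, is more constructive and makes the orbit characterization $N=\bigcup_{k\ge 0}f^{-k}(\{0,1\})$ explicit, which the paper uses only implicitly.
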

\begin{proof}
    Let us observe that  $X_1$ has two tranches  $\psi_1$-preimages of points $0$ and $1$.  It follows that the distance between the projection of tranches by $\psi_1$ is $1$. By our construction, in $X_2$ the longest open  subintervals of $[0,1]$ with all points having degenerate preimages under $\psi_2$ are $(a_0,\frac{a_0 +a_1}{2})$ and $(\frac{a_0 +a_1}{2},a_1)$, both with length $\frac1{\lambda_0}$ , which are pieces of monotonicity of the tent map with the smallest possible slope used to define $X$. These intervals get stretched by a factor of $\lambda_0$, so it is easy to see that the longest interval without a degenerate preimage under $\psi_3$ has length $\frac1{\lambda_0}\frac1{\lambda_0}= \frac1{\lambda_0^2}$. Continuing inductively we get that the longest interval without a nondegenerate $\psi_{n+1}$-preimage has length $\frac{1}{\lambda_0^n}$. By passing to the limit, we get that for any  $y \in [0,1]$ and any $\epsilon>0$ set $(y-\epsilon,y+\epsilon) \cap N$ is nonempty and so $N$ is dense in $[0,1]$. 
     
     We are going to repeat the argument from the proof of Lemma~\ref{lem:no_arcs_finite_sin} to show that the set $\hat{\psi}^{-1}(N) $ is also dense. Take any point $y$ with a degenerate preimage. By the density of $N$, there is a sequence $\{ y_n\}_{n=1}^\infty\subset N$ 
     such that $y_n\to y$. For each $i$ fix an element
      $x_i \in \hat{\psi}^{-1}(y_i)$  
      and assume by compactness (going to a subsequence if necessary) that
 the limit $x_n\to x$ exists. But then $\hat\psi(x_n)=y_n\to y=\hat\psi(x)$
 and therefore
 $
      \hat \psi^{-1}(y)=\{x\}$. 
      Indeed $\hat{\psi}^{-1}(N)$ is dense in $X$ as claimed.
    \par
    The set of points with a nondegenerate preimage under $\psi_k$ is countable for all $k$ and $N$ is a countable  union of such sets, therefore $N$ is countable.

By Lemma~$\ref{lem:fibers:nd}$ 
the set  $\hat{\psi}^{-1}(N)$ is a meager set. As $\hat{\psi}^{-1}(D)$ is its  
complement in  $\widehat{X}$, $\hat{\psi}^{-1}(D)$ is a residual set. 
\end{proof}

When we constructed continua $X_k$, their trenches were homeomorphic to $X_{k-1}$. 
This extends onto
$\widehat{X}$.
\begin{lem}
\label{lem:homeo}
    Each fiber $\hat{\psi}^{-1}(y) \subset \widehat{X}$ is either a singleton or is homeomorphic to $\widehat{X}$
\end{lem}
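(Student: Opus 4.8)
The plan is to exploit the self-similar structure of the Mahavier product together with the left shift $\sigma$ used in Lemma~\ref{lem:no_arcs_finite_tranche_subcontinuum}. Recall that $\widehat{X}$ consists of all sequences $(x_0,x_1,x_2,\ldots)$ with $(x_{i-1},x_i)\in X$ for every $i\geq 1$, that $\hat\psi$ is the projection onto $x_0$, and that the left shift $\sigma(x_0,x_1,\ldots)=(x_1,x_2,\ldots)$ maps $\widehat X$ into $\widehat X$. Writing $X$ as the closure of the graph of the function $f\colon(0,1)\ra[0,1]$ introduced in the proof of Lemma~\ref{lem:no_arcs_finite_sin}, I would first record the key geometric fact about the vertical slices $R(\{x\})=\{z:(x,z)\in X\}$: for $x\in(0,1)$ the slice is the single point $\{f(x)\}$, while $R(\{0\})=R(\{1\})=[0,1]$. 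The point is that a neighborhood of an interior $x\in(0,1)$ meets only finitely many of the intervals $I_n$, so near such $x$ the set $X$ is a finite union of graphs of continuous tent maps (agreeing at the shared endpoints $a_n$, where the common value is $0$); the oscillations accumulate only over $x=0$ and $x=1$, producing the two full vertical segments.

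With this in hand, fix $y\in[0,1]$ and describe the fiber as $\hat\psi^{-1}(y)=\{(y,x_1,x_2,\ldots)\in\widehat X\}$. Since every element of the fiber has first coordinate equal to $y$, the restriction of $\sigma$ to $\hat\psi^{-1}(y)$ is injective, and likewise every iterate $\sigma^m$ is injective on the fiber provided the coordinates $x_1,\ldots,x_{m-1}$ are forced to take prescribed values. I would then trace these forced coordinates: $x_1$ must lie in $R(\{y\})$, $x_2$ in $R(\{x_1\})$, and so on, so the whole point is determined as long as each successive slice is a singleton, that is, as long as the orbit $y,f(y),f^2(y),\ldots$ stays inside $(0,1)$.

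This splits into two cases. If $f^k(y)\in(0,1)$ for all $k\geq 0$, then every slice $R(\{f^k(y)\})=\{f^{k+1}(y)\}$ is degenerate, the sequence $(y,f(y),f^2(y),\ldots)$ is the unique element of the fiber, and $\hat\psi^{-1}(y)$ is a singleton. Otherwise let $k\geq 0$ be minimal with $f^k(y)\in\{0,1\}$ (with $k=0$ when $y\in\{0,1\}$); then the coordinates $x_0=y,\,x_1=f(y),\ldots,x_k=f^k(y)$ are all forced, while $R(\{f^k(y)\})=[0,1]$ by the slice computation. I claim $\sigma^{k+1}$ carries $\hat\psi^{-1}(y)$ homeomorphically onto $\widehat X$. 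It is injective on the fiber by the remark above, and continuous as a restriction of a continuous map. For surjectivity, given any $(w_0,w_1,\ldots)\in\widehat X$, the sequence $(y,f(y),\ldots,f^k(y),w_0,w_1,\ldots)$ lies in $\hat\psi^{-1}(y)$ (the only new adjacency to check is $(f^k(y),w_0)\in X$, which holds because $w_0\in[0,1]=R(\{f^k(y)\})$) and it maps to $(w_0,w_1,\ldots)$. A continuous bijection between the compact metric spaces $\hat\psi^{-1}(y)$ and $\widehat X$ is a homeomorphism, so $\hat\psi^{-1}(y)\cong\widehat X$, finishing the proof.

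The routine part is the shift bookkeeping. The genuinely load-bearing step is the slice computation $R(\{x\})=\{f(x)\}$ on $(0,1)$ against $R(\{0\})=R(\{1\})=[0,1]$, which is exactly where the specific geometry of the double-sided $\sin(1/x)$ continuum (oscillations accumulating only over the two endpoints) enters, together with the verification that once the orbit first hits $\{0,1\}$ the tail is completely unconstrained, so that $\sigma^{k+1}$ surjects onto all of $\widehat X$ rather than onto a proper subfiber.
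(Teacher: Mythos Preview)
Your proof is correct and follows essentially the same approach as the paper: both identify the forced initial segment of coordinates up to the first time the orbit hits $\{0,1\}$, and then use an iterate of the left shift $\sigma$ to delete that prefix and obtain a homeomorphism onto $\widehat X$. Your exposition is slightly more explicit in spelling out the slice computation $R(\{x\})=\{f(x)\}$ for $x\in(0,1)$ versus $R(\{0\})=R(\{1\})=[0,1]$ and in verifying surjectivity of $\sigma^{k+1}$, but the idea is identical to the paper's use of $\tau_k=\sigma^k$ (your indexing with $\sigma^{k+1}$ is in fact the cleaner one, landing directly on $\widehat X$ rather than on the fiber over $0$).
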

\begin{proof}
    Let $y \in [0,1]$ be such that $\hat{\psi}^{-1}(y)$ is not a singleton. It means that there is  $k \in \N$ such that $\psi_k^{-1}(y)$ is a tranche of $X_k$. Fix the smallest $k$ with this property, and note that $\psi_{k-1}^{-1}(y)$ is not a tranche of $X_{k-1}$. Therefore the first $k-1$ coordinates of any element in $\hat{\psi}^{-1}(y)$ are uniquely determined and belong to $(0,1)$. Using the symmetry of $X$, without loss of generality we may assume that the $k$-th coordinate is equal to $0$ for every element of $\hat{\phi}^{-1}(y)$. Then:  
    $$\hat{\psi}^{-1}(y)=\{(y_0,y_1, \ldots, y_{k-1},0, y_{k+1},\ldots), (y_{i-1},y_i) \subset X \}$$
    for $y_0=y$. Set of points $z$ for which $(0,z) \in X$ is equal to the interval $[0,1]$, so:
    $$\hat{\psi}^{-1}(y)=\{(y_0,y_1, \ldots, y_{k-1},0, z_0, z_1,\ldots), (y_{i-1},y_i) \in X, ~(z_{i-1},z_i) \in X, ~z_0 \in [0,1]\}$$
     Observe that the map $\tau_k(x)=\sigma^k(x)$
      is invertible on $\hat{\psi}^{-1}(y)$ and the set $\tau_k(\hat{\psi}^{-1}(y))$ is equal to $\widehat{X}$. Indeed $\hat{\psi}^{-1}(y)$ and $\hat X$ are homeomorphic.
\end{proof}
We can now prove the theorem revealing the main property of our construction.
\begin{thm}
\label{thm:hat_sin}
     The set $\widehat{X}$ is a generalized sin(1/x)-type continuum
\end{thm}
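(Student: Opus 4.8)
The plan is to check the three clauses of Definition~\ref{def_sin1/x} for the pair $(\widehat{X},\hat{\psi})$ with target graph $Y=[0,1]$, after first assembling the material already available. To begin, $\widehat{X}$ is a continuum, being a Hausdorff limit of the continua $X_n$ (the hyperspace $\mathcal{C}(\mathcal{H})$ of subcontinua is closed in $2^{\mathcal{H}}$). The map $\hat{\psi}$ is continuous as a coordinate projection, and it is monotone: by Lemma~\ref{lem:homeo} each fiber $\hat{\psi}^{-1}(y)$ is either a singleton or homeomorphic to $\widehat{X}$, hence connected. Clause \eqref{sin-ii} is precisely the density of $\hat{\psi}^{-1}(D)$, which Lemma~\ref{lem:tranche_baire} supplies (indeed it is residual), and clause \eqref{sin-i} is then either redundant or follows from Lemma~\ref{lem:fibers:nd}. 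This reduces the theorem to the approximation property \eqref{sin-iii}, which is the substantive part.

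For \eqref{sin-iii} the plan is to lift the approximation property of the finite stages $X_k$ (Lemma~\ref{lem:no_arcs_finite_sin}) to the limit, in the spirit of the proof of Lemma~\ref{lem:infite_depth}. Fix $y\in[0,1]$, a subcontinuum $Y_0\subset \hat{\psi}^{-1}(y)$, and $\epsilon>0$. Let $p_n\colon \mathcal{H}\to\mathcal{H}$ be the truncation keeping the coordinates of index at most $n$ and zeroing the rest; since the tail of the product metric is uniformly small, I can pick $n$ so large that $d(x,p_n(x))<\epsilon/4$ for all $x$. The structural facts I would record are $p_n(\widehat{X})=X_n$ and, for every subinterval $[a,b]$, the identity $p_n(\hat{\psi}^{-1}([a,b]))=\psi_n^{-1}([a,b])$; together with the metric bound these give $d_H(\hat{\psi}^{-1}([a,b]),\psi_n^{-1}([a,b]))\leq\epsilon/4$.

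With this in place I would set $Z=p_n(Y_0)$, which is a subcontinuum of $X_n$ lying in the fiber $\psi_n^{-1}(y)$ (the first coordinate is preserved) and satisfies $d_H(Y_0,Z)<\epsilon/4$. Applying the approximation property of $X_n$ to $Z$ produces an arc $[a,b]\subset[0,1]$ with $d_H(Z,\psi_n^{-1}([a,b]))<\epsilon/4$, and a triangle inequality then yields
$$d_H(Y_0,\hat{\psi}^{-1}([a,b]))\leq d_H(Y_0,Z)+d_H(Z,\psi_n^{-1}([a,b]))+d_H(\psi_n^{-1}([a,b]),\hat{\psi}^{-1}([a,b]))<3\epsilon/4<\epsilon,$$
which is exactly \eqref{sin-iii}. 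The step I expect to require the most care is the identity $p_n(\hat{\psi}^{-1}([a,b]))=\psi_n^{-1}([a,b])$: the nontrivial inclusion amounts to extending every finite admissible string $(z_0,\dots,z_n)\in X_n$ with $z_0\in[a,b]$ to an infinite point of $\widehat{X}$, which is where the surjectivity of the Mahavier relation $X$ (already used in Lemma~\ref{lem:no_arcs_finite_tranche_subcontinuum}) enters. Everything else is bookkeeping with the Hausdorff metric and the uniform truncation bound from the choice of $n$.
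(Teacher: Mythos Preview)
Your proposal is correct and follows essentially the same approach as the paper: reduce to the approximation property, project to a finite stage $X_n$ via the coordinate truncation, invoke Lemma~\ref{lem:no_arcs_finite_sin} there, and lift back using the surjectivity of the Mahavier relation together with the uniform truncation bound and a triangle inequality. The paper's notation is $\pi_k$ rather than your $p_n$, and it phrases the lift as $A=\pi_k^{-1}(B)$, but the argument and the identified delicate step (extending finite admissible strings to infinite ones) are the same.
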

\begin{proof}
    By Lemma~$\ref{lem:homeo}$, each fiber $\hat{\psi}^{-1}(y)$ is a singleton or is homeomorphic to $\widehat{X}$. In both cases, it is a connected set, meaning $\hat{\psi}$ is monotone.
    By Lemma~\ref{lem:tranche_baire} and the Baire Category Theorem, the set of degenerate fibers is dense in $\widehat{X}$. It remains to show the approximation property from the definition of generalized sin(1/x)-type continua.
    
 Fix any $y\in [0,1]$ and $\epsilon>0$. If the preimage of $y$ is degenerate, there is nothing to prove, so assume that $\hat{\psi}^{-1}(y)$ is nondegenerate and fix a subcontinuum  $Y_0 \subset \hat{\psi}^{-1}(y)$.
 By the construction, there is a natural number $k$ such that $\psi^{-1}_k(y)$ is a tranche of $X_k$. Let $\pi_k$ be the projection:
 $$\pi_k \colon \widehat{X}\ni (x_0,x_1, \ldots) \mapsto (x_0,x_1, \ldots, x_k,0,0,0, \ldots) \in  X_k$$
 Let us choose  $k$ large enough so that $Z_0=\pi_k(Y_0)$  is a subcontinuum of a tranche of $X_k$, and any set $V \subset \mathcal{H}$ satisfies:
 $$d_H(V,\pi_k(V))  <\epsilon/4.
 $$
By Lemma~\ref{lem:no_arcs_finite_sin} we obtain that $X_k$ is a generalized sin(1/x)-type continuum, so there is an arc  $[a,b] \subset [0,1]$ such that 
$B=\psi_k^{-1}([a,b])$ approximates $Z_0$:
 $$d_H(B,Z_0) < \epsilon/4$$
Let $A$ be an extension of $B$ to $\hat X$ 
 given by the following formula:
 $$A=\{ (x_0,x_1, \ldots,x_k,x_{k+1}, \ldots) \in \hat{X}:~ (x_1, \ldots, x_k,0, \ldots) \in B \}=\pi_k^{-1}( B).$$
Such an infinite extension is nonempty, because for any element of $B$ we can consider all replacements of $0$ on $k+1$-coordinate using all the elements $(x_k,x_{k+1})\in X$ and proceed with this process inductively. We can view the process of creation of $A$ as a Cauchy sequence in the hyperspace (or use the fact that $\pi_k$ is continuous); hence, $A$ is closed. This means that 
the Hausdorff distance 

is well defined on $A$ and
 $$ d_H(A,Y_0) \leq d_H(A, B)+d_H( B,Y_0) \leq d_H(A, B) +d_H(B,Z_0) +d_H(Z_0,Y_0).$$
 To sum up, we get
 $$d_H(\hat{\psi}^{-1}([a,b]),Y_0) = d_H(A,Y_0) \leq \epsilon/4 + \epsilon/4 +\epsilon/4 < \epsilon$$
    proving the approximation property, and so $\widehat{X}$ is a generalized sin(1/x)-type continuum.
\end{proof}

\begin{thm}
     The continuum $\widehat{X}$ does not contain any nondegenerate arcs.
\end{thm}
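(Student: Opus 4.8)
The plan is to argue by contradiction: assume $\gamma\subset\widehat{X}$ is a nondegenerate arc and exploit that the defining relation $X=\overline{\operatorname{graph}(f)}$ is built from tent maps of slope $\lambda_n\geq 4$, so that iterating $f$ is expanding, while $f(t)$ oscillates across all of $[0,1]$ as $t\to 0^+$ or $t\to 1^-$ (the intervals $I_n$ accumulate at $0$ and $1$, and each carries a full tent). First reduction: $\hat{\psi}(\gamma)$ is a subcontinuum of $[0,1]$, hence a point or a nondegenerate interval. If it is a single point $y$, then $\gamma\subset\hat{\psi}^{-1}(y)$, which is nondegenerate, and by Lemma~\ref{lem:homeo} this fiber is carried homeomorphically onto $\widehat{X}$ by a shift $\sigma^{k}$, so $\sigma^{k}(\gamma)$ is again an arc in $\widehat{X}$. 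Repeating, either after finitely many shifts the image projects onto a nondegenerate interval, or at every stage it projects to a point; in the latter case the fiber structure described in Lemma~\ref{lem:homeo} pins every coordinate of $\gamma$ to a constant, forcing $\gamma$ to be degenerate. Hence, after relabelling, I may assume $\hat{\psi}(\gamma)=[c,d]$ is nondegenerate.

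Next I pass to a convenient subarc. By an elementary property of continuous surjections of intervals, $\gamma$ contains a subarc $\gamma_0$ on which $\hat{\psi}$ is a homeomorphism onto a nondegenerate interval; shrinking, I take its image $[c',d']$ inside $(0,1)$ and inside a single lap of $f$ (the critical points of $f$ accumulate only at $0,1$, so only finitely many lie in any compact subinterval of $(0,1)$). Let $\phi=(\hat{\psi}|_{\gamma_0})^{-1}$, a continuous section with coordinate functions $\phi_m$. Since for every $t\in(0,1)$ the slice $X\cap(\{t\}\times[0,1])$ is the single point $(t,f(t))$, continuity forces $\phi_{m+1}(x_0)=f(\phi_m(x_0))$ whenever $\phi_m(x_0)\in(0,1)$; inductively $\phi_m(x_0)=f^{m}(x_0)$ for as long as the orbit stays in $(0,1)$.

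The core step, which I expect to be the main obstacle, is to locate an interior point $x^{\ast}\in[c',d']$ and an index $j$ with $\phi_j(x^{\ast})\in\{0,1\}$ while $\phi_j$ is nonconstant near $x^{\ast}$. Here expansion enters: as long as $f^{i}|_{[c',d']}$ is monotone it maps $[c',d']$ into a single lap, where $f$ is affine with slope of modulus $\geq 4$, so $|f^{i}([c',d'])|\geq 4^{i}\,|[c',d']|$; these lengths cannot remain $\leq 1$, so there is a least $j$ for which $f^{j-1}([c',d'])$ contains a critical point of $f$ in its interior. For $i<j$ the interior of $[c',d']$ maps into $(0,1)$, so the coordinates remain forced and $\phi_j=f^{j}$ there, while at step $j$ the map $f^{j}$ attains a critical value $c^{\ast}\in\{0,1\}$ at an interior point $x^{\ast}$, forming a nondegenerate fold (resp.\ a $V$-shape) around $x^{\ast}$. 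The delicate bookkeeping is precisely keeping the orbit inside $(0,1)$ for all $i<j$, so that the identification $\phi_j=f^{j}$ is legitimate up to the critical time.

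Finally I derive the contradiction. For $x$ near $x^{\ast}$ with $x\neq x^{\ast}$ we have $\phi_j(x)=f^{j}(x)\in(0,1)$ with $\phi_j(x)\to c^{\ast}\in\{0,1\}$, and the forcing gives $\phi_{j+1}(x)=f(\phi_j(x))$. Since $f(t)$ oscillates across all of $[0,1]$ as $t\to c^{\ast}$ from within $(0,1)$, the values $\phi_{j+1}(x)$ cannot converge as $x\to x^{\ast}$, which contradicts the continuity of the coordinate function $\phi_{j+1}$. This contradiction shows that no nondegenerate arc can be embedded in $\widehat{X}$, completing the proof.
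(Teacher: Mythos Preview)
Your overall strategy is sound and genuinely different from the paper's. The paper argues structurally: once $\hat\psi(A)=[a,b]$ is nondegenerate, density of the tranche set $N$ (Lemma~\ref{lem:tranche:count}) produces $y\in(a,b)\cap N$, and for suitable $k$ the projection $\pi_k(A)\subset X_k$ must equal the full preimage $\psi_k^{-1}([a,b])$ (by the uniqueness clause established inside the proof of Lemma~\ref{lem:no_arcs_finite_sin}); this set contains a quasi-arc accumulating on the tranche $\psi_k^{-1}(y)$ and hence is not locally connected, contradicting that it is a continuous image of an arc. Your route via tent-map expansion and the oscillation of $f$ near $\{0,1\}$ avoids the finite stages $X_k$ entirely and is a legitimate alternative.

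There is, however, a real gap: the sentence ``by an elementary property of continuous surjections of intervals, $\gamma$ contains a subarc $\gamma_0$ on which $\hat\psi$ is a homeomorphism onto a nondegenerate interval'' is not justified. A continuous surjection of $[0,1]$ onto an interval need not be injective on any subinterval (the Cantor staircase is a monotone counterexample), and nothing you have established rules out $\hat\psi|_\gamma$ behaving like a devil's staircase, constant precisely on subarcs lying inside fibers. Your earlier reduction via shifts does not terminate the resulting regress. The fix is to drop the section and work with a parametrization $\alpha\colon[0,1]\to\gamma$ directly: the forcing $\alpha_{i+1}(s)=f(\alpha_i(s))$ holds at \emph{every} $s$ with $\alpha_i(s)\in(0,1)$. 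After shrinking so that $\alpha_0([0,1])\subset(0,1)$ is nondegenerate, your expansion argument gives a minimal $j$ with $S=\alpha_j^{-1}(\{0,1\})\neq\emptyset$; moreover $S\neq[0,1]$, since otherwise $\alpha_j$ is constant and $\alpha_0$ would map into the countable set $(f^j)^{-1}(\{0,1\})$, forcing $\alpha_0$ constant. Choosing $s^*\in\partial S$, your oscillation argument then applies verbatim: by the intermediate value theorem $\alpha_j$ attains every value in a one-sided neighbourhood of $\alpha_j(s^*)\in\{0,1\}$ near $s^*$, so $\alpha_{j+1}=f\circ\alpha_j$ takes both values $0$ and $1$ arbitrarily close to $s^*$, contradicting continuity.
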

\begin{proof}
 Assume on the contrary that there exists a nondegenerate arc $A$ in $\widehat{X}$. If $\hat{\psi}(A)=[a,b]$ for some $a \neq b \in [0,1]$, then by Lemma~\ref{lem:tranche:count} we can find $y \in [a,b]$ with a nondegenerate preimage under $\hat{\psi}$. It follows that $\psi_k^{-1}(y)$ is a tranche in  $X_k$ for some arbitrarily large $k \in \N$, and so is the limit set of an oscillatory quasi-arc.  But then the projection of $A$ onto $X_k$ is not arcwise connected, which would contradict the fact that $A$ is an arc.  This means that $A$ is a subset of a tranche of $\widehat X$ or it does not intersect any tranche, which means it is a singleton. 
 
Since $A$ is nondegenerate, there is $y_0 \in [0,1]$, such that all points $y \in A$ have $y_0$ on the first coordinate, which means they have the form $y=(y_0,x_1,x_2,\ldots)$. By our construction $\sigma(y,x_1,x_2, \ldots) \in \widehat{X}$ and the set $\{(x_1,x_2,\ldots) : (y_0,x_1,x_2,\ldots)\in A\}$ is a nondegenerate arc. By the same argumentation as before, the second coordinate is the same for all elements of $A$. By induction, for any two elements $y,\tilde{y} \in A $ and any coordinate $i$, we have $y_i=\tilde{y}_i$. It follows that $A$ is a singleton, which contradicts our assumptions. We get that $\widehat{X}$ contains no nondegenerate arcs.
\end{proof}

The well-known example of a nondegenerate continuum that does not contain any arcs is the pseudo-arc (e.g. see \cite{MR3573330}), a hereditarily indecomposable continuum.

However, we can prove that $\hat X$ has an opposite extreme property, i.e. it is hereditarily decomposable. 
It turned out that tranched graphs are always decomposable, and in hereditary tranched graphs become hereditary by their natural structure.

\begin{prop}
    Every tranched graph is decomposable; moreover, hereditary tranched graphs are hereditarily decomposable.
\end{prop}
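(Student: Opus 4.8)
The plan is to prove the first assertion by pushing a decomposition of the target graph back through the monotone projection, and then to obtain hereditary decomposability almost immediately from the definition of a hereditary tranched graph together with the first assertion.

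For the first part, I would fix a (nondegenerate) tranched graph $X$ with an associated continuous monotone map $\phi\colon X\to Y$ onto a topological graph $Y$; replacing $Y$ by the subcontinuum $\phi(X)$ if necessary (a subcontinuum of a graph is again a graph), I may assume $\phi$ is surjective. Since $\phi^{-1}(D)$ is dense, every fiber is nowhere dense by Lemma~\ref{lem:if_dense_then_fibers_ndense}; as $X$ is not nowhere dense in itself, $Y$ cannot be a single point, so $Y$ is a nondegenerate graph. A nondegenerate graph is a Peano continuum and hence decomposable---for instance it contains a proper subcontinuum with nonempty interior (a proper subarc of an edge, whose relative interior is open in $Y$), which is the classical criterion for decomposability. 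I then fix a decomposition $Y=Y_1\cup Y_2$ into proper subcontinua and set $X_i=\phi^{-1}(Y_i)$.

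The heart of the argument is that the preimage of a subcontinuum under a monotone surjection of continua is again a subcontinuum, so that each $X_i$ is a subcontinuum: $X_i$ is closed, and were $X_i=P\cup Q$ a separation, then each fiber over $Y_i$, being connected, would lie in a single piece, whence $\phi(P),\phi(Q)$ would separate $Y_i$---a contradiction. This is precisely where monotonicity of $\phi$ is used, and it is the step I expect to carry the whole proof. It then remains to check that $X=X_1\cup X_2$ (immediate, since $\phi^{-1}(Y_1\cup Y_2)=\phi^{-1}(Y)=X$) and that each $X_i$ is proper (if $\phi^{-1}(Y_i)=X$ then $\phi(X)=Y\subseteq Y_i$, contradicting properness of $Y_i$). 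Thus $X$ is the union of two proper subcontinua, i.e.\ decomposable.

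For the second assertion, let $X$ be a hereditary tranched graph and let $A\subseteq X$ be any nondegenerate subcontinuum. By Definition~\ref{def:3.7}, $A$ is a singleton or a tranched graph with hereditary fibers; being nondegenerate it is a tranched graph, hence decomposable by the first part. Since $A$ was arbitrary, $X$ is hereditarily decomposable. The only points requiring care are the standing assumption that tranched graphs are nondegenerate and the bookkeeping that makes $\phi$ surjective; everything else reduces to the monotone-preimage fact above.
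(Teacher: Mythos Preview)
Your proposal is correct and follows essentially the same approach as the paper: decompose the target graph $Y=Y_1\cup Y_2$ into proper subcontinua, pull back via the monotone map $\phi$ to get $X=\phi^{-1}(Y_1)\cup\phi^{-1}(Y_2)$, and for the hereditary statement invoke Definition~\ref{def:3.7} directly so that every nondegenerate subcontinuum is itself a tranched graph. The paper is much terser (it omits the justifications you spell out for connectedness of preimages, nondegeneracy of $Y$, and properness of the $X_i$), but the argument is the same.
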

\begin{proof}
 Let $\phi \colon X \ra Y$ be a continuous map on a topological graph $Y$ from Definition~\ref{def:tranched_graph}. Since $Y=A\cup B$
for two proper subcontinua, $X=\phi^{-1}(A)\cup \phi^{-1}(B)$
showing it decomposability. By a similar argument, every subcontinuum which is not completely contained in a tranche is decomposable.

If $X$ is a hereditary tranched graph, all nondegenerate subcontinua of $X$ are tranched graphs,
and so the result follows.

\end{proof}
 The next result shows that the controlled collapse of a subset of hereditary tranched graph leads to another space in this class of continua.

\begin{prop}
\label{prop:quitent_space_is_tranched_graph}
    Let $X$ be a hereditary tranched graph and $\phi \colon X \ra Y$ be an associated mapping. Let $\sim $ be a closed equivalence relation on $X$ with finitely many non-degenerate and connected equivalence classes preserved by $\phi$, i.e. if $p\sim q$ then $x\sim y$ for any $x\in \phi^{-1}(\phi(p))$ and $y\in \phi^{-1}(\phi(q))$. Then $X /_\sim$ is also a hereditary tranched graph. 
\end{prop}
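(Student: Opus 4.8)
The plan is to push the relation $\sim$ down to the graph $Y$, then to isolate one reusable claim which I apply twice: once to $X$ itself, and once to each subcontinuum. The hypothesis ``preserved by $\phi$'' says precisely that every $\sim$-class is a union of fibers of $\phi$: taking $x=p$ in the stated condition gives $p\sim y$ for every $y\in\phi^{-1}(\phi(p))$, whence $\phi^{-1}(\phi(p))\subset[p]_\sim$. Thus $\sim$ descends to a closed relation $\bar\sim$ on $Y$ (declare $\phi(p)\bar\sim\phi(q)$ iff $p\sim q$), whose nondegenerate classes are the finitely many pairwise disjoint subcontinua $K_i=\phi(C_i)$, where $C_1,\dots,C_m$ are the nondegenerate $\sim$-classes (closed, being slices of the closed set $\sim$, and connected by hypothesis). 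Since a subcontinuum of a topological graph is a subgraph and collapsing finitely many pairwise disjoint subgraphs of a finite graph again yields a finite graph, $Y'=Y/_{\bar\sim}$ is a topological graph, and $\phi$ induces a continuous $\phi'\colon X/_\sim\to Y'$ with $\phi'\circ q=\bar q\circ\phi$, where $q,\bar q$ are the quotient maps. Note that $q$ is monotone, its fibers being the connected classes, so $q^{-1}$ of a subcontinuum is a subcontinuum.

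I will establish the following claim $(\star)$: if $W$ is a tranched graph with hereditary fibers, with associated map $\psi\colon W\to Z$, and $\approx$ is a closed relation on $W$ with finitely many nondegenerate connected classes, each a union of fibers of $\psi$, then $W/_\approx$ is a tranched graph with hereditary fibers. The map $\psi'\colon W/_\approx\to Z'$ is monotone, since each of its fibers is either a single point $[C_i]$ or the image $q(\psi^{-1}(z))$ of a connected fiber, so only the density of degenerate $\psi'$-fibers needs checking. Fix a nonempty open $\mathcal W\subset W/_\approx$; its saturated preimage $q^{-1}(\mathcal W)$ is open, and unless it lies inside $\bigcup_iC_i$ the open set $q^{-1}(\mathcal W)\setminus\bigcup_iC_i$ contains a point $w$ with degenerate $\psi$-fiber (these are dense by Definition~\ref{def:tranched_graph}); then $\psi(w)\notin\bigcup_iK_i$, so $[w]=\{w\}$ projects to a degenerate $\psi'$-fiber in $\mathcal W$. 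The excluded case would force $\mathcal W\subset\{[C_1],\dots,[C_m]\}$ to be a nonempty finite open subset of the nondegenerate continuum $W/_\approx$, which is impossible (the degenerate case being trivial). For hereditary fibers I read off the fibers of $\psi'$: over the collapsed point $[K_i]$ the fiber is the single point $[C_i]$, while over a noncollapsed $[z]$ it is $q(\psi^{-1}(z))$, carried homeomorphically by $q$ (as $\psi^{-1}(z)$ is disjoint from every $C_i$) onto $\psi^{-1}(z)$, hence a singleton or a tranched graph because $W$ has hereditary fibers. Applying $(\star)$ with $W=X,\ \psi=\phi,\ \approx=\sim$ shows $X/_\sim$ is a tranched graph with hereditary fibers.

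To upgrade this to a \emph{hereditary} tranched graph I treat an arbitrary nondegenerate subcontinuum $B\subset X/_\sim$. Put $A=q^{-1}(B)$; since $q$ is monotone, $A$ is a subcontinuum, it is nondegenerate (else $B$ would be), and it is both $\phi$-saturated and $\sim$-saturated. As $X$ is hereditary, $A$ is a tranched graph with hereditary fibers; fix an associated map $\psi_A\colon A\to Z_A$. The crux is to verify that the $\sim|_A$-classes (those $C_i$ contained in $A$) are unions of $\psi_A$-fibers, so that $(\star)$ applies to $A$ and gives that $B=A/_{\sim|_A}$ is a tranched graph with hereditary fibers, exactly as the definition of hereditary tranched graph demands. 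For this I show $\phi$ is constant on each $\psi_A$-fiber $F$: a nondegenerate such $F$ is nowhere dense in $A$, and any open subset of $X$ contained in $F$ would be an open subset of $A$ contained in $F$, so $F$ is in fact nowhere dense in $X$; by Remark~\ref{rem:collapse} it then lies in a single $\phi$-fiber, whence $\phi(F)$ is a point. Consequently every $\phi$-saturated set, in particular each $C_i\subset A$, meets each $\psi_A$-fiber in all-or-nothing fashion, i.e.\ is $\psi_A$-saturated, and $(\star)$ applies.

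The routine ingredients are that collapsing finitely many disjoint subgraphs of a finite graph returns a finite graph, and that monotone maps pull subcontinua back to subcontinua. The genuinely delicate step is the last one: transferring the saturation hypothesis from the global map $\phi$ to the intrinsic associated map $\psi_A$ of the subcontinuum $A$. This is where the invariance of the fiber decomposition (Theorem~\ref{thm:two_maps_are_homeo}) and the identification of nowhere dense subcontinua with fibers (Remark~\ref{rem:collapse}) carry the argument, since \emph{a priori} $\phi|_A$ need not be an associated map for $A$ at all; indeed $\phi|_A$ may be constant while $A$ is a genuinely nondegenerate tranched graph, as the limit arc of the Warsaw circle already illustrates.
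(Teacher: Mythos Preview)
Your argument is correct and follows the same skeleton as the paper's proof: push $\sim$ down to $Y$, collapse finitely many disjoint subgraphs to get a new graph, and check that the induced map has dense degenerate fibers. Where you go further is in the hereditary step. The paper dispatches this with a single line (``the same argument works for a subcontinuum of $X/_\sim$''), whereas you correctly isolate the nontrivial point: for a subcontinuum $A=q^{-1}(B)$ the restriction $\phi|_A$ need not be an associated map, so one must verify that the $\sim$-classes are saturated with respect to the \emph{intrinsic} associated map $\psi_A$. Your use of Lemma~\ref{lem:nowheredense}/Remark~\ref{rem:collapse} to show that each $\psi_A$-fiber, being nowhere dense in $A$ and hence in $X$, lies inside a single $\phi$-fiber is exactly what is needed to close this gap, and it is a step the paper's proof leaves implicit.
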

\begin{proof}
    Define the relation $\wr$ on $Y$ by putting $y_1 \wr y_2$ if and only if there is $x_i \in \phi^{-1}(y_i)$ for $i=1,2$ such that $x_1 \sim x_2$. 
    Since $\sim$ is preserved by $\phi$ the relation $\wr$ is an equivalence relation.
    Denote $A = X /_\sim$ and $B =Y /_\wr$. First, observe that the map $\psi \colon A \ni [x]_\sim \mapsto [\phi(x)]_\wr \in B$ is well defined and monotone. As $\sim$ is a closed equivalence relation collapsing to a point at most finitely many connected sets in $Y$, obviously $A$ is a compact metric space and $B$ is a topological graph. As the set of degenerate fibers of $\phi$ was dense in $X$, the set of degenerate fibers of $\psi$ is dense in $A$.
    The same argument works for a subcontinuum of $X/_\sim$, so we find that $X /_\sim$ is a hereditary tranched graph.
\end{proof}

Right now, we have shown that there exists a generalized sin(1/x)-type continua of infinite depth (which we define in Definition~\ref{def:4.21}) and width (the number of tranches of the continuum). By Lemma~\ref{lem:quasi-graphs_are_tranched} we know that quasi-graphs are hereditary tranched graphs with finite depth and width (i.e. number of tranches). This, alongside the arcwise connectedness of quasi-graphs, gives four properties that seem to characterize quasi-graphs in the class of tranched graphs. Our goal now is to show that we can deduce that arcwise connected tranched graphs always have finite width. This will remove one necessary condition from the assumptions.

\begin{defn}\label{def:4.21}
For a hereditary tranched graph $X$, let us denote $lvl_n(X)=\{T \colon X=T_0 \supset T_1 \supset \ldots \supset T_{n-1} \supset T_n=T \text{ and } T_{k+1} \text{ is a tranche of } T_k \}$.
We will call $\sup_{n \in \N}\{n: lvl_n(X) \neq \emptyset\}$ the 
 \textit{depth} of continuum $X$.
\end{defn} Notice in particular that if $X$ is a quasi-graph, then the following numbers coincide:
\begin{enumerate} 
\item depth of $X$ considered as a hereditary tranched graph, and 
\item maximal order among quasi-arcs of $X$ considered as quasi-graph.
\end{enumerate}

The following lemma is a method of reducing complexity of hereditary tranched graph, by removing oscillatory quasi-arcs ``from the outside in'' in such a way that the modified space remains in this class.
This provides a method for reducing the complexity of the hereditary tranched graph. 
Notice that if oscillatory quasi-arc was contained in the limit set of other oscillatory quasi-arc, then it cannot be directly removed since it would make the space no longer closed. It motivates the assumption of not having ancestors in the following lemma, making the procedure of removal of quasi-arcs partially ordered in some sense. 
\begin{lem}
\label{lem:regular_finite_delete_quasi_arc}
    Let $X$ be a hereditary tranched graph with a finite set of tranches, and let $L=\varphi([0,\infty)) \subset X$ be an oscillatory quasi-arc without ancestors. Then there is $M \in \N$ such that $X \backslash \varphi((M,\infty))$ is a hereditary tranched graph. If $X$ is arcwise connected, then $X \backslash \varphi((M,\infty))$ is also arcwise connected. 
\end{lem}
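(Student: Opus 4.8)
The plan is to exhibit $X'=X\setminus\varphi((M,\infty))$ as the result of deleting an open "pendant tail'' of $L$, choosing the cut parameter $M$ so large that the tail sits in a controlled neighbourhood of the single tranche that absorbs $\omega(L)$. By Lemma~\ref{lem:4.3}, $\omega(L)$ lies in a tranche $T=\phi^{-1}(y_0)$, where $\phi\colon X\to Y$ is the monotone map onto a topological graph witnessing that $X$ is a tranched graph. Since $\omega(L)$ is precisely the accumulation set of $\varphi(t)$ as $t\to\infty$ and $\phi(\omega(L))=\{y_0\}$, continuity gives $\phi(\varphi(t))\to y_0$. I would fix an open star neighbourhood $U\ni y_0$ in $Y$ whose closure contains no branch point other than possibly $y_0$ and meets the image of no tranche except $T$; this is possible because there are only finitely many tranches, hence finitely many tranche-images. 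Choosing $M_0$ with $\varphi((M_0,\infty))\subseteq\phi^{-1}(U)$, every fiber over $U\setminus\{y_0\}$ is a singleton, and by the local structure furnished in the proof of Lemma~\ref{lem:finite_tranches} one has $T=\bigcup_i\omega(T_i)$ for the quasi-arcs $T_i=\phi^{-1}(E_i\setminus\{y_0\})$ lying over the edges $E_i$ of the star.

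The technical heart, and what I expect to be the main obstacle, is to choose $M\ge M_0$ so that the tail $\varphi((M,\infty))$ is relatively open in $X$ and disjoint from $T$ (equivalently, so that $\{t:\varphi(t)\in T\}$ is bounded). Openness is exactly what makes $X'$ closed, hence compact; disjointness from $T$ is what guarantees the tranche is not punctured by the deletion. Openness can fail only through self-accumulation (the tail meeting $\omega(L)\subseteq T$) or through some $\varphi(s)$, $s>M$, being a limit of points of $X\setminus L$; in the latter case the finiteness of tranches confines all external accumulation onto the tail to the finitely many quasi-arcs $T_i$ approaching $T$, and the hypothesis that $L$ is \emph{without ancestors} is precisely what forbids a subtail $\varphi([t,\infty))$ from being reabsorbed into a limit set $\omega(K)$. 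The key structural payoff I aim to extract is that once $\varphi((M,\infty))\cap T=\emptyset$, the image $\phi(\varphi((M,\infty)))$ is a connected subset of $U\setminus\{y_0\}=\bigsqcup_i(E_i\setminus\{y_0\})$ accumulating at $y_0$; connectedness of $(M,\infty)$ then pins the tail over a \emph{single} edge $E_{i_0}$ (switching edges would require passing through $y_0$, i.e.\ meeting $T$), so $\phi$ maps the tail bijectively onto an open terminal sub-segment of $E_{i_0}$.

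Granting this, I would verify that $X'$ is a hereditary tranched graph almost for free. The restriction $\phi|_{X'}$ is continuous and monotone onto the subgraph $Y'=Y\setminus\phi(\varphi((M,\infty)))$, which is $Y$ with the edge $E_{i_0}$ shortened near $y_0$ — still a topological graph. Because the tail avoided $T$ and consisted of singleton fibers, the tranches of $X'$ coincide with those of $X$, each still a tranched graph by heredity, and the set of singleton fibers remains dense (indeed residual, by Theorem~\ref{thm:set_of_tranches_is_meager}), so $X'$ is a tranched graph. Heredity is inherited directly: every subcontinuum of $X'$ is a subcontinuum of $X$, hence a singleton or a tranched graph with hereditary fibers.

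Finally, for arcwise connectedness, I would argue that $\overline{\varphi((M,\infty))}=\varphi([M,\infty))\cup\omega(L)$ is attached to the remainder of $X$ only through the cut point $\varphi(M)$ in the arcwise sense: no arc joins an interior tail point to $\omega(L)$ through the tail, since that closure is a quasi-arc and not an arc, and the tail meets no tranche. Thus the open tail is an arcwise pendant, and any arc in $X$ between two points of $X'$ that strays into $\varphi((M,\infty))$ must enter and leave through $\varphi(M)$, so it can be rerouted to avoid the deleted set. Hence $X'$ is arcwise connected whenever $X$ is, completing the plan.
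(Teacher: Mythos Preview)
Your argument works only in what the paper treats as the easy case, and misses a genuine obstruction. The claim ``the set of singleton fibers remains dense'' in $X'$ need not hold for $\phi|_{X'}$. Concretely: the tail $\varphi((M,\infty))$ lies over a single edge $E_{i_0}$ of the star at $y_0$, so removing it deletes the quasi-arc $T_{i_0}$. If $\omega(T_{i_0})$ contains points of $T$ not in $\bigcup_{j\neq i_0}\omega(T_j)$, then those points acquire a neighbourhood in $X'$ entirely contained in $T$ (since every nearby singleton fiber in $X$ belonged to the deleted tail). Thus $T=\phi^{-1}(y_0)$ has nonempty interior in $X'$, and $\phi|_{X'}$ fails Definition~\ref{def:tranched_graph}. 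The Warsaw circle already exhibits this: remove the oscillating tail and the limit interval becomes a free arc in $X'$; the restriction of the collapsing map is no longer a valid tranched-graph witness. Invoking Theorem~\ref{thm:set_of_tranches_is_meager} is circular here, since that theorem presupposes a map satisfying the definition.

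The paper explicitly splits into two cases. Your argument (restrict $\phi$, same tranches) is exactly the paper's first case, where the remaining quasi-arcs still satisfy $\bigcup_j\omega(K_j)=T$. The second case---when no collection of quasi-arcs in $X_L$ covers $T$---requires real work: one uses heredity to get a tranched-graph map $\eta\colon T\to Z$, collapses the part of $T$ still reached by $\omega(X_L)$ via a relation $\sim$ (Proposition~\ref{prop:quitent_space_is_tranched_graph}), and then glues the resulting graph $\phi_T(T/_\sim)$ to the ``outside'' piece $X_1=\overline{(X_L/_\sim)\setminus(T/_\sim)}$ along a finite identification $\approx$, producing a new target graph $Y_L$ and map $\phi_L$. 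This is where the hereditary hypothesis is actually used, not merely to pass heredity to subcontinua. Your openness, single-edge, and arcwise-connectedness arguments are fine and match the paper's; the gap is the missing construction of a \emph{new} witnessing map when $\phi|_{X'}$ fails density.
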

\begin{proof}
     Let $X$ be a hereditary tranched graph with finite set of tranches, $\phi \colon X \ra Y$ be the continuous map from Definition~\ref{def:tranched_graph}, $L$ be an oscillatory quasi-arc, with $\omega(L)\subset T$ for some tranche $T \subset X$. If $L$ contains a branching point of $X$ or intersects any tranche, we choose $M$ large enough such that quasi-arc $\varphi((M,\infty))$ does not have these properties, which is possible by the following argument. Since $L$  is without ancestors, it maps one to one to a topological graph $Y$, hence can only contain finitely many branching points of $X$. As such, there are finitely many ,,bad'' points in $L$, which we can get rid of by shortening the quasi-arc.  Therefore, let us assume that we made the above modification when necessary and put $  \tilde{L}=\varphi((M,\infty))$.  This means that the set $T \cup  \tilde{L}$ is not arcwise connected. 
     
     Therefore, if $X$ is arcwise connected then $X_{L}=X \backslash \varphi((M, \infty)) $ is arcwise connected, because arc connecting any two points in $X_L$ may avoid intersecting $\tilde L$. Let us denote $Y_{L}=\phi(X \backslash \varphi((M, \infty))$. By our assumptions $\tilde L \cap T = \emptyset$, so  $\phi(\varphi((M,\infty))$ is an open arc, meaning $Y_L$ is a topological graph. As $X$ was a hereditary tranched graph with finite set of tranches, by Lemma~\ref{lem:finite_tranches} all of the tranches are unions of limit sets of oscillatory quasi-arcs.
     
     If there exist quasi-arcs  $K_1, \ldots, K_m \subset X_L$ such that $\bigcup_{i=1}^m\omega(K_i)=T$, then $X_L$ is a tranched graph with associated mapping $\phi_L=\phi |_ {X_L}$ and $lvl_1(X)=lvl_1(X_L)$.
    
 Assume now the other possibility that for any quasi-arcs $K_1, \ldots, K_m \subset X_L$ we have $\bigcup_{i=1}^m\omega(K_i)\neq T$. We present the sketch of the following procedure in Figure~\ref{fig:removing_arcs}.
 
From hereditarity of $X$ we know that $T$ is a tranched graph, so let $\eta\colon T\to Z$ be an associated continuous map, where $Z$ is a topological graph. Denote $\Omega=\eta^{-1}(\eta(\omega(X_L)\cap T))$ and let $\sim$ be the equivalence relation such that $a \sim b$ if $a=b$ or there exists a connected component $\Lambda$ of $\Omega$ such that $a,b \in \Lambda$.
We extend $\sim$ trivially from $T$ to $X_L$ by adding to it the diagonal in $X_L\times X_L$, so we can view $\sim$ also as relation on $X_L$.
 Clearly the relation $\sim$ is preserved by $\eta$, hence, by Proposition~\ref{prop:quitent_space_is_tranched_graph}, we get that $T/_\sim$ is a hereditary tranched graph. Let $\phi_T$ be the map from the definition of tranched graph for $T/_\sim$.

Let us write $X_1$ for the closure of $(X_L/_\sim) \backslash (T/_\sim)$ in $X_L/_\sim$ and $Y_T$ for $\phi_T(T/_\sim)$. Notice that $X_1$ has a nonempty intersection with $T/_\sim$ and that it is a finite collection of topological graphs. Define an equivalence relation $\approx$ on $X_1 \cup Y_T$ such that $x \approx y$ if and only if (i) $x=y$ or (ii) $x \in T/_\sim$, $y \in Y_T$ and $y= \phi_T(x)$  or (iii) $x,y \in T/_\sim$ and  $\phi_T(x)=\phi_T(y)$. Observe that there are only finitely many equivalence classes of $\approx$ and both $X_1$ and $Y_T$ are topological graphs, hence $Y_L= (X_1 \cup Y_T)/_\approx$ is a topological graph. 
We define map $\phi_L \colon X_L \ra Y_L$ by putting $\phi_L(x)=(\pi_\approx \circ \pi_\sim)(x)$ for $x \in X_L \backslash T$ and $\phi_L(x) = (\pi_\approx \circ \phi_T \circ \pi_\sim)(x) $ for $x \in T$, where $\pi_\sim$ and $\pi_\approx$ are natural projections of relations $\sim$ and $\approx$ respectively.

 We claim that all the fibers of $\phi_L$ are nowhere dense. Consider $y$ such that $\phi_L^{-1}(y)$ is nondegenerate and assume that there is an open set $U\subset \phi_L^{-1}(y) \subset X_L$. It is not possible when $U\cap (X_L\setminus T)\neq \emptyset$ because then $\phi$ has fiber which is not nowhere dense. Therefore we may assume that $U\subset T$. But then we must have $U\subset T\setminus \omega(X_L)$ as otherwise it cannot be contained in $T$ by the definition of oscillatory quasi-arc. Then it implies that $U\subset \Omega\setminus \omega(X_L)$, which by the fact that singleton fibers of $\eta$ are dense in $T$ implies that $U$ must be contained in fiber of $\eta$. But then it is nowhere dense in $T$ which again is a contradiction, therefore the claim holds.
It shows that $\phi_L$
satisfies conditions from Definition~\ref{def:tranched_graph}, so we get that $X_L$ is a hereditary tranched graph.
\end{proof}
\begin{figure}[ht]

            \centering
            \includegraphics[scale=0.18]{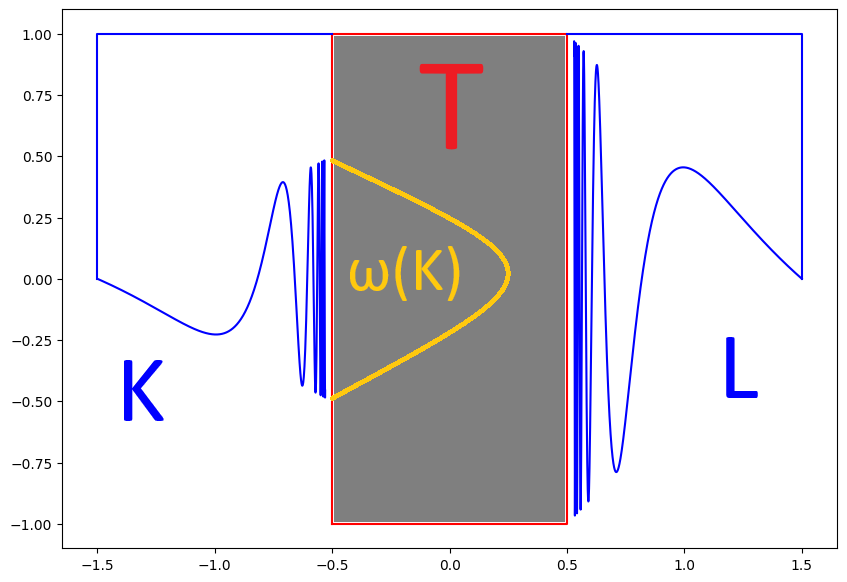}
            \includegraphics[scale=0.18]{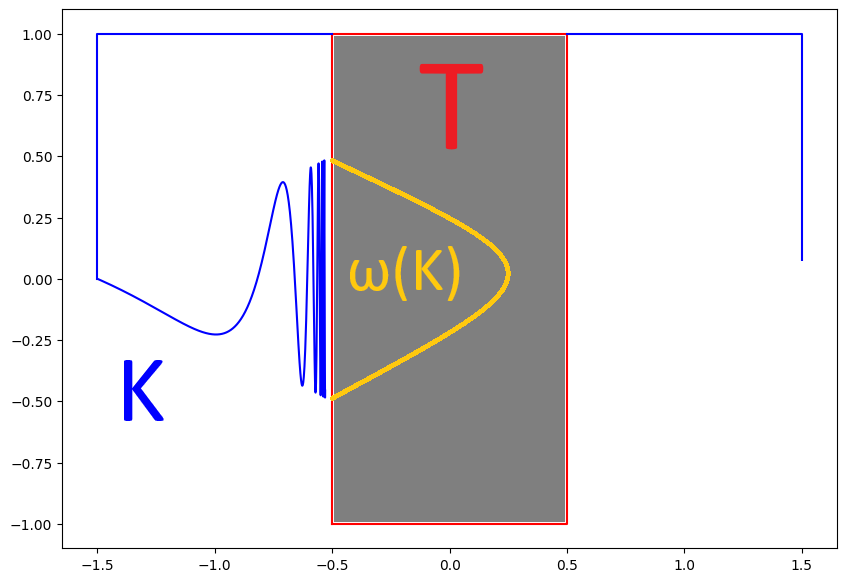}
            \includegraphics[scale=0.18]{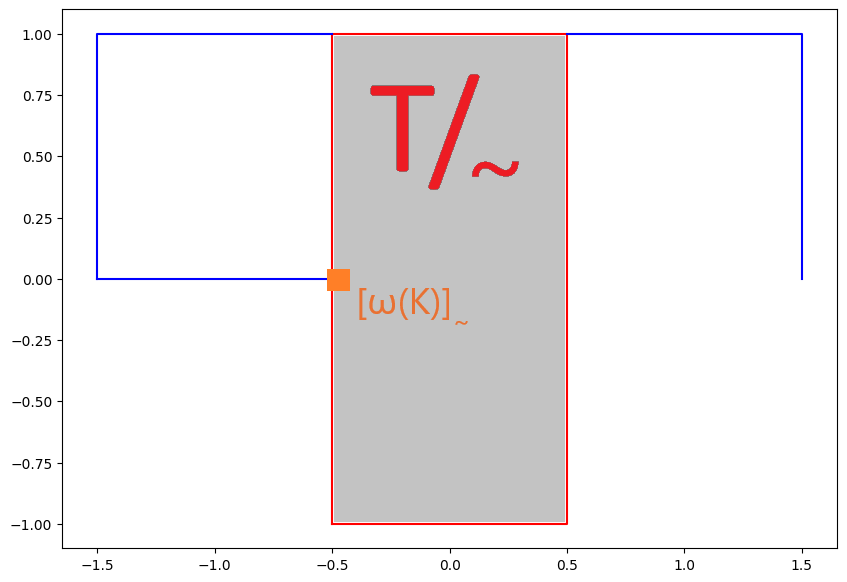}
            \includegraphics[scale=0.18]{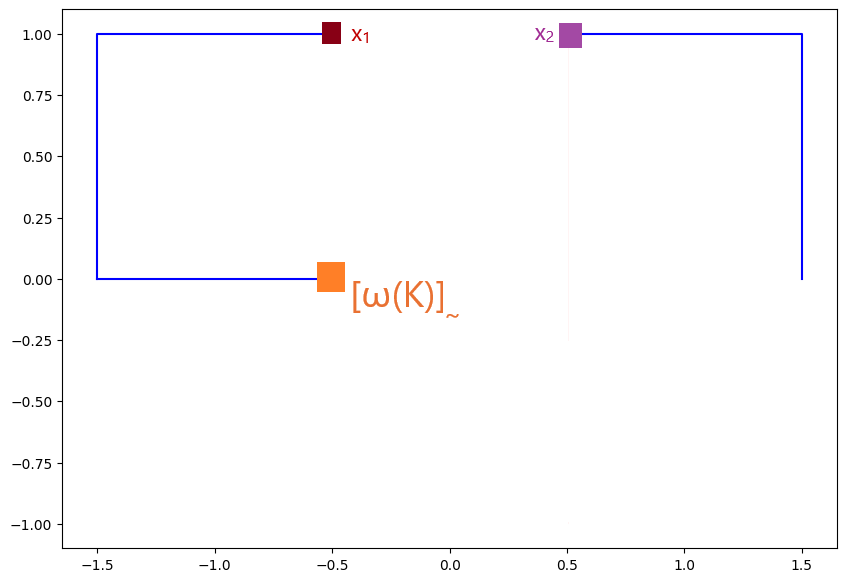}
            \includegraphics[scale=0.18]{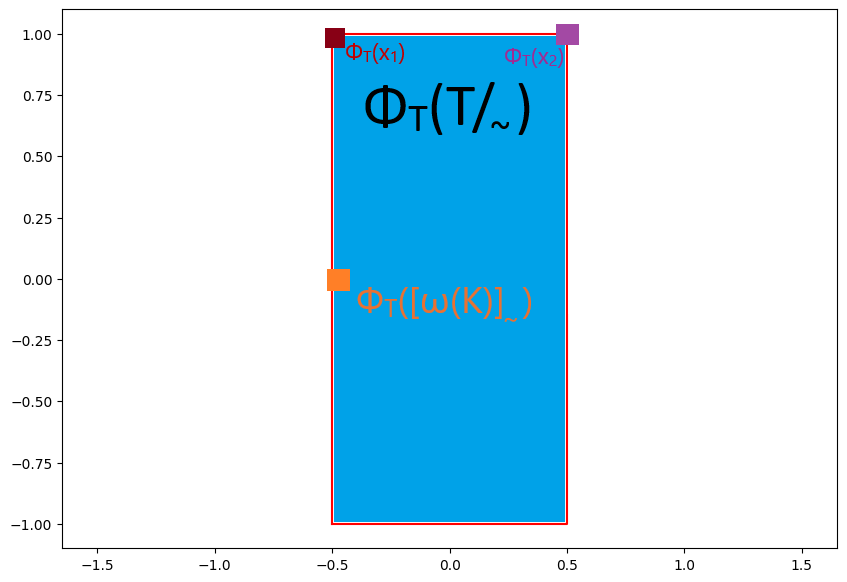}
            \includegraphics[scale=0.18]{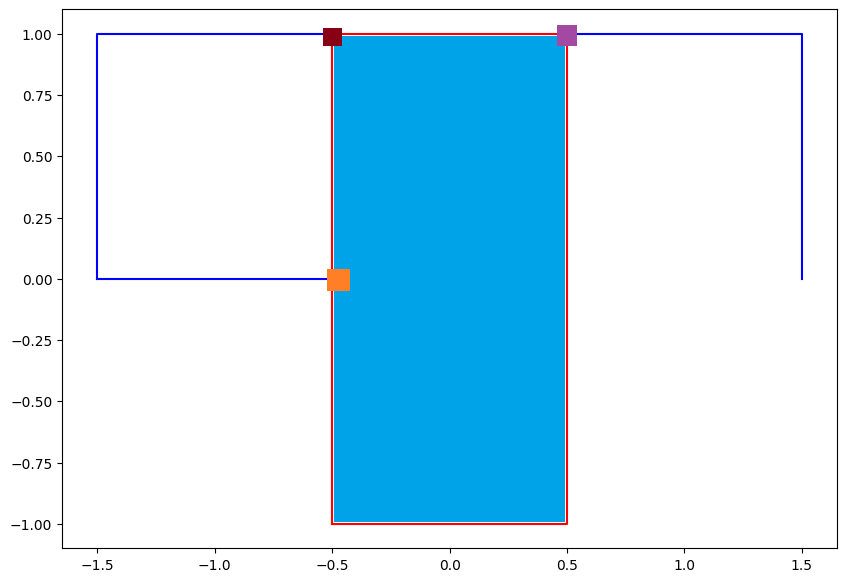}
            \caption{A sketch of constructions in Lemma~\ref{lem:regular_finite_delete_quasi_arc}, from top-left:
Continuum $X$, continuum $X_L$ and continuum $X_L/_\sim$. On the bottom $X_1$ and $\phi_T(T/_\sim)$ with points that will be identified marked in the same color.}
            \label{fig:removing_arcs}
    \end{figure}

In the next lemma we prove that for arcwise connected continua, the image of a tranche has to be contained in a circle. This is in line with the arguments shown before in the paper.
\begin{lem}
\label{lem:tranche_in_circle}
    Let $X$ be an arcwise connected tranched graph with an associated mapping $\phi \colon X \ra Y$ from the definition. Suppose that $y \in Y$ defines a nondegenerate fiber $\phi^{-1}(y)$. Then there is a circle $S \subset Y$ such that $y \in S$.
\end{lem}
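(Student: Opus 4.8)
My plan is to argue by contradiction, converting the geometric claim into a statement about the components of $Y\setminus\{y\}$. I will use the elementary graph fact that a point $y$ of a topological graph $Y$ lies on a circle if and only if two of the edges emanating from $y$ lie in the same connected component of $Y\setminus\{y\}$; equivalently, $y$ lies on no circle precisely when $Y\setminus\{y\}$ splits into exactly $\mathrm{val}(y)$ components $C_1,\dots,C_n$, each joined to $y$ by a single edge $e_i$ and satisfying $\overline{C_i}=C_i\cup\{y\}$. So suppose, for contradiction, that $y$ lies on no circle. Writing $T=\phi^{-1}(y)$ for the nondegenerate fiber and $U_i=\phi^{-1}(C_i)$, monotonicity of $\phi$ makes each $U_i$ connected and open, the $U_i$ are pairwise disjoint with $X\setminus T=\bigcup_i U_i$, and $\partial U_i\subseteq\phi^{-1}(\overline{C_i}\setminus C_i)=T$, so $\overline{U_i}=U_i\cup T_i$ with $T_i:=\overline{U_i}\cap T$. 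Since singleton fibers are dense (Theorem~\ref{thm:set_of_tranches_is_meager}), $X\setminus T$ is dense, hence $T=\bigcup_i T_i$; as $T$ is a nondegenerate continuum expressed as a finite union of the closed sets $T_i$, at least one $T_{i_0}$ is nondegenerate.

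The heart of the argument — and the step I expect to be the main obstacle — is to show that this nondegenerate ``accumulation set'' $T_{i_0}$ is arcwise inaccessible from its branch: there is no arc $\gamma\colon[0,1]\to X$ with $\gamma([0,1))\subset U_{i_0}$ and $\gamma(1)\in T_{i_0}$. This is a general $\sin(1/x)$-type obstruction, and the single-edge structure forced by the no-circle hypothesis is exactly what makes it work. Suppose such a $\gamma$ exists, put $a=\gamma(1)$ and choose $a'\in T_{i_0}$ with $a'\neq a$ (possible as $T_{i_0}$ is nondegenerate); fix disjoint balls $B(a,\eps)$ and $B(a',\eps)$. Since $\phi(\gamma(t))\to y$ inside $C_{i_0}$, for $t$ close to $1$ the image $\phi(\gamma(t))$ lies on the single edge $e_{i_0}$ at $y$, so I may measure its arclength-distance $\pi(\gamma(t))$ to $y$ along $e_{i_0}$; choose $t_1<1$ with $\gamma([t_1,1])\subset B(a,\eps)$, with $\phi(\gamma([t_1,1]))\subset e_{i_0}$, and with $\pi(\gamma(t_1))=s_*>0$. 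On the other hand $a'\in\overline{U_{i_0}}$ and singleton fibers are dense, so I can pick points $u_k\to a'$ lying in singleton fibers of $\phi$ and in $U_{i_0}$; each $\phi(u_k)$ lies on $e_{i_0}$ near $y$ with arclength $s_k\to0$. For $k$ large $s_k<s_*$, so by the intermediate value theorem there is $t\in[t_1,1]$ with $\pi(\gamma(t))=s_k$, i.e. $\phi(\gamma(t))=\phi(u_k)$; as $\phi^{-1}(\phi(u_k))=\{u_k\}$ this forces $\gamma(t)=u_k\in B(a',\eps)$, contradicting $\gamma(t)\in B(a,\eps)$. Hence no such $\gamma$ exists.

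With the inaccessibility in hand the conclusion is immediate. Both $U_{i_0}$ and $T$ are nonempty, so arcwise connectedness of $X$ yields an arc $J$ from some $c\in U_{i_0}$ to some $a\in T$. Let $\tau=\inf\{t:J(t)\in T\}$; then $J([0,\tau))$ is connected, meets $X\setminus T=\bigsqcup_i U_i$, and starts in $U_{i_0}$, hence lies entirely in $U_{i_0}$, while $J(\tau)\in\overline{U_{i_0}}\cap T=T_{i_0}$. Thus $s\mapsto J(\tau s)$ is exactly an arc forbidden by the previous paragraph, a contradiction. Therefore $y$ must lie on a circle $S\subset Y$. I would also point out why this is consistent with the Warsaw-circle picture: when $y$ does lie on a circle, a single component of $Y\setminus\{y\}$ is reached by two distinct edges, so the tame (arc-accessible) approach to $T$ and the wild oscillatory approach occur along \emph{different} edges; the arclength comparison in the key step then breaks down, which is precisely what permits an arcwise connected example to exist.
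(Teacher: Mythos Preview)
Your proof is correct and takes a genuinely different route from the paper's.

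The paper argues as follows: assuming $y$ lies on no circle, it invokes Lemma~\ref{lem:finite_tranches} to obtain an oscillatory quasi-arc $L$ without ancestors with $\omega(L)\subset T$, picks a point $x_1\in L$ and a point $x_2$ in a component of $Y\setminus\{y\}$ disjoint from $\phi(L)$, and then observes that any arc in $X$ from $x_1$ to $x_2$ must meet $T$; since no circle passes through $y$, this forces the arc to meet $\omega(L)$ while also running along $L$, contradicting that $L$ is oscillatory. Your argument, by contrast, never mentions quasi-arcs: you exploit directly that the no-circle hypothesis forces each component $C_i$ of $Y\setminus\{y\}$ to be attached by a \emph{single} edge, and use density of singleton fibers plus an intermediate-value comparison along that edge to show that a nondegenerate accumulation set $T_{i_0}$ is arcwise inaccessible from its own side $U_{i_0}$.

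What your approach buys is self-containment and a cleaner logical dependency. The paper's proof appeals to Lemma~\ref{lem:finite_tranches}, whose hypothesis is that $X$ has finitely many tranches; but finiteness of tranches for arcwise connected tranched graphs is only established later (Proposition~\ref{prop:tranches_betti}), and that proposition in turn relies on the present lemma via Lemma~\ref{lem:upper_bound_tranches}. Your argument sidesteps this entirely, needing only continuity and monotonicity of $\phi$ together with density of singleton fibers (which is part of Definition~\ref{def:tranched_graph}; your citation of Theorem~\ref{thm:set_of_tranches_is_meager} is slightly stronger than required). The paper's approach, on the other hand, fits more naturally into its running quasi-arc framework and makes the geometric picture (an arc forced to traverse an oscillatory limit set) very vivid. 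Two minor remarks: openness of $U_i$ comes from continuity of $\phi$ rather than monotonicity; and in the final step you should note $\tau>0$ (immediate since $U_{i_0}$ is open), so that $J|_{[0,\tau]}$ really is a nondegenerate arc.
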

\begin{proof}
 Suppose that $y \in Y$ is such that $T=\phi^{-1}(y) \subset X$ is a tranche and there is no circle $S \subset Y$ such that $y \in S$. It is easy to see that $y$ cannot be an endpoint of $Y$. It follows that $Y \backslash \{y\}$ has at least two connected components.

By Lemma~\ref{lem:finite_tranches} the tranche $T$ is a union of limit sets of oscillatory quasi-arcs. Choose an oscillatory quasi-arc  without ancestors $L$  such that $\omega(L) \subset T$ and let $Y_2$ be a connected component of $Y \backslash \{y\}$ that does not intersect with $\phi(L)$. Choose $x_1 \in L$ not contained in a tranche and denote $y_1=\phi(x_1)$. Similarly, there is $y_2 \in Y_2$ such that $\phi^{-1}(y_2)$ is a singleton, fix the unique $x_2\in X$ with $y_2=\phi(x_2)$.
    
The point $y$ is not a subset of a circle, hence any arc with endpoints $\{y_1,y_2\}$ needs to contain $y$ as its element. Take any arc $A\subset X$ with the endpoints $x_1$ and $x_2$, which exists since $X$ is arcwise connected.  It follows that $y \in \phi(A)$. 

This means that $A \cap (L \cup \omega(L))$ is an arc because otherwise $Y$ contains a circle intersecting $y$. This, combined with the fact that $\omega(L) \cap A \neq \emptyset$ gives us that $L$ is not oscillatory, which is a contradiction. The proof is finished.\end{proof}

\begin{defn}
    We say that a continuum $X$ is irreducible between $a$ and $b$ for points $a,b \in X$ if the only subcontinuum containing both $a$ and $b$ is $X$.

    A continuum $X$ is irreducible between $x_0$ and $x_1$ for points $x_0,x_1 \in X$ is called a $\lambda$-continuum if there is a monotone map $\phi \colon X \to [0,1]$ such that $\phi^{-1}(i)=x_i$ for $i \in \{0,1\}$ and every fiber $\phi^{-1}(y)$ is connected. 
\end{defn}

 Suppose $X$ is a $\lambda$-continuum. If the union of degenerate fibers is dense in $X$, then it is a tranched graph. This immediately gives the following.  

\begin{cor}
    Suppose $X$ is both an arcwise connected tranched graph and a $\lambda$-continuum. Then $X$ is an arc.
\end{cor}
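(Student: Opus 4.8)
The plan is to derive everything from irreducibility, which is the heart of the $\lambda$-continuum hypothesis. By definition, being a $\lambda$-continuum means that $X$ is irreducible between the two points $x_0,x_1$ determined by the associated monotone map $\psi\colon X\to[0,1]$ with $\psi^{-1}(0)=x_0$ and $\psi^{-1}(1)=x_1$. Since $\psi(x_0)=0\neq 1=\psi(x_1)$, the points $x_0$ and $x_1$ are distinct. First I would invoke arcwise connectedness to produce an arc $J\subset X$ with endpoints $x_0$ and $x_1$; recall that in a metric continuum arcwise connectedness yields a genuine arc between any two distinct points. Then $J$ is a subcontinuum of $X$ containing both $x_0$ and $x_1$, so irreducibility forces $J=X$. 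As $J$ is an arc, $X$ is an arc, which is the desired conclusion.

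The only point requiring care is bookkeeping of which pair of points witnesses irreducibility: one must use exactly the endpoints $x_0,x_1$ of the $\lambda$-map, not an arbitrary pair, and confirm that they are distinct. Everything else is immediate, so on this route there is essentially no obstacle beyond this verification; I note in passing that the tranched-graph hypothesis is not strictly needed for this argument.

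Since the corollary is placed so as to advertise the machinery of the section, I would also record the alternative derivation through Lemma~\ref{lem:tranche_in_circle}, which genuinely uses the tranched-graph structure. Let $\phi\colon X\to Y$ be the monotone map onto a topological graph from Definition~\ref{def:tranched_graph}. Because $\phi$ is monotone, preimages of subcontinua are subcontinua, so irreducibility of $X$ between $x_0,x_1$ pushes forward: any subcontinuum of $Y$ containing $\phi(x_0),\phi(x_1)$ pulls back to a subcontinuum of $X$ containing $x_0,x_1$, hence equals $X$, so $Y$ is irreducible between $\phi(x_0)$ and $\phi(x_1)$. A topological graph that is irreducible between two of its points has neither a branching point nor a circle, hence is an arc; in particular $Y$ contains no circle. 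Now Lemma~\ref{lem:tranche_in_circle} asserts that the image of any tranche lies on a circle of $Y$, so $X$ can have no tranche, i.e.\ every fiber of $\phi$ is degenerate. Thus $\phi$ is a continuous bijection from a compact space onto a Hausdorff space, hence a homeomorphism, and $X\cong Y$ is an arc. The main obstacle on this second route is the elementary but not entirely trivial claim that a topological graph irreducible between two of its points must be an arc, together with the standard fact that monotone maps between continua preserve irreducibility; both are routine in continuum theory.
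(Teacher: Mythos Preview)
Your proposal is correct on both routes. Your first argument---arcwise connectedness produces an arc $J$ between the irreducibility points $x_0,x_1$, and irreducibility forces $J=X$---is valid and, as you note, does not use the tranched-graph hypothesis at all. This is genuinely more elementary than what the paper has in mind: the paper places the corollary immediately after Lemma~\ref{lem:tranche_in_circle} and the remark that a $\lambda$-continuum with dense degenerate fibers is a tranched graph (with base $[0,1]$), and derives the corollary from that machinery. Your second route---push irreducibility forward along the monotone tranched-graph map $\phi$ to see that $Y$ is an irreducible graph, hence an arc with no circles, then apply Lemma~\ref{lem:tranche_in_circle} to rule out tranches and conclude $\phi$ is a homeomorphism---is essentially the paper's intended argument spelled out in full.

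What each buys: your direct irreducibility argument is shorter, self-contained, and shows the tranched-graph assumption is redundant here; the paper's route through Lemma~\ref{lem:tranche_in_circle} situates the result as an illustration of the section's structural lemma and makes explicit why the base graph $Y$ must be $[0,1]$. Both are sound; for exposition one might present the first as the proof and mention the second as the thematic connection.
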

Using Lemma~\ref{lem:tranche_in_circle}  we easily obtain the following. 
\begin{cor}
     Suppose $X$ is an arcwise connected tranched graph and let $\phi \colon X \ra Y$ be a mapping from the definition. If $a \in \End(Y)$ and $b \in \Br(Y)$ are such that $[a,b] \cap \Br(Y) = \{b\}$, then for all $y \in [a,b)$ the fiber $\phi^{-1}(y)$ is degenerate.

\end{cor}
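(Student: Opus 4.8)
The plan is to argue by contraposition through Lemma~\ref{lem:tranche_in_circle}: I will show that no point $y\in[a,b)$ can lie on a circle contained in $Y$, whence by that lemma no such $y$ can carry a nondegenerate fiber. So assume for contradiction that some $y\in[a,b)$ has $\phi^{-1}(y)$ nondegenerate; then Lemma~\ref{lem:tranche_in_circle} furnishes a circle $S\subseteq Y$ with $y\in S$, and the whole argument consists in contradicting the position of $y$ on the arc running from the endpoint $a$ to the first branching point $b$.

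First I would dispose of the case $y=a$. Since $a\in\End(Y)$, its valence in $Y$ equals $1$, whereas every point of a circle admits a $2$-star inside that circle and hence has valence at least $2$ in $Y$; thus $a$ cannot lie on any circle, and $\phi^{-1}(a)$ is degenerate. So assume $y\in(a,b)$. Because $y$ is neither an endpoint nor a branching point of $Y$ (the interior of $[a,b]$ misses $\Br(Y)$ and $y\neq a$), its valence is exactly $2$, so a small neighborhood of $y$ in $Y$ is an arc with $y$ in its interior, having precisely two local directions: one running along $[a,b]$ toward $a$ and one toward $b$.

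Next I would pin down the components of $Y\setminus\{y\}$. Using that $a$ is an endpoint and that $(a,b)$ contains no branching points, any arc in $Y$ issuing from $a$ is forced into $[a,b]$ at $a$ and cannot branch off before reaching $y$; consequently the component $C_a$ of $Y\setminus\{y\}$ containing $a$ is exactly the half-open arc $[a,y)$, while $b$ lies in a different component $C_b$, and since the valence of $y$ is $2$ these are the only two components. Now $S\setminus\{y\}$ is connected (a circle minus a point is an arc) and contained in $Y\setminus\{y\}$, hence lies entirely in one of $C_a$, $C_b$. If $S\setminus\{y\}\subseteq C_a=[a,y)$, then $S\subseteq[a,y]$, impossible since an arc contains no circle. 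On the other hand, because $y$ is an interior (non-endpoint) point of the circle $S$, the set $S$ must approach $y$ along both local directions at $y$, so $S\setminus\{y\}$ meets both $C_a$ and $C_b$, contradicting its connectedness. Either way we reach a contradiction, proving that $\phi^{-1}(y)$ is degenerate for every $y\in[a,b)$.

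I expect the only delicate point to be the local analysis at $y$: making precise that a circle through a valence-$2$ point must enter both local directions (equivalently, that $y$ cannot be an endpoint of $S$), and correctly identifying $C_a=[a,y)$ as a full connected component of $Y\setminus\{y\}$. Both are routine facts about topological graphs, but they are exactly where the hypotheses $a\in\End(Y)$ and $[a,b]\cap\Br(Y)=\{b\}$ are consumed, so they are the steps that deserve the careful wording indicated above.
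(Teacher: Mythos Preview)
Your proposal is correct and follows exactly the route the paper intends: the corollary is stated there as an immediate consequence of Lemma~\ref{lem:tranche_in_circle} with no further argument, and you have simply supplied the routine graph-theoretic verification that no $y\in[a,b)$ can lie on a circle of $Y$. The only cosmetic remark is that your ``on the other hand'' clause (that $S$ must enter both local directions at a valence-$2$ point) already disposes of both cases at once, so the separate treatment of $S\setminus\{y\}\subseteq C_a$ is redundant---but this does not affect correctness.
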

 The following can be used to find an upper bound for the number of tranches.
\begin{lem}
\label{lem:upper_bound_tranches}
    Suppose $X$ is an arcwise connected tranched graph, let $\phi \colon X \ra Y$ be the mapping from the definition and assume that $y_1 \neq y_2 \in Y$ define tranches. Then there are circles $S_1 \neq S_2 \subset Y$ such that $y_i \in S_i$.
\end{lem}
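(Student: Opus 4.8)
The plan is to apply Lemma~\ref{lem:tranche_in_circle} to each of $y_1,y_2$ and then argue that the two circles it produces can be chosen distinct. That lemma makes both families of circles (through $y_1$ and through $y_2$) nonempty; if $y_1$ lies on two different circles, or $y_2$ does, or the unique circles through the two points already differ, one immediately picks $S_1\neq S_2$. Hence the only case to exclude is that there is a single circle $S$ with $y_1,y_2\in S$ that is simultaneously the unique circle through $y_1$ and the unique circle through $y_2$. I would derive a contradiction from arcwise connectedness, i.e.\ show that \emph{no single circle of $Y$ can carry two tranches of an arcwise connected tranched graph.}

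So assume such an $S$ exists and write $S=\alpha\cup\beta$, where $\alpha,\beta$ are the two arcs of $S$ with $\alpha\cap\beta=\{y_1,y_2\}$. First I would extract the graph-theoretic consequences of uniqueness. If some edge at $y_i$ other than the two edges of $S$ led back to $S\setminus\{y_i\}$, or if $\alpha^\circ$ and $\beta^\circ$ were joined inside $Y\setminus\{y_1,y_2\}$, one would obtain a second circle through $y_i$; hence every non-$S$ edge at $y_i$ enters a subgraph $K_i$ attached to the rest of $Y$ only at $y_i$, and $\alpha^\circ,\beta^\circ$ lie in different components $M,M'$ of $Y\setminus\{y_1,y_2\}$ whose closures meet $\{y_1,y_2\}$ only along $\alpha$, resp.\ $\beta$. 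Passing to preimages, $\phi^{-1}(\overline M)$ meets the rest of $X$ only in $T_1\cup T_2$ and can reach $T_1$ (resp.\ $T_2$) only through the fiber over the $\alpha$-edge at $y_1$ (resp.\ $y_2$), and symmetrically for $\beta$ and $\phi^{-1}(\overline{M'})$.

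Next I would classify how these fibers reach the tranches. By monotonicity the fiber $R_\alpha=\phi^{-1}(\alpha^\circ)$ is a line whose two ends accumulate on $T_1$ and $T_2$; call such an end \emph{simple} if some arc runs through it into the tranche and \emph{oscillatory} otherwise, and record the types $a_1,a_2$ (for $R_\alpha$ at $y_1,y_2$) and $b_1,b_2$ (for $R_\beta$). The mechanism in the proof of Lemma~\ref{lem:tranche_in_circle} supplies the two facts I need: an oscillatory end admits no arc into its tranche (an arc inside $L\cup\omega(L)$ reaching $\omega(L)$ forces $L$ to converge, i.e.\ to be non-oscillatory), so a line with both ends oscillatory is arc-isolated; and, since any path in $Y$ between $y_1$ and $y_2$ must run along $\alpha$ or $\beta$ (the $K_i$ are dead ends), an arc in $X$ from $T_1$ to $T_2$ requires $a_1=a_2=\text{simple}$ or $b_1=b_2=\text{simple}$. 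Arcwise connectedness then gives: $R_\alpha$ is non-isolated only if $a_1$ or $a_2$ is simple, and likewise for $R_\beta$.

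The decisive step, which I expect to be the main obstacle, is to show that each tranche must be approached oscillatorily \emph{along $S$}, i.e.\ that $a_1=b_1=\text{simple}$ is impossible (and symmetrically at $y_2$). Since $T_1$ is a nondegenerate, nowhere dense fiber, the singleton fibers accumulating on it force an oscillatory quasi-arc $L$ with $\omega(L)\subset T_1$ (as in the proof of Lemma~\ref{lem:tranche_in_circle}, via Lemma~\ref{lem:finite_tranches}); if both $S$-approaches at $y_1$ were simple, $L$ would lie over a non-$S$ edge $e$. By arcwise connectedness there is an arc from $L$ into $T_1$; as the oscillatory approach along $e$ admits no arc into $T_1$, this arc must reach $y_1$ along a different edge $e'$, and then the segment of $e$ and the image of the connecting arc are two arcs in $Y$ from a point of $e$ to $y_1$ that diverge at $y_1$, producing a circle through $y_1$ using $e$---a second circle, contradicting uniqueness. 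Granting this, the argument closes by cases: if neither $\alpha$ nor $\beta$ is fully simple then $T_1,T_2$ lie in different arc components; if $\alpha$ is fully simple then the oscillation requirement forces $b_1=b_2=\text{oscillatory}$, so $R_\beta$ is arc-isolated; and symmetrically if $\beta$ is fully simple. Each possibility contradicts arcwise connectedness, so $S$ cannot carry both tranches and the required distinct circles exist.
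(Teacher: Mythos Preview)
Your reduction to a single common circle $S$ matches the paper, but from there the paper takes a far shorter route. It observes (via the ``arcs in $X$ cannot cross a tranche'' argument of Lemma~\ref{lem:tranche_in_circle}) that each open arc of $S\setminus\{y_1,y_2\}$ must contain a branching point of $Y$: if one arc had none, any arc in $X$ between singleton fibers over the two open arcs would be forced to project through $y_1$ or $y_2$, a contradiction. Picking one branching point $b_i$ in each arc, the points $y_1,y_2$ lie in distinct components $C_1,C_2$ of $S\setminus\{b_1,b_2\}$; an arc $A\subset X$ between singleton fibers over $C_1$ and $C_2$ then satisfies $y_1,y_2\notin\phi(A)$ by the same principle, and $\phi(A)\cup C_i$ contains a circle through $y_i$ distinct from $S$ for each $i$.

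Your simple/oscillatory machinery is unnecessary and has concrete gaps. First, you assert that $R_\alpha=\phi^{-1}(\alpha^\circ)$ is a line, but nothing rules out further tranches over points of $\alpha^\circ$; finiteness of the tranche set is only deduced \emph{after} this lemma (Proposition~\ref{prop:tranches_betti}), so you cannot assume it here, and your invocation of Lemma~\ref{lem:finite_tranches} in the decisive step is circular for the same reason. Second, the decisive step does not build the second circle you describe: since $K_1$ is a tree attached only at $y_1$, distinct edges $e\ne e'$ of $K_1$ at $y_1$ have $e\setminus\{y_1\}$ and $e'\setminus\{y_1\}$ in different components of $K_1\setminus\{y_1\}$, so the image of your connecting arc (which you arranged to meet $T_1$ only at its endpoint) cannot pass from $e$ to $e'$ at all. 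That is already a direct contradiction, not a new circle; the step is salvageable along these lines, but the whole analysis is bypassed by the paper's branching-point construction.
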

\begin{proof}
     Assume on the contrary that there are no circles $S_1 \neq S_2 \subset Y$ such that $y_i \in S_i$.
    By Lemma~\ref{lem:tranche_in_circle} there is a unique circle $S$ such that $y_1,y_2 \in S$. We claim that $S$ needs to have at least two branching points of $X$.  
    Suppose it is not the case and denote by $C_1, C_2$ connected components of $S \backslash \{y_1,y_2\}$. Then, if we choose any points $c_i \in C_i$ for $i=1,2$, then any arc connecting them has to pass through the point $y_1$ or $y_2$. It is a contradiction (cf. the proof of Lemma~\ref{lem:tranche_in_circle}), so the claim holds.

    Fix two distinct $b_1,b_2 \subset S \cap \Br(Y)$ and let $C_1,C_2$ be connected components of $S \backslash \{b_1,b_2\}$ such that $y_i \in C_i$. The continuum $X$ is arcwise connected, for points $c_i \in C_i$ there is an arc $A\subset X$ with the endpoints in the sets  $\phi^{-1}(c_1)$ and $\phi^{-1}(c_2)$. 
    Using the same argument as before $y_1,y_2\not\in\phi(A)$.
    But then $S_i=\phi(A) \cup C_i$ are circles such that $y_i \in S_i$ and $S_1 \neq S_2$, which leads us to a contradiction with the uniqueness of $S$ and completes the proof.
\end{proof}

Using the standard terminology of algebraic topology, we can state the result as follows: 
Using Lemma~\ref{lem:upper_bound_tranches} inductively, we get that for a  tranched graph $X$, with associated mapping $\phi \colon X \ra Y$ we have to have at least as many circles in the topological graph $Y$ as there are tranches in $X$.

\begin{prop}
\label{prop:tranches_betti}
    Suppose $X$ is an arcwise connected tranched graph and let $\phi \colon X \ra Y$ be the mapping from its definition. Then $X$ has at most $b_1(Y)$ tranches, where $b_1(Y)$ is the first Betti number of the topological graph $Y$.
\end{prop}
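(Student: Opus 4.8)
The plan is to recast the statement homologically. Since $X$ is arcwise connected, $Y=\phi(X)$ is a connected topological graph, so $b_1(Y)$ equals the dimension over $\mathbb{Z}_2$ of the cycle space $H_1(Y;\mathbb{Z}_2)=\ker\partial_1$. The strategy is to attach to each tranche a $1$-cycle in $Y$ and to show that the resulting family can be chosen linearly independent in $H_1(Y;\mathbb{Z}_2)$; since any independent family in the cycle space has size at most $b_1(Y)$, this forces the number of tranches to be at most $b_1(Y)$ (and in particular finite). Thus it suffices to prove that \emph{any} finite collection of distinct tranche-points $y_1,\dots,y_m\in Y$ yields $b_1(Y)\ge m$.

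I would argue by induction on $m$. For $m=1$, Lemma~\ref{lem:tranche_in_circle} supplies a circle through $y_1$, i.e.\ a nonzero class in $H_1(Y;\mathbb{Z}_2)$, so $b_1(Y)\ge 1$. For $m=2$, Lemma~\ref{lem:upper_bound_tranches} gives distinct circles $S_1\neq S_2$ with $y_i\in S_i$; over $\mathbb{Z}_2$ two distinct nonzero cycle classes are automatically linearly independent, so $b_1(Y)\ge 2$. For the inductive step I would maintain a connected subgraph $G_{m-1}\subseteq Y$ with $b_1(G_{m-1})\ge m-1$ carrying circles through $y_1,\dots,y_{m-1}$, and attach an ``ear'' forced by $y_m$. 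Concretely, by Lemma~\ref{lem:finite_tranches} choose an oscillatory quasi-arc $L$ without ancestors with $\omega(L)\subset\phi^{-1}(y_m)$; as in the proof of Lemma~\ref{lem:upper_bound_tranches}, pick a singleton-fiber point $x\in L$ near its limit set and an arc $A\subset X$ joining $x$ to a point lying over $G_{m-1}$, choosing $A$ to avoid the finitely many tranches $\phi^{-1}(y_1),\dots,\phi^{-1}(y_m)$. Then $\phi(A)$ is a genuine path in $Y$ whose interior misses $y_1,\dots,y_m$, and which, together with the loop at $y_m$ forced by the oscillation of $L$ (Lemma~\ref{lem:tranche_in_circle}), yields a new circle through $y_m$. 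An arc attached to a connected subgraph by both endpoints with interior disjoint from it raises the first Betti number by exactly one, so one gets $G_m$ with $b_1(G_m)\ge b_1(G_{m-1})+1\ge m$.

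The crux, and the step I expect to be the main obstacle, is exactly this inductive passage: Lemma~\ref{lem:upper_bound_tranches} only produces \emph{distinct} circles, whereas for $m\ge 3$ pairwise-distinct $\mathbb{Z}_2$-cycles need not be independent (the three circles of a theta-graph sum to zero, and a theta-graph has $b_1=2$). Hence the real content is to show that each new tranche contributes a \emph{new} ear rather than a cycle already homologous to a combination of the previous ones, and thereby to rule out configurations such as three tranches packed onto a theta-graph. The two ingredients that force this are arcwise connectedness, which supplies the connecting arc $A$ in $X$ and hence a new path in $Y$ attached to the already-built subgraph, and the oscillation of the quasi-arc over $y_m$, which forces that path to close into a loop based at $y_m$ whose interior can be kept disjoint from the edges used by $G_{m-1}$. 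The routine but essential bookkeeping is choosing $A$ to avoid the finitely many collapsed continua (so $\phi(A)$ is an honest nondegenerate arc off the tranche-points) and checking that the attached ear is nondegenerate with endpoints on $G_{m-1}$; once this is verified the induction closes and delivers $m\le b_1(Y)$.
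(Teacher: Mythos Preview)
Your homological framing is sound and you are right that Lemma~\ref{lem:upper_bound_tranches} only produces \emph{distinct} circles, so that for $m\ge 3$ linear independence in $H_1(Y;\mathbb{Z}_2)$ is not automatic; this is a subtlety the paper's one-line ``iterate Lemma~\ref{lem:upper_bound_tranches}'' does not make explicit. However, your inductive ear-attachment has a genuine gap at exactly the point you label ``routine bookkeeping''. You need the new circle through $y_m$ to contribute an arc with interior disjoint from $G_{m-1}$, but nothing you have written prevents $y_m\in G_{m-1}$: the circles through $y_1,\dots,y_{m-1}$ that you already placed in $G_{m-1}$ may well pass through $y_m$, and then the circle you build at $y_m$ can lie entirely inside $G_{m-1}$, adding no new homology. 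The sentence ``whose interior can be kept disjoint from the edges used by $G_{m-1}$'' is the whole content of the induction and you have not argued it.

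There are two secondary issues. First, you invoke Lemma~\ref{lem:finite_tranches} to extract an oscillatory quasi-arc with $\omega(L)\subset\phi^{-1}(y_m)$, but that lemma assumes finitely many tranches, which is precisely what the proposition is establishing; you would need to localize the argument of that lemma near a single tranche without the global finiteness hypothesis. Second, you assert that the arc $A\subset X$ can be chosen to avoid $\phi^{-1}(y_1),\dots,\phi^{-1}(y_m)$, but arcwise connectedness of $X$ gives you \emph{some} arc, not one avoiding prescribed nowhere dense subcontinua; this is exactly the kind of obstruction the proofs of Lemmas~\ref{lem:tranche_in_circle} and~\ref{lem:upper_bound_tranches} work to overcome, and it cannot simply be assumed.

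A cleaner way to close the induction, in the spirit of the paper's lemmas, is to aim directly for the statement that $Y\setminus\{y_1,\dots,y_m\}$ is connected for any finite set of tranche-points: this is what the proof of Lemma~\ref{lem:upper_bound_tranches} actually extracts from arcwise connectedness of $X$ (an arc in $X$ between singleton-fiber points projects to a path in $Y$ avoiding the $y_i$), and once you have it the bound $b_1(Y)\ge m$ follows from elementary graph homology by subdividing so that the $y_i$ lie in edge-interiors and observing that removing $m$ open edges from a connected graph while keeping it connected forces $b_1\ge m$.
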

We can use Proposition~\ref{prop:tranches_betti}
inductively to prove a finite number of tranches at any depth of the continuum.
Strictly speaking, we have the following.
\begin{lem}
\label{lem:arcwise_width}
    Let $X$ be an arcwise connected, hereditary tranched graph. Then for all $ k\in \N$, the set $lvl_k(X)$ is finite.
\end{lem}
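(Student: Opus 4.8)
The plan is to argue by strong induction on $k$. We have $lvl_0(X)=\{X\}$, and for $k=1$ the set $lvl_1(X)$ is exactly the set of tranches of $X$, which is finite by Proposition~\ref{prop:tranches_betti} (at most $b_1(Y)$ of them). For the inductive step, assume $lvl_k(X)$ is finite. Since every tranche of any $T^-\in lvl_{k-1}(X)$ belongs to $lvl_k(X)$, each such $T^-$ has only finitely many tranches; as $lvl_{k+1}(X)=\bigcup_{T\in lvl_k(X)}\{\text{tranches of }T\}$ is a finite union, it suffices to prove that each individual $T\in lvl_k(X)$ has finitely many tranches.

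Fix such a $T$, a tranche of its parent $T^-$. Because $T^-$ is a tranched graph with finitely many tranches, Lemma~\ref{lem:finite_tranches} produces finitely many oscillatory quasi-arcs $L_1,\dots,L_m\subset T^-$ with $T=\bigcup_{i=1}^m\omega(L_i)$. The subtlety is that $T$, although a tranched graph (being a subcontinuum of $X$), need \emph{not} be arcwise connected, so Proposition~\ref{prop:tranches_betti} does not apply to it directly. I would therefore join the arcwise connected components of $T$ by finitely many arcs chosen inside the arcwise connected continuum $X$, attached at singleton-fiber points of distinct components, to form $\hat T\subset X$. The map $\phi_T\colon T\to Y_T$ extends over the added edges to a monotone map $\hat T\to\hat Y$ onto a topological graph, under which the new arcs give only degenerate fibers; hence $\hat T$ is an arcwise connected (hereditary) tranched graph whose tranches are exactly those of $T$, to which Proposition~\ref{prop:tranches_betti} will apply once we know that finitely many arcs suffice.

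The crux is that $T$ has only finitely many arcwise connected components. Consider the ancestor chain $X=T_0\supset T_1\supset\cdots\supset T_k=T$ witnessing $T\in lvl_k(X)$, each $T_{i+1}$ a tranche of the tranched graph $T_i$ with associated map $\phi_{T_i}\colon T_i\to Y_{T_i}$. Take an arc $A\subset X$ joining points in two distinct components of $T$; since $A\not\subset T=T_k$, there is a largest $j<k$ with $A\subset T_j$, and then $A$ leaves the tranche $T_{j+1}$ inside $T_j$. Following the argument of Lemma~\ref{lem:tranche_in_circle}, $\phi_{T_j}(A)$ is then nondegenerate and runs along a circle of $Y_{T_j}$ through the point $\phi_{T_j}(T_{j+1})$. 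Each of the finitely many graphs $Y_{T_0},\dots,Y_{T_{k-1}}$ contains only finitely many circles through the relevant point, so if $T$ had infinitely many components we could, pigeon-holing the corresponding joining arcs, find infinitely many pairwise disjoint arcs in some single $T_j$ whose images all run along one circle; two of them would then have overlapping interior images, forcing a disconnected fiber of $\phi_{T_j}$ and contradicting its monotonicity, exactly as in the proof of Lemma~\ref{limit_collapse}.

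Hence $T$ has finitely many components, finitely many joining arcs suffice, $\hat T$ is a continuum, and Proposition~\ref{prop:tranches_betti} bounds the tranches of $\hat T$, and so of $T$. I expect the main difficulty to be precisely this multi-scale circle count: the joining arcs live in $X$ but may descend through several deeper tranches before returning, so one must track the ancestor chain and detect the separating circle at the correct level $T_j$, exploiting that only finitely many tranches --- hence only finitely many graphs, hence finitely many circles --- occur below level $k$ by the inductive hypothesis. The remaining verifications --- that the extension $\hat T\to\hat Y$ is monotone onto a graph with dense degenerate fibers and with the same tranches as $T$, and that attaching points may be chosen in singleton fibers (dense by Theorem~\ref{thm:set_of_tranches_is_meager}) --- are routine, and completing them closes the induction.
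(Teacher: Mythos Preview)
Your approach differs substantially from the paper's, and the step you yourself flag as ``the main difficulty'' contains a genuine gap.

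The paper never descends into a tranche $T$ and then tries to repair its lack of arcwise connectedness. Instead it works \emph{outside in}: having $lvl_1(X)$ finite, it uses Lemma~\ref{lem:finite_tranches} to realise the level-$1$ tranches as unions of limit sets of finitely many oscillatory quasi-arcs without ancestors, and then applies Lemma~\ref{lem:regular_finite_delete_quasi_arc} repeatedly to delete (tails of) those quasi-arcs. The resulting space $X_1$ is again an \emph{arcwise connected} hereditary tranched graph, now with $lvl_1(X_1)=lvl_2(X)$, so Proposition~\ref{prop:tranches_betti} applies directly to $X_1$ and gives $lvl_2(X)$ finite. Iterating yields the lemma. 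The point is that arcwise connectedness is maintained at the level of the ambient space throughout, so one never has to argue about arcwise components of a tranche.

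Your argument, by contrast, needs the claim that each $T\in lvl_k(X)$ has finitely many arcwise connected components, and the justification you give does not hold up. Two issues: first, the assertion that $\phi_{T_j}(A)$ ``runs along a circle'' through $\phi_{T_j}(T_{j+1})$ is not automatic---an arc whose two endpoints map to the same point may have image which is merely an arc (fold back along an edge), and $A$ may also meet other tranches of $T_j$, so $\phi_{T_j}|_A$ need not be injective away from $T_{j+1}$. Second, and more seriously, even granting the circle claim and pigeon-holing over the finitely many levels and circles, you assert you obtain infinitely many \emph{pairwise disjoint} arcs in some $T_j$ with overlapping images. But the joining arcs you start from connect components of $T$ in $X$; nothing prevents them from sharing long segments (they can all pass through the same points outside $T$), so disjointness is unjustified, and without it the ``overlapping images of disjoint arcs force a disconnected fiber'' step (the mechanism of Lemma~\ref{limit_collapse}) does not fire. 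Note also that if finitely many arcwise components were obtainable without using the arcwise connectedness of $X$, then abstractly attaching arcs would show \emph{every} tranched graph has finitely many tranches, contradicting known examples with uncountably many; so this step genuinely needs a careful use of the ambient arcwise connectedness, which your sketch does not supply. The paper's peeling argument sidesteps the whole issue.
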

\begin{proof}
    The Betti number of a graph is always finite; hence, by Proposition~\ref{prop:tranches_betti} the set $lvl_1(X)$ is finite. By Lemma~\ref{lem:finite_tranches}, the set $lvl_1(X)$ 
    can be presented as the union of limit sets of (finitely many) oscillatory quasi-arcs in $X$. Using Lemma~\ref{lem:regular_finite_delete_quasi_arc} we remove these quasi-arcs from $X$, obtaining an arcwise connected continuum $X_1$ such that $lvl_1(X_1)=lvl_2(X)$. Repeating the above argument, we obtain that the set of tranches of $X_1$ is finite and hence so is the set $lvl_2(X)$.  Continuing this process inductively, we see that for every $N \in \N$ set $lvl_N(X)$ is finite, completing the proof. 
\end{proof}
We can now prove the theorem characterizing tranched graphs which at the same time satisfy the definition of quasi-graph.
\begin{thm}
\label{thm:what_tranched_are_quasi}
    Let $X$ be a tranched graph. Then $X$ is a quasi-graph if and only if it is an arcwise connected, hereditary tranched graph of finite depth.
\end{thm}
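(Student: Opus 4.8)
The plan is to establish the two implications separately. The forward direction is immediate from results already in hand: if $X$ is a quasi-graph then it is arcwise connected by Definition~\ref{def_quasigraph}, it is a hereditary tranched graph by Lemma~\ref{lem:quasi-graphs_are_tranched}, and its depth is finite because, as observed after Definition~\ref{def:4.21}, the depth of a quasi-graph equals the maximal order among its oscillatory quasi-arcs, of which there are only finitely many.

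For the converse, assume $X$ is an arcwise connected, hereditary tranched graph of finite depth $N$, and proceed by induction on $N$. Lemma~\ref{lem:arcwise_width} shows that every $lvl_k(X)$ is finite, so $X$ has only finitely many tranches altogether. If $N=0$ there are no tranches, the associated monotone map $\phi\colon X\to Y$ is a continuous bijection onto a topological graph, hence a homeomorphism, and $X$ is a graph, i.e.\ a quasi-graph with empty collection of quasi-arcs.

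Assume now $N\geq 1$. The set $lvl_1(X)$ is finite, and by Lemma~\ref{lem:finite_tranches} the union of the level-one tranches is the union of the limit sets of finitely many oscillatory quasi-arcs $L_1,\dots,L_m$, which may be chosen without ancestors since their limit sets are the outermost nowhere dense subcontinua. Applying Lemma~\ref{lem:regular_finite_delete_quasi_arc} repeatedly, I would delete tails $\varphi_i((M_i,\infty))$ to form $X_1=X\setminus\bigcup_{i}\varphi_i((M_i,\infty))$; this space is again arcwise connected and hereditary tranched, and, exactly as in the proof of Lemma~\ref{lem:arcwise_width}, satisfies $lvl_1(X_1)=lvl_2(X)$, so it has depth $N-1$. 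By the induction hypothesis $X_1$ is a quasi-graph, say $X_1=G\cup\bigcup_{l}K_l$.

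It remains to reattach the deleted tails, and this is the step I expect to be the main obstacle. Each cut point $\varphi_i(M_i)$ becomes an endpoint of $X_1$, hence lies in the graph part $G$, and after adjusting the $M_i$ slightly I may assume the points $\varphi_i(M_i)$ are distinct from all branch points and from all endpoints $a_l$ of the $K_l$; then the reparametrized tails $\hat L_i=\varphi_i([M_i,\infty))$ are oscillatory quasi-arcs meeting $X_1$ exactly in their endpoints $\varphi_i(M_i)\in G$, with $\omega(\hat L_i)=\omega(L_i)$. I would then propose $X=G\cup\bigcup_l K_l\cup\bigcup_i\hat L_i$ as a quasi-graph decomposition, listing the $K_l$ before the $\hat L_i$. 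Verifying Definition~\ref{def_quasigraph}\eqref{def_quasigraph:1}--\eqref{def_quasigraph:4} is the heart of the argument. Conditions \eqref{def_quasigraph:1} and \eqref{def_quasigraph:2} reduce to checking that $G$ remains a topological graph, that the quasi-arcs stay pairwise disjoint, and that no new branch points are introduced, which is where the careful choice of $M_i$ (via Lemma~\ref{lem:regular_finite_delete_quasi_arc}, ensuring the retained tails avoid tranches and branch points) is used. Condition \eqref{def_quasigraph:3} follows because $\omega(\hat L_i)$ is a level-one tranche, hence a subcontinuum of $X_1=G\cup\bigcup_l K_l$, so it is covered by $G$ together with the earlier quasi-arcs. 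The genuinely delicate point is the all-or-nothing condition \eqref{def_quasigraph:4}: I must show that whenever a quasi-arc $K_l$ meets the tranche $\omega(\hat L_i)$ it is entirely contained in it, which I would deduce from the fact that $\omega(\hat L_i)$ is a connected component of $\omega(X)$ and that the bodies of the top-level quasi-arcs $\hat L_j$ have been arranged to avoid every tranche.
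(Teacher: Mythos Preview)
Your approach is essentially the paper's, merely reorganised: the paper removes \emph{all} oscillatory quasi-arcs in one pass via Lemma~\ref{lem:regular_finite_delete_quasi_arc} (indexed so that $L_n$ is removed first and $L_1$ last), reaching a graph $G$ directly, and then verifies Definition~\ref{def_quasigraph}\eqref{def_quasigraph:1}--\eqref{def_quasigraph:4} for $G\cup\bigcup L_j$. Your induction on depth peels off one level at a time; the reattachment step is the same verification, just carried out level by level.

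One point to correct in your handling of condition~\eqref{def_quasigraph:4}: the claim that each $\omega(\hat L_i)$ is a connected component of $\omega(X)$ is generally false, since several top-level quasi-arcs may accumulate on the same level-one tranche (this is exactly the content of Lemma~\ref{lem:finite_tranches}). Even if it were true, it would not by itself force $K_l\subset\omega(\hat L_i)$: a quasi-arc $K_l$ of $X_1$ can enter a former level-one tranche $T$ through an arc of $G$ and accumulate on a level-two tranche inside $T$, so its endpoint may lie outside $T$. The paper resolves~\eqref{def_quasigraph:4} not by a structural argument but by the shortening device already built into Lemma~\ref{lem:regular_finite_delete_quasi_arc}: whenever $\omega(L_i)\cap L_j\neq\emptyset$ with $L_j\not\subset\omega(L_i)$, one replaces $L_j$ by a shorter tail so that the intersection becomes empty (and absorbs the deleted initial segment into $G$). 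In your inductive scheme this means you must allow yourself to shorten the $K_l$ coming from the inductive hypothesis, not only the $\hat L_i$; once that is permitted, the rest of your argument goes through.
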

\begin{proof}
    Let $X$ be a tranched graph.
   Assume first that $X$ is a quasi-graph. We immediately get that $X$ is arcwise connected. As we showed in  Lemma~\ref{limit_collapse} and Theorem~\ref{thm:commonbase}, in this case all tranches are the limit sets of oscillatory quasi-arcs. By Definition~\ref{def_quasigraph} there are finitely many of them. It means that $X$ has a finite set of tranches and none of them contains an infinite hierarchy of tranches. Finally, Lemma~\ref{lem:quasi-graphs_are_tranched} shows that $X$ is a hereditary tranched graph.
    
    Now suppose   $X$  is a hereditary tranched graph which additionally is arcwise connected and of finite depth. Let  $\phi\colon X \ra Y$ be the map from Definition~\ref{def:tranched_graph}. By Lemma~\ref{lem:finite_tranches} all tranches are connected unions of limit sets of quasi-arcs. We use Lemma~\ref{lem:regular_finite_delete_quasi_arc} inductively to remove all oscillatory quasi-arcs from $X$, until we get a topological graph, which we denote by $G$. Suppose we removed $n$ oscillatory quasi-arcs from $X$ to get $G$. Let us assume that the indexes of oscillatory quasi-arcs we removed are ordered in a way that $L_n$ is the first quasi-arc we removed and  $L_1$ the last.  Then we get that:
    \begin{enumerate}[(i)]
        \item  $X =G \cup \bigcup_{j=1}^n L_j$ and by our assumption that quasi-arcs have no branching points $\Br(X) \subset G$ and $\End(X) \subset G$.
        \item  For any oscillatory quasi-arc $L_j$ only intersection with topological graph $G$ is at its endpoint.
        \item  Using the order we indexed the quasi-arcs, for any  $i=1, \ldots,  n$  quasi-arcs with lower indexed $j<i$ were removed from $X$ later than $L_i$ and those with higher index $j>i$ were removed before $L_i$. This means that $\omega (L_i) \subset G \cup  \bigcup_{j=1}^{i-1} L_j$ for any $ 1 \leq i \leq n$.
        \item Suppose now that $\omega (L_i) \cap L_j \neq \emptyset$ for some indexes  $i,j \in \N$. Then  $j<i$, and so
         $L_i$ will be removed  in the construction leading to $G$  before $L_j$ is removed.
        If $L_j$ was not a subset of $\omega(L_i)$, we can shorten $L_j$ accordingly obtaining that  $\omega (L_i) \cap L_j = \emptyset$, so we may assume that  $\omega (L_i) \subset L_j$ provided that $\omega (L_i) \cap L_j \neq \emptyset$ for some $i,j$.
    \end{enumerate}
   We obtained that $X$ satisfies all the properties from Definition~\ref{def_quasigraph}, meaning it is a quasi-graph, ending the proof.
 \end{proof}
 
The following example shows that tranches of a generalized sin(1/x)-continuum are not necessarily generalized sin(1/x)-type continua themselves.
\begin{exmp}
\label{exmp:regular_no_sin}
  Let
    \begin{align*}
		G &= (\{-1\} \times [-\frac{1}{2},1]) \cup ([-1,0]\times \{1\})  \cup (\{0\} \times [-1,1])\\
		L_1 &= \{(x,\frac{1}{2} \sin(-\pi/x)(1+x) +1 )~|~ x \in [-1,0)\} \\
		L_2 &=  \{(x,\frac{1}{2} \sin(-\pi/x)(1+x) -1 )~|~ x \in [-1,0)\} \\
		X_1 &= G \cup L_1 \cup L_2
    \end{align*}
    Then, as it does not satisfy the necessary condition in Theorem~\ref{thm:commonbase}, it is not a generalized sin(1/x)-type continuum (see Figure~\ref{fig:dopelnienie}). 
    \begin{figure}
            \centering
            \includegraphics[scale=0.3]{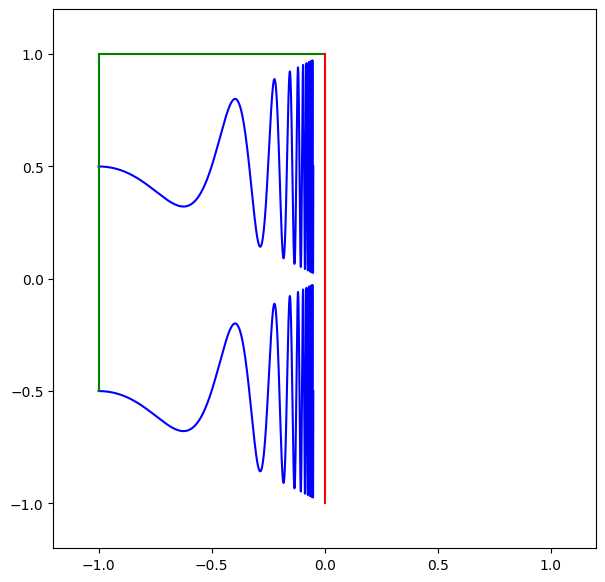}
            \includegraphics[scale=0.22]{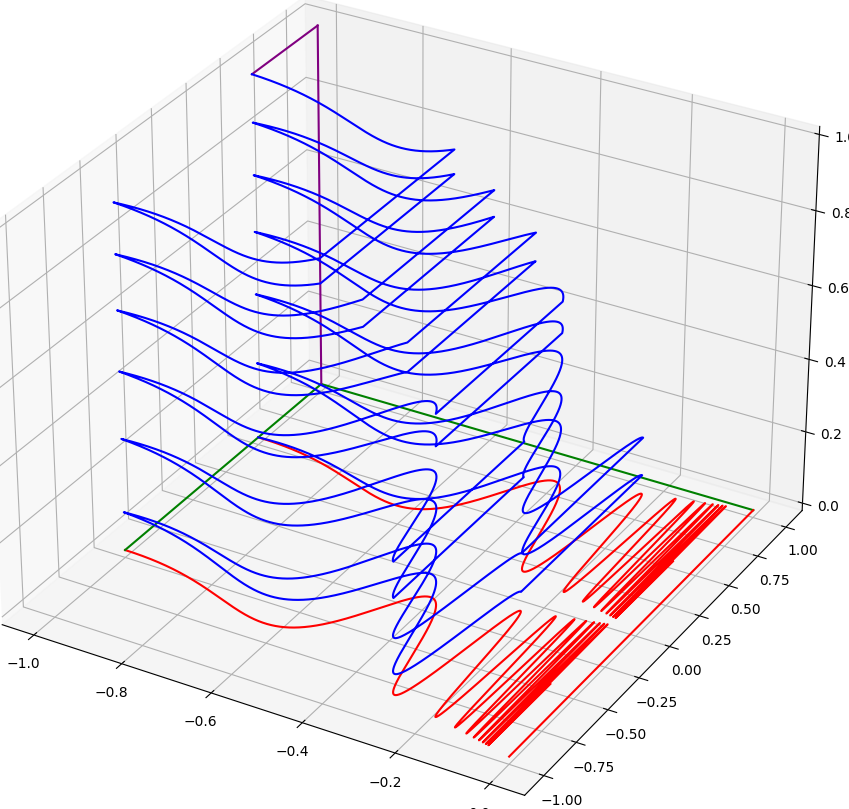}
           \caption{The quasi-graphs $X_1$ and $X$ from Example~\ref{exmp:regular_no_sin}} 
            \label{fig:dopelnienie}
        \end{figure}

Denote by $\varphi_1,\varphi_2$ parametrizations of quasi-arcs $L_1$ and $L_2$ respectively.
    Let  $\gamma_N^i \colon [0,1] \ra X_1$   for $N \in \N$ be curves defined as:
\begin{center}

\begin{tabular}{ c c c }
 $\gamma_N^1(t)=\varphi_1(Nt)$, & $\gamma_N^2(t)=(1-t)\varphi_1(N)+t\varphi_2(N)$, & $\gamma_N^3(t)=\varphi_2(N-Nt)$, \\ 
 $\gamma_N^4(t)=\varphi_2(Nt)$, & $\gamma_N^5(t)=t\varphi_1(N)+(1-t)\varphi_2(N)$, & $\gamma_N^6(t)=\varphi_1(N-Nt)$. 
\end{tabular}
\end{center}
The parameter $N$ decides how far into the quasi-arc $L_1$ we get, before we move onto $L_2$. Let $\gamma_N$ be such that $\gamma_{N}(t)=\gamma_N^i(6t)$  for $t \in [\frac{i-1}{6},\frac{i}{6}],~i=1,2,3,4,5,6$ and let  $\gamma$ be a curve defined as $$ \gamma(Nt)=\gamma_N(t).$$
    Define a continuous map $\varphi_K \colon [0,\infty) \ra X_1 \times \R$  acting in the following way:
   $$ \varphi_K(t)=(\gamma(t), \frac{1}{t+1})$$

    and denote by $A$ an arc connecting set $X_1 \times \{0\}$ with point $\varphi_K(0)$. If we write $K=\varphi_K([0,\infty))$, then
    $$X=(X_1 \times\{0\}) \cup K \cup A. $$
    By Definition~\ref{def_quasigraph} the continuum $X$ is a quasi-graph  (see Figure~\ref{fig:dopelnienie}). By Lemma~\ref{lem:quasi-graphs_are_tranched} we get that both $X$ and its only tranche $X_1$ are tranched graphs. However, $X_1$ is not a generalized sin(1/x)-type continuum. 

    Therefore, we constructed a generalized sin(1/x)-type continuum, that is also a quasi-graph, whose tranche is not a generalized sin(1/x)-type continuum.
  
\end{exmp}
All generalized sin(1/x)-type continua are tranched graphs by the definition, hence we get the following characterization as a direct consequence of Theorem~\ref{thm:what_tranched_are_quasi}.
 \begin{cor}
      Let $X$ be a generalized sin(1/x)-type continuum. Then $X$ is a quasi-graph if and only if it is arcwise connected,   hereditary tranched graph, and of finite depth.
 \end{cor}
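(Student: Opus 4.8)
The plan is to reduce the corollary to Theorem~\ref{thm:what_tranched_are_quasi} by observing that the hypothesis on $X$ is strictly stronger than being a tranched graph. First I would place the two relevant definitions side by side. Definition~\ref{def_sin1/x} equips a generalized sin(1/x)-type continuum $X$ with a topological graph $Y$ and a continuous monotone map $\phi\colon X\to Y$ satisfying both the density condition $\overline{\phi^{-1}(D)}=X$ (item \eqref{sin-ii}) and the approximation property (item \eqref{sin-iii}), where $D=\{y\in Y : \phi^{-1}(y)\text{ is degenerate}\}$. Definition~\ref{def:tranched_graph}, on the other hand, asks only for a topological graph $Y$ and a continuous monotone map $\phi\colon X\to Y$ with $\phi^{-1}(D)$ dense in $X$. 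Since the very data witnessing that $X$ is a generalized sin(1/x)-type continuum already contain all the data required by Definition~\ref{def:tranched_graph}, every generalized sin(1/x)-type continuum is in particular a tranched graph.

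With this class inclusion in hand, the second step is immediate: as $X$ is a tranched graph, Theorem~\ref{thm:what_tranched_are_quasi} applies verbatim and yields that $X$ is a quasi-graph if and only if it is an arcwise connected, hereditary tranched graph of finite depth. This is precisely the asserted equivalence, so nothing further is needed.

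I do not expect a genuine obstacle here, since the statement is simply the specialization of Theorem~\ref{thm:what_tranched_are_quasi} to the subclass of tranched graphs arising from generalized sin(1/x)-type continua; the only thing to check is the definitional inclusion, which is completed by comparing Definition~\ref{def_sin1/x} with Definition~\ref{def:tranched_graph}. The one point worth flagging is that the extra approximation property \eqref{sin-iii} plays no active role in the equivalence: it is inherited as part of the hypothesis but is never invoked in the proof of the theorem, so no argument beyond the comparison of definitions is required.
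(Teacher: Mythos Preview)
Your proposal is correct and matches the paper's own argument essentially verbatim: the paper simply notes that all generalized sin(1/x)-type continua are tranched graphs by definition and then invokes Theorem~\ref{thm:what_tranched_are_quasi} directly. Your additional remark that the approximation property \eqref{sin-iii} plays no role is accurate but not needed.
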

\section{Construction of an arcwise connected generalized sin(1/x)-type continuum   that is a tranched graph of infinite depth}\label{sec:example}

 The aim of this section is to rigorously  prove the correctness of the construction advertised in its title (see also Example~\ref{exmp:infinite_depth}). The section can be read independently, so the reader may freely skip the
  details and continue reading in Section~\ref{sec:dynamics}.

 In Example~\ref{exmp:infinite_order_quasi_arc} we constructed an infinite depth hereditary tranched graph, hence the assumption that continuum is of finite depth is necessary in Theorem~\ref{thm:what_tranched_are_quasi}. By the method of construction, this continuum was not arcwise connected. On the other hand, we proved in Lemma~\ref{lem:arcwise_width}
    that in arcwise connected hereditary tranched graph on any level (depth) the set of tranches is finite.
  Next example shows that 
  in these continua still
  infinite depth is possible (the example is even a generalized sin(1/x)-type continuum).
 In the proof we will use yet another geometrical representation of sin(1/x)-type curve, showing how changing the geometry of the space allows us for different constructions, depending on our goals.

  Let us describe informally the construction we are going to perform (see Figure~\ref{fig:warsaw_infinite} for a sketch of the first step).   
  We start with the Warsaw circle $X_0$ in the plane and we consider quasi-arc $L_0\subset X_0$ whose limit is an interval $ \omega(L_0)$. 
 Then we modify $L_0$ to $L_1$ by incorporating an oscillation in an additional dimension in such a way that now $\omega(L_1)\supset \omega(L_0)$
 is a smaller copy of $X_0$ perpendicular to the original $X_0$. In this way a continuum $X_1$
 is obtained. This process is repeated inductively, where each time $\omega(L_n)$
 is replaced by a smaller copy of $X_n$ and $L_n$ is modified to $L_{n+1}$ which oscillates in one direction more than $L_n$.
 As the ultimate step, we prove that the limit continuum $X_\infty$ of the sequence $X_n$ in the Hilbert cube is in fact an arcwise connected generalized sin(1/x)-type continuum.
 The self-similarity imposed by the construction is then used to show that $X_\infty$ has infinite depth.

     \begin{exmp}
\label{exmp:infinite_depth}
    There exists an arcwise connected generalized sin(1/x)-type continuum   that is a tranched graph of infinite depth. It will be constructed by induction.
\end{exmp}
    Let  us start with the function
    $$
    f(t) =\begin{cases} \frac14( \sin\frac{\pi}t+1+3t) & \text{if } t \in (0,\frac12],\\ \frac54-\frac54t& \text{if } t  \in [\frac12, 1] \end{cases}
    $$ Let $\varphi(t)=(t,f(t))$  and $X=([0,1]\times\{0\}) \cup \overline{L}$ be the Warsaw circle, where  $L=\varphi((0,1]))$, i.e. we choose the geometric representation as in Figure~\ref{fig:warsaw_infinite}.

    \begin{figure}[ht]

            \centering
             \begin{subfigure}[b]{0.3\textwidth}
      
        \includegraphics[scale=0.25]{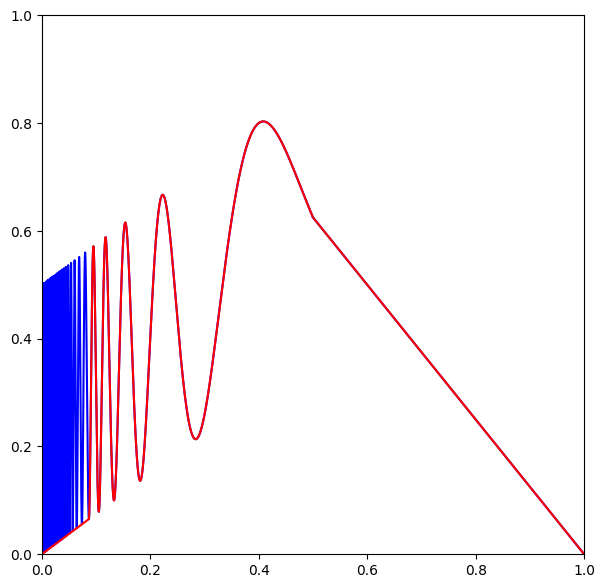}
        \caption{}\label{fig11:A}
            \end{subfigure}
            \begin{subfigure}[b]{0.3\textwidth}

        \includegraphics[scale=0.25]{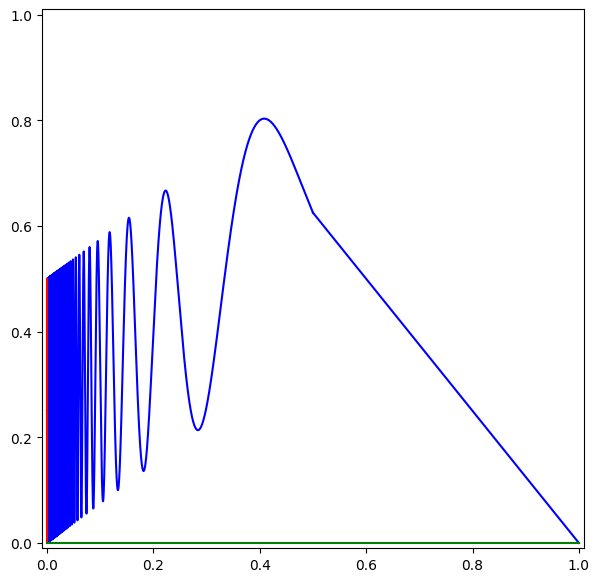}
        \caption{}\label{fig11:B}
            \end{subfigure}
            \begin{subfigure}[b]{0.3\textwidth}

         \includegraphics[scale=0.15]{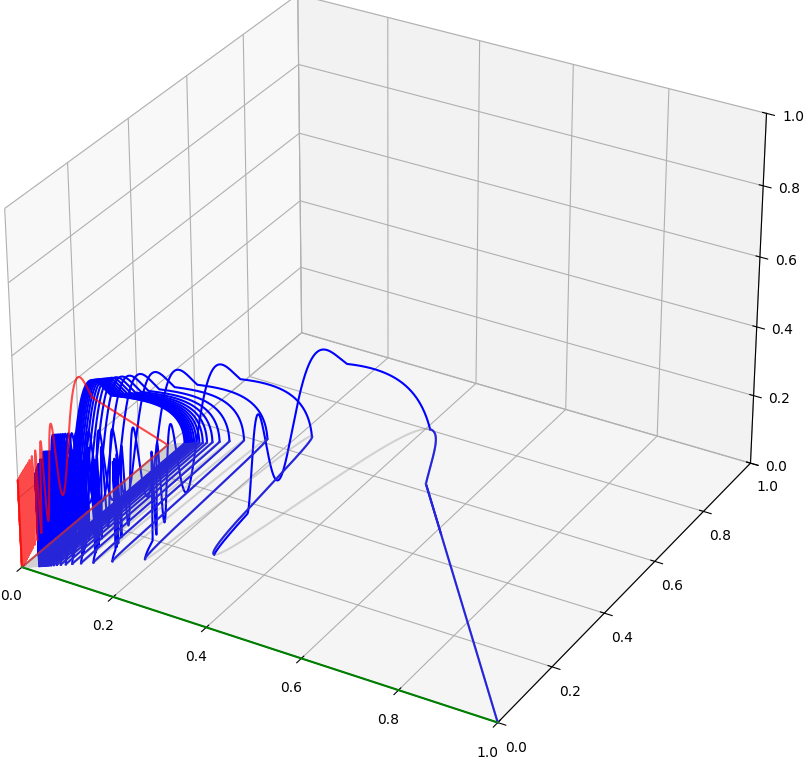}
        \caption{}\label{fig11:C}
            \end{subfigure}

            \caption{From left to right: (A) graphs of $f_{5}$ (in red) and $f$
            (in blue, beyond graph overlap with $f_5$), (B) the Warsaw circle $X$ and (C) the continuum $X_1$ from Example~\ref{exmp:infinite_depth}}.

            \label{fig:warsaw_infinite}
    \end{figure}

    Denote by $\{z^{(i)}\}_{i \in \N}$, $\{y^{(i)}\}_{i \in \N}$, the sets of local maxima and minima of $f$, with ordering $z^{(m)}<z^{(n)}$ if and only if $m>n$. By definition  $y^{(1)}=1$, hence $y^{(n+1)}<z^{(n)}<y^{(n)}$
for every $n\in \N$.
    For $N>1$ let 
    $$
    f_{N-1}(t) = 
    \begin{cases} 
    f(t) & \text{if } t \in [y^{(N)},1],\\
    \frac{f(y^{(N)})}{y^{(N)}}t & \text{if }  t \in [0, y^{(N)}] 
    \end{cases},
    $$

   \begin{enumerate}
        
        \item $\varphi_0 (t)=(t,f(t),0,\ldots)$, $L_0=\{\varphi_0(t), t \in (0,1]\} \subset \mathcal{H}$
        \label{cond:star:1}
        \item  $G=[0,1] \times \{0\}^\infty$ and $X_0=G \cup \overline{L_0}$
        \label{cond:star:2}
        \item $P^0 = \{P^0_i\}$, where $P^0_i=[y^{(i+1)},z^{(i)}]$. \label{cond:star:3}
    \end{enumerate}
    Note that \begin{enumerate}\setcounter{enumi}{3}
        \item \label{cond:star:4} $f_i|_{P^0_j}=f|_{P^0_j}$ for $j \le i$.
   \end{enumerate}

    For $i_0 \in \mathbb{N}$ let $h_{i_0}$ be an order-preserving affine homeomorphism from $f(P^0_{i_0})$ to $[0,1]$. 
    \begin{lem}\label{lem:Pn:construct}
There is a collection  of closed intervals $P^n=\{P^n_{i_0,\ldots i_n}\}_{i_0\in \mathbb{N}, i_0\geq i_1 \cdots\ge i_n}$ 
for $n=0,1,\ldots$ such that for any $n>0$ and admissible sequence of indexes we have $P^n_{i_0,\ldots,i_n} \subset P^{n-1}_{i_0,\ldots,i_{n-1}} $ and the equality $(h_{i_0} \circ f)(P^n_{i_0,\ldots,i_n})=P^{n-1}_{i_1,\ldots,i_n} $ holds.
    \end{lem}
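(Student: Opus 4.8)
The plan is to proceed by induction on $n$, the base case $n=0$ being already given by the definition of the family $P^0$ with $P^0_i=[y^{(i+1)},z^{(i)}]$. The one analytic fact the whole construction rests on is that $f$ runs monotonically (increasingly) from the local minimum at $y^{(i_0+1)}$ to the adjacent local maximum at $z^{(i_0)}$, so that $f|_{P^0_{i_0}}$ is a homeomorphism onto its image $f(P^0_{i_0})=[f(y^{(i_0+1)}),f(z^{(i_0)})]$. Composing with the order-preserving affine homeomorphism $h_{i_0}\colon f(P^0_{i_0})\to[0,1]$, I obtain a homeomorphism
$$
g_{i_0}:=h_{i_0}\circ f|_{P^0_{i_0}}\colon P^0_{i_0}\to[0,1].
$$
Everything after this is formal.

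With $g_{i_0}$ in hand, for any admissible string $i_0\geq i_1\geq\cdots\geq i_n$ I would define
$$
P^n_{i_0,\ldots,i_n}:=g_{i_0}^{-1}\bigl(P^{n-1}_{i_1,\ldots,i_n}\bigr).
$$
This is well posed: the truncated string $i_1\geq\cdots\geq i_n$ is admissible at level $n-1$, so $P^{n-1}_{i_1,\ldots,i_n}$ is already constructed by the inductive hypothesis, and iterating the nesting down to $P^0_{i_1}\subset[0,1]$ shows that every level-$(n-1)$ interval lies in $[0,1]$, the codomain of $g_{i_0}$, where $g_{i_0}^{-1}$ is defined. The required image identity is then immediate, since $(h_{i_0}\circ f)(P^n_{i_0,\ldots,i_n})=g_{i_0}\bigl(g_{i_0}^{-1}(P^{n-1}_{i_1,\ldots,i_n})\bigr)=P^{n-1}_{i_1,\ldots,i_n}$.

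It remains to verify the nesting $P^n_{i_0,\ldots,i_n}\subset P^{n-1}_{i_0,\ldots,i_{n-1}}$, and here the recursion carries the argument. For $n=1$ one has $P^1_{i_0,i_1}=g_{i_0}^{-1}(P^0_{i_1})\subset g_{i_0}^{-1}([0,1])=P^0_{i_0}$. For $n\geq 2$ I would invoke the inductive nesting $P^{n-1}_{i_1,\ldots,i_n}\subset P^{n-2}_{i_1,\ldots,i_{n-1}}$ and apply the inclusion-preserving bijection $g_{i_0}^{-1}$, obtaining
$$
P^n_{i_0,\ldots,i_n}=g_{i_0}^{-1}\bigl(P^{n-1}_{i_1,\ldots,i_n}\bigr)\subset g_{i_0}^{-1}\bigl(P^{n-2}_{i_1,\ldots,i_{n-1}}\bigr)=P^{n-1}_{i_0,\ldots,i_{n-1}},
$$
the last equality being precisely the definition of the level-$(n-1)$ interval $P^{n-1}_{i_0,\ldots,i_{n-1}}$.

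I do not expect a genuine obstacle: once $g_{i_0}$ is recognized as a homeomorphism onto the \emph{whole} of $[0,1]$, the construction amounts to pulling the entire level-$(n-1)$ family back into $P^0_{i_0}$, and both the image identity and the nesting are formal consequences of $g_{i_0}$ being an inclusion-preserving bijection. The only points demanding care are the bookkeeping of admissible strings — checking that dropping $i_0$ always leaves an admissible string so that no undefined interval is referenced — and confirming the monotonicity of $f$ on each $P^0_{i_0}$, since that monotonicity is exactly what makes $g_{i_0}$ invertible in the first place.
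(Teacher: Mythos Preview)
Your proof is correct and follows essentially the same inductive approach as the paper. The only stylistic difference is that you explicitly exploit the fact that $g_{i_0}=h_{i_0}\circ f|_{P^0_{i_0}}$ is a homeomorphism and write $P^n_{i_0,\ldots,i_n}=g_{i_0}^{-1}(P^{n-1}_{i_1,\ldots,i_n})$, whereas the paper phrases the same step as ``there is a closed subinterval $A\subset P^{n}_{i_0,\ldots,i_n}$ with $(h_{i_0}\circ f)(A)=P^{n}_{i_1,\ldots,i_{n+1}}$''; since the map is indeed a homeomorphism (a fact the paper itself invokes later), the two formulations coincide.
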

    
    \begin{proof}
        We will proceed by induction. As was defined before, $P^0_{i_0}=[y^{(i_0+1)},z^{(i_0)}]$ and $P^0=\{P^0_{i_0}\}_{i_0 \in \mathbb{N}}$. Take any $i_0 \in \mathbb{N}$ and $i_1 \le i_0$. By the definition $h_{i_0}(f(P^0_{i_0}))=[0,1]$, hence $P^0_{i_1} \subset (h_{i_0} \circ f)(P^0_{i_0})$, so there is a subinterval of $P^0_{i_0}$ whose image under $(h_{i_0} \circ f)$ is   $P^0_{i_1}$. We define  $P^1_{i_0,i_1}$ to be this  nonempty closed interval.

       Suppose we have constructed $P^k$ for all $k \leq n$. Choose any natural numbers $i_0 \geq i_1 \geq \cdots\geq i_{n+1}$. By definition $(h_{i_0} \circ f)(P^{n}_{i_0,\ldots,i_{n}})=P^{n-1}_{i_1,\ldots,i_n}$ and there is a closed  subinterval $P^n_{i_1,\ldots,i_n,i_{n+1}} \subset P^{n-1}_{i_1,\ldots,i_n}$, that is $P^n_{i_1,\ldots,i_{n+1}} \subset (h_{i_0} \circ f)(P^{n}_{i_0,\ldots,i_{n}})$. Therefore, there is a closed interval $A \subset P^{n}_{i_0,\ldots,i_{n}}$ such that $P^n_{i_1,\ldots,i_{n+1}} = (h_{i_0} \circ f)(A)$. We put $P^{n+1}_{i_0,\ldots i_{n+1}}=A$, 
       completing the induction.      
    \end{proof}
    
    In what follows, we assume that the collections $P^n$ of closed intervals (where $n=0,1,2,\ldots$) provided by Lemma~\ref{lem:Pn:construct} are fixed.
    \begin{lem}
        Let $P^n_{i_0,\ldots,i_n} \in P^n$. Then the map 
        $$
        g_{i_0,\ldots i_n} \colon P^n_{i_0,\ldots,i_n} \ni t \mapsto (f_{i_n} \circ h_{i_n} \circ \cdots  \circ h_{i_1} \circ f_{i_0} \circ h_{i_0} \circ f )(t) \in [0,1]
        $$ 
        is well-defined and continuous.
    \end{lem}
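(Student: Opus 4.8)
The plan is to prove the statement by induction on $n$, exploiting the self-similar structure of the composition. The key observation I would isolate first is that, on $P^n_{i_0,\ldots,i_n}$, the map factors through the map attached to the shifted index sequence:
\[
g_{i_0,\ldots,i_n} = g_{i_1,\ldots,i_n}\circ\bigl(h_{i_0}\circ f\bigr).
\]
Granting this identity, the induction closes at once. Lemma~\ref{lem:Pn:construct} guarantees that $h_{i_0}\circ f$ is a continuous map carrying $P^n_{i_0,\ldots,i_n}$ into $P^{n-1}_{i_1,\ldots,i_n}$, which is a genuine member of the collection $P^{n-1}$ because $i_1\ge\cdots\ge i_n$. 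The inductive hypothesis then supplies that $g_{i_1,\ldots,i_n}$ is well-defined and continuous on $P^{n-1}_{i_1,\ldots,i_n}$, so the composite $g_{i_1,\ldots,i_n}\circ(h_{i_0}\circ f)$ is well-defined and continuous on $P^n_{i_0,\ldots,i_n}$.

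For the base case $n=0$ I would argue directly: for $t\in P^0_{i_0}\subset(0,1]$ the map $f$ is defined, $f(t)$ lies in $f(P^0_{i_0})$, which is exactly the domain of the affine homeomorphism $h_{i_0}$; then $h_{i_0}(f(P^0_{i_0}))=[0,1]$ is the domain of $f_{i_0}$, and the image of $f_{i_0}$ lies in $[0,1]$. Each of the three maps is continuous, so $g_{i_0}=f_{i_0}\circ h_{i_0}\circ f$ is well-defined and continuous, as required.

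The one genuine point — and the step I expect to be the main (if modest) obstacle — is verifying the factorization identity, and this is exactly where property~\eqref{cond:star:4} is used. Applying $h_{i_0}\circ f$ sends a point $t\in P^n_{i_0,\ldots,i_n}$ to $s=h_{i_0}(f(t))\in P^{n-1}_{i_1,\ldots,i_n}\subset P^0_{i_1}$. In $g_{i_0,\ldots,i_n}$ the next map applied to $s$ is $f_{i_0}$, whereas the innermost map of $g_{i_1,\ldots,i_n}$ applied to $s$ is $f$; from there outward the two compositions are symbol-for-symbol identical. Since $i_1\le i_0$, property~\eqref{cond:star:4} gives $f_{i_0}|_{P^0_{i_1}}=f|_{P^0_{i_1}}$, hence $f_{i_0}(s)=f(s)$, which yields the displayed identity.

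Finally I would note that the same substitution, iterated, gives a fully explicit verification for readers who prefer to avoid the recursion: each inner block $h_{i_k}\circ f_{i_{k-1}}$ acts on $P^{n-k}_{i_k,\ldots,i_n}$ as the Lemma~\ref{lem:Pn:construct} step $h_{i_k}\circ f$ (because $f_{i_{k-1}}=f$ there, again by~\eqref{cond:star:4} since $i_k\le i_{k-1}$), carrying that interval onto $P^{n-k-1}_{i_{k+1},\ldots,i_n}$, until the final block $h_{i_n}\circ f$ maps $P^0_{i_n}$ onto $[0,1]$ and the outermost $f_{i_n}\colon[0,1]\to[0,1]$ produces the value. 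There is no deeper difficulty: the entire content is the bookkeeping showing that each intermediate value lands in the domain of the next map, and continuity is then automatic as a composition of continuous maps.
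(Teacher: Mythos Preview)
Your proof is correct and follows essentially the same approach as the paper: both arguments use Lemma~\ref{lem:Pn:construct} to track the intermediate value through the nested $P$-intervals, and invoke property~\eqref{cond:star:4} at each stage to replace $f_{i_{k-1}}$ by $f$ on $P^0_{i_k}$ (since $i_k\le i_{k-1}$). Your packaging via the factorization $g_{i_0,\ldots,i_n}=g_{i_1,\ldots,i_n}\circ(h_{i_0}\circ f)$ and induction on $n$ is a slightly cleaner way to present it, but the paper's pointwise step-by-step unwinding is the same argument; your final paragraph even spells out exactly the route the paper takes.
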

    \begin{proof}
        Fix any $t \in P^n_{i_0,\ldots,i_n}$, we will go from the inside out to prove our claim.
Note that
            $f(t) \in f(P^n_{i_0,\ldots,i_n})$,
            so $(h_{i_0} \circ f)(t) \in P^{n-1}_{i_1,\ldots,i_n} \subset P^0_{i_1}$ and $i_1 \leq i_0$.
            Therefore $f_{i_0}(s)=f(s)$ for $s=(h_{i_0} \circ f)(t)$ by \eqref{cond:star:4}. Suppose we know that $(h_{i_k} \circ  f_{i_{k-1}} \circ  \cdots \circ h_{i_1} \circ f_{i_0} \circ h_{i_0} \circ f)(t) \in P^{n-k}_{i_{n-k},\ldots,i_n}$ for some $k<n$, denote $s=(h_{i_k} \circ \cdots  \circ f)(t)$. As $i_{k+1} \leq i_k$, by \eqref{cond:star:4} we have that $f_{i_k}(s)=f(s)$, so
     $f_{i_k}(s)=f(s) \in f(P^{n-k}_{i_{n-k},\ldots,i_n})=h^{-1}_{i_{k+1}}(P^{n-k-1}_{i_{n-k-1},\ldots,i_n}) $. Then
        $(h_{i_{k+1}} \circ f_{i_k} \circ h_{i_k} \circ \cdots  \circ f)(t) \in  P^{n-k-1}_{i_{n-k-1},\ldots,i_n}$.    
        Completing the induction, $(h_{i_{n}} \circ f_{i_{n-1}} \circ h_{i_{n-1}} \circ \cdots  \circ f)(t) \in  P^{0}_{i_n} $, so the map $f_{i_n}$ is well defined for this point and consequently $g_{i_0,\ldots,i_n}$ is well defined. The map $g_{i_0,\ldots,i_n}$ is continuous as a composition of continuous maps. This finishes the proof.
    \end{proof}
    \begin{lem}
    \label{lem:glueing}
            For any $i_0 \ge i_1 \geq \cdots \geq i_n$ we have $g_{i_0, \ldots, i_n} (\End (P^n_{i_0, \ldots, i_n}))=\{0\}$.
    \end{lem}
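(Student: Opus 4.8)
The plan is to follow the two endpoints of the interval $P^n_{i_0,\ldots,i_n}$ step by step through the composition
$$g_{i_0,\ldots,i_n}=f_{i_n}\circ h_{i_n}\circ f_{i_{n-1}}\circ h_{i_{n-1}}\circ\cdots\circ f_{i_0}\circ h_{i_0}\circ f,$$
showing that every map in this chain \emph{except the outermost one} sends the endpoints of the relevant interval exactly to the endpoints of its image, so that by the time we are about to apply $f_{i_n}$ the two endpoints of $P^n_{i_0,\ldots,i_n}$ have been carried to the two endpoints $0$ and $1$ of $[0,1]$. The final, substantive input is then the boundary behaviour of $f_{i_n}$ alone.

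First I would record two elementary ``endpoints to endpoints'' facts. (a) Each $h_{i_k}$ is an order-preserving affine homeomorphism, so it maps the endpoints of any subinterval of its domain precisely onto the endpoints of the image. (b) On every interval $P^0_j=[y^{(j+1)},z^{(j)}]$ the function $f$ runs monotonically from the local minimum at $y^{(j+1)}$ to the local maximum at $z^{(j)}$; hence $f|_{P^0_j}$ is an increasing homeomorphism onto $f(P^0_j)$ and again sends endpoints to endpoints. Moreover, by \eqref{cond:star:4}, whenever the argument lies in some $P^0_j$ with $j\le i_k$ one has $f_{i_k}=f$ there (and in our chain the relevant restrictions always satisfy $j\le i_k$ because the index sequence is non-increasing), so the same holds for every restriction of an $f_{i_k}$ that is actually used.

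Next I would track the images with Lemma~\ref{lem:Pn:construct}. Starting from $P^n_{i_0,\ldots,i_n}\subset P^0_{i_0}$, the map $f$ is monotone there and $h_{i_0}$ is affine, so $h_{i_0}\circ f$ carries $\End(P^n_{i_0,\ldots,i_n})$ onto $\End\bigl((h_{i_0}\circ f)(P^n_{i_0,\ldots,i_n})\bigr)=\End(P^{n-1}_{i_1,\ldots,i_n})$. Inductively, each successive block $h_{i_k}\circ f_{i_{k-1}}$ sends endpoints to endpoints by (a) and (b) and, by Lemma~\ref{lem:Pn:construct}, carries the current $P$-interval onto the next one in the chain; after the block indexed by $k=n-1$ one lands on $\End(P^0_{i_n})=\{y^{(i_n+1)},z^{(i_n)}\}$. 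The last block $h_{i_n}\circ f_{i_{n-1}}$ then takes $P^0_{i_n}$ via $f_{i_{n-1}}=f$ onto $f(P^0_{i_n})$ and via $h_{i_n}$ onto all of $[0,1]$, so the two endpoints of $P^n_{i_0,\ldots,i_n}$ are now sitting exactly at $\{0,1\}=\End([0,1])$.

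Finally I would compute the outermost map directly on these two boundary values. With $N=i_n+1$ the map $f_{i_n}$ is linear through the origin on $[0,y^{(i_n+1)}]$, whence $f_{i_n}(0)=0$, while $f_{i_n}(1)=f(1)=\frac54-\frac54=0$ by the definition of $f$. Therefore both endpoints of $P^n_{i_0,\ldots,i_n}$ are mapped by $g_{i_0,\ldots,i_n}$ to $0$, which is the assertion. The only delicate part — the modest main obstacle — is the bookkeeping that keeps endpoints as endpoints through the alternating composition; once (a), (b) and Lemma~\ref{lem:Pn:construct} are in place this is a routine induction, and the real content is simply the coincidence $f_{i_n}(0)=f_{i_n}(1)=0$ of the final map at the two ends of $[0,1]$.
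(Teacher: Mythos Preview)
Your proof is correct and follows essentially the same approach as the paper's own proof: both track endpoints through the chain using that $h_{i_0}\circ f$ is a homeomorphism on $P^0_{i_0}$, invoke Lemma~\ref{lem:Pn:construct} to follow the $P$-intervals down the ladder, and finish with $f_{i_n}(\{0,1\})=\{0\}$. The only stylistic difference is that the paper organizes this as a formal induction on $n$ (peeling off the innermost block $h_{i_0}\circ f$ and reducing to $g_{i_1,\ldots,i_n}$ applied to $\End(P^{n-1}_{i_1,\ldots,i_n})$), whereas you unfold the whole composition at once and make the monotonicity of $f$ on each $P^0_j$ explicit.
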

   
    \begin{proof}
     Fix any $i_0 \ge i_1 \geq \cdots \geq i_n$.
The map $h_{i_0}$ is a homeomorphism from $f(P^0_{i_0})$ to $[0,1]$,  and $f_{i_0}(\{0,1\})=\{0\}$, so we have that $g_{i_0}$ sends endpoints of $P^0_{i_0}$ to $\{0\}$ for any $i_0 \in \mathbb{N}$. Suppose the result holds for any $k<n$. Choose any endpoint $e \in \End (P^n_{i_0,\ldots,i_n})$. By definition $(h_{i_0} \circ f)(P^n_{i_0,\ldots,i_n})=
P^{n-1}_{i_1,\ldots,i_n}$ 
 and $(h_{i_0} \circ f)$ is a homeomorphism on $P^0_{i_0}$, hence we have that  $(h_{i_0} \circ f)(e)=e' \in \End (  P^{n-1}_{i_1,\ldots,i_{n}} )$. As $e' \in P^{n-1}_{i_1,\ldots,i_{n}}  \subset P^0_{i_1}$ and $i_1 <i_0$ we have that $f(e')=f_{i_0}(e')$ which implies that $g_{i_0,\ldots i_n}(e)= (f_{i_n} \circ h_{i_n} \circ \cdots  \circ h_{i_1} \circ f_{i_0} \circ h_{i_0} \circ f )(e)=(f_{i_n} \circ h_{i_n} \circ \cdots  \circ h_{i_1} \circ f_{i_0} )(e')=(f_{i_n} \circ h_{i_n} \circ \cdots  \circ h_{i_1} \circ f )(e')=g_{i_1,\ldots,i_n}(e')=0$.
    \end{proof}

    We will use a standard notation for $x \in \mathcal{H}$ denoting $x=(x_0,x_1,\ldots)$. Let  $\pi_n \colon \mathcal{H} \ni x\mapsto x_n \in [0,1]$ be the projection onto the $(n+1)th$ coordinate,

    In the induction, we use the family $P^n$, but it is fixed already, before we start the recursive construction. Inductively we will construct $(X_n,L_n,\varphi_n)$ with the following properties:
    \begin{enumerate}
        \item[(A1)]\label{con:A1} $X_n=G \cup \overline{L_n} \subset \mathcal{H}$ and $L_n \subset (0,1]^{n+1} \times \{0\}^\infty$,
        \item[(A2)] $L_n$ is an oscillatory quasi-arc in $X_n$ with parametrization $\varphi_n \colon (0,1] \to L_n$,
        \item[(A3)] $X_n /_\sim$ is a circle,
        \item[(A4)] the unique nondegenerate fiber of $\pi_\sim|_{X_n}$ is $\omega(L_n)$
        \item[(A5)] 
        the map $\pi_0|_{L_n} \colon L_n \to (0,1]$ is a homeomorphism.\label{con:A5}
        \item[(A6)] for every subcontinuum $Y \subset \omega(L_n)$ and every $\epsilon>0$ there is an arc $[a,b] \in (0,1]$ such that $d_H(Y,\varphi_n([a,b]))<\epsilon$.

        \item[(A7)]for $n>0$ and for all $x \in L_n$ we have $(x_0,\ldots,x_n,0,\ldots) \in L_{n-1}$,\label{con:A7}
        \item[(A8)]for $n>0$ we have $\omega(L_n)=\frac12\theta(X_{n-1})$, where $\frac12\theta((x_0,x_1,\ldots))=(0,\frac12 x_0,\frac12 x_1, \ldots)$
        \item[(A9)] for $n>0$, any  natural numbers $  i_1 \geq \cdots\ge i_n$ and any $y\in \frac12\theta(\varphi_{n-1}(P^{n-1}_{i_1,\ldots i_{n}}))$
        there is a sequence $t^{(i_0)}\in P^n_{i_0,\ldots i_{n}}$, $i_0\geq i_1$ such that
        $\lim\limits_{i_0 \to \infty}\varphi_n(t^{(i_0)}) =y$.
      
        \item[(A10)] for $n>0$, for any $t \not \in \bigcup_{P \in P^n}P$ we have $\pi_{n+1}(\varphi_n(t))=0$ and for any $\{{i_1,\ldots i_n}\}$ with $  i_0 \geq \cdots\ge i_n$ and any $t \in P^n_{i_0,\ldots, i_n}$ we have $\pi_{n+1}(\varphi_{n}(t))=\frac1{2^{n}}g_{i_0, \ldots, i_{n-1}}$\label{con:A10}

    \end{enumerate}

    Also, from $(A5)$ and $(A7)$ we get that $\pi_k|_{L_n} \equiv 0$ for every $k> n+1$. 
    
    \begin{lem}
        The triple $(X_0,L_0,\varphi_0)$ satisfies the conditions $(A1)-(A10)$.
    \end{lem}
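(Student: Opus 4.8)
The plan is to verify the ten conditions directly from the explicit formula for $f$ and the definitions of $X_0$, $L_0$, $\varphi_0$. First I would note that conditions $(A7)$--$(A10)$ are stated only for $n>0$, so they hold vacuously in the base case; the entire content lies in $(A1)$--$(A6)$. The decisive preliminary computation is the limit set: since $\varphi_0(t)=(t,f(t),0,\ldots)$ and, as $t\to 0^+$, the term $\sin\frac{\pi}t$ takes every value of $[-1,1]$ along sequences $t\to 0$ while $3t\to 0$, the values $f(t)$ accumulate on the whole interval $[0,\frac12]$ while the first coordinate tends to $0$. Hence I would establish
$$\omega(L_0)=\{0\}\times[0,\tfrac12]\times\{0\}^\infty,$$
a nondegenerate segment.

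With this in hand, $(A1)$, $(A2)$ and $(A5)$ are immediate. Indeed $X_0=G\cup\overline{L_0}$ holds by definition, and every point of $L_0$ has the form $(t,f(t),0,0,\ldots)$ with $t\in(0,1]$, so $L_0$ lies in the prescribed coordinate subspace with all but the first two coordinates vanishing. The map $\pi_0|_{L_0}\colon (t,f(t),0,\ldots)\mapsto t$ is a continuous bijection onto $(0,1]$ whose inverse is exactly $\varphi_0$, giving the homeomorphism of $(A5)$; in particular $\varphi_0$ is a continuous injection, and after the harmless reparametrization $(0,1]\cong[0,\infty)$ sending $t=1$ to the endpoint $\varphi_0(1)=(1,0,\ldots)$ (note $f(1)=0$) and $t\to 0^+$ to $s\to\infty$, it is a quasi-arc whose limit set is nondegenerate, hence oscillatory, which is $(A2)$.

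For $(A3)$ and $(A4)$ I would analyse the quotient $\pi_\sim$ from Lemma~\ref{lem:graph_for_quasi}. Here $\omega(X_0)=\omega(L_0)$ is a single nondegenerate connected component, so the only nondegenerate fiber of $\pi_\sim|_{X_0}$ is precisely $\omega(L_0)$, which is $(A4)$. Collapsing this segment to a point $p^{*}$, the arc $G$ (with endpoints $(0,\ldots)$ and $(1,0,\ldots)$) and the arc $\overline{L_0}=L_0\cup\omega(L_0)$ each become arcs; since $\overline{L_0}\cap G=\{(1,0,\ldots),(0,0,\ldots)\}$ and $(0,\ldots)\in\omega(L_0)$, their images meet exactly at $p^{*}$ and at $q=\pi_\sim(1,0,\ldots)$, so their union is a simple closed curve, i.e.\ a circle, giving $(A3)$.

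The main step, and the only one carrying real content, is the approximation property $(A6)$. A subcontinuum $Y\subset\omega(L_0)$ is a subsegment $\{0\}\times[c,d]\times\{0\}^\infty$. On each monotone lap $P^0_i=[y^{(i+1)},z^{(i)}]$ the function $f$ runs monotonically from its local minimum $f(y^{(i+1)})$ to its local maximum $f(z^{(i)})$; as $i\to\infty$ both endpoints of $P^0_i$ tend to $0$, while $f(y^{(i+1)})\to 0$ and $f(z^{(i)})\to\frac12$. Thus for $i$ large the lap is contained in $\{t<\epsilon\}$ and its $f$-image contains $[c,d]$, so by the intermediate value theorem I can choose $[a,b]\subset P^0_i$ with $f([a,b])=[c,d]$. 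Then $\varphi_0([a,b])$ is an arc whose first coordinate stays below $\epsilon$ and whose second coordinate sweeps exactly $[c,d]$, whence $d_H(Y,\varphi_0([a,b]))<\epsilon$ for $i$ large enough. I expect this lap-and-IVT estimate to be the only place demanding care; once it is in place, all ten conditions are verified for $(X_0,L_0,\varphi_0)$.
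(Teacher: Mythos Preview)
Your proposal is correct and follows the same approach as the paper's proof, which is extremely terse: it simply observes that $(A7)$--$(A10)$ are vacuous for $n=0$, that $X_0$ is a Warsaw circle so $(A1)$--$(A4)$ and $(A6)$ are immediate, and that the explicit form of $\varphi_0$ gives $(A5)$. You supply the details the paper omits, most notably the lap-and-IVT argument for $(A6)$ and the explicit computation of $\omega(L_0)=\{0\}\times[0,\tfrac12]\times\{0\}^\infty$, which the paper takes for granted.
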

    \begin{proof}
        Since $n=0$, we only have to check $(A1)-(A6)$. The continuum $X_0$ is a Warsaw circle defined by \eqref{cond:star:2}, hence $(A1)-(A4)$ and $(A6)$ hold.  Note that any $x \in L_0$ with $x_0=t$  is uniquely defined by $(t,f(t),0,\ldots)$, which gives us $(A5)$.
    \end{proof}

    For the general step of the induction, suppose we have already defined $(X_n,L_n,\varphi_n)$ which satisfy $(A1)-(A10)$. We will construct $(X_{n+1},L_{n+1},\varphi_{n+1})$ which satisfy $(A1)-(A10)$ as well.

    First, for all $t \in (0,1]\setminus \bigcup_{P \in P^n}P$ we define $\varphi_{n+1}(t)=\varphi_n(t)$. Fix any  $P =P^n_{i_0,\ldots,i_n}$ and $t \in P $, and put:
    $$ \varphi_{n+1}(t)=(t=\pi_0(\varphi_n(t)),\ldots,\pi_{n-1}(\varphi_n(t)),\pi_n(\varphi_n(t)), \frac1{2^{n}}g_{i_0, \ldots, i_n}(t),0,0,\ldots)$$
    
    By Lemma~\ref{lem:glueing} we get that the map $\varphi_{n+1}$ is continuous. We denote $L_{n+1}=\varphi_{n+1}((0,1])$ and  put $X_n = G \cup \overline{L_{n+1}}$. This way the triple $(X_{n+1},L_{n+1},\varphi_{n+1})$ is defined.

    We see that $(A1)$ holds just by the definition. By definition $\varphi_{n+1}$ is injective, hence we get $(A2)$. Every $x \in L_{n+1}$ is uniquely determined by $x=\varphi_{n+1}(x_0)$, so in particular $(A5)$ holds. 
    
    Choose any $t\in (0,1]$ and let $x=\varphi_{n+1}(t) \in L_{n+1}$. Then $(x_0,\ldots,x_{n}, 0,\ldots)=(\pi_0(\varphi_n(t)),\ldots,\pi_{n-1}(\varphi_n(t)),\pi_n(\varphi_n(t)),0,\ldots) \in L_n$, proving $(A7)$.  
    Directly form the definition of $(n+2)$nd coordinate of $\varphi_{n+1}$, we get $(A10)$.

    As $L_{n+1}$ is oscillatory and $x_0=0$ for all $x \in \omega(L_{n+1}) $, it is a nondegenerate fiber of $X_{n+1}$. Furthermore, note that  by definition, $x_0>0$ for all points $x \in L_{n+1} $. By $(A1)$ for $n+1$ proven above we have $G \cap \{x \in \mathcal{H}: x_0=0\}=\{0\}^\infty \subset \omega(L_{n+1})$ hence $(A4)$ holds. As $G \cap \omega(L_{n+1})=\{0\}^\infty \neq \emptyset$, we have that $X_{n+1}/_\sim$ is a circle, which proves $(A3)$. 

    So far, we already have shown that $(X_{n+1},L_{n+1},\varphi_{n+1})$ satisfies $(A1)-(A5)$, $(A7)$ and $(A10)$. We need to show $(A6),(A8)$ and $(A9)$

    \begin{lem}
    \label{lem:A9}
        The triple $(X_{n+1},L_{n+1},\varphi_{n+1})$ satisfies the condition $(A9)$.
    \end{lem}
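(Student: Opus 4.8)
The plan is to write the target point in the canonical form $y=\frac12\theta(\varphi_n(s))$ and to produce the approximating sequence by ``renormalising'' $s$ back through the maps $h_{i_0}\circ f$; the mechanism is that the two lowest coordinates will be matched only in the limit, whereas every higher coordinate will be matched exactly for all $i_0$. Concretely, fix $i_1\geq\cdots\geq i_{n+1}$ and $y\in\frac12\theta(\varphi_n(P^n_{i_1,\ldots,i_{n+1}}))$, and let $s\in P^n_{i_1,\ldots,i_{n+1}}$ be the unique point with $y=\frac12\theta(\varphi_n(s))$. By Lemma~\ref{lem:Pn:construct} the map $h_{i_0}\circ f$ sends $P^{n+1}_{i_0,i_1,\ldots,i_{n+1}}$ onto $P^n_{i_1,\ldots,i_{n+1}}$, so for each $i_0\geq i_1$ I would choose $t^{(i_0)}\in P^{n+1}_{i_0,i_1,\ldots,i_{n+1}}$ with $(h_{i_0}\circ f)(t^{(i_0)})=s$. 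This single defining relation governs all the coordinate estimates.

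First I would treat the two bottom coordinates, where a genuine limit is required. Since $t^{(i_0)}\in P^{n+1}_{i_0,\ldots,i_{n+1}}\subset P^0_{i_0}=[y^{(i_0+1)},z^{(i_0)}]$ and these intervals shrink to $\{0\}$ as $i_0\to\infty$, the zeroth coordinate $\pi_0(\varphi_{n+1}(t^{(i_0)}))=t^{(i_0)}$ tends to $0=\pi_0(y)$. The first coordinate is the one the construction carries unchanged from the base Warsaw circle, so $\pi_1(\varphi_{n+1}(t^{(i_0)}))=f(t^{(i_0)})=h_{i_0}^{-1}(s)$. As $i_0\to\infty$ the local minima of $f$ on $P^0_{i_0}$ tend to $0$ and the local maxima tend to $\frac12$, hence the affine homeomorphisms $h_{i_0}^{-1}\colon[0,1]\to f(P^0_{i_0})$ converge uniformly to $u\mapsto\frac12 u$; therefore $f(t^{(i_0)})\to\frac12 s=\pi_1(y)$.

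Next I would match all remaining coordinates exactly, for every $i_0$, using the self-similarity built into $g$. The key is the absorption identity $g_{i_0,i_1,\ldots,i_j}(t^{(i_0)})=g_{i_1,\ldots,i_j}(s)$: because $(h_{i_0}\circ f)(t^{(i_0)})=s$ and $s\in P^0_{i_1}$ with $i_1\leq i_0$, property~\eqref{cond:star:4} yields $f_{i_0}(s)=f(s)$, so the outermost block $f_{i_0}\circ h_{i_0}\circ f$ of $g_{i_0,i_1,\ldots,i_j}$ evaluated at $t^{(i_0)}$ collapses to $f(s)$ and the whole composition reduces to the one defining $g_{i_1,\ldots,i_j}$ at $s$. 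Substituting this into the coordinate formulas (A10) for the relevant lifts, and using that the scaling factor attached to each newly created coordinate is halved from one level to the next, each coordinate $\pi_k(\varphi_{n+1}(t^{(i_0)}))$ with $k\geq 2$ — including the top coordinate created at the present step — equals $\frac12\pi_{k-1}(\varphi_n(s))=\pi_k(y)$ identically in $i_0$, while all coordinates above the top vanish on both sides. Together with the two limits above this gives $\varphi_{n+1}(t^{(i_0)})\to y$, which is precisely (A9) for the triple $(X_{n+1},L_{n+1},\varphi_{n+1})$.

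The main obstacle is keeping the two mechanisms straight at once: the degeneration of the affine rescalings $h_{i_0}$ that forces the first coordinate down to $\frac12 s$, and the purely algebraic collapse of the nested $g$-compositions that pins the higher coordinates exactly. The delicate bookkeeping is to verify that the index ranges in the absorption identity and the successive powers of $\frac12$ line up so that the ``exact'' coordinates really land on $\frac12\pi_{k-1}(\varphi_n(s))$ rather than on a rescaled neighbour. Note that this argument invokes only the coordinate description (A10) at the lower levels, not (A9) itself, so no separate base case is needed; once the absorption identity and the convergence $h_{i_0}^{-1}(s)\to\frac12 s$ are in hand, assembling the coordinatewise limits into $\varphi_{n+1}(t^{(i_0)})\to y$ is routine.
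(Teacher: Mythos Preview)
Your argument is correct and takes a genuinely different route from the paper. The paper invokes the induction hypothesis (A9) at level $n$ to obtain the approximating sequence $t^{(i_0)}$ and then checks that the newly created top coordinate also converges; you instead construct the sequence directly as $t^{(i_0)}=(h_{i_0}\circ f)|_{P^0_{i_0}}^{-1}(s)$ and verify \emph{all} coordinates at once via the absorption identity $g_{i_0,i_1,\ldots,i_j}(t^{(i_0)})=g_{i_1,\ldots,i_j}(s)$. The core computation for the top coordinate is the same in both proofs, but your approach applies it uniformly to every coordinate $k\geq 2$, so no appeal to (A9) at lower levels (hence no base case) is needed. A bonus of your route is that it makes the membership $t^{(i_0)}\in P^{n+1}_{i_0,i_1,\ldots,i_{n+1}}$ immediate from Lemma~\ref{lem:Pn:construct}, whereas the paper's inductive invocation of (A9) only places $t^{(i_0)}$ in the larger interval $P^n_{i_0,\ldots,i_n}$ and leaves the refinement to $P^{n+1}$ implicit. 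Your remark that the affine rescalings $h_{i_0}^{-1}$ converge uniformly to $u\mapsto \tfrac12 u$ is exactly what drives the first-coordinate limit and is worth stating, as the paper handles this step only through the black box of the inductive hypothesis.
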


    \begin{proof}
           First, choose any  natural numbers with $i_0 \geq i_1 \geq \cdots \geq i_{n}$ and  any $y \in \frac12\theta(\varphi_n(P^n_{i_1,\ldots, i_{n+1}}))$. Using $(A10)$ and $(A7)$ for indexes $k<n$ one by one and shifting the coordinates to the right we have that  $y=\frac12(0,t,f(t),\ldots, \frac1{2^{n-2}}g_{i_1,\ldots i_{n-1}}(t),\frac1{2^{n-1}}g_{i_1,\ldots i_{n}}(t),0, \ldots)$ for some $t \in (0,1]$.
           
           By $(A9)$ there is a sequence $t^{(i_0)} \in P^n_{i_0,\ldots,i_n}$ such that $\lim_{i_0 \to \infty} \varphi_n(t^{(i_0)})=(y_0,\ldots,y_{n+1},0,\ldots)$. We wish to show that $ \lim_{i_0 \to \infty}\varphi_{n+1}(t^{(i_0)})=y $. By $(A7)$, all coordinates of  $\varphi_{n+1}(t^{(i_0)})$ coincide with coordinates of $\varphi_{n}(t^{(i_0)})$, aside from the $(n+3)th$ coordinate. It follows that to get  $ \lim_{i_0 \to \infty}\varphi_{n+1}(t^{(i_0)})=y=(y_0,\ldots,y_{n+1},y_{n+2},0,\ldots)$ we only need to check formula for the $(n+3)$th coordinate.

               \begin{equation*}
\begin{split}   
 \lim_{i_0 \to \infty} g_{i_0,\ldots,i_n}(t^{(i_0)})=&\lim_{i_0 \to \infty} (f_{i_n} \circ h_{i_n} \circ \cdots \circ f_{i_0} \circ h_{i_0} \circ f)(t^{(i_0)}) =\\ 
 =&\lim_{i_0 \to \infty} (f_{i_n} \circ h_{i_n} )( g_{i_0,\ldots,i_{n-1}}(t^{(i_0)}))=\\
 =&
 (f_{i_n} \circ h_{i_n} )(\lim_{i_0 \to \infty}  g_{i_0,\ldots,i_{n-1}}(t^{(i_0)}))= \ldots
\end{split}
\end{equation*}
but we have that  $\pi_{n+1}(\varphi_{n}(t^{(i_0)})) \to y_{n+1} $. Using $(A10)$ for  $\pi_{n+1}(\varphi_{n}(t^{(i_0)}))$ and $ y_{n+1}=\pi_n(\varphi_{n-1}(t)) $ we get $g_{i_0,\ldots,i_{n-1}}(t^{(i_0)}) \to g_{i_1,\ldots,i_{n-1}}(t)$, so we can continue the chain of equalities:
\begin{equation*}
    \ldots=(f_{i_n} \circ h_{i_n} )( g_{i_1,\ldots,i_{n-1}}(t)) =g_{i_1,\ldots i_n}(t)=2^ny_{n+2}
\end{equation*}

           so $\lim_{i_0 \to \infty} \pi_{n+2}(\varphi_{n+1} (t^{(i_0)}))= \lim_{i_0 \to \infty} \frac1{2^{n}}g_{i_0,\ldots,i_n}(t^{(i_0)})=\frac1{2^{n}}2^{n}y_{n+2} = y_{n+2} $.
This completes the proof.

    \end{proof}
\begin{lem}
    The equality $\omega(L_{n+1})=\frac12\theta(X_n)$ holds, meaning that the triple $(X_{n+1},L_{n+1},\varphi_{n+1})$ satisfies the condition $(A8)$. 
\end{lem}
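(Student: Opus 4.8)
The plan is to prove the two inclusions $\omega(L_{n+1})\subseteq \tfrac12\theta(X_n)$ and $\tfrac12\theta(X_n)\subseteq\omega(L_{n+1})$ separately, working throughout with the description of the limit set as the set of accumulation points of $\varphi_{n+1}(t)$ as $t\to 0^+$. Since every point of $\omega(L_{n+1})$ has first coordinate $0$, this set is exactly $\{x\in X_{n+1}:x_0=0\}$, the fiber I want to match against $\tfrac12\theta(X_n)\subset\{0\}\times\mathcal H$. (For the base case $n=0$, where $X_0$ is the Warsaw circle, I would use $\omega(L_0)=\tfrac12\theta(G)$ in place of the induction hypothesis; the case $n\ge 1$ uses condition $(A8)$ at the previous level.)

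For the inclusion $\subseteq$, I would start from a sequence $t_k\to 0^+$ with $\varphi_{n+1}(t_k)\to p$. By the construction the first $n+1$ coordinates of $\varphi_{n+1}$ agree with those of $\varphi_n$, so projecting onto them gives $\varphi_n(t_k)\to(p_0,\dots,p_n,0,\dots)$; as $t_k\to 0$ this limit lies in $\omega(L_n)$, which by the induction hypothesis $(A8)$ equals $\tfrac12\theta(X_{n-1})$. Thus $(p_0,\dots,p_n)$ has the form $(0,\tfrac12 u_0,\dots,\tfrac12 u_{n-1})$ for some $u\in X_{n-1}$. It then remains to control the new coordinate $p_{n+1}=\lim_k\pi_{n+1}(\varphi_{n+1}(t_k))$, which by $(A10)$ is either $0$ or of the form $\tfrac1{2^n}g_{i_0,\dots,i_n}(t_k)$; running the same limit computation as in the proof of Lemma~\ref{lem:A9} (passing $i_0\to\infty$ and invoking $(A10)$) identifies $2p_{n+1}$ with a coordinate value compatible with $u$, so that $(2p_1,\dots,2p_{n+1},0,\dots)\in X_n=G\cup\overline{L_n}$, i.e. $p\in\tfrac12\theta(X_n)$.

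For the inclusion $\supseteq$ I would fix $q=\tfrac12\theta(w)$ with $w\in X_n$ and exhibit $t_k\to 0^+$ with $\varphi_{n+1}(t_k)\to q$, splitting according to whether $w\in G$ or $w\in\overline{L_n}$. The case $w\in G$ is direct: choosing the $t_k$ inside the descending intervals of $f$ (those $t$ lying in no member of $P^0$, hence in no member of any $P^m$, since $P^m_{\dots}\subset P^0_{i_0}$) makes every higher coordinate of $\varphi_{n+1}(t_k)$ vanish while $f(t_k)$ sweeps a dense subset of $[0,\tfrac12]$, so these limits fill $\tfrac12\theta(G)$. For $w=\varphi_n(s)\in L_n$ I would invoke Lemma~\ref{lem:A9}: it produces, for each admissible tuple $i_1\ge\dots\ge i_{n+1}$, points $t^{(i_0)}\in P^{n+1}_{i_0,\dots,i_{n+1}}\subset P^0_{i_0}$ with $\varphi_{n+1}(t^{(i_0)})\to\tfrac12\theta(\varphi_n(s))$, and since $t^{(i_0)}\to 0$ as $i_0\to\infty$ this places $\tfrac12\theta(\varphi_n(P^n_{i_1,\dots,i_{n+1}}))$ inside $\omega(L_{n+1})$. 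As $\omega(L_{n+1})$ is closed, it then contains the closure of the union of these images together with $\tfrac12\theta(G)$.

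The hard part is precisely this covering step in the $\supseteq$ direction: Lemma~\ref{lem:A9} only delivers the points lying over the nested family $\{P^{n+1}_{i_0,\dots,i_{n+1}}\}$, whereas $\tfrac12\theta(X_n)$ must be recovered in full, so I must rule out obtaining only a Cantor-type subset of the fiber. The mechanism I expect to exploit is that each reparametrizing map $h_{i_0}\circ f$ is onto $[0,1]$, so that the rescaled self-similar copies of the lower-level curve accumulate onto the whole of $X_n$; concretely I would run an inner induction peeling one coordinate at a time — first steering $f(t_k)$ to the target first coordinate, then using surjectivity of $h_{i_0}\circ f$ to drive $g_{i_0,\dots}$ onto the successive target coordinates — and finally use compactness of $\omega(L_{n+1})$ together with the already-established inclusion $\subseteq$ to upgrade ``contains a dense subset'' to the equality $\omega(L_{n+1})=\tfrac12\theta(X_n)$.
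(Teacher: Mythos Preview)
Your overall plan is sound, and your treatment of the cases $w\in G$ and $w\in\overline{L_n}\setminus L_n$ for the inclusion $\supseteq$ matches the paper's. The real difference lies in how you handle the ``hard part'' you correctly flagged: covering all of $\tfrac12\theta(L_n)$, not merely the portion over $\bigcup_{P\in P^n}P$. Rather than your coordinate-by-coordinate inner induction, the paper simply invokes the outer induction hypothesis $(A8)$ at the previous level. If $s\notin\bigcup_{P\in P^n}P$ then by construction $\varphi_n(s)=\varphi_{n-1}(s)$, so $\xi=\tfrac12\theta(\varphi_n(s))=\tfrac12\theta(\varphi_{n-1}(s))\in\tfrac12\theta(X_{n-1})=\omega(L_n)$; hence there is a sequence $\xi^{(k)}\to 0$ with $\varphi_n(\xi^{(k)})\to\xi$. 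One then argues that these $\xi^{(k)}$ may be chosen outside $\bigcup_{P\in P^n}P$ (since otherwise, by $(A9)$, the limit $\xi$ would lie over some $P\in P^n$, contrary to assumption), whence $\varphi_{n+1}(\xi^{(k)})=\varphi_n(\xi^{(k)})\to\xi$. This recycles what has already been established and dissolves your worry about recovering only a Cantor-type subset; your direct construction could probably be pushed through, but it duplicates the work packaged in the inductive hypothesis.

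A minor remark: the paper's own proof only writes out the inclusion $\tfrac12\theta(X_n)\subseteq\omega(L_{n+1})$ and does not spell out the reverse inclusion that you sketch.
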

\begin{proof}
    Choose $\xi \in  \frac12\theta(X_n)$. Assume first that $\xi =\frac12\theta(\varphi_n(t))\in \frac12\theta(L_n)$ for some $t \in (0,1]$. 
     If $\xi \in \frac12\theta(\varphi_{n}(P^{n}_{i_1,\ldots i_{n},i_{n+1}}))$,the result follows from Lemma~\ref{lem:A9}. Suppose otherwise. This means that $\varphi_{n-1}(t)=\varphi_n(t)$, so  $\xi = \frac12\theta(\varphi_{n-1}(t)) \in \frac12\theta(L_{n-1}) \subset \frac12\theta(X_{n-1})$. By  $(A8)$ in the induction hypothesis, there is a sequence $\varphi_n(\xi^{(k)}) \in L_n$ that converges to $\xi$. If the sequence $\xi^{(k)}$ was chosen to be in the intervals  in $P^{n+1}$, then by Lemma~\ref{lem:A9}, $\xi$ would have to be an element of $\frac12\theta(
    \varphi_{n}(P))$ for some $P \in P^n$, contradicting our assumptions. This means that $\xi^{(k)}$ can be chosen so that $\varphi_n(\xi^{(k)})=\varphi_{n+1}(\xi^{(k)})$, giving us a sequence of points in $L_{n+1}$ converging to $\xi$.

Now, let $\xi \in \frac12 \theta(\omega(L_n))$. Choose any $\epsilon>0$. Then by the definition of the limit set, there is $\xi' \in \frac12\theta(L_n)$ with $d(\xi,\xi')<\epsilon$. By the argumentation above, there is also $\xi'' \in L_{n+1}$ with $d(\xi',\xi'')<\epsilon$, hence $d(\xi,\xi'')<2 \eps$. 

Last, assume $\xi \in \frac12\theta(G) \subset \frac12\theta(X_0)$. Then  by $(A9)$ for $n=1$ there is a sequence $t^{(i)}\in (z^{(i)},y^{(i)})$ with $\varphi_0(t^{(i)}) \to \xi$. As for every $n>0$ the equality $\varphi_n(t^{(i)})=\varphi_0(t^{(i)})$, we get the desired result.
  
\end{proof}
  \begin{lem}
\label{lem:P_go_toL}
 For every     $\xi\in \frac12\theta(L_n)$ there is a sequence $t^{(k)}\in P^0_k$
such that $\varphi_{n+1}(t^{(k)})\to \xi$.
Furthermore, if $Y \subset  \frac12\theta(L_n)$ is subcontinuum such that $Y \cap \frac12\theta(L_n) \neq \emptyset $, then there are intervals $J_k\subset P_k^0$ such that $\lim_{k\to \infty}d_H(Y,\varphi_{n+1}(J_k))=0$,  and:
\begin{enumerate}
    \item if $Y=\frac12\theta(\varphi_n([a,b])$ for  some $0<a<b \leq 1$, then $J_k=[(f|_{P^0_k})^{-1}(a),(f|_{P^0_k})^{-1}(b)]$
    \item if $Y=\frac12\theta(\overline{\varphi_n((0,b])})$ for some $b \in (0,1]$ then $J_k=[y^{(k+1)},(f|_{P^0_k})^{-1}(b)]$
\end{enumerate}
\end{lem}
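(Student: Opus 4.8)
The plan is to read the entire lemma off a single self-similarity estimate, expressed through the rescaling map $\Phi_k := h_k\circ f|_{P^0_k}\colon P^0_k\to[0,1]$, which is an increasing homeomorphism onto $[0,1]$ with $\Phi_k(y^{(k+1)})=0$ and $\Phi_k(z^{(k)})=1$. Everything reduces to the following \emph{Key Claim}: $\varphi_{n+1}(\Phi_k^{-1}(s))\to\frac12\theta(\varphi_n(s))$ as $k\to\infty$, uniformly for $s$ in compact subsets of $(0,1]$. The map $\Phi_k$ is the right object here precisely because it is \emph{onto} $[0,1]$ (unlike $f|_{P^0_k}$, whose image is a proper subinterval), so a single upswing $P^0_k$ already parametrizes a whole approximate copy of $\frac12\theta(\overline{L_n})$.

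First I would verify the Key Claim coordinate by coordinate; since every point of $L_{n+1}$ has all coordinates of index $>n+2$ equal to $0$, only finitely many coordinates are in play. Write $t=\Phi_k^{-1}(s)$, so $h_k(f(t))=s$. The $0$-th coordinate is $t\in P^0_k\to 0$. Since $f(P^0_k)\to[0,\frac12]$ (the local minima of $f$ tend to $0$ and the local maxima to $\frac12$), the affine maps $h_k$ converge uniformly to multiplication by $2$, hence $h_k^{-1}\to\frac12(\cdot)$ and the first coordinate $f(t)=h_k^{-1}(s)\to\frac s2=\frac12\pi_0(\varphi_n(s))$. For a coordinate of index $m+2$, the construction (cf. property $(A10)$) identifies it on $P^m_{k,i_1,\dots,i_m}\subset P^0_k$ with a fixed multiple of $g_{k,i_1,\dots,i_m}$; combining the nesting identity $(h_k\circ f)(P^m_{k,i_1,\dots,i_m})=P^{m-1}_{i_1,\dots,i_m}$ from Lemma~\ref{lem:Pn:construct} with $f_k|_{[y^{(k+1)},1]}=f$, the innermost block $f_k\circ h_k\circ f$ evaluated at $t$ equals $f_k(s)=f(s)$ for all large $k$, so that $g_{k,i_1,\dots,i_m}(\Phi_k^{-1}(s))=g_{i_1,\dots,i_m}(s)$ eventually. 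Feeding this into the coordinate formula for $\frac12\theta(\varphi_n(s))$ shows each coordinate converges to its target, which gives the claim.

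Granting the Key Claim, the first assertion is immediate: for $\xi=\frac12\theta(\varphi_n(s))$ set $t^{(k)}=\Phi_k^{-1}(s)\in P^0_k$, so $\varphi_{n+1}(t^{(k)})\to\xi$. For the second assertion I would first observe that a subcontinuum $Y$ of $\frac12\theta(\overline{L_n})$ meeting $\frac12\theta(L_n)$ is, by the ray-compactification structure of $\overline{L_n}$, of exactly one of the two listed forms. In case (1), $Y=\frac12\theta(\varphi_n([a,b]))$ with $[a,b]\subset(0,1]$ compact, and I take $J_k=\Phi_k^{-1}([a,b])\subset P^0_k$; the uniform Key Claim on $[a,b]$ yields $d_H(Y,\varphi_{n+1}(J_k))\to 0$. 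In case (2), $Y=\frac12\theta(\varphi_n((0,b]))\cup\frac12\theta(\omega(L_n))$ and I take $J_k=[y^{(k+1)},\Phi_k^{-1}(b)]$ (note $\Phi_k(y^{(k+1)})=0$); I would split $J_k$ at $\Phi_k^{-1}(\delta)$, control the part with $\Phi_k\in[\delta,b]$ by the Key Claim, and, for the remainder $\frac12\theta(\omega(L_n))\subset Y$, use that $\omega(L_n)\subset\overline{\varphi_n((0,\delta])}$ for every $\delta>0$ together with the approximation property $(A6)$ of $\varphi_n$ to approximate any subcontinuum of $\omega(L_n)$ by small arcs $\varphi_n([a',b'])$, $[a',b']\subset(0,\delta]$, which are then transported into $J_k$ by the Key Claim; letting $\delta\to0$ finishes the estimate.

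The hard part is the Key Claim, and inside it the simultaneous, uniform control of all the active coordinates: each is a nested composition of the maps $f_{i_j},h_{i_j}$, and one must check that replacing the single innermost factor $f_k$ by $f$ (legitimate once $s\ge y^{(k+1)}$) propagates through the whole composition without amplifying the error, uniformly in $s$ on compact sets, so that the coordinatewise convergence is genuinely uniform. The second delicate point is case (2): because its parameter interval reaches the oscillatory end $s\to0$, the elementary estimate degenerates near $0$ and one must instead feed in the approximation property of the lower-level remainder $\omega(L_n)$; checking that the short initial pieces produced by $\varphi_{n+1}$ near $y^{(k+1)}$ do Hausdorff-approximate $\frac12\theta(\omega(L_n))$ is exactly what forces the two endpoint formulas in cases (1) and (2) to differ.
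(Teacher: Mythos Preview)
Your approach is essentially the paper's own, just organized more cleanly: both arguments write out the coordinates of $\varphi_{n+1}$ on $P^0_k$, use that $h_k$ is affine with $h_k\to 2(\cdot)$, and reduce the innermost block $f_k\circ h_k\circ f$ to $f$ once $s\ge y^{(k+1)}$. Your packaging of this as a single ``Key Claim'' about $\varphi_{n+1}\circ\Phi_k^{-1}\to\frac12\theta\circ\varphi_n$ is exactly the content of the paper's explicit coordinate computation, and your treatment of case~(2) via a $\delta$-splitting plus $(A6)$ is a more detailed version of the paper's terse connectedness argument (the paper simply observes $\varphi_{n+1}(y^{(k+1)})\to 0^\infty$ and invokes connectedness of the images).

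Two small points to watch. First, the lemma \emph{prescribes} $J_k=[(f|_{P^0_k})^{-1}(a),(f|_{P^0_k})^{-1}(b)]$, whereas you take $J_k=\Phi_k^{-1}([a,b])$; these differ by the affine $h_k$. The paper's proof in fact selects $t^{(i)}$ with $f(t^{(i)})=y$ where $y\in(0,\tfrac12]$ is the \emph{second coordinate} of $\xi$ (so $y=s/2$ in your parametrization), which matches the stated $(f|_{P^0_k})^{-1}$ but with a different label for the parameter. You should either insert the one-line affine correction or note the discrepancy. Second, in case~(2) your Key Claim only gives uniformity on compacta of $(0,1]$, so the direction ``every point of $\varphi_{n+1}([y^{(k+1)},\Phi_k^{-1}(\delta)])$ lies near $Y$'' is not covered by the claim alone; you rely on it implicitly when you write ``letting $\delta\to 0$ finishes the estimate''. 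The paper closes this with the connectedness of $\varphi_{n+1}(J_k)$; you could do the same, or argue directly that any accumulation point of such $\varphi_{n+1}(t)$ has first two coordinates $0$ and hence lands in $\frac12\theta(\omega(L_n))\subset Y$.
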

\begin{proof}

 In principle, the result follows from the use of $(A9)$ for $k<n$, considering the points whose $(k+1)$th coordinate is zero. We can also take any $P^0_{i_0}$ to get
   $$\varphi_{n+1}(P^0_{i_0})=\{(t,f(t),\frac12g_{i_0}(t),\ldots,\frac1{2^n}g_{i_0,\ldots,i_n}(t),0,\ldots), t \in P^0_{i_0}\}$$
      But  $(h_{i_0} \circ f) $ is a homeomorphism on $P^0_{i_0}$  and $(h_{i_0}\circ f)(P^n_{i_0,\ldots,i_n}) \subset P^0_{i_1}$ with $i_1 \le i_0$, so $f(s)=f_{i_1}(s)$ for all $s \in(h_{i_0}\circ f)(P^n_{i_0,\ldots,i_n})  $. Therefore, we can write
    $$\varphi_{n+1}(P^0_{i_0})=\{(f^{-1}|_{P^0_{i_0}}(y),y,\frac12(f(h_{i_0}(y)),\ldots,\frac1{2^n}g_{i_1,\ldots,i_n}(h_{i_0}(y)),0,\ldots), y \in f(P^0_{i_0})\}.$$

Fix any $\xi\in \frac12\theta(L_n)$, say 
$\xi=\frac12(0,2y,(f(2y)),\ldots,\frac1{2^{n-1}}g_{i_1,\ldots,i_n}(2y),0,\ldots)$
for some $y \in (0,\frac12]$.

Note that $f(P^0_i)=[f(y^{(i+1)}),f(z^{(i)})]$
with $f(y^{(i+1)})$ decreasing to $0$ and $f(z^{(i)})$ decreasing to $1/2$.
For large $i$ there is $t^{(i)}\in P_i^0$
such that $f(t^{(i)})=y$. Since $h_{i}$
is an affine homeomorphism for any $i$,
it is also clear that $\lim_{i\to\infty }h_i(y)=2y$. This proves that $\varphi_{n+1}(t^{(i)})\to \xi$.

The proof of the "furthermore" part is analogous. Simply, in the case $(1)$ it is enough to prove convergence at the endpoints of $A$ and extend it to all other points with coordinates in $\pi_0(J_k)$. Details are left to the reader. For the case $(2)$ notice that $\varphi_{n+1}(y^{(k+1)})=(y^{(k+1)},f(y^{(k+1)}),0,\ldots)$, but as we mentioned above, $y^{(k+1)} \to 0$, so $\lim_{k \to \infty}\varphi_{n+1}(y^{(k+1)}) =0^\infty \in \frac12\theta(\omega(L_{n}))$, but  $J_k$ are connected, meaning so is $\lim_{k \to \infty} J_k$. That along with the fact that $\lim_{k \to \infty}(J_k) \cap\frac12\theta(L_n) \neq \emptyset$ gives us the result.

\end{proof}

\begin{lem}
\label{lem:notP_go_to_G}
    We have that $\lim_{k\to \infty}d_H(\varphi_{n+1}([z^{(k+1)},y^{(k)}]),\frac12\theta(G))=0$
\end{lem}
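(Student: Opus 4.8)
The plan is to begin by making the target set completely explicit. Since $G=[0,1]\times\{0\}^\infty$ and $\frac12\theta(x_0,x_1,\ldots)=(0,\frac12 x_0,\frac12 x_1,\ldots)$, we get $\frac12\theta(G)=\{0\}\times[0,\frac12]\times\{0\}^\infty$, i.e.\ a single segment carried by the second coordinate. So the whole statement reduces to showing that, as $k\to\infty$, the arc $\varphi_{n+1}([z^{(k+1)},y^{(k)}])$ collapses in the Hausdorff metric onto this segment.

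The decisive structural step is to reduce $\varphi_{n+1}$ to the base parametrization $\varphi_0(t)=(t,f(t),0,0,\ldots)$ on the relevant (descending) portion of the interval. Every modification made in the inductive construction of $\varphi_{m+1}$ from $\varphi_m$ is performed only on $\bigcup_{P\in P^m}P$, and by Lemma~\ref{lem:Pn:construct} each $P^m_{i_0,\ldots,i_m}$ is contained in the ascending arc $P^0_{i_0}=[y^{(i_0+1)},z^{(i_0)}]$. Hence on the complement of $\bigcup_i P^0_i$ every coordinate of index at least $2$ remains $0$ and the first two coordinates remain $(t,f(t))$; there $\varphi_{n+1}$ literally equals $\varphi_0$. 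I would therefore isolate the sub‑intervals of $[z^{(k+1)},y^{(k)}]$ lying in these gaps between the ascending arcs, on which the computation reduces to understanding $\varphi_0$ on an interval shrinking to $0$.

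Next I would run the two Hausdorff estimates using only $\varphi_0$. For large $k$ both endpoints $z^{(k+1)},y^{(k)}$ tend to $0$, so the first coordinate $\pi_0(\varphi_0(t))=t$ is uniformly small; and $f(t)=\frac14(\sin\frac{\pi}{t}+1+3t)$ lies within $\frac{3t}{4}$ of the band $[0,\frac12]$. Thus every image point $(t,f(t),0,\ldots)$ lies within $O(t)$ of $(0,\min(f(t),\tfrac12),0,\ldots)\in\frac12\theta(G)$, giving one inclusion. For the reverse inclusion I would use that between a local maximum $z^{(\cdot)}$ (where $f\approx\frac12$) and the adjacent local minimum $y^{(\cdot)}$ (where $f\approx 0$) the map $f$ is monotone and its range $[f(y^{(k)}),f(z^{(k)})]$ swells up to all of $[0,\frac12]$ as $k\to\infty$; hence each $(0,s,0,\ldots)$ with $s\in[0,\frac12]$ is approximated by an image point with error $O(t)+O(f(y^{(k)}))$. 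Letting $k\to\infty$ forces $d_H\to 0$.

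The main obstacle will be precisely the reduction in the second paragraph: one must verify carefully that the part of the interval under consideration meets none of the nested modification sets $P^m$, i.e.\ that it lies in the gaps between the ascending arcs $P^0_i$. This is essential, because on the arcs $P^0_i$ themselves the higher coordinates do \emph{not} vanish — there $\varphi_{n+1}$ accumulates onto the spiral part $\frac12\theta(\overline{L_n})$ rather than onto $\frac12\theta(G)$, exactly as quantified by Lemma~\ref{lem:P_go_toL}. Once this combinatorial point is secured, the only remaining work is the uniform control of the error terms of size $\frac{3t}{4}$ and of the residual range of $f$ at the two ends of the interval, which is routine.
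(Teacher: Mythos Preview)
Your approach is essentially the paper's: reduce $\varphi_{n+1}$ to $\varphi_0$ by observing that all the inductive modifications live inside $\bigcup_i P^0_i$, and then read off the Hausdorff limit from the explicit asymptotics $f(z^{(k)})\to\tfrac12$, $f(y^{(k)})\to 0$ together with $\frac12\theta(G)=\{0\}\times[0,\tfrac12]\times\{0\}^\infty$. The paper dispatches your ``main obstacle'' in one sentence, asserting that the interval in question meets $\bigcup_{P\in P^0}P$ only at its endpoints (where the higher coordinates vanish by Lemma~\ref{lem:glueing}), so that $\varphi_{n+1}=\varphi_0$ on the \emph{entire} interval; your plan to isolate gap sub-intervals is therefore unnecessary, and the monotone descending piece already sweeps the full range needed for both Hausdorff inclusions.
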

\begin{proof}
  As $[z^{(k+1)},y^{(k)}] $ intersects intervals of $P^0$ only at the endpoints, where we keep the zero coordinates, we have that $ \varphi_{n+1}([z^{(k+1)},y^{(k)}])=\varphi_0([z^{(k+1)},y^{(k)}])$, so the result follows as $\lim_{k \to \infty}f(y^{(k)})=0$ and $\lim_{k \to \infty}f(z^{(k)})=\frac12$, and $\frac12\theta(G)=\{0\}\times[0,\frac12]\times\{0\}^\infty$.
\end{proof}

     \begin{lem}
        The triple $(X_{n+1},L_{n+1},\varphi_{n+1})$ satisfies the condition $(A6)$, 
    \end{lem}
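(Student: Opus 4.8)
The plan is to exploit the identity $\omega(L_{n+1})=\tfrac12\theta(X_n)$ from $(A8)$, which turns the approximation property for $L_{n+1}$ into a statement about subcontinua of a scaled copy of the Warsaw circle $X_n$. Fix a subcontinuum $Y\subset\omega(L_{n+1})$ and $\epsilon>0$. Since $\tfrac12\theta$ is a homeomorphism onto its image and is Lipschitz, I would write $Y=\tfrac12\theta(Z)$ for the unique subcontinuum $Z\subset X_n$, so that it suffices to produce an interval $[a,b]\subset(0,1]$ with $d_H\big(\tfrac12\theta(Z),\varphi_{n+1}([a,b])\big)<\epsilon$; the Lipschitz property of $\tfrac12\theta$ transports any approximation of $Z$ itself to one of $Y$. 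Using the circle structure of $X_n/_\sim$ from $(A3)$ and $(A4)$ --- the only nondegenerate fiber is $\omega(L_n)$, it meets $G$ exactly at $0^\infty$, and the complementary arc $G\cup L_n$ carries only singleton fibers --- I would sort $Z$ into: (a) $Z\subset\omega(L_n)$; (b) an arc $Z=\varphi_n([a,b])\subset L_n$; (c) a tail $Z=\overline{\varphi_n((0,b])}$ (which necessarily contains all of $\omega(L_n)$); (d) a subarc of $G$; and (e) ``mixed'' continua obtained by joining pieces of types (a)--(d) across the junction $\varphi_n(1)=(1,0,\dots)$ or across $0^\infty$.

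For the pure types the lemmas already proved apply directly. Types (b) and (c) are precisely the hypotheses of Lemma~\ref{lem:P_go_toL}, which yields intervals $J_k\subset P^0_k$ with $d_H(\tfrac12\theta(Z),\varphi_{n+1}(J_k))\to0$, so a large $k$ suffices. For type (d) I would localise Lemma~\ref{lem:notP_go_to_G}: on each interval $[z^{(k+1)},y^{(k)}]$ the map $\varphi_{n+1}$ agrees with $\varphi_0$ and $f$ runs monotonically across essentially all of $[0,\tfrac12]$, so the subinterval on which $f$ takes values in the range matching $\tfrac12\theta(Z)$ produces a sub-arc whose image converges to $\tfrac12\theta(Z)\subset\tfrac12\theta(G)$. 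For type (a) I would feed in the inductive hypothesis $(A6)$ for $n$: choose an arc $\varphi_n([a,b])$ with $d_H(Z,\varphi_n([a,b]))<\epsilon$, apply $\tfrac12\theta$ (contracting the error), and then approximate the arc $\tfrac12\theta(\varphi_n([a,b]))\subset\tfrac12\theta(L_n)$ by some $\varphi_{n+1}(J_k)$ via Lemma~\ref{lem:P_go_toL}(1); a triangle inequality closes this case, and it is here that the self-similarity of the construction is consumed.

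The main obstacle is the mixed type (e), where a single arc $\varphi_{n+1}(I_k)$ must trace, in one connected pass, a piece of a $\tfrac12\theta(\overline{L_n})$-copy together with an adjacent piece of $\tfrac12\theta(G)$ and/or of $\tfrac12\theta(\omega(L_n))$. The decisive structural fact is that consecutive fundamental intervals abut: $P^0_k=[y^{(k+1)},z^{(k)}]$ shares its right endpoint $z^{(k)}$ with the complementary interval $[z^{(k)},y^{(k)}]$ and its left endpoint $y^{(k+1)}$ with $[z^{(k+1)},y^{(k+1)}]$; moreover, by Lemma~\ref{lem:glueing} every higher coordinate $g_{i_0,\dots,i_m}$ vanishes at the endpoints of the intervals of $P^m$. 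Hence $\varphi_{n+1}(z^{(k)})\to\tfrac12\theta(\varphi_n(1))$ and $\varphi_{n+1}(y^{(k+1)})\to0^\infty$, so the $\tfrac12\theta(\overline{L_n})$-copy traced on $P^0_k$ glues continuously, at exactly the right two points, to the $\tfrac12\theta(G)$-copies traced on the neighbouring intervals. Consequently, for large $k$ the arc $\varphi_{n+1}$ restricted to a union of a few consecutive fundamental intervals traces a single circuit approximating the whole Warsaw circle $\tfrac12\theta(X_n)=\omega(L_{n+1})$, and any connected $Y$ corresponds to a connected sub-path. I would then take $I_k$ to be the sub-interval tracing exactly the portion of the circuit that matches $Y$ (invoking the type (a) argument on the $\omega(L_n)$-component and Lemma~\ref{lem:P_go_toL}(2) on any tail), and bound $d_H(Y,\varphi_{n+1}(I_k))$ by collecting the errors coming from $(A6)$ for $n$, from Lemma~\ref{lem:P_go_toL}, and from the gluing. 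Checking that the decomposition of an arbitrary $Z$ really corresponds to one such sub-interval, together with the attendant $\epsilon$-bookkeeping, is the delicate but essentially routine part.
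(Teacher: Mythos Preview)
Your proposal is correct and follows essentially the same route as the paper: both reduce via $(A8)$ to a subcontinuum $Z\subset X_n$, split according to whether $Z$ lies in $L_n$, in $\omega(L_n)$, in $G$, or straddles these pieces, and then invoke Lemma~\ref{lem:P_go_toL} for the $L_n$ and tail parts, the complementary intervals $[z^{(k+1)},y^{(k)}]$ (your localised Lemma~\ref{lem:notP_go_to_G}) for the $G$ part, the inductive hypothesis $(A6)$ for the $\omega(L_n)$ part, and the endpoint matching $\varphi_{n+1}(y^{(k+1)})\to 0^\infty$, $\varphi_{n+1}(z^{(k)})\to\tfrac12\theta(\varphi_n(1))$ to glue in the mixed case. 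The only cosmetic difference is that the paper groups your types (a)--(c) under the single heading $Y\subset\tfrac12\theta(\overline{L_n})$ and then spells out the mixed case by the two junction points $e^0=0^\infty$ and $e^1=(0,\tfrac12,0,\dots)$ rather than by a generic ``sub-path of a circuit'' description.
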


    \begin{proof}
            Fix any subcontinuum nondegenerate $Y \subset \omega(L_{n+1})  $. 
    
    Assume first that $Y \subset \frac12 \theta(L_n)$, it follows that $Y$ is an arc. By Lemma~\ref{lem:P_go_toL} there is a sequence $[a^{(k)},b^{(k)}] \subset P^0_k$ such that $\varphi_{n+1}([a^{(k)},b^{(k)}]) \to Y$ in Hausdorff metric.

    Assume now that $Y \subset \frac12\theta(\overline{L_n})$  and fix $\epsilon>0$.  By the induction hypothesis $(A5)$, we can find $[a,b] \subset (0,1]$ such that  $d(\frac12\theta(\varphi_n([a,b])),Y)<\epsilon$, but then, by the step above, we can find $[a',b'] \in (0,1]$ such that $d_H(\varphi_{n+1}([a',b']),\frac12\theta(\varphi_n([a,b]))) <\epsilon$, so $d_H(\varphi_{n+1}([a',b']),Y)<2\epsilon$.

    Suppose  $Y \subset \frac12\theta(G)$ and fix $\epsilon>0$. Choose $[a^{(k)},b^{(k)}] \subset (z^{(k)},y^{(k)})$ with $\varphi_0([a^{(k)},b^{(k)}]) \to Y$. Since $(z^{(k)},y^{(k)})\cap \bigcup_{P \in P^0} P= \emptyset$, we have that $\varphi_{n+1}([a^{(k)},b^{(k)}])=\varphi_0([a^{(k)},b^{(k)}])$, giving us the desired result.

   We are left to consider the case  $\Inter(Y_1)=\Inter(Y \cap \frac12\theta(G) )\neq \emptyset$ and $\Inter Y_2= \Inter (Y \cap \frac12 \theta(\overline{L_n}) )\neq \emptyset $. If $Y=\omega(L_{n+1})$, then 
the statement is obvious by the definition of limit set.
  Consider first the case  $Y_1 \cap Y_2$ is one point, either $e^0=(0,0,\ldots)$ or $e^1=(0,\frac12,0,\ldots)$ (the endpoints of $\frac12\theta(G)$).

    In the case of $e^0$, fix $\delta^{(k)}_1,\delta^{(k)}_2$ so that  $\varphi_{n+1}([y^{(k+1)},z^{(k)}-\delta^{(k)}_2]) \to Y_1 $  and $\varphi_{n+1}([z^{(k+1)}+\delta_1^{(k)},y^{(k+1)}]) \to Y_2$, using Lemma~\ref{lem:P_go_toL} .

    This gives us $\varphi_{n+1}([z^{(k+1)}+\delta_1^{(k)},z^{(k)}-\delta_2^{(k)}]) \to Y$. 
   As $e_0 \in Y$, by connectedness we have that $\omega(L_{n+1}) \subset Y$, hence $\lim_{k \to \infty}\varphi_{n+1}(y^{(k)}) \in Y$ and so the arc approximating $Y_1$ can be chosen to include $y^{(k+1)}$.
    
    In the case of $e_1$ we act analogously, to get $\varphi_{n+1}([y^{(k+1)}+\delta_1,y^{(k)}-\delta_2]) \to Y$.

    Finally, consider the case that $Y_1\cap Y_2$ contains both $e^0$, $e^1$.
    Then either $Y_1=\frac12 \theta(\overline{L_n})$ or $Y_2= \frac12\theta(G)$ because $Y$ is connected.  Without the loss of generality suppose $\frac12\theta(G) \subset Y$. 
   It follows that $Y_1=\frac12 \theta(\overline{L_n}) \setminus \frac12\theta(\varphi_n([a,b]))$ for some $0<a<b \le1$.  Denote  $a^{(k)}= (f|_{P^0_k})^{-1}(a)$ and $b^{(k)}=(f|_{P^0_k})^{-1}(b)$. By Lemma~\ref{lem:P_go_toL} we have that $\varphi_{n+1}( [b^{(k+1)},z^{(k+1)}]\cup[y^{(k)},a^{(k)}] )\to Y_1$ and by Lemma~\ref{lem:notP_go_to_G} we have that $\varphi_{n+1}([z^{(k+1)},y^{(k)}]) \to \frac12\theta(G)=Y_2$. All together this gives us that $\varphi_{n+1}([b^{(k+1)},a^{(k)}]) \to Y$.
    \end{proof}

    Observe that maps $\varphi_n$ form a Cauchy sequence (in the space of continuous maps $[0,1]\to \mathcal{H}$) and therefore  $\varphi_\infty(t)=\lim_{n \to \infty}\varphi_n(t)$ is a well-defined continuous map. 
 
    Note that $\pi_0(\varphi_\infty(t))=t$, hence is $1-1$. This shows that $L_\infty = \varphi_\infty((0,1])$ is an oscillatory quasi-arc in $X_\infty$. 

    We have $X_\infty \cap \{x\in \mathcal{H} : x_0=0\}=\omega(L_\infty)$, so  $X_\infty /_\sim$ is a circle, the map $\phi=\pi_\sim|_{X_\infty}$ has a unique tranche $\omega(L_\infty)$, so $X_\infty$ is a tranched graph.

    Similarly to the Example~\ref{exmp:infinite_order_quasi_arc}, the continuum $X_\infty$ has a self-similar property, i.e. $\omega(L_\infty)=\frac12\theta(X_\infty)$, meaning $L_\infty$ is an $\infty$-order oscillatory quasi-arc.

    \begin{lem}
        $X_\infty$ is a generalized sin(1/x)-type continuum
    \end{lem}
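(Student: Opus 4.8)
The plan is to verify the three defining conditions of Definition~\ref{def_sin1/x} for the monotone map $\phi=\pi_\sim|_{X_\infty}\colon X_\infty\to Y$, where $Y=X_\infty/_\sim$ is the circle already identified above. Two of these are essentially free: monotonicity of $\phi$ and density of the degenerate fibers were recorded when we observed that $X_\infty$ is a tranched graph whose only nondegenerate fiber is $\omega(L_\infty)$ (by Theorem~\ref{thm:set_of_tranches_is_meager} the degenerate fibers even form a residual set), while condition~\eqref{sin-i} is redundant by Lemma~\ref{lem:if_dense_then_fibers_ndense}. Thus the entire content of the lemma reduces to the approximation property~\eqref{sin-iii}.

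First I would reduce~\eqref{sin-iii} to a statement about the single tranche. Since every fiber other than $\omega(L_\infty)$ is a singleton and $\phi|_{L_\infty}$ is a homeomorphism onto $Y\setminus\{y_0\}$, where $y_0=\phi(\omega(L_\infty))$, any arc $[a,b]\subset Y$ avoiding $y_0$ pulls back to a subarc $\varphi_\infty([s,t])$ of $L_\infty$; for a singleton fiber~\eqref{sin-iii} is then immediate from continuity of $\varphi_\infty$. Hence it suffices to prove the following ``$\infty$-level'' analogue of condition $(A6)$: for every subcontinuum $Y_0\subset\omega(L_\infty)$ and every $\epsilon>0$ there is an arc $[a,b]\subset(0,1]$ with $d_H(Y_0,\varphi_\infty([a,b]))<\epsilon$. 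I would obtain this by pushing $(A6)$ to the limit.

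For the limiting argument, let $\rho_n$ denote the projection onto the first $n+1$ coordinates (setting all later coordinates to $0$). Two uniform estimates drive the proof. Since the maps $\varphi_n$ form a Cauchy sequence converging uniformly to $\varphi_\infty$, for large $n$ one has $\sup_t d(\varphi_n(t),\varphi_\infty(t))<\epsilon/3$, hence $d_H(\varphi_n([a,b]),\varphi_\infty([a,b]))<\epsilon/3$ for all $[a,b]$. Moreover, because the metric on the Hilbert cube weights the $i$-th coordinate by $2^{-i}$, the quantity $d(x,\rho_n(x))$ tends to $0$ uniformly in $x$, so $d_H(V,\rho_n(V))<\epsilon/3$ for every set $V$ once $n$ is large. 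The key algebraic identity is $\rho_n(\omega(L_\infty))=\omega(L_n)$: indeed $\omega(L_\infty)=\frac12\theta(X_\infty)$ by the self-similarity established above, one checks directly that $\rho_n\circ\frac12\theta=\frac12\theta\circ\rho_{n-1}$, and $\rho_{n-1}(X_\infty)=X_{n-1}$ (which follows from $(A5)$ and $(A7)$ after taking closures and Hausdorff limits); combined with $(A8)$ this gives $\rho_n(\omega(L_\infty))=\frac12\theta(X_{n-1})=\omega(L_n)$.

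Putting these together, given $Y_0\subset\omega(L_\infty)$ and $\epsilon>0$ I would fix $n$ realizing both uniform bounds, set $Z_0=\rho_n(Y_0)$ (a subcontinuum of $\omega(L_n)$ with $d_H(Y_0,Z_0)<\epsilon/3$), and apply condition $(A6)$ at level $n$ to produce $[a,b]\subset(0,1]$ with $d_H(Z_0,\varphi_n([a,b]))<\epsilon/3$. The triangle inequality $d_H(Y_0,\varphi_\infty([a,b]))\le d_H(Y_0,Z_0)+d_H(Z_0,\varphi_n([a,b]))+d_H(\varphi_n([a,b]),\varphi_\infty([a,b]))<\epsilon$ then yields the approximation property, completing the proof. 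I expect the main obstacle to be the bookkeeping in the identity $\rho_n(\omega(L_\infty))=\omega(L_n)$, i.e.\ making precise that the projection of $X_\infty$ onto its first $n$ coordinates is exactly $X_{n-1}$; everything else is a clean three-$\epsilon$ estimate riding on the uniform convergence $\varphi_n\to\varphi_\infty$ and the already-proved finite-level approximation $(A6)$.
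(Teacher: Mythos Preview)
Your proposal is correct and follows essentially the same approach as the paper: reduce to the approximation property for the unique tranche $\omega(L_\infty)$, project to a finite level $n$, invoke $(A6)$ there, and lift back via a three-term triangle inequality using uniform closeness of $\varphi_n$ to $\varphi_\infty$ and of the projection $\rho_n$ to the identity. You are somewhat more careful than the paper in spelling out the identity $\rho_n(\omega(L_\infty))=\omega(L_n)$ (the paper simply writes ``Let $Y_n$ be the projection of $Y$'' and implicitly treats it as a subcontinuum of $\omega(L_n)$), but the argument is the same.
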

    \begin{proof}
         Choose any continuum $Y \subset \omega(L_\infty)$, and any $\epsilon>0$. Pick $n$ large enough so that for any $x,y \in \mathcal{H}$ if $x_i=y_i$ for all $i \leq n$ then $d(x,y)<\epsilon$. Then for any subcontinuum $C \subset \mathcal{H}$ and $C_n=\{(x_0,\ldots,x_n,0,\ldots) \in \mathcal{H}: x \in C\}$ we have $d_H(C,C_n)<\epsilon$.
    
    It follows that $d_H(X_n,X_\infty)<\epsilon$. Let $Y_n$ be the projection of $Y$. It follows that $d_H(Y,Y_n)<\epsilon$. As $X_n$ is a generalized sin(1/x)-type continuum, there is an arc $[a,b] \subset (0,1]$ with $d_H(Y_n,\varphi_n([a,b])<\epsilon $, but by the choice of $\epsilon$ we have $d_H(\varphi_n([a,b]),\varphi_\infty([a,b]))<\epsilon$, so the triangle inequality gives us $d_H(\varphi_\infty([a,b]_,Y)<3\epsilon$.
    \end{proof}

    \begin{lem}
        Continuum $X_\infty$ is arcwise connected.
    \end{lem}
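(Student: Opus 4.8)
The plan is to show that every point of $X_\infty$ can be joined by an arc to the fixed base point $e^0=(0,0,\ldots)$, which is enough for arcwise connectedness. I will rely on the two facts established immediately before this statement: the self-similarity $\omega(L_\infty)=\frac12\theta(X_\infty)$ and the identity $X_\infty\cap\{x\in\mathcal H:x_0=0\}=\omega(L_\infty)$ (note $e^0\in\omega(L_\infty)$). A preliminary observation I would record is that, since $\pi_0\circ\varphi_\infty=\id$, the quasi-arc $L_\infty$ meets $G=[0,1]\times\{0\}^\infty$ only at its endpoint $\varphi_\infty(1)=(1,0,\ldots)$, because $f(t)>0$ for $t\in(0,1)$ while $f(1)=0$; thus $L_\infty\cap G=\{(1,0,\ldots)\}$.

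First I would dispose of the easy points. The graph $G$ is an arc with endpoints $e^0$ and $(1,0,\ldots)$, so any $p\in G$ is joined to $e^0$ inside $G$. If $p=\varphi_\infty(t)\in L_\infty$, then $\varphi_\infty([t,1])$ is an arc from $p$ to $(1,0,\ldots)$ contained in $L_\infty\cup\{(1,0,\ldots)\}$; concatenating it with the subarc of $G$ from $(1,0,\ldots)$ to $e^0$ (the two pieces meet only at $(1,0,\ldots)$, by the preliminary observation) yields an arc from $p$ to $e^0$ inside $G\cup L_\infty$. In this way, for every $q\in G\cup L_\infty$ I obtain an arc $\beta_q\subset G\cup L_\infty$ running from $q$ to $e^0$.

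The substantial case is $p\in\omega(L_\infty)$. If $p=e^0$ there is nothing to prove, so assume $p\neq e^0$ and let $m\ge 1$ be the least index with $p_m\neq 0$ (recall $p_0=0$). Using $\omega(L_\infty)=\frac12\theta(X_\infty)$ repeatedly, I would show that $q:=(\tfrac12\theta)^{-m}(p)$ is well defined, lies in $X_\infty$, and satisfies $q_0=2^m p_m\neq 0$. Indeed, $p\in\frac12\theta(X_\infty)$ has a unique preimage in $X_\infty$ whose $j$-th coordinate is $2p_{j+1}$; as long as the resulting $0$-th coordinate vanishes (exactly while the index $m$ has not yet been reached), that preimage again lies in $X_\infty\cap\{x_0=0\}=\omega(L_\infty)=\frac12\theta(X_\infty)$, so the inversion iterates, and an easy induction keeps each intermediate point inside $X_\infty$. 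After $m$ steps the $0$-th coordinate is nonzero, so $q\in X_\infty\cap\{x_0\neq 0\}\subset G\cup L_\infty$, with $p=(\tfrac12\theta)^m(q)$. Since $\tfrac12\theta$ is a continuous injection (a homeomorphism on the compact arc $\beta_q$) with $(\tfrac12\theta)^m(e^0)=e^0$ and $(\tfrac12\theta)^m(X_\infty)\subset X_\infty$, the image $(\tfrac12\theta)^m(\beta_q)$ is an arc inside $X_\infty$ joining $p$ to $e^0$. Hence every point of $X_\infty$ is arc-joined to $e^0$, and $X_\infty$ is arcwise connected.

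The main obstacle is the third paragraph: one must make the coordinate-peeling precise, in particular verifying that each intermediate preimage stays inside $X_\infty$ (this is exactly where $X_\infty\cap\{x_0=0\}=\omega(L_\infty)$ and the self-similarity are used in tandem) and that the procedure terminates after finitely many steps for every $p\neq e^0$. Everything else reduces to the elementary facts that $G$ is an arc, that $L_\infty$ is a quasi-arc attached to $G$ at the single point $(1,0,\ldots)$, and that $\frac12\theta$ is a coordinate-shrinking homeomorphism onto its image.
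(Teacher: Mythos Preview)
Your proof is correct and follows essentially the same strategy as the paper: handle $G$ and $L_\infty$ directly, then for $p\in\omega(L_\infty)\setminus\{e^0\}$ peel off coordinates using the self-similarity $\omega(L_\infty)=\tfrac12\theta(X_\infty)$ until landing in $G\cup L_\infty$, and transport the resulting arc back through the embedding $(\tfrac12\theta)^m$. The paper anchors arcs at $(1,0,\ldots)$ and writes the final arc explicitly as $G\cup\tfrac12\theta(G)\cup\cdots\cup\tfrac1{2^n}\theta^n(G\cup\varphi_\infty([x_n,1]))$, whereas you anchor at $e^0$ and phrase the same construction as the pushforward $(\tfrac12\theta)^m(\beta_q)$; these are the same arc described in two ways, and your version has the mild advantage of making injectivity (hence arc-ness) transparent via the homeomorphism.
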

    \begin{proof}
            It is enough to show that for every point $x \in X_\infty$ there is an arc from $x$ to $(1,0,0,\ldots) =\varphi_\infty(1)\in X_\infty$. If $x \in G$, then the arc is  $[x_0,1]\times\{0\}^\infty$, if $x \in L_\infty$, then we take $\varphi_\infty([x_0,1])$. Suppose now $x_0=0$, but there is a coordinate $n$ such that $x_n \neq 0$. This means that $x\in \frac1{2^n}\theta^n(L_\infty\cup G)$, so the desired arc is $A=G \cup \frac12\theta(G) \cup\ldots\cup \frac1{2^n}\theta^n(G \cup \varphi_\infty([x_n,1]))$
             or
            $A=G \cup \frac12\theta(G) \cup\ldots\cup \frac1{2^n}\theta^n([x_0,1]\times\{0\}^\infty)$ depending on the location of $x$.
    \end{proof}

\section{Relation to other classes and dynamics of tranched graph maps}\label{sec:dynamics}

In \cite{MR4471558} M.~Mihokov\'{a} studied minimal sets on continua with a free interval, where \textit{free interval} is any space homeomorphic to $\R$. We say that $J$ is a dense free interval in $X$ if $J$ is a free interval and $\overline{J}=X$. In \cite{MR4471558} the objects studied are continua $X$ that can be expressed in the form:
$$ X = L \cup J \cup R$$
where  $L,R$ are  nowhere dense locally connected continua, disjoint from the dense free interval $J$ that can be split into two rays $J_L$ and $J_R$ such that $\omega(J_L)=L$ and $\omega(J_R)=R$. Similarly to \cite{MR4471558}, we will denote by $\mathcal{C}$ the class of such continua $X$.
\begin{lem}
    Suppose $X \in \mathcal{C}$. Then $X$ is a tranched graph.
\end{lem}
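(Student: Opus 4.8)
The plan is to collapse the two remainders to points and to recognize the resulting quotient as an arc or a circle. Concretely, I would define a decomposition of $X$ whose nondegenerate members are the connected components of $L\cup R$; there are at most two of them, namely $L$ and $R$ when $L\cap R=\emptyset$, or the single continuum $L\cup R$ when they meet (in particular when $L=R$, as happens for the Warsaw circle). Let $\phi\colon X\to Y$ be the associated quotient map onto $Y=X/_\sim$. Since we are collapsing finitely many disjoint compact connected subsets of a compact metric space, the corresponding decomposition is upper semicontinuous, so by standard decomposition theory (cf.\ the quotient theorem recalled above and the argument of Lemma~\ref{lem:graph_for_quasi}) the space $Y$ is a compact metric space and $\phi$ is continuous and closed. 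Moreover, each fiber of $\phi$ is either a singleton or one of $L$, $R$, $L\cup R$, hence connected, so $\phi$ is monotone.

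The key step is to identify $Y$. Writing $J=J_L\cup J_R$, where the two rays share their common origin $c$, we have $X=\overline{J_L}\cup\overline{J_R}$ with $\overline{J_L}=J_L\cup L$ and $\overline{J_R}=J_R\cup R$; each is a compactification of a ray with remainder $L$, respectively $R$, in the sense recalled in Section~\ref{sec:2}. I would first show that collapsing $L$ inside $\overline{J_L}$ produces the one-point compactification of the ray $J_L$: any neighbourhood of the collapsed point pulls back to an open saturated set containing $L=\omega(J_L)$, whose complement in $\overline{J_L}$ is compact and disjoint from $\omega(J_L)$, hence a bounded initial segment of the ray. Since the one-point compactification of a ray is an arc, $\phi(\overline{J_L})$ is an arc with endpoints $\phi(c)$ and $\phi(L)$; symmetrically $\phi(\overline{J_R})$ is an arc with endpoints $\phi(c)$ and $\phi(R)$. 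Gluing these two arcs along the common point $\phi(c)$ yields an arc when $\phi(L)\neq\phi(R)$ and a circle when $\phi(L)=\phi(R)$. In either case $Y$ is a topological graph.

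Finally, the set $D$ of points of $Y$ with degenerate preimage is exactly $\phi(J)$, since the collapsed points are the only ones carrying nondegenerate fibers; hence $\phi^{-1}(D)=J$. As $\overline{J}=X$ by hypothesis, $\phi^{-1}(D)$ is dense in $X$, so $\phi\colon X\to Y$ witnesses that $X$ is a tranched graph in the sense of Definition~\ref{def:tranched_graph}.

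The main obstacle I anticipate is the clean identification of $Y$ in the second step, in particular the verification that collapsing the remainder of a ray compactification gives precisely the one-point compactification of the ray (an arc). The delicate bookkeeping is keeping track of whether $L$ and $R$ are disjoint, intersecting, or equal, since these three cases produce either an arc or a circle and must all be handled uniformly by collapsing the connected components of $L\cup R$; the remaining verifications (continuity, monotonicity, and density of the degenerate fibers) are routine.
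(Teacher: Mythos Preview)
Your proposal is correct and takes essentially the same approach as the paper: collapse $L$ and $R$ (or $L\cup R$ when they meet) to points and verify that the resulting quotient map satisfies Definition~\ref{def:tranched_graph}. Your argument is in fact more thorough than the paper's, which does not explicitly identify the quotient $Y$ as an arc or a circle in the proof itself (that case analysis is deferred to the Remark immediately following), and your phrasing via connected components of $L\cup R$ agrees with the paper's equivalence relation once transitivity is applied when $L\cap R\neq\emptyset$.
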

\begin{proof}
    Let $Y=X / _\sim$ where $x \sim y$ if and only if $x=y$ or both $x,y \in L$ or both $x,y \in R$. Let $\phi \colon X \ra Y$ the associated quotient map. Since $L$ and $R$ are closed and nowhere dense, $\phi$ is continuous and monotone, and moreover all fibers of $\phi$ are nowhere dense and the only (possible) nondegenerate fibers of $\phi$ are $L$ and $R$. It follows that the set of points with degenerate preimage is dense, so  $\phi$ satisfies Definition~\ref{def:tranched_graph} and so $X$ satisfying the definition from \cite{MR4471558} is a tranched graph.
\end{proof}
Using \cite[Remark 5]{MR4471558}  it is possible  to classify all  graphs $Y$ 
 that can appear in the definition of the tranched graphs in this class:
\begin{rem}
    Let $X=L \cup J \cup R$, $\phi \colon X \mapsto Y$ be mappings from the definition of tranched graph. Then, up to homeomorphism:
    \begin{enumerate}[(a)]
        \item If both $L$ and $R$ are singleton then $X$ has no tranches and
            \begin{itemize}
                \item If $L=R$, then $X=Y$ is a circle,
                \item If $L \neq R$, then $X=Y=[0,1]$
            \end{itemize}
        \item If $L$ is nondegenerate and $R$ is a singleton, then $L$ is the only tranche of $X$ and
            \begin{itemize}
                \item if $L \cap R = \emptyset$, then $Y=[0,1]$ and $\phi^{-1}(0)=L$ is a tranche.
                \item if $L \cap R \neq \emptyset$, then $Y$ is a circle.
            \end{itemize}
        \item if both $L$ and $R$ are nondegenerate
            \begin{itemize}
                \item If $L \cap R = \emptyset$, then $Y=[0,1]$ and $X$ has two tranches: $\phi^{-1}(0)=L$ and $\phi^{-1}(1)=R$
                \item If $L \cap R \neq \emptyset$, then $Y$ is a circle and $X$ has one tranche $L \cup R$
            \end{itemize}
    \end{enumerate}
\end{rem}
 It is easy to check that all examples provided by \cite{MR4471558} satisfy the definition of a generalized sin(1/x)-type continuum. The topological structure of continua with dense free interval is somewhat restricted; still, following examples provided earlier in this paper, it is not hard to show that the elements of the class $\mathcal{C}$
do not have to be generalized sin(1/x)-type continua, neither do they have to be hereditary tranched graphs.

 We say that a map $f \colon X \to X$ is \textit{(topologically) mixing} if for any nonenmpty open subsets $U,V \subset X$ there is a number $N \in \N$ such that for all $n>N$ $f^n(U) \cap V \neq \emptyset$. In the standard hierarchy of chaotic maps, definition of mixing maps can be extended to a stronger definition of topologically exact maps, where a map $f \colon X \ra X$ is \textit{(topologically) exact}  or \textit{locally eventually onto} (leo) if for every open set $U \subset  X$, there exists $N \in \N$ such that $f^N(U)=X$.

In the third chapter of his doctoral thesis (\cite{Drwiega2019}, in Polish) Drwiega studied the dynamics of the sin(1/x) curve, which can be viewed in the framework of this paper as a simple generalized sin(1/x)-type continuum. He showed a lower bound on topological entropy of a continuous topologically transitive  map of a sin(1/x)-continuum, which is $\log3$. He also proved that for a space consisting of two quasi-arcs accumulating on an interval, no mixing map is topologically exact - a result we extend in Theorem~\ref{lem:finite_then_no_LEO} by a different argument. In a more general setting, the dynamics of quasi-graph maps was much more studied (e.g. see \cite{MR3557770},\cite{MR4385436},\cite{MR4580970}) than in the case of generalized sin(1/x)-type continua (see \cite{MR3272777} for some comments). 
The first result that is inspired by the above studies 
is that the set of tranches is invariant under any continuous  onto map. This property imposes essential restrictions on topological and ergodic properties of dynamical systems on these spaces.

\begin{lem}\label{lem:TXinv}
    Suppose $X$ is an 
    tranched graph with a finite set of tranches and  let $\phi \colon X\to Y$ be an associated map from the definition. Then for any continuous surjective mapping $f \colon X \ra X$, set $T_X$ composed of tranches of $X$, i.e. 
    $$
    T_X=\{x\in X : \phi^{-1}(\phi(x))\neq \{x\}\}
    $$
    is $f-$ invariant, meaning $f(T_X)=T_X$.
\end{lem}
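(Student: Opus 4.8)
The plan is to prove that $f$ permutes the finitely many tranches of $X$, which gives $f(T_X)=T_X$ immediately. Throughout I would lean on three facts already available: $T_X$ is closed and $U=X\setminus T_X$ is residual (Theorem~\ref{thm:set_of_tranches_is_meager}); a subcontinuum of $X$ is nowhere dense exactly when it is contained in a single fiber of $\phi$, so that $T_X$ is precisely the union of all nondegenerate nowhere dense subcontinua (Lemma~\ref{lem:nowheredense} with Remark~\ref{rem:collapse}); and, crucially, each tranche $T$ is a finite union $T=\bigcup_i\omega(L_i)$ of limit sets of oscillatory quasi-arcs $L_i\subset U$ accumulating on $T$ (Lemma~\ref{lem:finite_tranches}). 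I would also record the structural dichotomy that will carry the argument: every point of $U$ has a neighbourhood whose closure is a topological graph (a Peano continuum), whereas at each point of $T_X$ the oscillatory approach of the $L_i$ prevents $X$ from being locally connected; in particular $X$ is not itself a Peano continuum.

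The first computation is the control of $f$ on limit sets. For a quasi-arc $L=\varphi([0,\infty))$, the continua $\overline{\varphi([m,\infty))}$ decrease, so by continuity of $f$,
\[
f(\omega(L))=\bigcap_{m\ge 0} f\big(\overline{\varphi([m,\infty))}\big)=\bigcap_{m\ge 0}\overline{(f\circ\varphi)([m,\infty))}=\omega(f\circ\varphi),
\]
i.e. the image of the limit set of $L$ is the $\omega$-limit set of the path $\beta:=f\circ\varphi$. Hence $f(T)=\bigcup_i\omega(f\circ\varphi_i)$, and the forward inclusion $f(T_X)\subseteq T_X$ reduces to showing that each $\omega(f\circ\varphi_i)$ lies in $T_X$. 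I must exclude two degeneracies for $\beta_i=f\circ\varphi_i$: that $\omega(\beta_i)$ touches $U$ while being ``thin'' (a collapse of part of the tranche onto a graph point) and that $\omega(\beta_i)$ has nonempty interior (a spreading over an open, graph-like region of $U$). I expect this to be the \textbf{main obstacle}, and it is exactly here that surjectivity and the finiteness of the tranche set are indispensable. The idea is that if $\omega(\beta_i)$ avoided $T_X$, then $f(\overline{L_i})=\beta_i([0,\infty))\cup\omega(\beta_i)$ would be a continuous image of the one-point compactification $[0,\infty]$, hence a Peano continuum; assembling the finitely many such images together with $f(G)$ (the image of the graph part) would exhibit $f(X)=X$ as a connected finite union of Peano continua, forcing $X$ to be locally connected --- contradicting the previous paragraph. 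The spreading case is handled by the same non-local-connectedness obstruction after pushing forward by $\phi$, since a spread forces $f(\overline{L_i})$ to cover a whole graph-region $\phi^{-1}(\overline J)\subseteq U$, and surjectivity then leaves no room to reproduce the non-locally-connected part of $X$.

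Granting the forward inclusion, each $f(T_i)$ is a nondegenerate nowhere dense subcontinuum (or a point) contained in a unique tranche, so $f$ descends to a self-map of the finite set $\{T_1,\dots,T_k\}$. For the reverse inclusion I would avoid trying to show $f(U)\subseteq U$, which is \emph{false} in general (points of $U$ may legitimately map into $T_X$, and this is how tranches get covered). Instead, suppose some target tranche $T_j$ is missed by the induced self-map; then $f(T_X)\cap T_j=\varnothing$, whence $f^{-1}(T_j)\subseteq U$, and $f^{-1}(T_j)$ is a compact subset of the locally graph-like set $U$ lying at positive distance from $T_X$. Covering it by finitely many graph charts exhibits $T_j=f\!\left(f^{-1}(T_j)\right)$ as the continuous image of a graph-like compactum, forcing the non-Peano continuum $T_j$ to be locally connected --- the same contradiction as before. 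Thus the self-map is onto, hence a permutation of $\{T_1,\dots,T_k\}$, and therefore $f(T_X)=T_X$.
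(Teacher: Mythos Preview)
Your overall strategy --- exploit the Peano/non-Peano dichotomy together with the decomposition $X=G\cup\bigcup_j\overline{L_j}$ from Lemma~\ref{lem:finite_tranches} --- is reasonable, but the execution has real gaps in both directions.

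The reverse inclusion breaks down at the sentence ``forcing the non-Peano continuum $T_j$ to be locally connected.'' Tranches can perfectly well be Peano: in the Warsaw circle the unique tranche is an arc. So there is no contradiction to derive. Even if you try to repair this by replacing $T_j$ with $\overline{L}$ for a quasi-arc $L$ accumulating on $T_j$, the implication ``image of a graph-like compactum $\Rightarrow$ locally connected'' is vacuous: compact subsets of graphs include Cantor sets, and every compact metric space is a continuous image of a Cantor set. There is also a further slip at the end: showing that the induced self-map of $\{T_1,\dots,T_k\}$ is a permutation only gives $f(T_i)\subseteq T_{\sigma(i)}$, not equality, so $f(T_X)=T_X$ does not follow from the permutation alone.

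The forward inclusion has a quantifier problem. To exhibit $X=f(G)\cup\bigcup_j f(\overline{L_j})$ as a finite union of Peano continua you need \emph{every} $f(\overline{L_j})$ to be Peano, but the hypothesis ``$\omega(\beta_i)$ is a single point of $U$'' applies only to the one index $i$ under consideration. The remaining $f(\overline{L_j})$ may carry all of the non-local-connectedness of $X$, and your argument says nothing about them. The ``spreading'' case is likewise only sketched.

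The paper's route is different: it is essentially a counting argument on oscillatory quasi-arcs. For the forward inclusion, if $f(\omega(L_i))$ is nondegenerate it is a nowhere dense limit set, hence inside a tranche; if it collapses to a point then $f(\overline{L_i})$ is a graph and $f(L_i)$ contains no oscillatory quasi-arc, which by surjectivity (each oscillatory quasi-arc in $X$ must come from one) is impossible. For the reverse inclusion the paper works pointwise: if some $y\in T_X$ is missed by $f(T_X)$, then the quasi-arc whose limit set contains $y$ cannot arise as the image of any quasi-arc $K$ (else $f(\omega(K))$ would hit $y$), again contradicting surjectivity. This avoids the ``tranche is non-Peano'' pitfall entirely.
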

\begin{proof}
    Let $X$ be 
    tranched graph  with a finite set of tranches. It follows by Lemma~\ref{lem:finite_tranches}  that all tranches are limit sets of oscillatory quasi-arcs. 
    
    Suppose first that there is $x \in T_X$ such that $f(x)  \not\in  T_X$.  Denote by $L_1, \ldots, L_k$ the  set  of quasi-arcs and assume that $x\in \omega(L_i)$. Suppose that there is $\tilde{x} \in \omega(L_i)$ with $f(\tilde{x}) \neq f(x)$. By continuity, we get that $f(L_i)$ is nondegenerate and arcwise connected, hence contains an oscillatory quasi-arc and $f(\omega(L_i))$ is a non-degenerate limit set. This means that $f(\omega(L_i)) \subset T_X$ and so $f(x) \in T_X$, contradicting the assumptions.  Therefore $\{f(x)\}= f(\omega(L_i))$ for any oscillatory quasi-arcs  $L_i$ such that $x \in \omega(L_i)$,
    in particular $f(\omega(L_i))\cap T_X=\emptyset$.  This means that $f(L_i \cup \omega(L_i))$ is a topological graph and so $f(L_i)$ does not contain an oscillatory quasi-arc. This implies that at least one oscillatory quasi-arc in $X$ does not map onto an oscillatory quasi-arc. But any oscillatory quasi-arc must be an image of an oscillatory quasi-arc, which would contradict  surjectivity. This shows that $f(T_X)\subset T_X$.

    Suppose now that there is $y \in T_X$ such that $y \neq f(x)$ for all $x \in T_X$. This means there is an oscillatory quasi-arc $L \subset X$ that no oscillatory quasi-arc maps to, because otherwise, if $f(K)=L$, then $f(\omega(K))=\omega(L)$, and as a consequence  there is $x \in \omega(K)$ such that $f(x)=y$. 
    But $f$ is surjective, leading to a contradiction.
\end{proof}

Notice that in general we cannot say that set $X \backslash T_X$ is invariant as well, so Lemma~\ref{lem:TXinv} cannot be extended beyond $T_X$.

The following Lemma will be an important tool in the process of describing possible topological dynamics on tranched graphs.

\begin{lem}
\label{lem:tranched_graphs_not_peano}
    Let $X$ be a tranched graph. Then $X$ is a Peano continuum if and only if it is a topological graph.
\end{lem}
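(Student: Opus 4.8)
The plan is to prove the trivial implication directly and to reduce the substantial one to the classical characterization of local connectedness via convergence continua. For the easy direction ($\Leftarrow$), a topological graph is a finite union of arcs meeting only at their endpoints, hence locally connected, so it is a Peano continuum. For ($\Rightarrow$), let $\phi\colon X\to Y$ be the monotone surjection onto a topological graph from Definition~\ref{def:tranched_graph}. I would first reduce to showing that $\phi$ has no tranche: if every fiber is degenerate then $\phi$ is a continuous bijection between compact metric spaces, hence a homeomorphism, and $X\cong Y$ is a topological graph. So I assume toward a contradiction that $T=\phi^{-1}(y_0)$ is a tranche; by Lemma~\ref{lem:if_dense_then_fibers_ndense} it is a nondegenerate, nowhere dense subcontinuum with $\diam T=d>0$. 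The goal is then to exhibit a \emph{convergence continuum}, that is, a nondegenerate subcontinuum $N$ which is the Hausdorff limit of a sequence of pairwise disjoint subcontinua, each disjoint from $N$. Since a locally connected continuum contains no convergence continuum (see \cite{Nadler}) and a Peano continuum is locally connected, producing such $N$ inside $X$ yields the desired contradiction.

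To construct $N$, fix a small star neighbourhood of $y_0$ in $Y$ with edges $E_1,\dots,E_m$, where $m=\mathrm{val}(y_0)$, and for $r>0$ let $c_j(r)\in E_j$ be the point at distance $r$ from $y_0$. For $K>k$ put $C^{j}_{k,K}=\phi^{-1}\big([c_j(1/K),c_j(1/k)]\big)$; this is a subcontinuum (a monotone preimage of a subarc) whose image avoids $y_0$, so it is disjoint from $T$. As $K\to\infty$ these sets increase to $\phi^{-1}\big((y_0,c_j(1/k)]\big)$, whose closure is $P^{j}_k:=\phi^{-1}\big([y_0,c_j(1/k)]\big)$, and therefore $\sup_K \diam C^{j}_{k,K}=\diam P^{j}_k$. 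Because $T$ is nowhere dense, every $t\in T$ is a limit of points off $T$, which map into the edge germs at $y_0$; hence $T\subseteq\bigcup_{j=1}^m P^{j}_k$ for each $k$. Covering the connected set $T$ of diameter $d$ by the $m$ subcontinua $P^{j}_k$ forces $\diam P^{j}_k\ge d/m$ for at least one index $j$ (a $1$-Lipschitz projection argument), so by the pigeonhole principle some fixed edge $j^{*}$ satisfies $\diam P^{j^{*}}_k\ge d/m$ for infinitely many $k$. For each such $k$ I choose $K(k)$ with $\diam C^{j^{*}}_{k,K(k)}\ge d/2m$, and, thinning the sequence of $k$'s, I arrange that the defining intervals $[c_{j^{*}}(1/K(k)),c_{j^{*}}(1/k)]$ are pairwise disjoint (they accumulate at $y_0$). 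Writing $C_k:=C^{j^{*}}_{k,K(k)}$ we obtain pairwise disjoint subcontinua, each disjoint from $T$, with $\diam C_k\ge d/2m$ and $C_k\subseteq P^{j^{*}}_k$. Since $P^{j^{*}}_k\to T$ in the Hausdorff metric, a subsequence of $C_k$ converges to a subcontinuum $N\subseteq T$ with $\diam N\ge d/2m>0$, which is the required convergence continuum.

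The main obstacle is the construction of $N$: one must ensure that the approximating continua $C_k$ do not collapse to points, i.e. that the approach of $X\setminus T$ to the nondegenerate tranche $T$ is unavoidably ``wide''. This is exactly what the diameter--pigeonhole step provides, and it is where the two defining features of a tranched graph are used together: nowhere-density of $T$ guarantees that $T$ is covered by the finitely many edge-sheets $P^{j}_k$ (none of which the accumulation can avoid), while $\diam T=d>0$ forces one of these sheets, and hence a bounded-away-from-$y_0$ piece of it, to remain nondegenerate as $k\to\infty$. Once $N$ is produced, the classical theorem that Peano continua contain no convergence continuum immediately contradicts the assumption that $X$ is a Peano continuum, completing the argument.
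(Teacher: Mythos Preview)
Your overall strategy---produce a nondegenerate continuum of convergence inside any tranche and invoke the classical equivalence ``locally connected $\Leftrightarrow$ no continuum of convergence''---is sound, and it is genuinely different from the paper's argument. The paper uses local connectedness \emph{directly}: it picks $\mathrm{val}(y_0)+1$ points in the tranche $T$, uses the Peano hypothesis to get pairwise disjoint small \emph{connected} open neighbourhoods $U_0,\dots,U_n$ of these points, and then observes that the connected images $\phi(U_i)\ni y_0$ must, by pigeonhole, share a full edge-germ for two indices $i\neq j$; a point on that germ with degenerate fiber then lies in $U_i\cap U_j=\emptyset$, a contradiction. This is a short counting argument with no hyperspace machinery. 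Your route trades that for a standard structural lemma from continuum theory; it is more portable (it would work verbatim for any monotone quotient with a nondegenerate nowhere-dense fiber onto a locally finitely-branched base), but it is also longer.

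There is, however, a genuine slip in the execution. You set $P^{j}_k:=\phi^{-1}\big([y_0,c_j(1/k)]\big)$ and claim that the closure of $\phi^{-1}\big((y_0,c_j(1/k)]\big)$ equals $P^{j}_k$, hence $\sup_K\diam C^{j}_{k,K}=\diam P^{j}_k$. This need not hold: parts of $T$ may be approached only from some of the edges (think of two quasi-arcs accumulating on disjoint halves of an arc-tranche, as in Example~\ref{exmp:regular_no_sin}). Worse, since $y_0\in[y_0,c_j(1/k)]$ you always have $T\subset P^{j}_k$, so $\diam P^{j}_k\ge d$ for \emph{every} $j$ and $k$; your ``covering $T$ by the $P^{j}_k$ forces one to have diameter $\ge d/m$'' is vacuous and gives no control on the $C^{j}_{k,K}$.

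The fix is exactly what your nowhere-density paragraph actually proves: set $Q^{j}_k:=\overline{\phi^{-1}\big((y_0,c_j(1/k)]\big)}$. These are subcontinua (increasing unions of the connected $C^{j}_{k,K}$, then closure), one has $\sup_K\diam C^{j}_{k,K}=\diam Q^{j}_k$ by construction, and the accumulation argument shows precisely $T\subset\bigcup_{j}Q^{j}_k$. Now the $1$-Lipschitz projection $t\mapsto d(t,p)$ onto $[0,d]$ gives some $\diam(T\cap Q^{j}_k)\ge d/m$, hence $\diam Q^{j^{*}}_k\ge d/m$ along a fixed edge for infinitely many $k$, and the rest of your construction of $N$ goes through unchanged. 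With $P^{j}_k$ replaced by $Q^{j}_k$ throughout, the proof is correct.
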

\begin{proof}
    Let $X$ be a tranched graph and let $\phi \colon X \ra Y$ be the continuous monotone map from its definition. Assume that $X$ is a Peano continuum but is not a topological graph. It follows that there is at least one tranche $T=\phi^{-1}(y)$ for some $y \in Y$.  Denote by $n$ the valence of the point $y \in Y$. As $T$ is nondegenerate, we can choose $n+1$ distinct points $\{x_0, \ldots, x_n \} \subset T$, denote $\delta=\max d(x_i,x_j) /2$  and let $U_i\subset B(x_i, \delta)$ be open  connected sets. 
    Denote $S=\phi(\bigcup_{i=1}^n U_i)$ and observe that  $S$ contains an $s$-star centered in $y$ for some $s\leq n$, but does not contain $k$-stars for $k>n$. In particular, there is an arc $A \subset Y$ such that $A \subset \phi(U_i)\cap \phi(U_j), j \neq i$. By the density of singleton fibers, there is $y_0 \in A$ with a degenerate preimage. But sets $U_i$ and $U_j$ were disjoint, hence the fiber $\phi^{-1}(y_0)$ cannot intersect both of them, leading to a contradiction.
\end{proof} 
Lemma 3.1 and Corollary 3.2 in \cite{MR3557770} state that for a quasi-graph map, the image of a topological graph does not contain any oscillatory quasi-arc.
The continuous image of a Peano continuum has to be a Peano continuum as well; therefore, Lemma~\ref{lem:tranched_graphs_not_peano} extends the result of \cite{MR3557770} to a more general setting of arcwise connected tranched graphs.
\begin{cor}
\label{cor:image_of_graph_is_graph}
    Let $X$ be an arcwise connected tranched graph and let $G \subset X$ be a topological graph.  For any continuous map $f \colon X \ra X$, the set $f(G)$ does not contain any oscillatory quasi-arcs. In particular, if $f(G)$ is nondegenerate, then $f(G)$ is a topological graph.
\end{cor}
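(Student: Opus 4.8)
The plan is to prove the apparently stronger ``in particular'' clause first, and then read off the assertion about quasi-arcs as a consequence. The key reduction is that, once $f(G)$ is known to be a \emph{tranched graph}, Lemma~\ref{lem:tranched_graphs_not_peano} applies verbatim: a tranched graph is a Peano continuum if and only if it is a topological graph. So the whole corollary rests on two facts about $f(G)$, namely that it is Peano and that it is a tranched graph.

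First I would record that $f(G)$ is a Peano continuum. A topological graph $G$ is a locally connected continuum, hence, by the Hahn--Mazurkiewicz characterization recalled in Section~\ref{sec:2}, a continuous image of $[0,1]$; composing with $f$ exhibits $f(G)$ as a continuous image of $[0,1]$ as well, so $f(G)$ is a (possibly degenerate) Peano continuum. If $f(G)$ is degenerate it trivially contains no oscillatory quasi-arc, so I would henceforth assume it is nondegenerate.

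Next I would argue that $f(G)$ is itself a tranched graph. Let $\phi\colon X\to Y$ be the monotone map from Definition~\ref{def:3.6}. The natural candidate is to restrict $\phi$ to $f(G)$, pass to the monotone quotient identifying the (connected) traces of its fibers, and obtain a monotone surjection onto a subgraph of $Y$; density of the singleton fibers would then be inherited from the fact that singleton fibers are residual in $X$ (Theorem~\ref{thm:set_of_tranches_is_meager}), intersected with $f(G)$. The delicate case is the one in which $f(G)$ is entirely contained in a single tranche $T=\phi^{-1}(y)$: then $\phi$ carries $f(G)$ to one point and yields no information, and one must instead work with the structure of $T$ (passing to the associated map on $T$ and, if necessary, descending through the hierarchy of tranches). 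This is exactly where arcwise connectedness of $X$ is meant to intervene, preventing a nondegenerate locally connected set from collapsing pathologically, in analogy with the way local connectedness is exploited in the proof of Lemma~\ref{lem:tranched_graphs_not_peano}.

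Once $f(G)$ is recognized as a tranched graph, the argument closes quickly. By Lemma~\ref{lem:tranched_graphs_not_peano}, the Peano tranched graph $f(G)$ is a topological graph, which is the ``in particular'' assertion. Finally, a topological graph contains no oscillatory quasi-arc: every subcontinuum of a topological graph is again a topological graph, hence locally connected, whereas the closure $\overline{L}=L\cup\omega(L)$ of any oscillatory quasi-arc fails to be locally connected at the nondegenerate set $\omega(L)$ (which, by Lemma~\ref{lem:4.3}, would in any event have to lie inside a tranche). This yields the first assertion. I expect the establishment of $f(G)$ as a tranched graph — and specifically the handling of the case where $f(G)$ sits inside a single tranche — to be the main obstacle, since for a general (non-hereditary) tranched graph a tranche need not decompose nicely, so the arcwise-connectedness hypothesis must be used precisely to control this situation.
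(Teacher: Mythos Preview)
Your strategy coincides with the paper's: the only justification offered there (the sentence immediately preceding the corollary) is that the continuous image of a Peano continuum is Peano, whence Lemma~\ref{lem:tranched_graphs_not_peano} is supposed to apply. You go further by trying to verify the hypothesis of that lemma for $f(G)$ --- that it is itself a tranched graph --- and you correctly isolate the case $f(G)\subset T$ for a single tranche $T$ as the obstruction.

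That gap is real and, under the hypotheses as stated, cannot be closed. Arcwise connectedness of $X$ does not constrain the internal structure of a tranche: by Theorem~\ref{thm:sin1x_with_any_quasig_as_tranche} one may realise $T=[0,1]^2$ as $\omega(L)$ for a ray $L$, and attaching an arc $A$ from the endpoint of $L$ into $T$ produces an arcwise connected tranched graph $X=T\cup L\cup A$ whose unique tranche is $T$. If $G\subset A$ is an arc and $f\colon X\to X$ carries $G$ onto $T$ by a Peano space-filling curve (extended constantly off $G$), then $f(G)=[0,1]^2$ is nondegenerate, is not a topological graph, and contains oscillatory quasi-arcs; so both conclusions of the corollary fail and the step ``$f(G)$ is a tranched graph'' is not merely unproved but false. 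Your instinct to descend through the tranche hierarchy is exactly the right repair, but it needs $X$ to be a \emph{hereditary} tranched graph (Definition~\ref{def:3.7}): then every nondegenerate subcontinuum of $X$, in particular $f(G)$, is a tranched graph outright, and Lemma~\ref{lem:tranched_graphs_not_peano} finishes the argument immediately.
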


 Now we are ready to show that tranched graphs may support complicated dynamics.

If we glue two Warsaw circles at their tranches (Figure~\ref{fig:mixing_map}) we get an arcwise connected tranched graph, but whose set of singleton fibers is not arcwise connected. It is easy to verify that such continuum doesn't admit a mixing map. On the contrary, the double sided sin(1/x)-continuum (Figure~\ref{fig:double}) is not arcwise connected, but has arcwise connected set of singleton fibers. 
\begin{figure}[ht]

            \centering
            \includegraphics[scale=0.30]{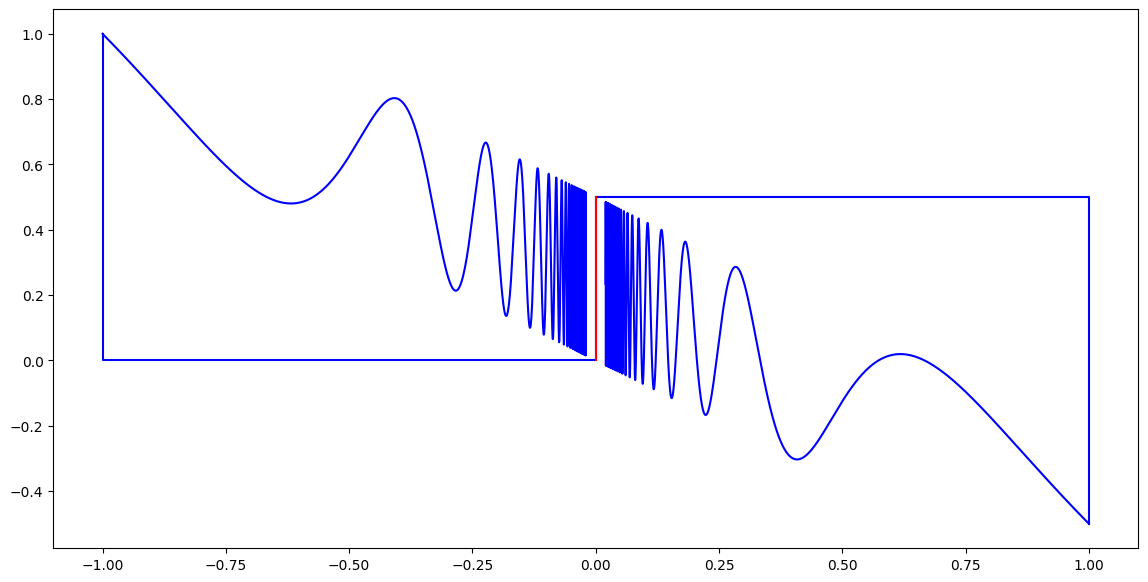}
            \caption{An arcwise connected tranched graph, that doesn't admit a mixing map.  }
            \label{fig:mixing_map}
    \end{figure}

\begin{thm}
    Suppose that $X$ is a tranched graph 
   and let $Y$ be a topological graph such that $\phi \colon X \ra Y$ satisfies the definition.
    Assume that the set $\{x : \phi^{-1}(\phi(x))=\{x\}\}$
    is arcwise connected.
    Then there exists a topologically mixing map $f \colon X \ra X$.
\end{thm}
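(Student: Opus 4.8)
The plan is to exploit the hypothesis through the following dictionary. Write $D_0=\{x:\phi^{-1}(\phi(x))=\{x\}\}$ for the set of singleton fibers; by Theorem~\ref{thm:set_of_tranches_is_meager} it is residual, hence dense, and by assumption it is arcwise connected. First I would record that $\phi|_{D_0}$ is a homeomorphism onto $\phi(D_0)=Y\setminus T_Y$, where $T_Y=\phi(T_X)$ is the set of points carrying a nondegenerate fiber (finite in the relevant case): injectivity is clear, and continuity of the inverse follows from upper semicontinuity of the fibers of the monotone map $\phi$ on the compactum $X$, since if $\phi(x_n)\to\phi(x)$ with $x_n,x\in D_0$ then every limit point of $\{x_n\}$ lies in $\phi^{-1}(\phi(x))=\{x\}$. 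Thus $D_0$ is a dense, arcwise connected, graph-like set, and arcwise connectedness forces every point of $T_Y$ to be non-separating in $Y$ (otherwise $Y\setminus T_Y$ would be disconnected). The role of this hypothesis is exactly what fails for the two glued Warsaw circles of Figure~\ref{fig:mixing_map}: there $D_0$ splits into two arc-components that no continuous map can interchange.

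Next I would construct $f$ as a map acting as an \emph{expanding, folding} self-map along $D_0$ which extends continuously across the tranches. Fix a base point $p\in D_0$ and, using arcwise connectedness, a finite family of arcs in $D_0$ running through every edge of $Y$ (via $\phi|_{D_0}$). I would define $f$ so that the image of a short arc of $D_0$ is a long path in $D_0$ that sweeps across the whole skeleton: running out toward each tranche, and here the approximation property, Definition~\ref{def_sin1/x}\eqref{sin-iii}, is used so that the sweep comes arbitrarily close to every tranche, and back through the branch points, in such a way that a single sweep covers at least two distinct directions out of some branch point. The last feature is what will break any periodic decomposition. When $Y$ happens to admit a mixing self-map $g$ one may instead take $f$ to be a lift with $\phi\circ f=g\circ\phi$, prescribing how the finitely many degenerate fibers over $g^{-1}(T_Y)$ fall into the tranches; but the sweeping construction is uniform and does not presume that $Y$ itself carries a mixing map (which trees do not).

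With $f$ in hand, mixing is verified using density of $D_0$. Given nonempty open $U,V\subseteq X$, pick short arcs $J_U\subseteq U\cap D_0$ and $J_V\subseteq V\cap D_0$. By the expanding design the diameters of $f^n(J_U)$ grow, and because each sweep covers the skeleton to a depth increasing with $n$ and comes within $\tfrac1n$ of every tranche, the sets $f^n(J_U)$ eventually meet $J_V$ for all sufficiently large $n$; hence $f^n(U)\cap V\neq\emptyset$ for all large $n$, which is topological mixing. In the lift case the same conclusion is immediate: transporting through $\phi|_{D_0}$ conjugates $f|_{D_0}$ to $g$, and choosing an open $V^*\subseteq\phi(V\cap D_0)$ avoiding $T_Y$, mixing of $g$ yields a point of $f^n(U)\cap V$ over $g^n(\phi(J_U))\cap V^*$.

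The hard part will be continuity of $f$ at the tranches, where $X$ is not locally connected. One must define $f$ on each tranche $T\subseteq T_X$ as the common limit of $f$ along every sequence of singleton-fiber points approaching $T$, and check that the sweeping near $T$ stabilises so that this limit is single-valued and continuous; here the precise way the oscillatory quasi-arcs accumulate on $T$, together with the approximation property, must be invoked, and in the finite-tranche case the constraint $f(T_X)=T_X$ from Lemma~\ref{lem:TXinv} must be respected. The second delicate point is to arrange the sweep so that $f$ genuinely mixes rather than being merely transitive, i.e.\ to preclude a cyclic permutation of the branches; covering two directions out of a single branch point in one step, as above, is what I would use to rule this out.
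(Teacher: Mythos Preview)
Your proposal is more a plan than a proof, and it contains two concrete errors alongside an unresolved gap. First, you invoke the approximation property of Definition~\ref{def_sin1/x}\eqref{sin-iii} to make the sweep ``come arbitrarily close to every tranche''; but the theorem is stated for tranched graphs, not for generalized $\sin(1/x)$-type continua, so this property is simply unavailable. Second, your parenthetical dismissal of the lift approach on the grounds that trees do not carry mixing maps is false: the interval already admits mixing maps (tent maps), and so do trees. Beyond these, the ``expanding, folding sweep'' is not defined precisely enough to check anything, and you yourself flag continuity at the tranches as ``the hard part'' without resolving it; requiring $f|_T$ to be the common limit of $f$ along singleton-fiber sequences forces agreement of limits from all directions of approach, which your sweep does not arrange.

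The paper's argument sidesteps all of this with one clean device you did not find. Having observed (as you do) that the tranche images $y_1,\ldots,y_n\in Y$ are non-separating and hence finite in number, it \emph{cuts} $Y$ at each $y_i$, producing a connected graph $Y_n$ in which the former tranche points have become endpoints. It then quotes an existing result on graph maps to obtain a mixing map $g\colon Y_n\to Y_n$ with $\End(Y_n)\subset\Fix(g)$ and with fixed points accumulating on the endpoints in a controlled way. Since the new endpoints are fixed, $g$ descends to a mixing map $h\colon Y\to Y$ fixing every $y_i$, and one lifts to $f\colon X\to X$ by declaring $f$ to be the \emph{identity} on each tranche and $f=\phi^{-1}\circ h\circ\phi$ on singleton fibers. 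Continuity at the tranches is then automatic from the controlled accumulation of fixed points, and mixing is inherited from $h$. The trick you are missing is thus: do not try to move the tranches at all; make them pointwise fixed by first cutting and then using a graph map that fixes the resulting endpoints.
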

\begin{proof}

    Let $X$ be a tranched graph  and let $Y$ be a topological graph such that $\phi \colon X \ra Y$ satisfies the definition. Assume that the set $\{x : \phi^{-1}(\phi(x))=\{x\}\}$
    is arcwise connected.
    Removing the images of tranches from $Y$ keeps the space arcwise connected, therefore the number of tranches for $X$ is bounded from above by $b_1(Y)$ (the disconnecting number of $Y$ ) and consequently the continuum $X$ has finitely many tranches. Denote $N= \{y \in Y | \phi^{-1}(y) \text{ is nondegenerate}\}
    =\{y_1, \ldots, y_n\}$.  Let  $Y_1$ be obtained by compactifying $Z_1=Y \backslash \{y_1\}$ in such a way that the set $A_1=Y_1\setminus Z_1\subset \End(Y_1)$. Informally it means that we ,,cut'' the graph $Y$ at $y_1$ 
    obtaining $val(y_1)$ endpoints. Our assumptions guarantee that $Z_1$ and $Y_1$ are connected.
    
    Continue this construction recursively denoting by $Y_k$ the graph obtained by compactifying $Z_k=Y_{k-1} \backslash \{y_k\}$ in such a way that the set $A_k=Y_k\setminus Z_k\subset \End(Y_k)$.
 Denote by $\sim$ the equivalence relation identifying points in each set $A_i$, i.e. $a\sim b$ if $a=b$ or $a,b\in A_i$ for some $i$. Note that $Y_n /_\sim=Y$ up to a homeomorphism. By \cite{MR3255434}, there is a pure mixing map $g \colon Y_n \ra Y_n$ with $\End(Y_n) \subset \Fix(g) $. Moreover, using techniques from the construction of a purely mixing map on the interval (see for example \cite[Chapter 2.2]{MR3616574}), we can get that fixed points of $g$  accumulate on the set of endpoints of $Y_N$ in any prescribed way (i.e. we can control the speed of convergence in the construction). We can pull this map back to $Y$, by setting $h(y)=g(y)$ on points outside $A_i$ and $h(y)=y$ on points in $N$. Finally, we can define the map $f \colon X \ra X$ to be $f(\phi^{-1}(y))=\phi^{-1}(h(y))$ if $y$ is an image of a degenerate fiber and $f(x)=x$ if point $x$ is an element of the tranche of $X$. 
   
    The map is well defined and monotone, and since we may control the convergence of fixed points when defining $g$ we can easily get that $f$ is also continuous. As $g$ was topologically mixing, so is $h$, and as a result is $f$.
\end{proof}
 Recall that a map $f \colon X \ra X$ is topologically exact if for every open set $U \subset  X$, there exists $N \in \N$ such that $f^N(U)=X$. These maps are to some extent excluded on tranched graphs, except for very regular ones, as shown below.

\begin{thm}
\label{lem:finite_then_no_LEO}
    Suppose that $X$ is a tranched graph with a finite and nonempty set of tranches. Then $X$ does not admit a topologically exact map.
\end{thm}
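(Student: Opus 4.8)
The plan is to derive a contradiction by showing that a topologically exact map would force $X$ to be a Peano continuum, which is impossible by Lemma~\ref{lem:tranched_graphs_not_peano}: since $X$ has a nonempty set of tranches it is not a topological graph, and hence not a Peano continuum. Let $\phi \colon X \ra Y$ be the map from Definition~\ref{def:tranched_graph}. The central observation is that having finitely many tranches means there are only finitely many points $y_1, \ldots, y_k \in Y$ with nondegenerate fiber; away from this finite set every fiber of $\phi$ is a singleton. This is exactly what lets me manufacture a genuine open arc inside $X$, which is the only ingredient I need to invoke the Hahn--Mazurkiewicz theorem. Notably, the invariance result of Lemma~\ref{lem:TXinv} is not required for this argument.

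More precisely, I would first note that $Y$ is nondegenerate (otherwise $X=\phi^{-1}(Y)$ would be a single fiber, which cannot be nowhere dense in the nondegenerate continuum $X$), so $Y$ contains an edge. Pick a point $y_0$ in the interior of that edge with $y_0 \notin \{y_1, \ldots, y_k\}$, which is possible since an edge interior is uncountable while $\{y_1,\dots,y_k\}$ is finite, and choose a small closed arc $[a,b] \subset Y$ contained in that edge interior and avoiding all the $y_i$. On $[a,b]$ every fiber of $\phi$ is a singleton, so $\phi$ restricts to a continuous bijection of the compact set $\phi^{-1}([a,b])$ onto $[a,b]$, hence a homeomorphism; thus $\phi^{-1}([a,b])$ is an arc. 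Setting $U = \phi^{-1}((a,b))$ gives a nonempty open subset of $X$ (nonempty by surjectivity of $\phi$, open because $(a,b)$ is open in $Y$) whose closure is a nondegenerate subcontinuum of the arc $\phi^{-1}([a,b])$, hence itself an arc.

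Finally, assume toward a contradiction that $f \colon X \ra X$ is topologically exact. Applied to $U$, exactness yields $N \in \N$ with $f^N(U) = X$, and since $\overline{U}$ is compact we get $f^N(\overline{U}) = X$. As $\overline{U}$ is an arc, there is a continuous surjection $[0,1] \ra \overline{U}$, and composing with $f^N$ produces a continuous surjection $[0,1] \ra X$; by the Hahn--Mazurkiewicz theorem $X$ is then a Peano continuum, so Lemma~\ref{lem:tranched_graphs_not_peano} forces $X$ to be a topological graph, contradicting the assumption that $X$ has a nonempty set of tranches. I expect the main obstacle to lie in the middle step: one must be certain that finiteness of the tranche set genuinely confines all nondegenerate fibers to finitely many points of $Y$ and that the resulting preimage is a true arc, since this is precisely what fails for the infinite-depth examples such as $\widehat{X}$ and the continuum $A$, where no open set has an arc (indeed, a locally connected) closure.
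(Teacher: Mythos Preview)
Your proof is correct and follows essentially the same route as the paper: locate an open set whose closure is a Peano continuum (you use an arc, the paper speaks of a topological graph), apply topological exactness to cover $X$ by an iterate, and invoke Lemma~\ref{lem:tranched_graphs_not_peano} for the contradiction. Your version is in fact more carefully written, making explicit why finiteness of the tranche set yields an open arc in $X$ and naming the Hahn--Mazurkiewicz theorem where the paper simply asserts that the continuous image of a Peano continuum is Peano.
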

\begin{proof}
    Let $X$ be as in assumptions and suppose $f \colon X \ra X$ is topologically exact. As the set of tranches of $X$ is finite, there is an open connected set that is mapped injectively to a topological graph. As such, there is a topological graph $G \subset X$. As $f$ is topologically exact, there is a natural number $n$ for which $f^n(G)=X$. As $G$ is a topological graph, $X$ is a continuous image of a Peano continuum, so it is a Peano continuum. It is in contradiction with Lemma~\ref{lem:tranched_graphs_not_peano}.
\end{proof}

The assumption that the continuum has finitely many tranches in  
 Theorem~\ref{lem:finite_then_no_LEO} is used to generate a Peano subcontinuum with nonempty interior. If the set of tranches is infinite, this argument cannot be used anymore. The following example shows that for complicated generalized sin(1/x)-type continua, we can get complex dynamics  both in a global (topologically exact map) and local (infinite topological entropy) sense.

\begin{exmp}
    Let $\widehat{X}$ be a continuum constructed in Example~\ref{exmp:dense_no_arcs} and $\sigma \colon \widehat{X} \ra \widehat{X} $ be the left shift, defined as in Lemma~\ref{lem:no_arcs_finite_tranche_subcontinuum}. Then $\sigma$ is topologically exact and has infinite topological entropy.
\end{exmp}
\begin{proof}
    By our construction if $(x_0,x_1, \ldots ) \in \widehat{X}$, then $\sigma(x_0,x_1,\ldots)=(x_1,x_2, \ldots) \in \widehat{X}$, so map $\sigma$ is well-defined on $\widehat{X}$.
       Now choose open set $U \subset \widehat{X}$. By Lemma~\ref{lem:tranche:count}, set of tranches of $\widehat{X}$ is dense in $\widehat{X}$, so there is a tranche $T=\hat{\phi}^{-1}(y) \subset U$. 
       By Lemma~\ref{lem:homeo}, $T$ is homeomorphic to $\widehat{X}$ and  for some $n \in \N$ we have $\sigma^n(T)=\widehat{X}$, as $T \subset U$ it follows that $\sigma^n(U)=\widehat{X}$, so map $\sigma \colon \widehat{X} \ra \widehat{X} $ is topologically exact.
       
 Pick two tranches $T_1,T_2$ of $\widehat{X}$. By Lemma~\ref{lem:homeo} they are homeomorphic to $\widehat{X}$, so for any sequence $s \in \{0,1\}^\N$ there is a point $x_s \in \widehat{X}$ such that $f^{i}(x) \in T_{s(i)}$ for all $i \in \N$. As the topological entropy of the full shift on two symbols is $\log 2$ we have that $h_{top}(\sigma) \geq \log 2$. We can repeat this reasoning by picking $k$ tranches to get that $h_{top}(\sigma) \geq \log k$. Freedom of choice of $k$ gives us that $h_{top}(\sigma) = \infty$.
\end{proof}
\section*{Acknowledgements}

Authors are grateful to Chris Mouron, Logan Hoehn, Udayan Darji, Iztok Bani\v{c},  Pawe\l{} Krupski, Veronica Martinez de la Vega, Jernej \v{C}in\v{c}, Ľubomír Snoha and Tomasz Downarowicz for numerous discussions on the history of tranches in topology and implications of tranche-like objects present in the space, on admissible dynamics on that space.

\bibliographystyle{plain} 
\bibliography{refs} 

\end{document}